\renewcommand{\emptyset}{\varnothing}
\newcommand{\NN}{\mathbb N}
\newcommand{\QQ}{\mathbb Q}
\newcommand{\RR}{\mathbb R}
\newcommand{\CC}{\mathbb C}
\newcommand{\PP}{\mathbb P}
\newcommand{\ZZ}{\mathbb Z}
\newcommand{\GG}{\mathbb G}
\newcommand{\id}{\mathrm{id}}
\newcommand{\Face}{\mathrm{Face}}
\renewcommand{\Vert}{\mathrm{Vert}}
\theoremstyle{definition}
\newtheorem{thm}{Theorem}[section]
\newtheorem{cor}[thm]{Corollary}
\newtheorem{lem}[thm]{Lemma}
\newtheorem{prop}[thm]{Proposition}
\newtheorem{defn}[thm]{Definition}
\newtheorem{conj}[thm]{Conjecture}
\newtheorem{eg}[thm]{Example}
\newtheorem{rem}[thm]{Remark}
\newtheorem{obs}[thm]{Observation}
\newtheorem{maintheorem}{Theorem}	\newtheorem{fact}[thm]{Fact}
\numberwithin{equation}{section}
\newcommand{\ar}[1]
{{\xrightarrow{#1}}}
\newcommand{\indexedforests}{\operatorname{\mathsf{Forest}}}% set of indexed forests
\newcommand{\forestpoly}[1]{\mathfrak{P}_{#1}} %forest polynomials
\newcommand{\qsym}[2][]{
{\ifx&#1&%
  {\operatorname{QSym}_{#2}}
\else
  {{}^{#1}\!\operatorname{QSym}_{#2}}
\fi}
} % ring of quasisymmetric polynomials
\newcommand{\eqsym}[2][]{
{\ifx&#1&%
  {\operatorname{EQSym}_{#2}}
\else
  {{}^{#1}\!\operatorname{EQSym}_{#2}}
\fi}
} % ring of quasisymmetric polynomials
\newcommand{\qseq}[2][]{
{\ifx&#1&%
  {\operatorname{QSeq}_{#2}}
\else
  {{}^{#1}\!\operatorname{QSeq}_{#2}}
\fi}
}
\newcommand{\qsymide}[2][]{
{\ifx&#1&%
  {\operatorname{QSym}_{#2}^+}
\else
  {{}^{#1}\!\operatorname{QSym}_{#2}^+}
\fi}
} % ideal of positive degree quasis
\newcommand{\eqsymide}[2][]{
{\ifx&#1&%
  {\operatorname{EQSym}_{#2}^+}
\else
  {{}^{#1}\!\operatorname{EQSym}_{#2}^+}
\fi}
} % ideal of equivariant quasis
\newcommand{\sym}[1]{\operatorname{Sym}_{#1}} % ring of symmetric polynomials
\newcommand{\esym}[1]{\operatorname{ESym}_{#1}} % ring of symmetric polynomials
\newcommand{\symide}[1]{\sym{#1}^+} % ideal of positive degree syms
\newcommand{\esymide}[1]{\esym{#1}^+} % ideal of positive degree equivariant syms
\newcommand{\supp}{\operatorname{supp}} % support
\newcommand{\compatible}[2][]{
{\ifx&#1&%
  {\mathcal{C}(#2)}
\else
  {\mathcal{C}^{m}(#2)}
\fi}
} %compatible labelings of indexed forest
\newcommand{\internal}[1]{\operatorname{IN}(#1)} % set of internal nodes
\newcommand{\suchthat}{\;|\;}
\newcommand{\schub}[1]{\mathfrak{S}_{#1}} % schubert polynomials
\newcommand{\ncperm}{\operatorname{NCPerm}}
\date{}
\newcommand{\cat}[1]{\operatorname{Cat}_{#1}} % catalan sequence
\newcommand{\idem}{\operatorname{id}} %identity
\newcommand{\slide}[2][]{
{\ifx&#1&%
  {\mathfrak{F}_{#2}}
\else
  {\mathfrak{F}_{#2}^{\underline{#1}}}
\fi}
} % slide polynomial
\newcommand{\tope}[2][]{
{\ifx&#1&%
  {\mathsf{T}_{#2}}
\else
  {\mathsf{T}_{#2}^{\underline{#1}}}
\fi}
} % trimming operator
\newcommand{\rope}[1]{\mathsf{R}_{#1}} % R-ing operator
\newcommand{\zigzag}[2][]
{
{\ifx&#1&%
  {\mathsf{ZigZag}_{#2}}
\else
  {\mathsf{ZigZag}_{#2}^{#1}}
\fi}
}
\newcommand{\ltfor}[2][] %indexed forests with all leftmost leaves<=n
{
{\ifx&#1&%
  {\mathsf{LTFor}_{#2}}
\else
  {\mathsf{LTFor}_{#2}^{#1}}
\fi}
}
\newcommand{\rtfor}[2][] %indexed forests with all rightmost leaves >n
{
{\ifx&#1&%
  {\mathsf{RTFor}_{>#2}}
\else
  {\mathsf{RTFor}_{>#2}^{#1}}
\fi}
}
\newcommand{\suppfor}[2][] %indexedforestswithsupp<=n
{
{\ifx&#1&%
  {\mathsf{Forest}_{#2}}
\else
  {\mathsf{For}_{#2}^{#1}}
\fi}
}
\newcommand{\binfor}[1][]{
{\ifx&#1&%
    \operatorname{\mathsf{BinFor}}
\else
    \operatorname{\mathsf{BinFor}}^{#1}
\fi}
} %m binary forests
\newcommand{\hqsym}[2][]{
{\ifx&#1&%
  {\operatorname{HQSym}_{#2}}
\else
  {\operatorname{HQSym}_{#2}^{#1}}
\fi}
} % notation for harmonics of qsym
\newcommand{\fl}[1]{\mathrm{Fl}_{#1}}
\newcommand{\GL}{\mathrm{GL}}
\newcommand{\Pl}{\mathrm{Pl}}
\newcommand{\coinv}[1]{\operatorname{Coinv}_{#1}} % coinvariant algebra
\newcommand{\qscoinv}[2][]{
{\ifx&#1&%
  {\operatorname{QSCoinv}_{#2}}
\else
  {{}^{#1}\!\operatorname{QSCoinv}_{#2}}
\fi}
}
\newcommand{\eqscoinv}[2][]{
{\ifx&#1&%
  {\operatorname{EQSCoinv}_{#2}}
\else
  {{}^{#1}\!\operatorname{EQSCoinv}_{#2}}
\fi}
}
\definecolor{ao}{rgb}{0.0, 0.5, 0.0}
\newcommand{\rt}{\Omega}
\newcommand{\rtseq}{\mathrm{RTSeq}}
\newcommand{\reseq}{\mathrm{RESeq}}
\newcommand{\qfl}{\mathrm{QFl}} %keeping this for back-compatility
\newcommand{\cqfl}[1]{\mathrm{Complex}(\qfl_{#1})} 
\newcommand{\nfor}{\operatorname{NestFor}} %nested
\newcommand{\bnfor}{\operatorname{BNestFor}} %bicolored nested
\newcommand{\bnfornf}{\bnfor^{\operatorname{nf}}} 
\newcommand{\wh}[1]{\widehat{#1}} %anything with widehat
\newcommand{\rletter}[1]{\mathsf{r}_{#1}}
\newcommand{\tletter}[1]{\mathsf{t}_{#1}}
\newcommand{\eletter}[1]{\mathsf{e}_{#1}}
\newcommand{\xletter}[1]{\mathsf{x}_{#1}}
\newcommand{\tl}{\textbf{t}}
\newcommand{\xl}{\textbf{x}}
\newcommand{\ev}{\operatorname{ev}}
\newcommand{\NC}{\operatorname{NC}}
\newcommand{\ForToNC}{\operatorname{ForToNC}}
\newcommand{\eope}[1]{{\mathsf{E}_{#1}}}
\newcommand{\mc}[1]{\mathcal{#1}}
\newcommand{\inv}[1]{\operatorname{Inv}(#1)}
\newcommand{\invnc}[1]{\operatorname{Inv}_{\NC}(#1)} % set of noncrossing reduced words
\renewcommand\emph[1]{\textcolor{blue}{\textit{#1}}} %emph redefined
\newcommand{\wnode}
{
\begin{tikzpicture}[baseline=-3,scale=1]
		\node at (0,0) {{$\wedge$}};
		\draw[fill=white] (0,.1) circle (.05);
\end{tikzpicture}
}
\newcommand{\bnode}
{
\begin{tikzpicture}[baseline=-3,scale=1]
		\node at (0,0) {{$\wedge$}};
		\draw[fill=black] (0,.1) circle (.05);
\end{tikzpicture}
}
\newcommand{\cox}{\bm{c}} %coxeter element
\newcommand{\point}{\operatorname{pt}} % point
\newcommand{\cay}[1]{\operatorname{Cayley}(#1)}
\newcommand{\tree}{\mathrm{Tree}}
\newcommand{\proj}{\mathsf{Proj}} % projectivization
\newcommand{\ctam}{\overset{c}{\sim}}
\newcommand{\degint}{\int}
\newcommand{\cone}[1]{\operatorname{Cone}_{#1}}
\newcommand{\LTFor}{\mathrm{LTForest}}
\newcommand{\LT}{\mathrm{LTer}}
\title[The quasisymmetric flag variety]{The quasisymmetric flag variety: a toric complex on noncrossing partitions}
\author{Nantel Bergeron}
\address{Dept. of Math. and Stat., York University, Toronto, Ontario M3J 1P3, Canada}
\email{\href{mailto:bergeron@yorku.ca}{bergeron@yorku.ca}}
\author{Lucas Gagnon}
\address{Dept. of Math. and Stat., York University, Toronto, Ontario M3J 1P3, Canada}
\email{\href{mailto:lgagnon@yorku.ca}{lgagnon@yorku.ca}}
\author{Philippe Nadeau}
\address{Universite Claude Bernard Lyon 1, CNRS, Ecole Centrale de Lyon, INSA Lyon, Université Jean Monnet, ICJ UMR5208, 69622 Villeurbanne, France}
\email{\href{mailto:nadeau@math.univ-lyon1.fr}{nadeau@math.univ-lyon1.fr}}
\author{Hunter Spink}
\address{Department of Mathematics,
University of Toronto, Toronto, ON M5S 2E4, Canada}
\email{\href{mailto:hunter.spink@utoronto.ca}{hunter.spink@utoronto.ca}}
\author{Vasu Tewari}
\address{Department of Mathematical and Computational Sciences, University of Toronto Mississauga, Mississauga, ON L5L 1C6, Canada}
\email{\href{mailto:vasu.tewari@utoronto.ca}{vasu.tewari@utoronto.ca}}
\thanks{
NB and LG were supported by the Natural Sciences and Engineering Research Council of Canada (NSERC) and York Research Chair in Applied Algebra.
PN was partially supported by French ANR grant ANR-19-CE48-0011 (COMBIN\'E). HS and VT acknowledge the support of the NSERC, respectively [RGPIN-2024-04181] and [RGPIN-2024-05433].}
\begin{document}

\begin{abstract}
We develop the geometric theory of equivariant quasisymmetry via a new ``quasisymmetric flag variety''. This is a toric complex in the flag variety whose fixed point set is the set of noncrossing partitions, and whose cohomology ring is the ring of quasisymmetric coinvariants. 
\end{abstract}

\maketitle

\section{Introduction}
%%%%%%%%%%%%%%%%%%%%%%%%%%%%%%%%%%%%%%%%%%%%
In this paper we study a new geometric object we call the \emph{quasisymmetric flag variety} 
$\qfl_n$, which is
contained in the variety $\fl{n}$ of complete flags of subspaces $0=\mc{F}_0\subsetneq \mc{F}_1\subsetneq\cdots \subsetneq \mc{F}_n=\mathbb{C}^n$.
We show that $\qfl_n$ plays the same role for the quasisymmetric polynomials of Gessel \cite{Ges84} and Stanley \cite{StThesis} that $\fl{n}$ plays for symmetric polynomials. 
Our main results, summarized below, provide a complete geometric model for the quasisymmetric coinvariants.

Recall that a quasisymmetric polynomial in the variable set $\xl_n=\{x_1,\dots,x_n\}$ is one where the coefficient of $x_1^{a_1}\cdots x_k^{a_k}$ equals that of $x_{i_1}^{a_1}\cdots x_{i_k}^{a_k}$ for all increasing sequences $1\le i_1<\cdots<i_k\le n$ and all sequences $(a_1,\dots,a_k)$ of positive integers. 
We denote by $\qsym{n}$ the ring of all quasisymmetric polynomials, and note that since the defining condition for quasisymmetry is a weakening of symmetry, the ring $\sym{n}$ of symmetric polynomials  in $\xl_n$ is a subring of $\qsym{n}$.

In~\cite{ABB04}, Aval--Bergeron--Bergeron initiated the study of the \emph{quasisymmetric coinvariant ring}
\[
\qscoinv{n}\coloneqq \ZZ[\xl_n]/\qsymide{n}
\quad
\text{where}
\quad
\qsymide{n}\coloneqq \langle f(\xl_n)-f(0,\ldots,0)\suchthat f\in \qsym{n}\rangle.
\]
The graded ring $\coinv{n}\coloneqq \ZZ[\xl_n]/\symide{n}$ has a unimodal symmetric sequence of ranks, which reflects the fact that $H^\bullet(\fl{n})\simeq\coinv{n}$ and $\fl{n}$ is a smooth projective variety. 
In contrast, the  graded space  $\qscoinv{n}$  is not rank symmetric for any $n\geq 3$ \cite[Theorem 1.1]{ABB04}, and thus it cannot arise as the cohomology of any smooth projective variety.

We construct the quasisymmetric flag variety using a collection of $(n-1)$-dimensional smooth toric varieties $X(T)$ parameterized by the set $\tree_{n}$ of $n$-leaf planar binary trees, 
\begin{align*}
%\label{eq:QFL}
\qfl_n\coloneqq \bigcup_{T\in \operatorname{Tree}_n}X(T)\subset \fl{n}.
\end{align*}
Each variety $X(T)$ is a left-translated Richardson variety, and an iterated $\mathbb{P}^1$-bundle following a process determined by the combinatorial structure of the tree $T$. 
The moment polytopes of these varieties are both combinatorial cubes and \emph{polypositroids} in the sense of Lam--Postnikov \cite{LP20}.

\begin{maintheorem}\label{main:A}
We have $H^\bullet(\qfl_n)\cong\qscoinv{n}$.
\end{maintheorem}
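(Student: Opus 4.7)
The strategy is to construct a ring homomorphism $\qscoinv{n}\to H^\bullet(\qfl_n)$ coming from the inclusion $\qfl_n\hookrightarrow \fl{n}$, and then verify it is an isomorphism by comparing Hilbert series.

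First I would use the Borel presentation $H^\bullet(\fl{n})\cong \coinv{n}=\ZZ[\xl_n]/\symide{n}$, in which each $x_i$ is the first Chern class of the tautological line bundle $\mathcal F_i/\mathcal F_{i-1}$. Pullback along $\iota\colon \qfl_n\hookrightarrow \fl{n}$ then yields a ring homomorphism $\phi\colon \ZZ[\xl_n]\to H^\bullet(\qfl_n)$.  The main algebraic claim to prove is that $\phi(\qsymide{n})=0$; once this is established, $\phi$ descends to the desired homomorphism $\overline\phi\colon \qscoinv{n}\to H^\bullet(\qfl_n)$.

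To prove this vanishing, I would use $\qfl_n=\bigcup_{T\in\tree_n} X(T)$ together with smoothness of each $X(T)$ to reduce to checking that $f|_{X(T)}=0$ in $H^\bullet(X(T))$ for every $f\in\qsymide{n}$ and every planar binary tree $T$.  Each $X(T)$ is a smooth toric variety whose moment polytope is a combinatorial cube / polypositroid, so its cohomology has a transparent presentation; the iterated $\mathbb P^1$-bundle description of $X(T)$ determined by $T$ gives a precise formula for each restriction $\iota^* x_i|_{X(T)}$, and equivariant localization further reduces the check to evaluating $f$ at the noncrossing-partition fixed points on $X(T)$. The defining condition for quasisymmetry — invariance under increasing reindexings of subsets of coordinates — should match exactly the pattern in which the tree $T$ identifies variables along these fixed points, producing the required vanishing.

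For surjectivity of $\overline\phi$, I would invoke the fact that each $X(T)$ is an iterated $\mathbb P^1$-bundle, hence has cohomology generated in degree $2$ by restrictions of first Chern classes pulled back from $\fl{n}$. A Mayer--Vietoris argument, using that each pairwise intersection $X(T)\cap X(T')$ is itself a toric subvariety of the complex, then shows that $H^\bullet(\qfl_n)$ is generated by the image of $\phi$. For injectivity, I would compare Hilbert series: the Hilbert series of $\qscoinv{n}$ is known from~\cite{ABB04}, and the torus action on $\qfl_n$ yields a Białynicki-Birula-type cell decomposition whose cells are indexed by the noncrossing-partition fixed points, so its Poincaré polynomial can be computed directly from the combinatorics of those cells. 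Matching the two generating functions forces the surjection $\overline\phi$ to be an isomorphism of finitely generated free $\ZZ$-modules in each degree.

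\emph{Main obstacle.} The conceptual crux is the vanishing step: showing that quasisymmetric polynomials with zero constant term restrict to zero on each smooth toric piece $X(T)$. This is precisely where the combinatorial definition of quasisymmetry must be translated into a geometric vanishing statement on toric varieties indexed by planar binary trees, mediated by noncrossing partitions. The remaining ingredients — Borel's presentation, Mayer--Vietoris for unions of smooth toric varieties, and the Hilbert series comparison — are standard once this vanishing is in hand.
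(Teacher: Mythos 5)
Your high-level plan --- restrict the Borel presentation along $\iota\colon \qfl_n\hookrightarrow\fl{n}$, kill the quasisymmetric ideal, then match ranks --- overlaps in spirit with the paper but differs in two significant ways, and both places you deviate contain genuine gaps. The paper works entirely \emph{equivariantly}: it first exhibits a good affine paving of $\qfl_n$ indexed by $\NC_n$ (\Cref{thm:paving}), identifies $H^{\bullet}_{T_n}(\qfl_n)$ with a GKM graph-cohomology ring on the Kreweras lattice (\Cref{cor:qfl_GKM}), produces a free $\ZZ[\tl_n]$-basis of flowup classes given by double forest polynomials (\Cref{thm:forest_flowup}), shows $\ker(\mathbf{ev}_{\NC})=\eqsymide{n}$ (\Cref{thm:we_forgot_to_prove_this}), and only then specializes $t_i\mapsto 0$. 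The equivariant freeness is exactly what licenses the integral base change to ordinary cohomology, a safety net your non-equivariant route discards.

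First, the vanishing step. You reduce $\phi(\qsymide{n})=0$ to $f|_{X(T)}=0$ on each smooth component, and then propose to check this by ``evaluating $f$ at the noncrossing-partition fixed points.'' In ordinary cohomology the restriction $H^{\bullet}(X(T))\to H^{\bullet}(X(T)^{T_n})$ already kills all positive degrees (for $\mathbb P^1$ it annihilates $H^2$), so evaluation at fixed points cannot detect the vanishing of a degree-$d$ class. The argument only works equivariantly, where the key fact is that for $f\in\qsym{n}$ one has $\ev_w(f)=f(\tl_n)$ for every $w\in\NC_n$ (cf.\ the cited \cite[Theorem 7.1]{BGNST1}); thus $f(\xl_n)-f(\tl_n)$ dies in the GKM ring, and $t\mapsto 0$ then kills $f(\tl_n)$. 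Moreover, the preliminary reduction to components requires that $H^{\bullet}(\qfl_n)\to\prod_T H^{\bullet}(X(T))$ be injective; this is not automatic for a reducible complex and is never established in your sketch. The paper avoids it by localizing at fixed points of $\qfl_n$ itself via the affine paving.

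Second, and more seriously, the Mayer--Vietoris argument for surjectivity does not close. Knowing each $H^{\bullet}(X(T))$ is generated by restrictions of Chern classes tells you that the image of $H^{\bullet}(\qfl_n)$ in $\prod_T H^{\bullet}(X(T))$ lands among compatible tuples, but there is no reason every compatible tuple is the diagonal restriction of a single class from $\fl{n}$. Indeed, the paper records precisely this failure for the closely related toric complex $\operatorname{HHMP}_n$, where $\phi^*H^{\bullet}(\fl{n})\subsetneq H^{\bullet}(\operatorname{HHMP}_n)$ is a \emph{strict} inclusion (\cite{NST_c}), so the mere toric-complex structure is not enough to give surjectivity. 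The paper circumvents this by explicitly producing a flowup basis of $H^{\bullet}_{T_n}(\qfl_n)$ inside the image of $\ZZ[\tl_n][\xl_n]$, which yields surjectivity and the dimension count simultaneously. Your Hilbert-series comparison (using the $\NC_n$-indexed affine paving, which the paper does supply in \Cref{thm:paving}) is a clean way to finish once surjectivity is in hand, but it cannot repair the surjectivity gap itself. Either go fully equivariant and build a basis as the paper does, or explain concretely why every compatible tuple on $\cqfl{n}$ is a diagonal image.
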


We note that this does not contradict the previous observation on smooth projective varieties as $\qfl_n$ is reducible.
We give a similar presentation of the torus-equivariant cohomology ring via the coinvariant ring of the \emph{equivariantly quasisymmetric polynomials} defined in~\cite{BGNST1}; see~\Cref{sec:qfl_brl}.

The combinatorics of $\qfl_n$ is governed by \emph{noncrossing partitions}.  
Let $S_{n}$ denote the symmetric group on $n$ letters. 
The torus fixed points of $\qfl_{n}$ are exactly the permutations $\NC_{n} \subseteq S_{n}$ obtained by treating each block of a noncrossing set partition as a backwards cycle, see Biane~\cite{Bi97}.  
Noncrossing partitions also yield the following intrinsic characterization of $\qfl_{n}$, which we prove in \Cref{sec:Intrinsic}.

\begin{maintheorem}
\label{mainthm:plucker}
Denoting by $[\Pl_{\sigma}]_{\sigma\in S_n}$ the \emph{Pl\"ucker functions} on $\fl{n}$, we have that
\[
\qfl_n=\bigcap_{\sigma\in S_n\setminus \NC_n}\{\Pl_{\sigma}=0\}\subset \fl{n}.
\]
\end{maintheorem}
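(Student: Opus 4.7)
The plan is to establish the two inclusions separately. The forward inclusion is a routine toric-variety argument, while the reverse inclusion reduces to a combinatorial claim about moment polytopes of torus orbit closures.

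For the forward inclusion $\qfl_n \subseteq \bigcap_{\sigma \in S_n \setminus \NC_n}\{\Pl_\sigma = 0\}$, it suffices to verify the vanishing on each component $X(T)$. Since $X(T)$ is a projective toric variety for the standard torus $(\mathbb{C}^\ast)^n$ acting on $\fl{n}$, with torus-fixed points contained in $\NC_n$, and since each Pl\"ucker $\Pl_\sigma$ is a torus-eigenvector section of a line bundle on $\fl{n}$, the restriction $\Pl_\sigma|_{X(T)}$ is again a torus-eigenvector section. On a projective toric variety, such a section is identically zero if and only if it vanishes at every torus-fixed point. At a fixed flag $e_w$ we have $\Pl_\sigma(e_w) \neq 0$ iff $w = \sigma$, so for $\sigma \notin \NC_n$ the function $\Pl_\sigma$ vanishes at every fixed point of $X(T)$ and hence on $X(T)$ itself.

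For the reverse inclusion, set $Y = \bigcap_{\sigma \in S_n \setminus \NC_n}\{\Pl_\sigma = 0\}$. Given $F \in Y$, its torus orbit closure $Z := \overline{(\mathbb{C}^\ast)^n \cdot F}$ is a projective toric subvariety of $Y$, and its fixed points lie in $Y^{(\mathbb{C}^\ast)^n} \subseteq \NC_n$. Its moment polytope $P_Z$ is therefore a lattice polytope with vertex set inside the image of $\NC_n$ under the moment map. The key combinatorial claim is that every moment polytope of such a torus orbit closure, with vertices among the images of $\NC_n$, is contained in the moment polytope of some $X(T')$. Since a torus-invariant subvariety of $\fl{n}$ is determined up to scheme structure by its nonvanishing Pl\"ucker coordinates (equivalently, by the vertices of its moment polytope together with the flag-matroid data), this containment gives $Z \subseteq X(T')$, whence $F \in X(T') \subseteq \qfl_n$.

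The main expected obstacle is this polytope-containment claim: every flag-matroid polytope realized as the moment polytope of a torus orbit closure in $\fl{n}$, whose vertices lie in the image of $\NC_n$, must already fit inside one of the tree-cubes. Because the polytopes of the $X(T')$ are combinatorial cubes and polypositroids in the sense of Lam--Postnikov~\cite{LP20}, and because not every subset of the images of $\NC_n$ is the vertex set of an actual flag-matroid polytope, the argument amounts to a careful combinatorial analysis of which generalized permutohedra with vertices in these images are actually realized. I would attack this inductively through the recursive structure of noncrossing partitions and planar binary trees---for example, by localizing at the outermost block of the noncrossing partition at a distinguished vertex of $P_Z$ and applying an inductive hypothesis on the complementary smaller pieces.
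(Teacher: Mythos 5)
The forward inclusion is essentially correct in outcome but rests on an invalid general principle. It is \emph{not} true on an arbitrary projective toric variety that a torus-eigenvector section of an ample line bundle vanishes identically iff it vanishes at every fixed point: an eigenvector section corresponds to a lattice point of the moment polytope and vanishes at all fixed points precisely when that lattice point is not a vertex, which is a much weaker condition than the section being zero. What saves you here is a special feature of the Pl\"ucker system: the weight of $\Pl_\sigma$ is the \emph{vertex} $\sigma\cdot\lambda$ of the ambient permutohedron, and any such vertex that lies in the subpolytope $P(X(T);\lambda)$ must automatically be a vertex of it by convexity, so the ``non-vertex lattice point'' case never occurs. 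The paper sidesteps this entirely by invoking Proposition~\ref{prop:plucker_points}, which says directly that $\Pl_\sigma(\mc F)\neq 0$ iff $\sigma B\in\overline{T\cdot\mc F}$; combined with Theorem~\ref{thm:noncrossing_fixed_points} this gives the forward inclusion in one line. You should replace your general toric claim with that fact.

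The reverse inclusion is where the real gap lies, and there are actually two gaps. First, the step ``moment polytope (or fixed-point set) containment $\Rightarrow$ variety containment'' is asserted but false as a general principle: a torus orbit closure is \emph{not} determined by its nonvanishing Pl\"ucker coordinates (two flags with the same Pl\"ucker support but different Pl\"ucker ratios give different orbit closures with the same moment polytope). To make this step work you would need to independently establish that each $X(T')$ is cut out inside $\fl{n}$ by the vanishing of all $\Pl_\sigma$ with $\sigma\notin I_{T'}$; this is closely related to (a local version of) the very statement being proved, and it would have to be supplied separately, e.g.\ via a Richardson-variety defining-equations result applied to Theorem~\ref{thm:samesetsrichardson}. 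Second, the combinatorial claim you flag as ``the main expected obstacle''---that every realizable flag-matroid polytope with vertices in $\NC_n$ is contained in one of the tree cubes---is in fact the entire content of the theorem: the paper explicitly notes after Theorem~\ref{mainthm:plucker} that this theorem is precisely what proves the ``geometric origin'' case of its open Conjecture. Sketching an inductive attack on the outermost block does not constitute a proof, and this is where all the difficulty is concentrated.

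For comparison, the paper's proof of the reverse inclusion takes a completely different route. Given $\mc F$ with all non-noncrossing Pl\"uckers vanishing, it puts $\mc F$ in a Bruhat cell $BwB$ (forcing $w\in\NC_n$ since $\Pl_w\neq 0$) and works with the explicit matrix representative $h\in M(w)$. The technical heart is an induction on $w(a)-w(b)$ showing that $h_{w(b),a}=0$ for every non-noncrossing inversion $(a,b)$, using a $\ZZ^n$-grading on the coordinates of $M(w)$ (Observation~\ref{obs:annoying}), the simpliciality of the cone $\cone{w}$ spanned by noncrossing-inversion roots (Proposition~\ref{prop:cones_are_all_good}), and a direct Pl\"ucker expansion to extract the relevant coefficient. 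This lands $h$ in the noncrossing Bruhat cell $M_{\NC}(w)$, whence $\mc F\in\qfl_n$ by Proposition~\ref{prop:charts_are_charts}. So rather than reasoning globally with moment polytopes, the paper localizes to a single affine chart where the Pl\"ucker conditions become coordinate constraints. Your outline is not salvageable without essentially redoing that work under a different guise.
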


Our remaining results establish the following parallels between $\qfl_{n}$ and $\fl{n}$. 
\begin{enumerate}[label=(\arabic*)]
\item \label{it1:intro} The Bruhat decomposition gives a stratification of the flag variety into a union of affine Schubert cells $(X^w)^{\circ}$ with well-behaved closure relations known as an affine paving.  
In \Cref{sec:affinepaving}, we describe an affine paving 
\[
\qfl_n = \bigsqcup_{w \in \NC_{n}} (X^{w})^{\circ} \cap \qfl_n,
\]
which shares many combinatorial properties with the Bruhat decomposition.  In \Cref{sec:CoinvariantsGKM} we prove that the closures of our affine cells give a homology basis for $H_\bullet(\qfl_n)$. 

\item \label{it2:intro}
Schubert polynomials \cite{LS82} give a basis of $H^{\bullet}(\fl{n})$
in Borel's presentation, and this basis is Kronecker dual to the homology basis of Schubert cycles $[X^w]$.  
In \Cref{sec:equivbuildingopscohom} we prove that our homology basis is likewise Kronecker dual to the family of forest polynomials~\cite{NT_forest,NST_a} and double forest polynomials~\cite{BGNST1}.

\item \label{it3:intro}
The divided difference formalism for Schubert polynomials admits a geometric interpretation via Bott--Samelson resolutions of Schubert varieties.  
Forest polynomials satisfy similar formalisms, and we show in \Cref{sec:equivbuildingopscohom} that these operations can be similarly interpreted using iterated $\mathbb{P}^1$-bundles. 
We exploit this connection to compute the degree map of each toric variety $X(F)$ in $\qfl_{n}$ using a recursive combinatorial process first defined in~\cite{BGNST1}.

\item \label{it4:intro}
Torus-equivariant versions of~\ref{it1:intro}--\ref{it3:intro} also hold, highlighting parallels between the classical double Schubert polynomials and the double forest polynomials defined in~\cite{BGNST1}.   

\end{enumerate}

In fact, many of our ``non-equivariant'' results are derived by first proving their equivariant analogue. 
In doing so, we rely crucially on the combinatorial relationship between double forest polynomials and noncrossing partitions  developed in~\cite{BGNST1}. 
In particular, we show that the Goresky--Kottwitz--MacPherson graph \cite{GKM98} associated to $\qfl_n$ under the standard torus action is the Kreweras lattice on $\NC_{n}$, and specializations of double forest polynomials describe a free basis of the associated graph cohomology ring. 
This generalizes earlier work of the first and second author~\cite{BeGa23} on \emph{quasisymmetric orbit harmonics}, in which the main result shows that the ring
\[
\QQ[\xl_n]/\langle f(\xl_n)\suchthat f(w(1),\ldots,w(n))=0\text{ for all }w\in \NC_n\rangle
\]
has associated graded $\qscoinv{n}$.
\smallskip

We now give a brief outline of the paper. 
In Section~\ref{sec:preliminaries} we recall standard facts about $\fl{n}$, noncrossing partitions and binary trees.  
We then define a family of elementary ``building operations'' on $\fl{n}$ in Section~\ref{sec:BuildingOperators} and study the combinatorics of their compositions in Section~\ref{sec:RelationsonBuilding}. 
The building operations are the backbone of our work, and in Section~\ref{sec:BottManifold} we use them to construct a family of Bott manifolds $X(\wh{F})$ attached to bicolored nested forests $\wh{F}$ that includes the $X(T)$ as the top-dimensional case. 
We provide a precise description of the inclusion order of these varieties in Section~\ref{sec:torusfixedpoints}, highlighting the role of noncrossing partitions, and connect our work to $\operatorname{HHMP}_{n}$ and Richardson varieties in  Section~\ref{sec:richardsons_and_polypositroids}.
In Section~\ref{sec:qsymvar} we define $\qfl_{n}$ and completely describe the structure of this complex.  
The final four sections (\ref{sec:affinepaving}, \ref{sec:Intrinsic}, \ref{sec:CoinvariantsGKM}, and \ref{sec:equivbuildingopscohom}) contain our main results, as described above. 

\smallskip

In the remainder of this introduction, we explain the motivation behind the construction of the varieties $X(T)$.   
In \cite{NST_c} a partial solution to the geometric realization of $\qscoinv{n}$ was obtained via a different construction called the \emph{$\rt$-flag variety}.  
Let $S_{n-1}\subset S_n$ comprise permutations $w$ satisfying $w(n)=n$ and denote the backwards long cycle $\cox=(n\, n-1\,\cdots \,1) \in S_n$.
In~\cite{NST_c}, the third, fourth, and fifth authors consider the  complex of $(n-1)!$ smooth toric Richardson varieties
\begin{align*}\operatorname{HHMP}_n\coloneqq \bigcup_{w\in S_{n-1}}X^{w\cox}_w
\end{align*}
under the torus action of $T_n=(\mathbb{C}^*)^n$ acting on $\fl{n}$ via its action on $\mathbb{C}^n$. 
This complex was first studied in \cite{HHMP,lian2023hhmp} as arising from a toric degeneration of a general $T_n$-orbit closure in $\fl{n}$. 
We emphasize that this complex differs from $\qfl_n$, as for $n\ge 3$ it contains more Richardson varieties than there are $X(T)$ varieties.
Each toric Richardson in this complex is smooth and has a moment polytope that is a combinatorial cube. 

The toric complex $\operatorname{HHMP}_n$ is assembled in a combinatorially simple way, equivalent to the decomposition of $[0, 1] \times [0, 2] \times \cdots \times [0, n-1]$ into unit cubes; see \cite[Section 7.1]{NST_c}. 
One of the main results of \cite{NST_c} was that for the inclusion $\phi\colon \operatorname{HHMP}_n\to \fl{n}$, we have
\[
\phi^*H^\bullet(\fl{n})\cong \qscoinv{n}\subset H^\bullet(\operatorname{HHMP}_n).
\]
Here the inclusion is strict: multiple cycles $X^{w\cox}_w$ are equivalent under the left action of $S_n$ by permutation matrices. 

The geometry of $\qfl_n$ is constructed to avoid this duplication problem. We show (see also \cite[\S 6]{NST_c}) that the $X(T)$ varieties collapse the left-translation redundancy of the $X^{w\cox}_w$ via a surjective map
\[
\begin{array}{rcl}
\{X^{w\cox}_w\suchthat w\in S_{n-1}\}&\to& \{X(T)\suchthat T\in \tree_n\}\\
X^{w\cox}_w&\mapsto& w^{-1}X^{w\cox}_w
\end{array}
\]
taking the $(n-1)!$-many Richardson varieties to the $\cat{n-1}=\frac{1}{n}\binom{2n-2}{n-1}$-many $X(T)$ varieties. 

The trade-off in the construction of $\qfl_{n}$ is a more complicated toric complex. 
In particular, the moment polytopes of our translated Richardson varieties overlap in an irregular manner, and torus orbits associated to partially overlapping faces do not intersect. 
These phenomena first appears in the case $n = 3$, and \Cref{fig:hhmp_original_and_translated.pdf} contrasts the structure of the moment polytopes for $\operatorname{HHMP}_{n}$ with our ``translated'' moment polytopes, which reflects the structure of $\qfl_{n}$. Geometrically  $\operatorname{HHMP}_3$ is two Hirzebruch surfaces glued along a common torus-invariant $\mathbb{P}^1$, while $\qfl_3$ is two Hirzebruch surfaces glued along two different and intersecting torus-invariant $\mathbb{P}^1$'s. 
We revisit this figure in Section~\ref{sec:qsymvar} where the right panel is redrawn to resemble the Kreweras lattice $\NC_3$.

%%%%%%%%%
\begin{figure}[!htb]
    \centering
    \includegraphics[scale=1.1]{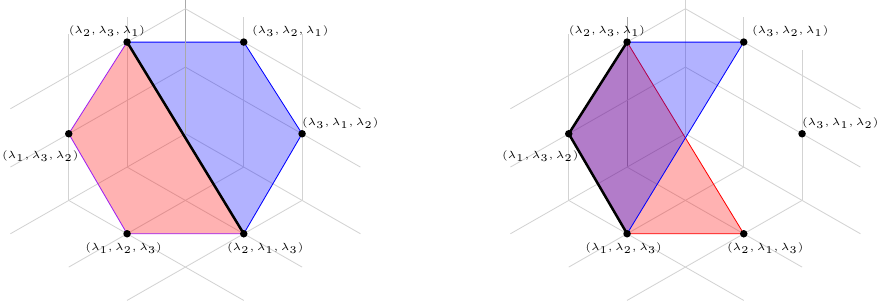}
    \caption{The two trapezoids comprising the HHMP subdivision of the $n=3$ permutahedron (left) and the intersecting $X(T)$ moment polytopes (right).}
    \label{fig:hhmp_original_and_translated.pdf}
\end{figure}
%%%%%%%%%

\subsection*{Acknowledgements}
We are very grateful to Allen Knutson and Alex Fink for numerous helpful conversations and their generous sharing of ideas.
VT owes a lot to enlightening conversations with Dave Anderson. 
Finally thanks also go out to Sara Billey, Alexander Woo,  and Alejandro Morales for pointers to relevant results in literature.

%%%%%%%%%%%%%%%%%%%%%%%%%%%%%%%%%%%%%%%%%%%%%%%%%%%%%%%%%%
\section{Preliminaries}
\label{sec:preliminaries}
%%%%%%%%%%%%%%%%%%%%%%%%%%%%%%%%%%%%%%%%%%%%%%%%%%%%%%%%%%

Let $s_1,\ldots,s_{n-1}$ be the simple transpositions $s_i=(i,i+1)$ generating the symmetric group $S_n$. We let
\[
\cox \coloneqq  (n\,n-1\,\cdots\,2\,1)=s_{n-1}s_{n-2}\cdots s_1 \in S_{n},
\] 
denote the backwards long cycle.  For all nonnegative integers $m$ we set $[m]\coloneqq \{1,\dots,m\}$.

%%%%%%%%%%%%%%%%%%%%%%%%%%%%%%%%%%%%%%%%%%%%%%%%%%%%%%%%%%
\subsection{Recollections on $\fl{n}$}

We work over $\CC$ and denote by $T_{n},B_{n},B_{n}^-\subset \GL_{n}$ the subsets of diagonal, upper triangular, and lower triangular invertible $n\times n$ matrices.  When there is no ambiguity we write $T$, $B$, and $B^-$ for $T_{n}$, $B_{n}$, and $B^{-}_{n}$. We will denote $\chi_i$ for the $i$th standard character of $T_n$, corresponding to the $i$'th entry along the diagonal, so that for $a = \mathrm{diag}(a_{1}, \ldots, a_{n}) \in T$ we have $\chi_{i}(a) = a_{i}$.

We identify the complete flag variety 
\[
\fl{n} \coloneqq \text{flags of subspaces }\big(\{0\}=\mc{F}_0\subsetneq \mc{F}_1\subsetneq \mc{F}_2\subsetneq \cdots \subsetneq \mc{F}_{n-1}\subsetneq \mc{F}_{n}=\CC^{n}\big).
\]
with $\GL_{n}/B_{n}$ via the transitive action of $\GL_{n}$ with $B_{n}$ the stabilizer of the \emph{standard coordinate flag} with $\mc{F}_i=\langle e_1,\ldots,e_i\rangle$ for $1\leq i\leq n$.
Via this identification $ith$ subspace in the flag associated to $hB_{n}$ is the column span of the first $i$ columns of $h$ for $1\leq i\leq n$.

For $w \in S_{n}$, we will denote the Schubert cycles in $\fl{n}$ by $X^v=\overline{BvB}$, the opposite Schubert cycles by $X_u=\overline{B^-uB}$, and for $u\le v$ in the Bruhat order the Richardson varieties $X^v_u\coloneqq X^v\cap X_u=\overline{BvB}\cap \overline{B^-uB}$.
The Bruhat decomposition
\begin{align}
\label{eq:bruhat_decomposition}
\fl{n}=\bigsqcup_{w\in S_n}BwB
\qquad
\text{ with }
\qquad 
BwB\cong \mathbb{A}^{\ell(w)},
\end{align}
 gives an affine paving if the $BwB$ are ordered via any linear extension of the  Bruhat order on $S_n$; see Section~\ref{sec:affinepaving} for more details. 

For $w \in S_{n}$ and a dominant weight $\lambda=(\lambda_1\ge \cdots \ge \lambda_n)\in \mathbb{Z}^n$ we have a \emph{Pl\"{u}cker function} defined for $h\in \GL_n$ by \[
\Pl_{\lambda, w}(h) = (\det h_{w(1)})^{\lambda_1-\lambda_2}(\det h_{w(1),w(2)})^{\lambda_2-\lambda_3}\cdots (\det h_{w(1),\ldots,w(n)})^{\lambda_n},
\]
where $h_{i_1,\ldots,i_k}$ is the submatrix of $h$ with columns $1,\ldots,k$ and rows $i_1,\ldots,i_k$.
These functions together define a map $\Pl_{\lambda}:\fl{n}\to \mathbb{P}^{n!-1}$, which we write simply $\Pl$ if $\lambda$ is the fundamental dominant weight $(n,n-1,\ldots,1)$. 
Then $\Pl_{\lambda}$ is $T_n$-equivariant with respect to the $T_n$-action on $\mathbb{P}^{n!-1}$ where $\mathrm{diag}(a_1,\ldots,a_n)\in T_n$ acts in the $w$-coordinate by $a_{w(1)}^{\lambda_1}a_{w(2)}^{\lambda_2}\cdots a_{w(n)}^{\lambda_n}=a_1^{\lambda_{w^{-1}(1)}}\cdots a_n^{\lambda_{w^{-1}(n)}}$.  

 The moment polytope of $\fl{n}$ under $\Pl_{\lambda}$ is the generalized permutahedron 
\[
\operatorname{Perm}(\lambda) =\operatorname{conv}\{u\cdot {\lambda}\suchthat u\in S_n\} \subseteq \RR^{n},
\]
where $u\cdot {\lambda} = (\lambda_{u^{-1}(1)},\ldots,\lambda_{u^{-1}(n)})$ and $\operatorname{conv}$ denotes taking the convex hull. 
If $X\subset \fl{n}$ is a $T_n$-invariant subvariety, then its moment polytope under $\Pl_{\lambda}$ is  
\[
P(X;{\lambda})=\operatorname{conv}\{u\cdot {\lambda} \suchthat u\in X^T\} \subseteq \operatorname{Perm}(\lambda).
\]
The moment polytope of a $T_{n}$-orbit closure $X$ is always a \emph{flag matroid polytope} (see \cite{GGMS87}), meaning its vertices are contained in the vertices of $\operatorname{Perm}(\lambda)$ and all edges are parallel to the type $A_{n-1}$ roots $e_i-e_j$ for various $i,j$.

%%%%%%%%%%%%%
\subsection{Noncrossing partitions} 
\label{sec:ncp}
%%%%%%%%%%%%%%

A \emph{combinatorial noncrossing partition} is a partition  $A_1\sqcup \cdots \sqcup A_k=[n]$ such that for $i\ne j$, distinct elements $a,b\in A_i$, and distinct elements $c,d\in A_j$, we never have $a<c<b<d$. We depict combinatorial noncrossing partitions as noncrossing arc diagram; for example we draw $\{1,2,3,6\}\sqcup \{4,5\}=[6]$ as
\[
\begin{tikzpicture}[scale = 0.75, baseline = 0.75*-0.2]
\foreach \x in {1, ..., 6}{\draw[fill] (\x - 1, 0) node[inner sep = 2pt] (\x) {$\scriptstyle \x$};}
\foreach \i\j in {1/2, 2/3, 3/6, 4/5}{\draw[thick] (\i) to[out = 35, in = 145] (\j);}
\end{tikzpicture}.
\]

A \emph{backwards cycle} is a cycle $(b_1\,b_2\,\cdots\, b_r)$ with $b_1>b_2>\cdots >b_r$. An \emph{algebraic noncrossing partition} is defined to be a permutation $w$ whose disjoint cycle decomposition $\operatorname{Cyc}(w)\coloneqq  C_{1}C_{2} \cdots C_{k}$ consists of backwards cycles whose underlying sets define a combinatorial noncrossing partition.  For example $w = (6321)(54)$ is the algebraic noncrossing partition associated to the arc diagram above. We denote
\[
\NC_{n} \coloneqq \{\text{algebraic noncrossing partitions of $[n]$}\}\subset S_n.
\]

The \emph{Kreweras order} on $\NC_n$ is defined by setting $u\le_K v$ if the combinatorial noncrossing partition associated to $u$ refines the combinatorial noncrossing partition associated to $v$. 
For example $(63)(21)(54)\le_K (6321)(54)$. 
The Kreweras order is a lattice, and the lattice structure is induced by the lattice of set partitions under refinement \cite{Kre72}.

Going forward, we will identify combinatorial and algebraic noncrossing partitions, using ``combinatorial'' and ``algebraic'' only when disambiguation is needed.

We say that permutations $u,w\in S_n$ are \emph{adjacent} if $w=(i\,j)u$ for some transposition $(i\,j)$. Let $\cay{S_n}$ be the \emph{Cayley graph} on $S_n$ in which adjacent permutations are connected by an edge.
Then $u,w\in \NC_n$ are adjacent in the Kreweras lattice (meaning one element covers the other) if and only if $u,w$ are adjacent in $\cay{S_n}$. 
In this way, we may identify the Hasse diagram of the Kreweras lattice with the induced subgraph of $\cay{S_n}$ on $\NC_n$ as was first observed by Biane~\cite{Bi97}.  
We denote this induced subgraph by $\cay{\NC_{n}}$.  

\begin{rem}
    The Kreweras order is different from the Bruhat order $\le$ restricted to $\NC_n$. For example, $\cox$ is the maximal element of $\le_K$, whereas $w_0$ is the maximal element of $\le$. 
    See \cite{BJV} for a more detailed study of the interaction between the two orders.
\end{rem}

\subsection{Binary trees and nested forests}
\label{sec:forest_prelims}

A \emph{planar binary tree} is a rooted tree $T$ in which each node $v$ is either an \emph{internal node} with exactly $2$ children $v_L$ and $v_R$ (the left and right child), or $v$ is a \emph{leaf} with zero children. 
Let $\internal{T}$ denote the set of internal nodes in $T$. We allow for the possibility that $|\internal{T}|=0$, in which case the unique node is both a root and a leaf. 

A \emph{nested forest supported on $[n]$} is a family $\wh{F} = (T_{C})_{C \in \operatorname{Cyc}(w)}$ of binary trees  $T_{C}$ indexed by the disjoint cycles of a noncrossing partition $w = C_{1} C_{2} \cdots C_{k} \in \NC_{n}$ such that each $T_{C}$ has $|C|$ leaves.  
We identify the leaves of each $T_{C}$ with the underlying set of $C$ in increasing order and depict $\wh{F}$ by drawing the $T_{C}$ in the upper half plane so that the set $[n]$ of all leaves appears in increasing order along the horizontal axis.  

The internal nodes of $\wh{F}$ are $\internal{\wh{F}} = \bigsqcup_{C \in \operatorname{Cyc}(w)} \internal{T_{C}}$. 
The \emph{canonical label} of each $v \in \internal{F}$ is the value of the rightmost leaf descendant of $v_L$.

We define a map $\ncperm: \nfor_{n} \to \NC_{n}$ that sends a nested forest $(T_{C})_{C \in \operatorname{Cyc}(w)}$ to its underlying noncrossing partition $w=\prod_{C\in\operatorname{Cyc}(w)}C$.  For example
\[
\ncperm\left( \begin{tikzpicture}[scale = 0.5, baseline = 0.5*1cm]
    \draw[thin] (0.75, 0) -- (7.25, 0);
    \foreach \x in {1, 2, 3, 4, 5, 6, 7}{
    	\draw[thick] (\x - 0.1, 0 - 0.1) -- (\x + 0.1, 0 + 0.1);
    	\draw[thick] (\x - 0.1, 0 + 0.1) -- (\x + 0.1, 0 - 0.1);
    	\draw (\x, 0) node[inner sep = -2pt]  (\x) {};
    	\draw (\x) node[below] {$\scriptstyle \x$};}
    \draw[fill] (2, 1)  circle (2pt) node[inner sep = 0pt] (l1) {};
    \draw[fill] (4, 3)  circle (2pt) node[inner sep = 0pt] (l3) {};
    \draw[fill] (4.5, 0.5)  circle (2pt) node[inner sep = 0pt] (l4) {};
    \draw[fill] (5, 1)  circle (2pt) node[inner sep = 0pt] (l5) {};
    \foreach \a/\b in {l3/l1, l3/7, l1/1, l1/3, l5/6, l5/l4, l4/4, l4/5} {\draw[thin] (\a) -- (\b);}
\end{tikzpicture}  \right) = (731)(654)(2).
\]

\begin{rem}
\label{rem:nstfor_to_NC}
Nested forests should be seen as an enriched version of noncrossing partitions.  In Section~\ref{sec:torusfixedpoints}, we show that each nested forest corresponds, up to certain trivial commutation relations, to a distinguished factorization of its associated noncrossing partition.
\end{rem}

%%%%%%%%%%%%%%%%%%%%%%%%%%%%%%%%%%%%%%%%%%%%%%%%%%%%%%
\section{Building split $\mathbb{P}^1$-bundles}
\label{sec:BuildingOperators}
%%%%%%%%%%%%%%%%%%%%%%%%%%%%%%%%%%%%%%%%%%%%%%%%%%%%%%

We now introduce the operations used to build $\qfl_n\subset \fl{n}$. 

%%%%%%%%%%%%%%%%%%%%%%%%%%%%%%%%%%%%%%%%%%%%%%%%%%
\subsection{$\Psi_i^-$ and $\Psi_i^+$}
\label{section:psi_plus_minus}
%%%%%%%%%%%%%%%%%%%%%%%%%%%%%%%%%%%%%%%%%%%%%%%%%%

We recall certain pattern maps that were studied by Bergeron--Sottile \cite{BS98} and summarize their essential properties. We refer the reader to \cite{BB03,BB05} for a more general perspective.

%%%%%%%%%
\begin{defn}
    Let $\Psi_{i,j}\colon \operatorname{Mat}_{m-1\times m-1}\to \operatorname{Mat}_{m\times m}$ be the operation $$(\Psi_{i,j}M)_{k,\ell}=\begin{cases}M_{k-\delta_{k<i},\ell-\delta_{\ell<j}}&k\ne i\text{ and }\ell\ne j\\
    1&(k,\ell)=(i,j)\\0&\text{otherwise.}\end{cases}$$    
\end{defn}
%%%%%%%%%

The map $\Psi_{1,i}$ was crucial to the construction in \cite[\S 5]{NST_c}.
In contrast, the following two pattern maps will be important to us:
\[
    \Psi_i^-\coloneqq \Psi_{i,i}\text{ and }\Psi_i^+\coloneqq \Psi_{i,i+1}.
\]
For example when $m=4$ we have
$$\Psi_{2}^-\begin{bmatrix}a&b&c\\d&e&f\\g&h&i\end{bmatrix}=\begin{bmatrix}a&0&b&c\\0&1&0&0\\d&0&e&f\\g&0&h&i\end{bmatrix}\text{ and }\Psi_2^+\begin{bmatrix}a&b&c\\d&e&f\\g&h&i\end{bmatrix}=\begin{bmatrix}a&b&0&c\\0&0&1&0\\d&e&0&f\\g&h&0&i\end{bmatrix}.$$

By restricting to invertible matrices, the pattern maps descend to closed embeddings 
\[
\Psi_{i,j}:\fl{m-1}\hookrightarrow \fl{m}.
\]
Define $\gamma_i:T_m\to T_{m-1}$ to be the map $\mathrm{diag}(a_1,\ldots,a_{m})\mapsto \mathrm{diag}(a_1,\ldots,a_{i-1},a_{i+1},\ldots,a_{m})$.
\begin{defn}
\label{defn:gamma_action}
    If $T$ is a torus and $\gamma:T\to T_m$ is a map of tori then we write $\fl{m}^\gamma$ for $\fl{m}$ equipped with the action of $T$ induced by $\gamma$.
\end{defn}

%%%%%%%%%
\begin{fact}
    The maps $\Psi_{i,j}:\fl{m-1}^{\gamma_i}\to \fl{m}$ are $T_m$-equivariant closed embeddings, and in particular this is true of $\Psi_i^{\pm}:\fl{m-1}^{\gamma_i}\hookrightarrow \fl{m}$.
\end{fact}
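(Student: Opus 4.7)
The plan is to give an intrinsic, flag-theoretic description of $\Psi_{i,j}$ from which both the closed-embedding and $T_m$-equivariance properties follow by inspection. Let $\iota_i : \CC^{m-1} \hookrightarrow \CC^m$ be the linear inclusion that places a zero in coordinate $i$, with image the hyperplane $H_i \coloneqq \ker \chi_i$. Translating the matrix definition into column operations, one finds that $\Psi_{i,j}$ sends a flag $\mc{F}'_\bullet \in \fl{m-1}$ to the flag $\mc{F}_\bullet$ with $\mc{F}_k = \iota_i(\mc{F}'_k)$ for $k < j$ and $\mc{F}_k = \iota_i(\mc{F}'_{k-1}) + \CC e_i$ for $k \geq j$. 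Since this expression depends only on $\mc{F}'_\bullet$, or equivalently since $\Psi_{i,j}(h' b') \in \Psi_{i,j}(h') B_m$ for all $b' \in B_{m-1}$ (an explicit column-operations computation), the map descends to a well-defined morphism $\fl{m-1} \to \fl{m}$.

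From the formula, the image equals the closed subvariety $Y_{i,j} \coloneqq \{\mc{F}_\bullet \in \fl{m} : \mc{F}_{j-1} \subseteq H_i \text{ and } e_i \in \mc{F}_j\}$, cut out by two Schubert-type incidence conditions (which together force $\mc{F}_j = \mc{F}_{j-1} + \CC e_i$). An explicit inverse $Y_{i,j} \to \fl{m-1}$ is given by $\mc{F}'_k = \iota_i^{-1}(\mc{F}_k)$ for $k < j$ and $\mc{F}'_k = \iota_i^{-1}(\mc{F}_{k+1} \cap H_i)$ for $k \geq j-1$; these agree at $k = j-1$, and on $Y_{i,j}$ the intersection $\mc{F}_{k+1} \cap H_i$ has constant codimension one in $\mc{F}_{k+1}$ (since $e_i \notin H_i$), so the inverse is algebraic. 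Hence $\Psi_{i,j}$ is an isomorphism onto $Y_{i,j}$, i.e., a closed embedding.

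For $T_m$-equivariance, the identity $a \cdot \iota_i(v) = \iota_i(\gamma_i(a) \cdot v)$ holds for all $a \in T_m$ and $v \in \CC^{m-1}$, since $\iota_i$ identifies $\CC^{m-1}$ with $H_i$ on which $T_m$ acts via $\gamma_i$. Combined with the fact that $T_m$ preserves $\CC e_i$, applying both observations to the flag formulas of the first paragraph yields $\Psi_{i,j}(\gamma_i(a) \cdot \mc{F}'_\bullet) = a \cdot \Psi_{i,j}(\mc{F}'_\bullet)$. The main obstacle is establishing descent in the first paragraph: checking $\Psi_{i,j}(h' b') \in \Psi_{i,j}(h') B_m$ for $b' \in B_{m-1}$ is a direct but fiddly column-operations computation. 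Once it is in hand, all remaining properties follow cleanly from the flag description.
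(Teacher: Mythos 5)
The paper records this as a Fact recalled from Bergeron--Sottile and does not supply a proof of its own, so there is no in-paper argument to compare against line by line; your intrinsic, flag-theoretic argument is correct and is exactly in the spirit of the paper's surrounding discussion, which introduces the $\gamma_i$-equivariant inclusion $\epsilon_i$ (your $\iota_i$) and a flag-level description of $\Psi_{i,j}$ for the same purpose. Two small remarks. First, you flag descent as ``the main obstacle,'' but your own opening observation already disposes of it: once you note that $\Psi_{i,j}(h')$ is invertible (its determinant is $\pm\det h'$) and read off the column spans, the resulting flag formula $\mc{F}_k = \iota_i(\mc{F}'_k)$ for $k<j$ and $\mc{F}_k = \iota_i(\mc{F}'_{k-1}) + \CC e_i$ for $k\ge j$ visibly depends only on $\mc{F}'_\bullet$, so no separate column-operations check of $\Psi_{i,j}(h'b')\in\Psi_{i,j}(h')B_m$ is needed. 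Second, it is worth a word confirming that the chain this formula produces is a genuine complete flag; this is immediate since $e_i\notin H_i$ makes $\iota_i(\mc{F}'_{k-1})+\CC e_i$ a direct sum of dimension $k$ and the required inclusions are then automatic. Your flag formula also agrees with the paper's formula for $\pi_i\Psi_i$ and with the displayed $m=4$ matrix example (the paper's inline display of the flags $\Psi_i^{\pm}\mc{F}$ appears to carry a small index slip, and your version is the one consistent with the rest of the section).
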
 
%%%%%%%%%

Write $\epsilon_{i}:\mathbb{C}^{m-1}\hookrightarrow \mathbb{C}^{m}$ for the inclusion $(x_1,\ldots,x_{m-1})\mapsto (x_1,\ldots,x_{i-1},0,x_i,\ldots,x_{m-1})$. 
Then $\epsilon_i$ is a $T_m$-equivariant inclusion if we give $\mathbb{C}^{m-1}$ the action of $T_m$ induced by $\gamma_i$, and we have
\begin{align*}
\Psi_i^-\mc{F}&=\{0\}\subset \epsilon_{i}(\mc{F}_1)\subset \cdots \subset \epsilon_{i}(\mc{F}_i)\subset \epsilon_{i}(\mc{F}_{i})\oplus \langle e_i\rangle \subset \epsilon_{i}(\mc{F}_{i+1})\oplus \langle e_i\rangle\subset \cdots\\
\Psi_i^+\mc{F}&=\{0\}\subset \epsilon_{i}(\mc{F}_1)\subset \cdots \subset \epsilon_{i}(\mc{F}_i)\subset \epsilon_{i}(\mc{F}_{i+1})\subset \epsilon_{i}(\mc{F}_{i+1})\oplus \langle e_i\rangle\subset \cdots.
\end{align*}

%%%%%%%%%%%%%%%%%%%%%%%%%%%%%%%%%%%%%%%%%%%%%%%%%%%%%%
\subsection{Building $\PP^1$-bundles with $\PP_i$}
\label{sec:pushpull}
%%%%%%%%%%%%%%%%%%%%%%%%%%%%%%%%%%%%%%%%%%%%%%%%%%%%%%

We use the subbundle convention for relative projectivization, so that for $\mc{V}$ a vector bundle on a variety $X$ we have $\proj(\mc{V})_X:=(\mc{V}\setminus \{0\})/\mathbb{C}^*$.  
Consider the sequence of maps
\[
\fl{m-1}^{\gamma_i}\overset{\Psi_i^{\pm}}{\rightrightarrows}\fl{m}\xrightarrow{\pi_i}\GL_{m}/P_i,
\]
where $P_i = \langle B, s_{i} \rangle$ is the $i$th minimal parabolic subgroup of $\GL_{m}$, and $\pi_i$ is the projection map. 

The space $\GL_m/P_i$ is typically identified with the variety of partial flags $\{0\}\subset \mc{F}_1\subset \cdots \subset \mc{F}_{i-1}\subset \mc{F}_{i+1}\subset \cdots \subset \mc{F}_m=\mathbb{C}^m$ with $\dim \mc{F}_j=j$.  
Under this identification $\pi_i$ becomes the map which forgets the $i$th subspace of a complete flag.

%%%%%%%%%
\begin{fact}
    $\fl{m}\to \GL_m/P_i$ is $T_m$-equivariantly isomorphic to the $\mathbb{P}^1$-bundle $\proj(\mc{F}_{i+1}/\mc{F}_{i-1})_{\GL_{m}/P_i}$.
\end{fact}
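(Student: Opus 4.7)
The plan is to write down the two mutually inverse $T_m$-equivariant morphisms explicitly and verify they are algebraic; the statement is essentially a direct translation of the definitions once set up correctly.

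First I would describe the relevant tautological data on $\GL_m/P_i$. Viewing $\GL_m/P_i$ as the variety of partial flags $\mc{F}'_{\bullet}=(\mc{F}_1\subsetneq\cdots\subsetneq \mc{F}_{i-1}\subsetneq \mc{F}_{i+1}\subsetneq\cdots\subsetneq \mc{F}_m=\CC^m)$, the assignments $\mc{F}'_{\bullet}\mapsto \mc{F}_{i-1}$ and $\mc{F}'_{\bullet}\mapsto \mc{F}_{i+1}$ give two tautological $T_m$-equivariant subbundles $\mc{F}_{i-1}\hookrightarrow \mc{F}_{i+1}$ of the trivial rank $m$ bundle, and the quotient $\mc{Q}\coloneqq \mc{F}_{i+1}/\mc{F}_{i-1}$ is a $T_m$-equivariant rank-$2$ bundle. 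Hence $\proj(\mc{Q})_{\GL_m/P_i}$ is a $T_m$-equivariant $\mathbb{P}^1$-bundle.

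Next I would construct the isomorphism. Define $\Phi\colon \fl{m}\to \proj(\mc{Q})$ on points by sending a complete flag $\mc{F}_\bullet$ to the pair $(\mc{F}'_\bullet,\ \mc{F}_i/\mc{F}_{i-1})$, where $\mc{F}'_\bullet$ is obtained by forgetting $\mc{F}_i$ and the line $\mc{F}_i/\mc{F}_{i-1}\subset \mc{F}_{i+1}/\mc{F}_{i-1}$ defines a point of the fiber of $\proj(\mc{Q})$ over $\mc{F}'_\bullet$. In the reverse direction, define $\Psi\colon \proj(\mc{Q})\to \fl{m}$ by sending $(\mc{F}'_\bullet,\ell)$ to the complete flag with $\mc{F}_i$ taken to be the preimage of $\ell$ under $\mc{F}_{i+1}\twoheadrightarrow \mc{F}_{i+1}/\mc{F}_{i-1}$. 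On points these operations are clearly inverse to each other and commute with projection to $\GL_m/P_i$.

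The only actual work is checking that $\Phi$ and $\Psi$ are morphisms of schemes, and that both are $T_m$-equivariant. Equivariance is immediate because taking subspaces, quotients, and preimages commutes with the $\GL_m$-action, and in particular the $T_m$-action. For the scheme-theoretic check, I would pass to a standard affine chart on $\GL_m/P_i$ (e.g.\ the big cell), choose a local trivialization of $\mc{F}_{i+1}$ and a local splitting $\mc{Q}\cong \mc{O}^{\oplus 2}$, and verify that $\Phi$ and $\Psi$ are given by regular formulas; alternatively one invokes the universal property of $\proj(\mc{Q})$ together with the fact that $\mc{F}_i/\mc{F}_{i-1}$ is a line subbundle of the pullback of $\mc{Q}$ to $\fl{m}$.

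There is no substantive obstacle here: the content of the statement is the tautological identification of a choice of $\mc{F}_i$ with $\mc{F}_{i-1}\subsetneq \mc{F}_i\subsetneq \mc{F}_{i+1}$ with a choice of line in $\mc{F}_{i+1}/\mc{F}_{i-1}$. The mildly technical point is simply to set up the bundle data cleanly under the subbundle convention $\proj(\mc{V})=(\mc{V}\setminus\{0\})/\CC^*$ so that the identification is globally algebraic and $T_m$-equivariant rather than only pointwise.
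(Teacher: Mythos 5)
The paper states this as a \emph{Fact} with no proof given, treating it as a classical result about the forgetful map from a full flag variety to a partial flag variety. Your proof is the standard argument: identify the rank-$2$ quotient bundle on $\GL_m/P_i$, observe that supplying the missing subspace $\mc{F}_i$ is the same as choosing the line $\mc{F}_i/\mc{F}_{i-1}$ in $\mc{F}_{i+1}/\mc{F}_{i-1}$, write down the two mutually inverse maps, and check equivariance and algebraicity via the universal property of projectivization (with the subbundle convention matching the paper's). This is correct and complete, and is the proof one would expect if the paper had spelled it out.
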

%%%%%%%%%

We now study how the $\PP^1$-bundle $\pi_i$ interacts with the maps $\Psi_i^{\pm}$. 
Since $s_{i}P_{i} = P_{i}$, we have the equality
$\pi_i\Psi_i^-=\pi_i\Psi_i^+.$ 
Consequently we write
\begin{align*}
    \pi_i\Psi_i\coloneqq \pi_i\Psi_i^+=\pi_i\Psi_i^-
\end{align*} to emphasize that this composite does not depend on $\pm$. 
The map $\pi_i\Psi_i$ is  given by
$$\pi_i\Psi_i(\mc{F})_j=\begin{cases}\epsilon_i(\mc{F}_j)&j<i\\
\epsilon_i(\mc{F}_{j-1})\oplus \langle e_i\rangle&j>i,\end{cases}$$
and is a closed $T_m$-equivariant embedding $\fl{m-1}^{\gamma_i}\hookrightarrow \GL_m/P_i$ with image
$$
\pi_i\Psi_i(\fl{m-1})=\big\{\{\mc{F}_j\}_{j\in [m]\setminus i}: \mc{F}_{i-1}\subset \{x_i=0\}\text{ and }e_i\in \mc{F}_{i+1}\big\}\subset \GL_m/P_i.
$$
For $Z\subset \fl{m-1}$, we define 
\[
\PP_iZ\coloneqq \pi_i^{-1}\pi_i\Psi_i Z\subset \fl{m}.
\]
Like the pattern maps we can also view $\mathbb{P}_iZ$ in terms of matrices. 
For an $(m-1)\times (m-1)$ matrix $M$, let $\mathbb{G}_iM$ be the set obtained from $\Psi_i^+M$ by replacing the $0$ in entry $(i,i)$ with all values $+ \in \mathbb{C}^*$. 
Then $\mathbb{G}_i$ descends to a map
$$\text{subsets of }\fl{m-1}\to \text{subsets of }\fl{m},$$
and $\mathbb{P}_iZ=(\Psi_i^+Z)\sqcup (\mathbb{G}_iZ)\sqcup (\Psi_i^-Z)$. For example when $m=4$ we have
\[
\PP_{2}\begin{bmatrix}a&b&c\\d&e&f\\g&h&i\end{bmatrix}B_3=\left\{\begin{bmatrix}
a&b&0&c\\
0&0 &1&0\\
d&e&0&f\\
g&h&0&i
\end{bmatrix}B_4\right\}\sqcup \left\{\begin{bmatrix}
a&b&0&c\\
0&+ &1&0\\
d&e&0&f\\
g&h&0&i
\end{bmatrix}B_4:+\in \mathbb{C}^*\right\} \sqcup \left\{\begin{bmatrix}
a&0&b&c\\
0&1&0&0\\
d&0&e&f\\
g&0&h&i\end{bmatrix}B_4\right\}.
\]

%%%%%%%%%
\begin{thm}
\label{thm:ZBuilding}
    Let $Z\subset \fl{m-1}$ be a $T_{m-1}$-invariant subvariety. 
    If we consider $Z$ as a $T_m$-invariant subvariety of $\fl{m-1}^{\gamma_i}$ for some fixed $1\le i \le m-1$, then the following are true.
    \begin{enumerate}
        \item The map $$ (\pi_i\Psi_i)^{-1} \pi_i\colon \PP_iZ\to Z$$ realizes $\PP_i Z$ as a $T_m$-equivariant $\PP^1$-bundle over $Z$, which is $T_m$-equivariantly isomorphic to the $\mathbb{P}^1$-bundle $\proj((\mc{F}_i/\mc{F}_{i-1})\oplus \CC_{\chi_i})_Z\to Z$.
        \item The closed subsets $\Psi_i^-|_Z$ and $\Psi_i^+|_Z$ correspond to the $T_m$-equivariant sections $\proj(\{0\}\oplus \CC_{\chi_i})_Z$ and  $\proj((\mc{F}_i/\mc{F}_{i-1})\oplus \{0\})_Z$  respectively.
    \end{enumerate}
\end{thm}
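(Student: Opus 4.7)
The plan is to exhibit $\PP_iZ\to Z$ as the pullback of the ambient $\PP^1$-bundle $\pi_i\colon \fl{m}\to \GL_m/P_i$ along the closed embedding $\pi_i\Psi_i\colon Z\hookrightarrow \GL_m/P_i$, and then identify the pulled-back rank-$2$ bundle in $T_m$-equivariant terms.

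For (1), since $\pi_i\Psi_i$ is a $T_m$-equivariant closed embedding of $\fl{m-1}^{\gamma_i}$, its restriction gives a $T_m$-equivariant isomorphism $Z\xrightarrow{\sim}\pi_i\Psi_i Z$. By definition $\PP_iZ=\pi_i^{-1}(\pi_i\Psi_iZ)$, so $\pi_i\colon \PP_iZ\to \pi_i\Psi_iZ$ is the restriction of the ambient $\PP^1$-bundle, and composing with $(\pi_i\Psi_i)^{-1}$ yields the claimed $T_m$-equivariant $\PP^1$-bundle structure on $\PP_iZ\to Z$. To identify its underlying rank-$2$ bundle, I would use the explicit formulas for $\pi_i\Psi_i$ to compute that the pullback of $\mc{F}_{i+1}/\mc{F}_{i-1}$ is
\[
\big(\epsilon_i(\mc{G}_i)\oplus \langle e_i\rangle\big) / \epsilon_i(\mc{G}_{i-1}) \;=\; \epsilon_i(\mc{G}_i/\mc{G}_{i-1})\oplus \langle e_i\rangle.
\]
Since $\epsilon_i$ is $T_m$-equivariant and $\langle e_i\rangle$ is the weight line for the character $\chi_i$, this is $T_m$-equivariantly identified with $(\mc{F}_i/\mc{F}_{i-1})\oplus \CC_{\chi_i}$ as a bundle on $Z$, yielding the isomorphism in (1).

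For (2), I would read off the $i$th subspace of $\Psi_i^{\pm}\mc{G}$ from the matrix definition. If $M$ is any matrix representative of $\mc{G}$, the $i$th column of $\Psi_i^-M$ is $e_i$, while the $i$th column of $\Psi_i^+M$ is the $\epsilon_i$-lift of the $i$th column of $M$; hence the corresponding $i$th subspaces of the flags are $\epsilon_i(\mc{G}_{i-1})\oplus \langle e_i\rangle$ and $\epsilon_i(\mc{G}_i)$ respectively. Quotienting by $\mc{F}_{i-1}=\epsilon_i(\mc{G}_{i-1})$ produces the lines $\langle e_i\rangle$ and $\epsilon_i(\mc{G}_i/\mc{G}_{i-1})$ in the fiber, which under the identification above correspond to the sections $\proj(\{0\}\oplus \CC_{\chi_i})_Z$ and $\proj((\mc{F}_i/\mc{F}_{i-1})\oplus \{0\})_Z$.

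The main subtlety is confirming that the decomposition $\epsilon_i(\mc{G}_i/\mc{G}_{i-1})\oplus \langle e_i\rangle$ is genuinely a $T_m$-equivariant direct sum of subbundles, rather than a mere vector-space splitting. This holds because $\epsilon_i(\mc{G}_i)$ and $\langle e_i\rangle$ are each $T_m$-stable subspaces of $\CC^m$ that meet trivially (as $e_i\notin \im{\epsilon_i}$), and quotienting by the $T_m$-stable subspace $\epsilon_i(\mc{G}_{i-1})\subset \epsilon_i(\mc{G}_i)$ preserves the splitting. Once this equivariant identification is in hand, both parts of the theorem follow from the setup above.
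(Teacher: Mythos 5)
Your proposal is correct and follows essentially the same route as the paper: both exhibit $\PP_iZ\to Z$ as the pullback of $\pi_i$ along the closed embedding $\pi_i\Psi_i$, compute $(\pi_i\Psi_i)^*(\mc{F}_{i+1}/\mc{F}_{i-1})\cong (\mc{F}_i/\mc{F}_{i-1})\oplus\CC_{\chi_i}$ from the explicit flag description, and read off the two sections from the $i$th subspaces of $\Psi_i^\pm\mc{G}$. The only cosmetic difference is that the paper first reduces to $Z=\fl{m-1}$ and then restricts, whereas you keep a general $Z$ throughout; your closing paragraph on why the splitting is genuinely $T_m$-equivariant is a nice extra precision that the paper leaves implicit.
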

\begin{proof}
Both results follow from the case $Z=\fl{m-1}$ by restriction, so we consider only $Z=\fl{m-1}$. 
\begin{enumerate}
    \item Since $\mathbb{P}_i\fl{m-1}$ can be defined by the pullback diagram
    \begin{center}
        \begin{tikzcd}
            \mathbb{P}_i\fl{m-1}\ar[r,hookrightarrow]\ar[d]&\fl{m}=\proj(\mc{F}_{i+1}/\mc{F}_{i-1})_{\GL_m/P_i}\ar[d,"\pi_i"]\\\fl{m-1}^{\gamma_i}\ar[r,hookrightarrow,"\pi_i\Psi_i"]&\GL_m/P_i,
        \end{tikzcd}
    \end{center}
    we have $\mathbb{P}_i\fl{m-1}$ is the projectivization of the pullback of $\mc{F}_{i+1}/\mc{F}_{i-1}$ under the closed embedding $\pi_i\Psi_i$. 
    This pullback is given by
    \begin{align}
        \label{eq:pipsipullback}
        (\pi_i\Psi_i)^*(\mc{F}_{i+1}/\mc{F}_{i-1})=(\mc{F}_i\oplus \mathbb{C}_{\chi_i}) /(\mc{F}_{i-1}\oplus \{0\})\cong (\mc{F}_{i}/\mc{F}_{i-1})\oplus \CC_{\chi_i}\end{align} so the result follows.
    \item We can realize $\Psi_i^-\fl{m-1}$ and $\Psi_i^+\fl{m-1}$ as $T_m$-equivariant sections of $\pi_i|_{\mathbb{P}_i\fl{m-1}}$ taking a partial flag $\{\mc{F}_j\}_{j\in [m]\setminus i}\in \pi_i\Psi_i(\fl{m-1})$ to respectively
        \begin{align*}
             &\{0\}\subset \mc{F}_1\subset \cdots \subset \mc{F}_{i-1}\subset \mc{F}_{i-1}\oplus \langle e_i\rangle \subset \mc{F}_{i+1}\subset \cdots \subset \mc{F}_{m-1}\subset \CC^{m},\text{ and}\\
             &\{0\}\subset \mc{F}_1\subset \cdots \subset \mc{F}_{i-1}\subset \mc{F}_{i+1} \cap \{x_i=0\}\subset \mc{F}_{i+1}\subset \cdots \subset \mc{F}_{m-1}\subset \CC^{m}.
        \end{align*}
        These two sections correspond to the choice of intermediate sub-bundles $\mc{F}_{i-1}\oplus \langle e_i\rangle$ and $\mc{F}_{i+1}\cap \{x_i=0\}$ between $\mc{F}_{i-1}|_{\pi_i\Psi_i(\fl{m-1})}$ and $\mc{F}_{i+1}|_{\pi_i\Psi_i(\fl{m-1})}$, which in the pullback bundle \eqref{eq:pipsipullback} correspond to $\{0\}\oplus \CC_{\chi_i}$ and $(\mc{F}_i/\mc{F}_{i-1})\oplus \{0\}$ .\qedhere
\end{enumerate}
\end{proof}
%%%%%%%%%

%%%%%%%%%%%%%%%%%%%%%%%%%%%%%%%%%%%%%%%%%%%%%%%%%%%%%%
\section{The varieties $X(\wh{F})$ and relations on the building operations}
\label{sec:RelationsonBuilding}
%%%%%%%%%%%%%%%%%%%%%%%%%%%%%%%%%%%%%%%%%%%%%%%%%%%%%%

In this section we introduce bicolored nested forests as a tool to study compositions of the building operations $\Psi_i^-$, $\Psi_i^+$, and $\mathbb{P}_i$.  
This culminates in Definition~\ref{def:XF}, in which we introduce the varieties  $X(\wh{F})$ used to define $\qfl_{n}$.

%%%%%%%%%%%%%%%%%%%%%%%%%%%%%%%%%%%%%%%%%%%%%%%%%%%%%%
\subsection{Combinatorics of bicolored nested forests}
\label{sec:bnfor_combinatorics}
%%%%%%%%%%%%%%%%%%%%%%%%%%%%%%%%%%%%%%%%%%%%%%%%%%%%%%

Let $\reseq$ be the set of words from the alphabet 
\[
\bigcup_{i=1}^{\infty} \{\rletter{i}^-,\rletter{i}^+,\eletter{i}\},
\]
and for $\rt \in \reseq$ define $|\rt|$ to be the number of $\eletter{i}$ letters in $\rt$. 
For example $\rletter{2}^-\rletter{3}^+\eletter{2}\in \reseq$ and $|\rletter{2}^-\rletter{3}^+\eletter{2}|=1$. We define a distinguished subset $\reseq_n\subset \reseq$ by
\[
\reseq_n\coloneqq \{\xletter{1}\cdots \xletter{n}\in \reseq\suchthat \xletter{i}\in \{\rletter{1}^-,\ldots,\rletter{i}^-,\rletter{1}^+,\ldots,\rletter{i-1}^+,\eletter{1},\ldots,\eletter{i-1}\}\text{ for all $i$}\}.
\]
Note that every word of $\reseq_n$ begins with $\rletter{1}^-$.

%%%%%%%%%%%%%%
\begin{defn}
\label{defn:bnforrelations}
Let $\bnfor_n$ be the quotient of $\reseq_n$ by the local relations
        \begin{gather*}
        \eletter{i} \eletter{j}=\eletter{j} \eletter{i+1}\text{ for }i> j\\
\rletter{i}^- \rletter{j}^-=\rletter{j}^- \rletter{i+1}^-\text{ for }i\ge j, \qquad \qquad 
\rletter{i}^+ \rletter{j}^+=\rletter{j}^+ \rletter{i+1}^+\text{ for }i>j,\\
\rletter{i}^+ \rletter{j}^-=\rletter{j}^- \rletter{i+1}^+\text{ for }i\ge j, \qquad \qquad 
\rletter{i}^- \rletter{j}^+=\rletter{j}^+ \rletter{i+1}^-\text{ for $i>j$},\\
\eletter{i} \rletter{j}^-=\rletter{j}^- \eletter{i+1}\text{ for }i\ge j, \qquad \qquad 
\rletter{i}^- \eletter{j}=\eletter{j} \rletter{i+1}^-\text{ for }i >j\\
\eletter{i} \rletter{j}^+=\rletter{j}^+ \eletter{i+1}\text{ for }i>j, \qquad \qquad 
\rletter{i}^+ \eletter{j}=\eletter{j}\rletter{i+1}^+\text{ for }i>j.
\end{gather*}
This is well-defined as the relations all preserve $\reseq_n$.
\end{defn}
%%%%%%%%%

We recall from {\cite[Proposition 9.4]{BGNST1}} how the equivalence classes in $\bnfor_{n}$ can be represented  diagrammatically.  
A \emph{bicolored nested forest} is a nested forest in which each internal node has been colored with either black ($\bnode$) or white ($\wnode$). 
We now define a map $\wh{F}$ from $\reseq_{n}$ to bicolored nested forests with support in $[n]$.  

\begin{defn}
\label{def:forest_insertion}
For $\rt \in \reseq_{n}$, the associated bicolored nested forest $\wh{F}(\rt)$ with support in $[n]$ is defined recursively by $\wh{F}(\emptyset)=\emptyset$ and 
\begin{enumerate}
    \item $\wh{F}(\rt \cdot \rletter{i}^-)$ is obtained from $\wh{F}(\rt)$ by inserting a new tree with no internal nodes as a leaf between $i-1$ and $i$ and relabeling leaves appropriately,
    \item $\wh{F}(\rt \cdot \rletter{i}^+)$ is obtained from $\wh{F}(\rt)$ by replacing the $i$th leaf with a white node $\wnode$ whose children are leaves and relabeling leaves appropriately, and
    \item $\wh{F}(\rt \cdot \eletter{i})$ is obtained from $\wh{F}(\rt)$ by replacing the $i$th leaf with a black node $\bnode$ whose children are leaves and relabeling leaves appropriately.
\end{enumerate}
\end{defn}

\begin{eg}
\label{eg:forest_insertion}
We demonstrate the recursive process for $\Omega=\rletter{1}^-\rletter{1}^{+}\rletter{2}^{-}\eletter{1}\eletter{3}\rletter{2}^+$:
\[
\wh{F}(\rletter{1}^-) = 
\begin{tikzpicture}[scale = 0.5, baseline = 0.5*-0.1cm]
\draw[thin] (0.75, 0) -- (1.25, 0);
\foreach \x in {1}{
	\draw[thick] (\x - 0.1, 0 - 0.1) -- (\x + 0.1, 0 + 0.1);
	\draw[thick] (\x - 0.1, 0 + 0.1) -- (\x + 0.1, 0 - 0.1);
	\draw (\x, 0) node[inner sep = -2pt]  (\x) {};
	\draw (\x) node[below] {$\scriptstyle \x$};}
\end{tikzpicture},
\qquad
\wh{F}(\rletter{1}^-\rletter{1}^{+}) = 
\begin{tikzpicture}[scale = 0.5, baseline = 0.5*-0.1cm]
\draw[thin] (0.75, 0) -- (2.25, 0);
\foreach \x in {1, 2}{
	\draw[thick] (\x - 0.1, 0 - 0.1) -- (\x + 0.1, 0 + 0.1);
	\draw[thick] (\x - 0.1, 0 + 0.1) -- (\x + 0.1, 0 - 0.1);
	\draw (\x, 0) node[inner sep = -2pt]  (\x) {};
	\draw (\x) node[below] {$\scriptstyle \x$};}
\draw (1.5, 0.5) circle (2pt) node[inner sep = -2pt] (l1) {};
\foreach \a/\b in {1/l1, 2/l1} {\draw[thin] (\a) -- (\b);}
\end{tikzpicture},
\qquad
\wh{F}(\rletter{1}^-\rletter{1}^{+}\rletter{2}^{-}) = 
\begin{tikzpicture}[scale = 0.5, baseline = 0.5*-0.1cm]
\draw[thin] (0.75, 0) -- (3.25, 0);
\foreach \x in {1, 2, 3}{
	\draw[thick] (\x - 0.1, 0 - 0.1) -- (\x + 0.1, 0 + 0.1);
	\draw[thick] (\x - 0.1, 0 + 0.1) -- (\x + 0.1, 0 - 0.1);
	\draw (\x, 0) node[inner sep = -2pt]  (\x) {};
	\draw (\x) node[below] {$\scriptstyle \x$};}
\draw (2, 1) circle (2pt) node[inner sep = -2pt] (l1) {};
%\draw[fill] (2.5, 0.5) circle (2pt) node[inner sep = -2pt] (l2) {};
\foreach \a/\b in {1/l1, 3/l1} {\draw[thin] (\a) -- (\b);}
\end{tikzpicture},
\qquad
\wh{F}(\rletter{1}^-\rletter{1}^{+}\rletter{2}^{-}\eletter{1}) = 
\begin{tikzpicture}[scale = 0.5, baseline = 0.5*-0.1cm]
\draw[thin] (0.75, 0) -- (4.25, 0);
\foreach \x in {1, 2, 3, 4}{
	\draw[thick] (\x - 0.1, 0 - 0.1) -- (\x + 0.1, 0 + 0.1);
	\draw[thick] (\x - 0.1, 0 + 0.1) -- (\x + 0.1, 0 - 0.1);
	\draw (\x, 0) node[inner sep = -2pt]  (\x) {};
	\draw (\x) node[below] {$\scriptstyle \x$};}
\draw[fill] (1.5, 0.5) circle (2pt) node[inner sep = -2pt] (l1) {};
\draw (2.5, 1.5) circle (2pt) node[inner sep = -2pt] (l3) {};
%\draw[fill] (3.5, 0.5) circle (2pt) node[inner sep = -2pt] (l4) {};
\foreach \a/\b in {1/l1, 2/l1, l1/l3, l3/4} {\draw[thin] (\a) -- (\b);}
\end{tikzpicture},
\]
\[
\wh{F}(\rletter{1}^-\rletter{1}^{+}\rletter{2}^{-}\eletter{1}\eletter{3}) = 
\begin{tikzpicture}[scale = 0.5, baseline = 0.5*-0.1cm]
\draw[thin] (0.75, 0) -- (5.25, 0);
\foreach \x in {1, 2, 3, 4, 5}{
	\draw[thick] (\x - 0.1, 0 - 0.1) -- (\x + 0.1, 0 + 0.1);
	\draw[thick] (\x - 0.1, 0 + 0.1) -- (\x + 0.1, 0 - 0.1);
	\draw (\x, 0) node[inner sep = -2pt]  (\x) {};
	\draw (\x) node[below] {$\scriptstyle \x$};}
\draw[fill]  (1.5, 0.5) circle (2pt) node[inner sep = -2pt] (l1) {};
\draw (3, 2) circle (2pt) node[inner sep = -2pt] (l3) {};
\draw[fill] (3.5, 0.5) circle (2pt) node[inner sep = -2pt] (l4) {};
\foreach \a/\b in {1/l1, 2/l1, 3/l4, 4/l4, l1/l3, 5/l3} {\draw[thin] (\a) -- (\b);}
\end{tikzpicture},
\qquad\text{and}\qquad
\wh{F}(\rletter{1}^-\rletter{1}^{+}\rletter{2}^{-}\eletter{1}\eletter{3}\rletter{2}^+) = 
\begin{tikzpicture}[scale = 0.5, baseline = 0.5*-0.1cm]
\draw[thin] (0.75, 0) -- (6.25, 0);
\foreach \x in {1, 2, 3, 4, 5, 6}{
	\draw[thick] (\x - 0.1, 0 - 0.1) -- (\x + 0.1, 0 + 0.1);
	\draw[thick] (\x - 0.1, 0 + 0.1) -- (\x + 0.1, 0 - 0.1);
	\draw (\x, 0) node[inner sep = -2pt]  (\x) {};
	\draw (\x) node[below] {$\scriptstyle \x$};}
\draw[fill] (2, 1) circle (2pt) node[inner sep = -2pt] (l1) {};
\draw  (2.5, 0.5) circle (2pt) node[inner sep = -2pt] (l2) {};
\draw (3.5, 2.5) circle (2pt) node[inner sep = -2pt] (l3) {};
\draw[fill] (4.5, 0.5) circle (2pt) node[inner sep = -2pt] (l4) {};
\foreach \a/\b in {1/l1, 2/l2, 3/l2, 4/l4, 5/l4, l1/l3, l2/l1, 6/l3} {\draw[thin] (\a) -- (\b);}
\end{tikzpicture}.
\]
\end{eg}

%%%%%%%%%
\begin{thm}
{\cite[Proposition 9.4]{BGNST1}}
Two elements $\rt,\rt'\in \reseq_n$ are in the same equivalence class of $\bnfor_n$ if and only if $\wh{F}(\rt)=\wh{F}(\rt')$.
\end{thm}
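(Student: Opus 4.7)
The plan is to prove the two implications separately. For the easy direction $\rt \sim \rt' \Rightarrow \wh{F}(\rt) = \wh{F}(\rt')$, it suffices to check that each of the eight families of local relations in Definition~\ref{defn:bnforrelations} preserves the output forest. This reduces to a routine case analysis: each relation swaps two letters whose associated insertions act on disjoint regions of the intermediate forest, and the index shift $i \mapsto i+1$ precisely compensates for the leaf added or subdivided by the other insertion. The strict inequalities $i > j$ rule out exactly the degenerate cases where the two insertions interact (e.g., a later $\eletter{k+1}$ acting on the right child leaf just created by $\eletter{k}$), and so are needed for the swap to make sense.

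For the hard direction, I would induct on $n$. The base cases $n \leq 1$ are immediate (the only length-$1$ word is $\rletter{1}^-$). For the inductive step, the central tool is the following key lemma.

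\textbf{Key Lemma.} \emph{Let $\rt \in \reseq_n$ and let $v$ be a last-insertable node of $\wh{F}(\rt)$, meaning either (a) $v$ is a standalone leaf forming its own tree, or (b) $v$ is an internal node both of whose children are leaves. Then $\rt$ is equivalent under the relations to a word of the form $\rt^\flat \cdot \xletter$, where $\xletter \in \{\rletter{i}^-, \rletter{i}^+, \eletter{i}\}$ is the unique letter that inserts $v$ (of the correct color and at the correct position) and $\rt^\flat \in \reseq_{n-1}$ satisfies $\wh{F}(\rt^\flat) = \wh{F}(\rt) \setminus v$.}

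Given the lemma, the inductive step is immediate: assuming $\wh{F}(\rt) = \wh{F}(\rt')$, pick any last-insertable node $v$ of the common forest (such a node always exists -- any forest has leftmost standalone leaves or lowest internal nodes), apply the lemma to move the corresponding letters to the end of both words, invoke the induction hypothesis on the length-$(n-1)$ prefixes, and reattach the common final letter.

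The main obstacle is the Key Lemma itself. To prove it, I would locate the unique letter $\xletter_k$ in $\rt$ that inserts $v$ and show that $\xletter_k$ can be commuted rightward past each of $\xletter_{k+1}, \ldots, \xletter_n$ using the relations. The crucial structural observation is this: because $v$ is last-insertable in the \emph{final} forest $\wh{F}(\rt)$, none of the letters placed after $\xletter_k$ in $\rt$ can operate on $v$ itself or on a leaf descended from $v$ -- otherwise $v$ would either have been destroyed or acquired a non-leaf child, contradicting (a) or (b). This forces each adjacent pair $\xletter_k \xletter_{k+j}$ to fall into one of the swappable index configurations listed in Definition~\ref{defn:bnforrelations}, so that exactly one relation applies at each step. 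Verifying this is tedious but mechanical: one matches the type of $\xletter_k$ (whether $\rletter{}^-$, $\rletter{}^+$, or $\eletter{}$) against the type of each successor and checks that the independence of their insertions yields the required inequality. This case analysis, essentially eight matrices of subcases paralleling the eight relations, is where the strict inequalities $i > j$ versus $i \geq j$ must be tracked carefully, and is the most delicate part of the argument.
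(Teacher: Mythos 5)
The paper does not prove this theorem; it is imported verbatim as \cite[Proposition 9.4]{BGNST1}, so there is no internal proof to compare against. Assessing your proposal on its own terms, the overall strategy --- check that each relation preserves $\wh{F}$, then invert the last insertion and induct on $n$ --- is natural, and the commutation observation at the core of your Key Lemma is the right engine.

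There is, however, a genuine gap in the Key Lemma as stated. Clause (b), that $v$ is an internal node both of whose children are leaves, is too weak: such a node need not be removable by undoing a single letter. Take $\rt = \rletter{1}^- \eletter{1} \rletter{2}^-\in\reseq_3$; then $\wh{F}(\rt)$ consists of a black node with leaf children at positions $1$ and $3$ together with a standalone leaf at position $2$ nested between them. That black node satisfies your (b), yet no letter $\eletter{i}$ produces this forest from a $2$-leaf forest, since every $\eletter{i}$-insertion creates two adjacent leaf children. Clause (b) must additionally require that the two leaf children of $v$ occupy adjacent positions, i.e.\ that no tree of the forest is nested between them. With that fix a last-insertable node always exists (any standalone leaf, or failing that the lowest internal node of a tree containing no nested tree), and your commutation argument can proceed. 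Two further points would need care in a complete write-up: the index of the shifted letter changes at each swap (e.g.\ $\eletter{a}\eletter{b}=\eletter{b}\eletter{a+1}$), so one must verify after each step that the shifted letter still corresponds to the node $v$ and that the remaining letters still avoid $v$ and its children; and the full $3\times 3$ matrix of type-by-type cases has to be matched against the strict-versus-nonstrict inequalities in Definition~\ref{defn:bnforrelations}, which is precisely what encodes the exclusion of interacting positions. You acknowledge these checks are deferred; they are the real content of the hard direction.
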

%%%%%%%%%

Going forward we identify elements of $\bnfor_{n}$ with the associated bicolored nested forest, so that we can write $\wh{K} = \wh{H} \cdot \xletter{i}$ to mean  $\wh{H} = \wh{F}(\rt)$ and $\wh{K} = \wh{F}(\rt \cdot \xletter{i})$ for some $\rt\in \reseq_n$.  
The defining relations of $\bnfor_n$ preserve $|\rt|$, so it makes sense to discuss $|\wh{F}|$. 
Diagrammatically, $|\wh{F}|$ is the number of black nodes in $\wh{F}$. 
See \Cref{eg:forest_insertion} for a bicolored nested forest built from an $\rt\in \reseq_6$ with $|\rt|=2$. 

%%%%%%%%%%%%%%%
\begin{defn}
\label{defn:forest_subsets}
We conclude by identifying some distinguished subsets of $\bnfor_{n}$ which will appear in later sections.
\begin{enumerate}
\item Let $\indexedforests_n = \{ \wh{F} \in \bnfor_{n} \suchthat \text{each $\rt \in \wh{F}$ has the form $(\rletter{1}^{-})^{n-k}\eletter{i_1}\cdots \eletter{i_k}$}\}$; elements of this set map to forests without white nodes in which no nesting occurs, so that the support of each tree is a contiguous interval of $[n]$.  

\item Let $\operatorname{Tree}_{n} = \{ \wh{F} \in \bnfor_{n} \suchthat |\wh{F}| = n-1\}$; elements of this set have representatives of the form $\rletter{1}^- \eletter{i_1} \cdots \eletter{i_{n-1}}$ and map to singleton trees $(T_{\mathbf{c}})$ with entirely black nodes.  
\end{enumerate}
\end{defn}

We note that in the above definition, $\operatorname{Tree}_{n} \subseteq \indexedforests_n \subseteq \bnfor_{n}$.

%%%%%%%%%%%%%%%%%%%%%%%%%%%%%%%%%%%%%%%%%%%%%%%%%%%%%%
\subsection{Combinatorics of building operations}
\label{sec:building_combinatorics}
%%%%%%%%%%%%%%%%%%%%%%%%%%%%%%%%%%%%%%%%%%%%%%%%%%%%%%

%%%%%%%%%
\begin{lem}
\label{lem:babyrtsame}
We have the relations
        \begin{gather*}
        \PP_{j}\PP_{i}=\PP_{i+1}\PP_{j} \text{ for }i> j\\
 \Psi_{j}^-\Psi_{i}^-= \Psi_{i+1}^-\Psi_{j}^-\text{ for }i\ge j, \qquad \qquad 
 \Psi_{j}^+\Psi_{i}^+= \Psi_{i+1}^+\Psi_{j}^+\text{ for }i>j,\\
 \Psi_{j}^-\Psi_{i}^+= \Psi_{i+1}^+\Psi_{j}^-\text{ for }i\ge j, \qquad \qquad 
 \Psi_{j}^+\Psi_{i}^-= \Psi_{i+1}^-\Psi_{j}^+\text{ for $i>j$},\\
 \Psi_{j}^-\PP_{i}= \PP_{i+1}\Psi_{j}^-\text{ for }i\ge j, \qquad \qquad 
 \PP_{j}\Psi_{i}^-=\Psi_{i+1}^-\PP_{j} \text{ for }i >j\\
 \Psi_{j}^+\PP_{i}=\PP_{i+1}\Psi_{j}^+ \text{ for }i>j, \qquad \qquad 
 \PP_{j}\Psi_{i}^+=\Psi_{i+1}^+\PP_{j}\text{ for }i>j.
\end{gather*}
Moreover, these relations hold when all $\mathbb{P}$'s are replaced by $\mathbb{G}$'s.
\end{lem}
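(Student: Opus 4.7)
The plan is a direct verification for each relation using the matrix descriptions of $\Psi_i^{\pm}$, $\GG_i$, and $\PP_i$.

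First I would establish the four $\Psi^{\pm}\Psi^{\pm}$ identities by comparing matrix entries on both sides. Each pattern map $\Psi_{k,\ell}$ inserts a row at index $k$ and a column at index $\ell$ into the input matrix, places a $1$ at the entry $(k,\ell)$ (with $0$s filling out the rest of that row and column), and shifts the remaining entries appropriately. Composing two pattern maps is therefore determined entirely by how the later insertion interacts with the shift induced by the earlier one, and this reduces to a bookkeeping argument: an insertion at row position $j$ pushes any later insertion at row position $i \ge j$ up to $i+1$, and analogously for columns. The strict versus non-strict index hypotheses in the lemma reflect exactly when the two inserted $1$s would conflict: for instance, $\Psi_i^+$ inserts its $1$ at column $i+1$, so the relation $\Psi_j^+\Psi_i^+=\Psi_{i+1}^+\Psi_j^+$ requires $i>j$ in order that the two insertions target distinct column indices, whereas $\Psi_j^-\Psi_i^-=\Psi_{i+1}^-\Psi_j^-$ is also valid when $i=j$ because the $1$s sit in columns $j$ and $j+1$ either way.

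The relations involving $\GG$ follow from the same bookkeeping, because $\GG_i$ acts on a matrix $M$ by the same row and column insertion as $\Psi_i^+$ except that the entry at $(i,i)$ is allowed to range freely over $\CC^*$. A composition such as $\GG_j\GG_iM$ is then a $(\CC^*)^2$-parametrized family of matrices, and the same positional shift argument identifies this family with $\GG_{i+1}\GG_jM$ for $i>j$ via the canonical reindexing of parameters. The mixed relations of the form $\Psi_j^\pm\GG_i$ and $\GG_j\Psi_i^\pm$ are handled identically, since the parameter in the $\GG$ factor is independent of the shift bookkeeping.

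The relations involving $\PP$ then reduce to the cases already handled, using the decomposition $\PP_i Z = \Psi_i^+ Z \sqcup \GG_i Z \sqcup \Psi_i^- Z$ recalled in Section~\ref{sec:pushpull}. Expanding $\PP_j\PP_i Z$ into a disjoint union of nine terms $A_j A'_i Z$ with $A, A' \in \{\Psi^+, \GG, \Psi^-\}$ and applying the corresponding commutation relation to each term (all of which hold under the hypothesis $i>j$) reassembles the right-hand side $\PP_{i+1}\PP_j Z$. The four mixed relations involving one $\PP$ and one $\Psi^\pm$ are handled analogously, with a three-term decomposition replacing the nine-term one. The main obstacle is keeping track of index shifts systematically: it is tempting to prove all the relations via a single symmetry argument, but the asymmetry in the $(i,i)$ versus $(i,i+1)$ placement of the distinguished $1$ forces the strict versus non-strict conditions to be checked individually. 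The cleanest presentation probably records the position of each inserted $1$ in the final $m\times m$ matrix and observes that both sides of each relation produce the same such positions, making the equality transparent once the shift rule is correctly recorded.
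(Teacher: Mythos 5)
Your proof is correct and follows essentially the same route as the paper's: reduce the $\PP$-relations to the $\GG$- and $\Psi^{\pm}$-relations via the decomposition $\PP_i Z = \Psi_i^- Z \sqcup \GG_i Z \sqcup \Psi_i^+ Z$, then verify the latter by positional bookkeeping of the inserted rows and columns on sets of matrices before quotienting by $B$. The paper simply declares these verifications straightforward and omits them, while you sketch the bookkeeping explicitly.

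One caveat about your motivating remark: the relation $\Psi_j^+\Psi_i^+ = \Psi_{i+1}^+\Psi_j^+$ actually does hold when $i = j$; a direct check shows $\Psi_j^+\Psi_j^+ = \Psi_{j+1}^+\Psi_j^+$, which is precisely the second relation in Lemma~\ref{lem:2additionalbuilding}, recorded separately because it is not among the defining relations of $\bnfor_n$ in Definition~\ref{defn:bnforrelations}. So the strict hypothesis $i > j$ in that line is chosen to mirror Definition~\ref{defn:bnforrelations}, not because the two column insertions actually collide when $i = j$ (in the final $m\times m$ matrix they land in columns $j+1$ and $j+2$ regardless). By contrast, the $\GG$-analogue $\GG_j\GG_i = \GG_{i+1}\GG_j$ genuinely fails at $i = j$: one can check that the free $\CC^*$-entries land in different positions on the two sides. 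None of this affects the validity of your argument, since you only prove the relations as stated, but the explanation of where the strict inequalities come from is misleading.
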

\begin{proof}
Since $\mathbb{P}_i=\Psi_i^-\sqcup \mathbb{G}_i\sqcup \Psi_i^+$, it suffices to check all of the above relations with $\mathbb{G}_i$ in place of $\mathbb{P}_i$. 
It suffices to show the stronger statement that these relations hold for the operations $\Psi_i^-$, $\Psi_i^+$, and $\mathbb{G}_i$ on subsets of matrices without considering equivalence classes mod $B$. These are straightforward so we omit their explicit verification.
\end{proof}
%%%%%%%%%

%%%%%%%%%
\begin{rem}
The relations in~\Cref{lem:babyrtsame} are not exhaustive: in \Cref{lem:2additionalbuilding} we show that two additional relations are needed to describe all interactions between the building operations.
\end{rem}
%%%%%%%%%

%%%%%%%%%
\begin{eg}
   The following computation witnesses the relation $\mathbb{G}_1\mathbb{G}_2=\mathbb{G}_3\mathbb{G}_1$:
\[
\begin{bmatrix}a&b&c\\d&e&f\\g&h&i\end{bmatrix}
\xrightarrow{\mathbb{G}_2}
\begin{bmatrix}a&b&0&c\\0&+&1&0\\d&e&0&f\\g&h&0&i\end{bmatrix}
\xrightarrow{\mathbb{G}_1}
\begin{bmatrix}+&1&0&0&0\\a&0&b&0&c\\0&0&+&1&0\\d&0&e&0&f\\g&0&h&0&i\end{bmatrix}
\xleftarrow{\mathbb{G}_3}
\begin{bmatrix}+&1&0&0\\a&0&b&c\\d&0&e&f\\g&0&h&i\end{bmatrix}
\xleftarrow{\mathbb{G}_1}
\begin{bmatrix}a&b&c\\d&e&f\\g&h&i\end{bmatrix}.
\]
\end{eg}
%%%%%%%%%

Since the relations of \Cref{lem:babyrtsame} are the opposites to the defining relations of $\bnfor_n$, the following is well-defined.

%%%%%%%%%
\begin{defn}\label{def:XF}
    For $\wh{F}\in \bnfor_n$, we define  $X(\wh{F})\subset\fl{n}$ recursively:  $X(\rletter{1}^{-})=\fl{1}$ is a single point, and 
\begin{enumerate}
    \item $X(\wh{F} \cdot \rletter{j}^\pm)=\Psi_{j}^{\pm}X(\wh{F} )$
    \item $X(\wh{F}\cdot  \eletter{j})=\PP_jX(\wh{F})$.
\end{enumerate}
Similarly, we define $X^{\circ}(\wh{F})\subset \fl{n}$ recursively: $X^{\circ}(\rletter{1}^{-})=\fl{1}$ is a single point, and
\begin{enumerate}
    \item $X^{\circ}(\wh{F}\cdot \rletter{j}^{\pm})=\Psi_j^{\pm}X^{\circ}(\wh{F})$
    \item $X^{\circ}(\wh{F}\cdot \eletter{j})=\GG_jX^{\circ}(\wh{F})$.
\end{enumerate} 
\end{defn}
%%%%%%%%%

%%%%%%%%%
\begin{rem}
\label{rem:quasischubert}
We conclude by revisiting the distinguished families from Definition~\ref{defn:forest_subsets}.
\begin{enumerate}
\item In Theorem~\ref{thm:samesetsrichardson}, we show that the $X(F)$ for $F \in \indexedforests_n$ are the \emph{quasisymmetric Schubert cycles} of~\cite[Definition 6.2]{NST_c}. 

\item For $T \in \operatorname{Tree}_{n}$, the $X(T)$ are the top-dimensional quasisymmetric Schubert cycles, which we use to construct $\qfl_{n}$.  

\end{enumerate}
\end{rem}
%%%%%%%%%

%%%%%%%%%%%%%%%%%%%%%%%%%%%%%%%%%%%%%%%%%%%%%%%%%%%%%%
\section{The Bott manifolds $X(\wh{F})$}
\label{sec:BottManifold}
%%%%%%%%%%%%%%%%%%%%%%%%%%%%%%%%%%%%%%%%%%%%%%%%%%%%%%

We now describe the toric structure of varieties $X(\wh{F})$ and $X^{\circ}(\wh{F})$ in Definition~\ref{def:XF}.

%%%%%%%%%
\begin{thm}
\label{thm:XF_torus_orbit}
For $\wh{F} \in \bnfor_{m}$, the variety $X^{\circ}(\wh{F})$ is a torus orbit in $\fl{m}$ of dimension $|\wh{F}|$. Furthermore, $X(\wh{F})$ is the closure of the torus orbit $X^{\circ}(\wh{F})$ in $\fl{m}$.
\end{thm}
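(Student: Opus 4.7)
The plan is to argue by induction on the length of a representative word $\rt \in \reseq_m$ for $\wh{F}$, proving the torus-orbit claim and the closure claim simultaneously. The base case $\rt = \rletter{1}^-$ gives $\fl{1}$, a single point of dimension $|\wh{F}| = 0$ which is trivially a torus orbit. For the inductive step I handle the last letter of $\rt$ by type; both statements are invariant under the defining relations of $\bnfor_m$ by \Cref{lem:babyrtsame}, so it suffices to check one representative.

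When the last letter is $\rletter{j}^\pm$, the recursive definition gives $X(\wh{F}\cdot\rletter{j}^\pm) = \Psi_j^\pm X(\wh{F})$ and $X^\circ(\wh{F}\cdot\rletter{j}^\pm) = \Psi_j^\pm X^\circ(\wh{F})$. Since $\Psi_j^\pm : \fl{m-1}^{\gamma_j} \hookrightarrow \fl{m}$ is a $T_m$-equivariant closed embedding and $\gamma_j : T_m \twoheadrightarrow T_{m-1}$ is surjective, the image of a $T_{m-1}$-orbit is a $T_m$-orbit of the same dimension, and closed embeddings preserve Zariski closures. Since $\rletter{j}^\pm$ does not contribute to $|\wh{F}|$, the inductive conclusion transfers without modification.

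When the last letter is $\eletter{j}$, we have $X(\wh{F}\cdot\eletter{j}) = \mathbb{P}_j X(\wh{F})$ and $X^\circ(\wh{F}\cdot\eletter{j}) = \mathbb{G}_j X^\circ(\wh{F})$. By \Cref{thm:ZBuilding}, $\mathbb{P}_j X(\wh{F}) \to X(\wh{F})$ is a $T_m$-equivariant $\mathbb{P}^1$-bundle isomorphic to $\proj((\mc{F}_j/\mc{F}_{j-1}) \oplus \CC_{\chi_j})_{X(\wh{F})}$, with distinguished sections $\Psi_j^\pm X(\wh{F})$. Over the base $X^\circ(\wh{F})$, removing these two sections yields $\mathbb{G}_j X^\circ(\wh{F})$ as a $\mathbb{C}^*$-bundle. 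Since $\chi_j$ is not in the image of $\gamma_j^*$, the induced $T_m$-action on each fiber $\mathbb{C}^*$ has nonzero weight; combined with the transitive $T_m$-action on the base $X^\circ(\wh{F})$ via $\gamma_j$ (inductive hypothesis), this exhibits $\mathbb{G}_j X^\circ(\wh{F})$ as a single $T_m$-orbit of dimension $|\wh{F}| + 1 = |\wh{F}\cdot\eletter{j}|$. For the closure statement, $\mathbb{G}_j X^\circ(\wh{F})$ is fiberwise dense (as $\mathbb{C}^* \subset \mathbb{P}^1$) in $\mathbb{P}_j X^\circ(\wh{F})$; since $X^\circ(\wh{F})$ is dense in $X(\wh{F})$ by induction and the $\mathbb{P}^1$-bundle is proper over $X(\wh{F})$, the closure of $\mathbb{P}_j X^\circ(\wh{F})$ in $\fl{m}$ is $\mathbb{P}_j X(\wh{F}) = X(\wh{F}\cdot\eletter{j})$.

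The main obstacle is the $\eletter{j}$ step: showing that the $\mathbb{C}^*$-bundle $\mathbb{G}_j X^\circ(\wh{F})$ is a \emph{single} $T_m$-orbit rather than a family of torus orbits fibered over the base requires the observation that $\chi_j$ provides a torus-action direction independent of $\im(\gamma_j^*)$. The $\mathbb{P}^1$-bundle description from \Cref{thm:ZBuilding}, together with the explicit identification of the two sections with $\Psi_j^\pm$, makes this step explicit and simultaneously delivers the closure assertion via the base-fiber decomposition.
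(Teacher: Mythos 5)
Your proof is correct and follows essentially the same route as the paper's: induct on the word length, observe that $\Psi_j^\pm$ is an equivariant closed embedding (so orbit and closure claims transfer trivially), and handle the $\eletter{j}$ step by the $\mathbb{P}^1$-bundle structure from \Cref{thm:ZBuilding}, using the splitting of $T_m$ into a factor acting transitively on the base via $\gamma_j$ and the $\ker(\gamma_j) \cong \mathbb{C}^*$ factor (detected by $\chi_j$) acting transitively on the fiber. Your phrasing ``$\chi_j$ is not in the image of $\gamma_j^*$'' is the same observation the paper records as $T_m \cong \gamma_i(T_m) \times \mathbb{C}^*_{\chi_i}$, and your properness argument for the closure is equivalent to the paper's commutation of $\mathbb{P}_i$, $\Psi_i^\pm$ with closures.
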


We prove the theorem using the following.  
Recall the meaning of $\fl{m-1}^{\gamma_i}$ from Definition~\ref{defn:gamma_action}.

%%%%%%%%%
\begin{prop}
\label{prop:XBottManifold}
Fix $\wh{F}\in \bnfor_{m-1}$, and consider $X(\wh{F})\subset \fl{m-1}^{\gamma_i}$ for some fixed $1\le i \le m$.
\begin{enumerate}[itemsep = 1ex, label=(\arabic*)]
    \item \label{it1:5.1} The map $\Psi_i^{-}:X(\wh{F})\to X(\wh{F}\cdot \rletter{i}^{-})$ is a $T_m$-equivariant isomorphism.
    \item \label{it2:5.1} If $i < m$, the map $\Psi_i^{+}:X(\wh{F})\to X(\wh{F} \cdot \rletter{i}^{+})$ is a $T_m$-equivariant isomorphism.
    \item \label{it3:5.1} If $i < m$, there exists a $T_m$-equivariant isomorphism $X(\wh{F}\cdot \eletter{i})\cong \proj(\mc{F}_i/\mc{F}_{i-1}\oplus \mathbb{C}_{\chi_i})_{X(\wh{F})}$. Furthermore, $X(\wh{F}\cdot \rletter{i}^-)$ and $X(\wh{F}\cdot \rletter{i}^+)$ are the $T_m$-equivariant sections $\proj(\{0\}\oplus \mathbb{C}_{\chi_i})_{X(\wh{F})}$ and $\proj((\mc{F}_i/\mc{F}_{i-1})\oplus \{0\})_{X(\wh{F})}$  respectively.
\end{enumerate}
\end{prop}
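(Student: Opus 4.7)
The plan is to reduce the entire proposition to Theorem~\ref{thm:ZBuilding} applied to $Z = X(\wh{F})$, combined with the defining recursions $X(\wh{F} \cdot \rletter{i}^\pm) = \Psi_i^\pm X(\wh{F})$ and $X(\wh{F} \cdot \eletter{i}) = \PP_i X(\wh{F})$. The overall argument will proceed by induction on the length of a word representing $\wh{F}$, with the trivial base case $X(\rletter{1}^-) = \fl{1}$; the inductive hypothesis carries the statement that $X(\wh{F}) \subset \fl{m-1}$ is a closed irreducible $T_{m-1}$-invariant subvariety (so that Theorem~\ref{thm:ZBuilding} applies), along with the three conclusions of the present proposition.

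For parts~\ref{it1:5.1} and~\ref{it2:5.1}, the key observation is that $\Psi_i^\pm\colon \fl{m-1}^{\gamma_i} \hookrightarrow \fl{m}$ were already identified as $T_m$-equivariant closed embeddings. Restricting a closed embedding to a closed subvariety gives an isomorphism onto the image, and combining with $X(\wh{F} \cdot \rletter{i}^\pm) = \Psi_i^\pm X(\wh{F})$ immediately yields the claimed $T_m$-equivariant isomorphisms.

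For part~\ref{it3:5.1}, we apply Theorem~\ref{thm:ZBuilding} to $Z = X(\wh{F})$ (justified by the inductive hypothesis). Since by definition $X(\wh{F} \cdot \eletter{i}) = \PP_i X(\wh{F})$, part~(1) of that theorem gives the $T_m$-equivariant isomorphism with $\proj((\mc{F}_i/\mc{F}_{i-1}) \oplus \mathbb{C}_{\chi_i})_{X(\wh{F})}$. For the section identifications, part~(2) of that theorem identifies $\Psi_i^-|_{X(\wh{F})}$ and $\Psi_i^+|_{X(\wh{F})}$ with the sections $\proj(\{0\} \oplus \mathbb{C}_{\chi_i})_{X(\wh{F})}$ and $\proj((\mc{F}_i/\mc{F}_{i-1}) \oplus \{0\})_{X(\wh{F})}$; these images are exactly $X(\wh{F} \cdot \rletter{i}^\pm)$ by definition.

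To close the induction, we record that $X(\wh{F} \cdot \xletter{i})$ remains a closed irreducible $T_m$-invariant subvariety of $\fl{m}$: for $\xletter{i} \in \{\rletter{i}^-, \rletter{i}^+\}$ this follows from the isomorphisms just proved, and for $\xletter{i} = \eletter{i}$ it is inherited from the $\mathbb{P}^1$-bundle description in~\ref{it3:5.1}. The proof is thus essentially an organizational bookkeeping exercise, since the substantive geometric content has already been established in Theorem~\ref{thm:ZBuilding}; the only real subtlety is making sure the inductive invariance hypothesis is properly threaded through the recursion and that the labelling of the two sections (``zero'' vs.\ ``infinity'') matches on both sides of the claimed identification.
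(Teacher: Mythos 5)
Your proof is correct and takes essentially the same approach as the paper, which simply treats the proposition as an immediate corollary of Theorem~\ref{thm:ZBuilding} and Definition~\ref{def:XF}. The inductive bookkeeping you add---verifying at each step that $X(\wh{F})$ remains a $T$-invariant subvariety so that Theorem~\ref{thm:ZBuilding} applies---is a reasonable spell-out of what the paper leaves implicit, but is not a different argument.
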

\begin{proof}
These are immediate corollaries of \Cref{thm:ZBuilding} and \Cref{def:XF}.
\end{proof}
%%%%%%%%%

\begin{proof}[Proof of Theorem~\ref{thm:XF_torus_orbit}]
The dimension statement follows because each $\mathbb{G}_i$ defining $X^{\circ}(\wh{F})$ increases the dimension by $1$ and each $\Psi_i^{\pm}$ preserves the dimension.
    To show that $X^{\circ}(\wh{F})$ is a torus orbit, we induct on $m$. 
    By Proposition~\ref{prop:XBottManifold}\ref{it1:5.1} and~\ref{it2:5.1} if we know $X^{\circ}(\wh{F})$ is a torus orbit then so is $X^{\circ}(\wh{F}\cdot \rletter{i}^{\pm})$. 
    \Cref{prop:XBottManifold} further implies that for any $Y\subset X(\wh{F})$ we have that $\mathbb{G}_iY\to Y$ is a $\mathbb{C}^*$-bundle obtained from $\mathbb{P}_iY$ by removing the two distinguished sections. 
    By applying this for $Y=X^{\circ}(\wh{F})$ we conclude that $T_m$ acts transitively on $\mathbb{G}_iX^{\circ}(\wh{F})=X^{\circ}(\wh{F}\cdot \eletter{i})$ from the fact that $T_m\cong \gamma_i(T_m)\times \mathbb{C}^*_{\chi_i}$ with $\gamma_i(T_m)$ acting transitively on the base of the $\mathbb{C}^*$-bundle $X^{\circ}(\wh{F}\cdot \eletter{i})\to X^{\circ}(\wh{F})$ and $\mathbb{C}^*_{\chi_i}$ acting transitively on the fibers while fixing the base. Finally, for any $Z\subset \fl{m-1}$ we have $\Psi_i^{\pm}\overline{Z}=\overline{\Psi_i^{\pm}Z}$ and $\mathbb{P}_i\overline{Z}=\overline{\mathbb{P}_iZ}=\overline{\mathbb{G}Z}$, which shows by induction that $X(\wh{F})$ is the closure of $X^{\circ}(\wh{F})$.
\end{proof}
%%%%%%%%%

Recall that a \emph{combinatorial cube} is a polytope whose face lattice is identical to that of a cube of the same dimension. 
If we have a sequence of varieties $X_1,X_2,\ldots,X_m$ where $X_1=\{\point\}$ is a single point and $X_i=\proj(\mc{L}_{i-1}\oplus \mathbb{C})_{X_{i-1}}$ for $\mc{L}_{i-1}$ a toric line bundle on $X_{i-1}$, then $X_m$ is a smooth projective toric variety whose moment polytope is a combinatorial cube. 
The toric structure is defined recursively by saying if $T_{i-1}$ is the torus for $X_{i-1}$, then $X_i$ is a toric variety for $T_i\coloneqq T_{i-1}\times \mathbb{C}^*$ where $T_{i-1}$ acts trivially on the factor of $\mathbb{C}$ and $\mathbb{C}^*$ acts by scaling this $\mathbb{C}$ factor. 
The dense torus orbit can also be obtained by taking $\proj(\mc{L}_{i-1}\oplus \mathbb{C})_{T_i}$ and removing the two distinguished sections. 
Such an $X_m$ obtained in this way is called a \emph{Bott manifold} (see \cite{MP08}).  

%%%%%%%%%
\begin{defn}
For $\rt_1,\rt_2\in \reseq_n$, we say $\rt_1\le_{re} \rt_2$ if $\rt_1$ is obtained from $\rt_2$ by switching some letters $\eletter{i}$ to $\rletter{i}^{\pm}$.
\end{defn}
%%%%%%%%%

%%%%%%%%%
\begin{prop}
For $\Omega_{1} \eletter{i} \Omega_{2} \in \reseq_{n}$, 
\label{prop:bicoloredchange}
\leavevmode
\begin{enumerate}[label=(\arabic*)]
    \item \label{it1:etorplus} $\wh{F}(\rt_1\rletter{i}^+\rt_2)$ is obtained from $\wh{F}(\rt_1\eletter{i}\rt_2)$ by changing the black node associated to $\eletter{i}$ to a white node and 
    \item  \label{it2:etorminus}  $\wh{F}(\rt_1\rletter{i}^-\rt_2)$ is obtained from $\wh{F}(\rt_1\eletter{i}\rt_2)$ by deleting the left edge of the black node associated to $\eletter{i}$ and contracting the resulting node.
\end{enumerate}
In particular, the relation $\le_{re}$ descends to a partial order on $\bnfor_n$.
\end{prop}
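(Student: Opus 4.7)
The plan is to establish parts (1) and (2) simultaneously by induction on $|\Omega_2|$, and then deduce the partial order statement as a corollary.

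For the base case $\Omega_2 = \emptyset$, both parts follow from direct inspection of Definition~\ref{def:forest_insertion}. The insertions associated to $\eletter{i}$ and $\rletter{i}^+$ are identical up to the color of the newly created internal node $v$, giving (1). For (2), $\eletter{i}$ creates a black node $v$ with fresh leaves $\ell_L,\ell_R$ as children at positions $i$ and $i+1$, while $\rletter{i}^-$ inserts an isolated singleton leaf at position $i$ and shifts the old leaf formerly at position $i$ to position $i+1$. Deleting the left edge of $v$ disconnects $\ell_L$ into a singleton at position $i$, and contracting the now one-child $v$ promotes $\ell_R$ into $v$'s former tree position; this is exactly the $\rletter{i}^-$ picture.

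For the inductive step, assume (1) and (2) hold for $\Omega_2$, and extend to $\Omega_2 \cdot \xi'$ for a single letter $\xi' \in \{\rletter{j}^-, \rletter{j}^+, \eletter{j}\}$. The essential observation is that the three forests $\wh{F}(\Omega_1 \eletter{i} \Omega_2)$, $\wh{F}(\Omega_1 \rletter{i}^+ \Omega_2)$, and $\wh{F}(\Omega_1 \rletter{i}^- \Omega_2)$ all carry leaves in identical positions after each application of a letter from $\Omega_2$. Since every building operation modifies only a leaf or inserts a new leaf, and never alters the color or connectivity of an existing internal node, $\xi'$ acts identically at position $j$ in all three forests. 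Consequently the local discrepancies at $v$ (color difference for (1), and delete-contract for (2)) propagate intact. When $j \in \{i, i+1\}$ one must verify that building below the relevant leaf in the $\eletter{i}$-version matches building below the corresponding leaf in the $\rletter{i}^-$-version; this reduces to the observation that the left child of $v$ in the $\eletter{i}$-version and the singleton leaf at position $i$ in the $\rletter{i}^-$-version occupy the same leaf-position and evolve in parallel, and similarly for the right child of $v$ versus the leaf at position $i+1$.

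For the final claim, parts (1) and (2) show that switching any subset of $\eletter{}$-letters in a representative of a class $[\Omega] \in \bnfor_n$ corresponds to performing a prescribed set of black-to-white recolorings and delete-contract moves at distinguished black nodes of $\wh{F}(\Omega)$. Because this correspondence is described intrinsically on the forest, it is independent of the choice of word representative; thus $\le_{re}$ descends to a well-defined relation on $\bnfor_n$. Reflexivity is immediate (empty switch), transitivity follows from composing moves, and antisymmetry holds because each elementary move strictly decreases the number of black nodes. The main technical obstacle I anticipate is in the inductive step of (2), where one must articulate precisely how the ``left subtree of $v$'' in the $\eletter{i}$-version and the ``separate tree at position $i$'' in the $\rletter{i}^-$-version remain in bijective correspondence through arbitrary sequences of building operations, so that the delete-contract description is preserved after each letter of $\Omega_2$ is applied.
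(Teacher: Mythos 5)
Your proof is correct and follows the same route as the paper, which disposes of the statement in a single sentence by saying it "follows immediately from the way that bicolored nested forests are created recursively." You simply supply the details via an induction on $|\Omega_2|$. The technical worry you flag in part (2) does resolve: since each building letter either inserts a fresh singleton leaf or replaces an existing leaf by a cherry, and in particular never reads or modifies an existing internal node, applying any $\xi'$ commutes with the delete-and-contract move at $v$ even when $\xi'$ acts on the leaf that was $v$'s left or right child, so the two forests remain related by the same local move at each step.
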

%%%%%%%%%
\begin{proof}
    This follows immediately from the way that bicolored nested forests are created recursively from sequences in $\reseq_n$.
\end{proof}

We define the operation of \emph{left edge deletion} at a node in $\internal{\wh{F}}$ to be the operation described in Proposition~\ref{prop:bicoloredchange}\ref{it2:etorminus}.  
We sometimes emphasize that a forest is obtained by left edge deletion at $v$ by drawing the contracted edge through the former location of $v$, as shown below.

\begin{eg}
\label{eg:edge_deletion}
We demonstrate \Cref{prop:bicoloredchange} using the nested forest $\wh{F}(\rletter{1}^-\rletter{1}^{+}\rletter{2}^{-}\eletter{1}\eletter{3}\rletter{2}^+)$ from Example~\ref{eg:forest_insertion}.  
Three examples of $\le_{re}$-smaller nested forests are:
\[
\begin{tikzpicture}[scale = 0.5, baseline = 0.5*-0.1cm]
\draw[thin] (0.75, 0) -- (6.25, 0);
\foreach \x in {1, 2, 3, 4, 5, 6}{
	\draw[thick] (\x - 0.1, 0 - 0.1) -- (\x + 0.1, 0 + 0.1);
	\draw[thick] (\x - 0.1, 0 + 0.1) -- (\x + 0.1, 0 - 0.1);
	\draw (\x, 0) node[inner sep = -2pt]  (\x) {};
	\draw (\x) node[below] {$\scriptstyle \x$};}
\draw (2, 1) circle (2pt) node[inner sep = -2pt] (l1) {};
\draw  (2.5, 0.5) circle (2pt) node[inner sep = -2pt] (l2) {};
\draw (3.5, 2.5) circle (2pt) node[inner sep = -2pt] (l3) {};
\draw[fill] (4.5, 0.5) circle (2pt) node[inner sep = -2pt] (l4) {};
\foreach \a/\b in {1/l1, 2/l2, 3/l2, 4/l4, 5/l4, l1/l3, l2/l1, 6/l3} {\draw[thin] (\a) -- (\b);}
\draw (3.5, -1.5) node {$\wh{F}(\rletter{1}^-\rletter{1}^{+}\rletter{2}^{-}{\color{red}\rletter{1}^{+}}\eletter{3}\rletter{2}^+)$};
\end{tikzpicture},
\qquad
\begin{tikzpicture}[scale = 0.5, baseline = 0.5*-0.1cm]
\draw[thin] (0.75, 0) -- (6.25, 0);
\foreach \x in {1, 2, 3, 4, 5, 6}{
	\draw[thick] (\x - 0.1, 0 - 0.1) -- (\x + 0.1, 0 + 0.1);
	\draw[thick] (\x - 0.1, 0 + 0.1) -- (\x + 0.1, 0 - 0.1);
	\draw (\x, 0) node[inner sep = -2pt]  (\x) {};
	\draw (\x) node[below] {$\scriptstyle \x$};}
\draw[fill] (2, 1) circle (2pt) node[inner sep = -2pt] (l1) {};
\draw  (2.5, 0.5) circle (2pt) node[inner sep = -2pt] (l2) {};
\draw (3.5, 2.5) circle (2pt) node[inner sep = -2pt] (l3) {};
\foreach \a/\b in {1/l1, 2/l2, 3/l2,  l1/l3, l2/l1, 6/l3} {\draw[thin] (\a) -- (\b);}
\draw (3.5, -1.5) node {$\wh{F}(\rletter{1}^-\rletter{1}^{+}\rletter{2}^{-}\eletter{1}{\color{red}\rletter{3}^{-}}\rletter{2}^+)$};
\end{tikzpicture},
\qquad\text{and}\qquad
\begin{tikzpicture}[scale = 0.5, baseline = 0.5*-0.1cm]
\draw[thin] (0.75, 0) -- (6.25, 0);
\foreach \x in {1, 2, 3, 4, 5, 6}{
	\draw[thick] (\x - 0.1, 0 - 0.1) -- (\x + 0.1, 0 + 0.1);
	\draw[thick] (\x - 0.1, 0 + 0.1) -- (\x + 0.1, 0 - 0.1);
	\draw (\x, 0) node[inner sep = -2pt]  (\x) {};
	\draw (\x) node[below] {$\scriptstyle \x$};}
\draw (2.5, 0.5) circle (2pt) node[inner sep = -2pt] (l2) {};
\draw (3.5, 2.5) circle (2pt) node[inner sep = -2pt] (l3) {};
\draw[fill] (4.5, 0.5) circle (2pt) node[inner sep = -2pt] (l4) {};
\foreach \a/\b in {2/l2, 3/l2, 4/l4, 5/l4, 6/l3} {\draw[thin] (\a) -- (\b);}
\draw (l3) -- (2, 1) -- (l2);
\draw (7, 0) node {$=$};
\begin{scope}[xshift = 7cm]
\draw[thin] (0.75, 0) -- (6.25, 0);
\foreach \x in {1, 2, 3, 4, 5, 6}{
	\draw[thick] (\x - 0.1, 0 - 0.1) -- (\x + 0.1, 0 + 0.1);
	\draw[thick] (\x - 0.1, 0 + 0.1) -- (\x + 0.1, 0 - 0.1);
	\draw (\x, 0) node[inner sep = -2pt]  (\x) {};
	\draw (\x) node[below] {$\scriptstyle \x$};}
\draw (2.5, 0.5) circle (2pt) node[inner sep = -2pt] (l2) {};
\draw (4, 2) circle (2pt) node[inner sep = -2pt] (l3) {};
\draw[fill] (4.5, 0.5) circle (2pt) node[inner sep = -2pt] (l4) {};
\foreach \a/\b in {2/l2, 3/l2, l3/l2, 4/l4, 5/l4, 6/l3} {\draw[thin] (\a) -- (\b);}
\end{scope}
\draw (7, -1.5) node {$\wh{F}(\rletter{1}^-\rletter{1}^{+}\rletter{2}^{-}{\color{red}\rletter{1}^{-}}\eletter{3}\rletter{2}^+)$};
\end{tikzpicture}.
\]
\end{eg}

%%%%%%%%%
\begin{defn}
    For $\wh{F}\in \bnfor_n$, we define   \begin{align*}\Face(\wh{F})&=\{\wh{G}:\wh{G}\le_{re}\wh{F}\}\text{ and }\\\Vert(\wh{F})&=\{\wh{G}:\wh{G}\le_{re}\wh{F}\text{ and }|\wh{G}|=0\}.
    \end{align*}
\end{defn}
%%%%%%%%%

Restricting $\le_{re}$ to $\Face(\wh{F})$ gives the face poset of a $|\wh{F}|$-dimensional cube.
Indeed, the choice of whether to change each black node white or to perform left edge deletion is equivalent to choosing one from a pair of opposite faces.  
Figure~\ref{fig:led_poset_kreweras_eg} shows in the left panel two such cubes and the elements of $\Face(\wh{F})$ associated with each face.  
We now give this interpretation a geometric~meaning.  

%%%%%%%%%
\begin{thm}
\label{thm:smoothTorbit}
For $\wh{F}\in \bnfor_n$,
     $X(\wh{F})\subset \fl{n}$ is a Bott manifold of dimension $|\wh{F}|$ with dense torus orbit $X^{\circ}(\wh{F})$. 
     The distinct torus orbits of $X(\wh{F})$  are given by $\{X^{\circ}(\wh{G}):\wh{G}\in \Face(\wh{F})\}$, and the distinct torus orbit closures are given by $\{X(\wh{G}):\wh{G}\in \Face(\wh{F})\}$.
\end{thm}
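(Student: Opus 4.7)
The plan is to prove all four assertions by induction on the length of a representative $\rt = \xletter{1}\cdots\xletter{n} \in \reseq_n$ of $\wh{F}$; the claims depend only on the equivalence class $\wh{F}$, so any representative suffices. The base case $\wh{F} = \rletter{1}^-$ gives $X(\wh{F}) = \fl{1}$, a point with unique torus orbit $X^\circ(\rletter{1}^-)$, and all assertions hold trivially.

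For the inductive step, write $\wh{F} = \wh{F}' \cdot \xletter{n}$ with $\wh{F}' \in \bnfor_{n-1}$ of strictly smaller length. If $\xletter{n} = \rletter{i}^\pm$, Proposition~\ref{prop:XBottManifold}\ref{it1:5.1}--\ref{it2:5.1} gives a $T_n$-equivariant isomorphism $\Psi_i^{\pm}: X(\wh{F}') \to X(\wh{F})$, under which the Bott manifold structure, dimension, dense orbit, and orbit stratification transport directly from the inductive hypothesis; since $\le_{re}$ leaves $\rletter{\cdot}^\pm$ letters untouched, we have $\Face(\wh{F}) = \{\wh{G}' \cdot \rletter{i}^\pm: \wh{G}' \in \Face(\wh{F}')\}$, so $X^\circ(\wh{G}')$ maps to $X^\circ(\wh{G}' \cdot \rletter{i}^\pm)$ and the orbit enumeration is as claimed. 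If $\xletter{n} = \eletter{i}$, Proposition~\ref{prop:XBottManifold}\ref{it3:5.1} presents $X(\wh{F}) \cong \proj(\mc{L} \oplus \mathbb{C}_{\chi_i})_{X(\wh{F}')}$ with the $T_n$-equivariant line bundle $\mc{L} = \mc{F}_i/\mc{F}_{i-1}|_{X(\wh{F}')}$, extending the Bott tower over $X(\wh{F}')$ by one stage and increasing the dimension by one to $|\wh{F}'| + 1 = |\wh{F}|$; the dense orbit $\GG_i X^\circ(\wh{F}') = X^\circ(\wh{F})$ is the open locus of the $\mathbb{P}^1$-bundle over $X^\circ(\wh{F}')$. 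Over each base orbit $X^\circ(\wh{G}')$ with $\wh{G}' \in \Face(\wh{F}')$, the restricted $\mathbb{P}^1$-bundle decomposes by Proposition~\ref{prop:XBottManifold}\ref{it3:5.1} into precisely three $T_n$-orbits: the two distinguished sections $\Psi_i^{\mp}X^\circ(\wh{G}') = X^\circ(\wh{G}' \cdot \rletter{i}^\mp)$ and the open locus $\GG_i X^\circ(\wh{G}') = X^\circ(\wh{G}' \cdot \eletter{i})$. Taking the union over all $\wh{G}' \in \Face(\wh{F}')$ recovers $\{X^\circ(\wh{G}): \wh{G} \in \Face(\wh{F})\}$, and the orbit closure assertion then follows from Theorem~\ref{thm:XF_torus_orbit} applied to each such $\wh{G}$.

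The principal bookkeeping concern is the face-set decomposition $\Face(\wh{F})$ in each inductive case, which is immediate from the local character of $\le_{re}$ (modifying only $\eletter{}$ letters while preserving the index), together with distinctness of the three fiberwise $T_n$-orbits in the $\eletter{i}$ case. The latter holds because the $\mathbb{C}^*_{\chi_i}$ factor of $T_n$ acts nontrivially along each $\mathbb{P}^1$-fiber while fixing both distinguished sections pointwise, so no collapsing of fiberwise orbits can occur.
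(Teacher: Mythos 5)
Your proof is correct and follows essentially the same inductive strategy as the paper: establish the base case, transport structure through the isomorphism $\Psi_i^\pm$ in the $\rletter{i}^\pm$ case, and use the split $\mathbb{P}^1$-bundle description from Proposition~\ref{prop:XBottManifold}\ref{it3:5.1} in the $\eletter{i}$ case, matching the face-set decomposition of $\Face(\wh{F})$. The only cosmetic difference is that in the $\eletter{i}$ step you reason fiberwise at the level of orbits and then pass to closures via Theorem~\ref{thm:XF_torus_orbit}, whereas the paper invokes the corresponding statement about torus orbit closures in a split projective bundle directly; these are interchangeable.
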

\begin{proof}
    By \Cref{prop:XBottManifold}, any $\rt\in \reseq_n$ representing $\wh{F}$ induces a Bott manifold structure on $X(\wh{F})$ with dense torus orbit $X^{\circ}(\wh{F})$ as described above, and the dimension is $|\wh{F}|$ by \Cref{thm:XF_torus_orbit}. 
    
    As \Cref{thm:XF_torus_orbit} shows the closure of the torus orbit $X^{\circ}(\wh{G})$ is $X(\wh{G})$, it remains to describe the distinct torus-orbit closures. 
    We proceeed inductively. 
    Suppose the result holds for all $\wh{F}\in \bnfor_{n-1}$; we aim to prove it  for each $\wh{F}\cdot \xletter{i}\in \bnfor_n$. 
    
    First, note that $\Psi_i^{\pm}:X(\wh{F})\to X(\wh{F}\cdot \rletter{i}^{\pm})$ is an isomorphism and moreover induces a bijection between torus orbit closures via $$X(\wh{G})\mapsto \Psi_i^{\pm}X(\wh{G})=X(\wh{G}\cdot\rletter{i}^{\pm}),$$
    so we conclude the result for $\wh{F}\cdot \rletter{i}^{\pm}$ as $\Face(\wh{F}\cdot \rletter{i}^{\pm})=\{\wh{G}\cdot\rletter{i}^{\pm}:\wh{G}\in \Face(\wh{F})\}$.
    Consider now the $\mathbb{P}^1$-bundle $X(\wh{F}\cdot\eletter{i})\to X(\wh{F})$. For any projective toric variety $X$ with a toric line bundle $\mc{L}$, the torus orbit closures on the toric variety $\proj(\mc{L}\oplus \mathbb{C})$ are given by the $\mathbb{P}^1$-bundles over the torus orbit closures in $X$, together with the images of the torus orbit closures in $X$ in the two disjoint sections of the split projective bundle. 
    Consequently by \Cref{prop:XBottManifold} the torus orbit closures in $X(\wh{F})$ are given by
    \begin{align*}\bigsqcup_{\wh{G}\in \Face(\wh{F})}\{\Psi_i^-X(\wh{G}),\Psi_i^+X(\wh{G}),\mathbb{P}_iX(\wh{G})\}=\bigsqcup_{\wh{G}\in \Face(\wh{F})}\{X(\wh{G}\cdot\rletter{i}^-),X(\wh{G}\cdot\rletter{i}^+),X(\wh{G}\cdot\eletter{i})\}\end{align*}
    and we conclude as $\Face(\wh{F}\cdot \eletter{i})=\bigcup_{\wh{G}\in \Face(\wh{F})}\{\wh{G}\cdot \rletter{i}^-,\wh{G}\cdot \rletter{i}^+,\wh{G}\cdot \eletter{i}\}$.
\end{proof}
%%%%%%%%%

%%%%%%%%%%%%%%%%%%%%%%%%%%%%%%%%%%%%%%%%%%%%%%%%%%%%%%
\section{Torus fixed points of $X(\wh{F})$}
\label{sec:torusfixedpoints}
%%%%%%%%%%%%%%%%%%%%%%%%%%%%%%%%%%%%%%%%%%%%%%%%%%%%%%

In this section we describe the combinatorics of the fixed point sets
\[
I_{\wh{F}}\coloneqq X(\wh{F})^T\subset S_n.
\]
As the following theorem shows, elements of $I_{\wh{F}}$ always lie in $\NC_n$.

\begin{thm}
\label{thm:noncrossing_fixed_points}
For any $\wh{F} \in \bnfor_{n}$, we have \[I_{\wh{F}} = \{ \ncperm(\wh{G}) \suchthat \text{$\wh{G} \in \Vert(\wh{F})$}\}\subset \NC_n.\]
In particular, if $\wh{G} \in \bnfor_{n}$ has $|\wh{G}|=0$, then $X_{\wh{G}}=\{\ncperm(\wh{G})\}$.
\end{thm}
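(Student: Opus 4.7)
The plan is to combine \Cref{thm:smoothTorbit} with an inductive calculation of $X(\wh{G})$ for $|\wh{G}|=0$.

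First, \Cref{thm:smoothTorbit} identifies the torus orbits of $X(\wh{F})$ with $\Face(\wh{F})$, and $\dim X^{\circ}(\wh{G}) = |\wh{G}|$. Thus the $T_n$-fixed points of $X(\wh{F})$ are the $0$-dimensional orbits, indexed by $\Vert(\wh{F})$, and for $\wh{G} \in \Vert(\wh{F})$ one has $X^{\circ}(\wh{G}) = X(\wh{G})$. Since $\ncperm(\wh{G}) \in \NC_n$ is immediate from the definition of $\ncperm$, the whole theorem reduces to the ``in particular'' claim that $X(\wh{G}) = \{\ncperm(\wh{G})\}$ whenever $|\wh{G}|=0$.

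I would prove this by induction on $n$. The base case $n = 1$ is immediate: $\wh{G} = \wh{F}(\rletter{1}^-)$, $X(\wh{G}) = \fl{1}$ is the identity flag, which corresponds to $\ncperm(\wh{G}) = \mathrm{id} \in S_1$. For the inductive step, pick any representative $\rt \in \reseq_n$ for $\wh{G}$ and write $\rt = \rt' \cdot \rletter{i}^{\pm}$; the last letter must be $\rletter{i}^{\pm}$ since $|\wh{G}| = 0$ forbids $\eletter{j}$ from appearing in $\rt$. Setting $\wh{G}' = \wh{F}(\rt') \in \bnfor_{n-1}$, the inductive hypothesis identifies $X(\wh{G}')$ with the $T_{n-1}$-fixed flag of $\ncperm(\wh{G}') \in \NC_{n-1} \subset \fl{n-1}$, and \Cref{def:XF} gives $X(\wh{G}) = \Psi_i^{\pm}(X(\wh{G}'))$. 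The task then reduces to verifying $\Psi_i^{\pm}(\ncperm(\wh{G}')) = \ncperm(\wh{G}' \cdot \rletter{i}^{\pm})$ as permutations in $S_n$.

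Unfolding the matrix definition of $\Psi_i^{\pm}$ in terms of permutation matrices (using $(P_v)_{v(j), j} = 1$), one computes that if $w' \in S_{n-1}$ and $\tilde{w}'(k) := w'(k) + \mathbf{1}_{w'(k) \geq i}$, then $\Psi_i^-(w')$ has one-line notation $\tilde w'(1) \cdots \tilde w'(i-1)\,i\,\tilde w'(i) \cdots \tilde w'(n-1)$ (so $i$ sits at position $i$), while $\Psi_i^+(w')$ has one-line notation $\tilde w'(1) \cdots \tilde w'(i)\,i\,\tilde w'(i+1) \cdots \tilde w'(n-1)$ (so $i$ sits at position $i+1$). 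On the combinatorial side, $\rletter{i}^-$ adjoins a new singleton cycle $(i)$ to the backward-cycle decomposition of $\ncperm(\wh{G}')$ (after the appropriate shift), while $\rletter{i}^+$ rewrites the backward cycle $(b_1, \ldots, b_s = i, \ldots, b_r)$ containing the old label $i$ into $(b_1+1, \ldots, b_{s-1}+1, i+1, i, b_{s+1}, \ldots, b_r)$. A direct case analysis matches the resulting one-line notations. The main obstacle is the $\rletter{i}^+$ case, where one must check carefully that the cycle insertion preserves the backward-cycle property, that the resulting partition is still noncrossing, and that each position of the one-line notation agrees with $\Psi_i^+(w')$.
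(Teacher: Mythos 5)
Your proposal is correct and follows essentially the same route as the paper: use \Cref{thm:smoothTorbit} to reduce to the case $|\wh{G}|=0$, then induct on $n$ using \Cref{def:XF} and the permutation-level identities $\Psi_i^{\pm}(\ncperm(\wh{G}')) = \ncperm(\wh{G}'\cdot\rletter{i}^{\pm})$. The only cosmetic difference is in how the last identity is checked: you verify it position by position in one-line notation, while the paper uses the fact that $\Psi_i^+w = (\Psi_i^-w)s_i$ together with the observation that multiplying a backward cycle on a set not containing $i$ (but containing $i+1$) by $(i\ i{+}1)$ yields the backward cycle on the enlarged set; both computations are straightforward and equivalent.
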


\begin{eg}
We apply the theorem to the fixed point set for the nested forest $\wh{F}(\rletter{1}^-\rletter{1}^{+}\rletter{2}^{-}\eletter{1}\eletter{3}\rletter{2}^+)$ from Examples~\ref{eg:forest_insertion} and~\ref{eg:edge_deletion}.   We have:
\begin{align*}
(6321)(54)
\leftrightarrow
\begin{tikzpicture}[scale = 0.5, baseline = 0.5*-0.1cm]
\draw[thin] (0.75, 0) -- (6.25, 0);
\foreach \x in {1, 2, 3, 4, 5, 6}{
	\draw[thick] (\x - 0.1, 0 - 0.1) -- (\x + 0.1, 0 + 0.1);
	\draw[thick] (\x - 0.1, 0 + 0.1) -- (\x + 0.1, 0 - 0.1);
	\draw (\x, 0) node[inner sep = -2pt]  (\x) {};
	\draw (\x) node[below] {$\scriptstyle \x$};}
\draw(2, 1) circle (2pt) node[inner sep = -2pt] (l1) {};
\draw  (2.5, 0.5) circle (2pt) node[inner sep = -2pt] (l2) {};
\draw (3.5, 2.5) circle (2pt) node[inner sep = -2pt] (l3) {};
\draw (4.5, 0.5) circle (2pt) node[inner sep = -2pt] (l4) {};
\foreach \a/\b in {1/l1, 2/l2, 3/l2, 4/l4, 5/l4, l1/l3, l2/l1, 6/l3} {\draw[thin] (\a) -- (\b);}
\draw (3.5, -1.5) node {$\wh{F}(\rletter{1}^-\rletter{1}^{+}\rletter{2}^{-}{\color{red}\rletter{1}^{+}}{\color{red}\rletter{3}^{+}}\rletter{2}^+)$};
\end{tikzpicture}
&&
%(36) (13) (23)
%=
(6321)
\leftrightarrow
\begin{tikzpicture}[scale = 0.5, baseline = 0.5*-0.1cm]
\draw[thin] (0.75, 0) -- (6.25, 0);
\foreach \x in {1, 2, 3, 4, 5, 6}{
	\draw[thick] (\x - 0.1, 0 - 0.1) -- (\x + 0.1, 0 + 0.1);
	\draw[thick] (\x - 0.1, 0 + 0.1) -- (\x + 0.1, 0 - 0.1);
	\draw (\x, 0) node[inner sep = -2pt]  (\x) {};
	\draw (\x) node[below] {$\scriptstyle \x$};}
\draw(2, 1) circle (2pt) node[inner sep = -2pt] (l1) {};
\draw  (2.5, 0.5) circle (2pt) node[inner sep = -2pt] (l2) {};
\draw (3.5, 2.5) circle (2pt) node[inner sep = -2pt] (l3) {};
\foreach \a/\b in {1/l1, 2/l2, 3/l2,  l1/l3, l2/l1, 6/l3} {\draw[thin] (\a) -- (\b);}
\draw (3.5, -1.5) node {$\wh{F}(\rletter{1}^-\rletter{1}^{+}\rletter{2}^{-}{\color{red}\rletter{1}^{+}}{\color{red}\rletter{3}^{-}}\rletter{2}^+)$};
\end{tikzpicture}
\\[2ex]
%(36) (23) (45)
%=
(632)(54)
\leftrightarrow
\begin{tikzpicture}[scale = 0.5, baseline = 0.5*-0.1cm]
\draw[thin] (0.75, 0) -- (6.25, 0);
\foreach \x in {1, 2, 3, 4, 5, 6}{
	\draw[thick] (\x - 0.1, 0 - 0.1) -- (\x + 0.1, 0 + 0.1);
	\draw[thick] (\x - 0.1, 0 + 0.1) -- (\x + 0.1, 0 - 0.1);
	\draw (\x, 0) node[inner sep = -2pt]  (\x) {};
	\draw (\x) node[below] {$\scriptstyle \x$};}
\draw  (2.5, 0.5) circle (2pt) node[inner sep = -2pt] (l2) {};
\draw (3.5, 2.5) circle (2pt) node[inner sep = -2pt] (l3) {};
\draw (4.5, 0.5) circle (2pt) node[inner sep = -2pt] (l4) {};
\foreach \a/\b in {2/l2, 3/l2, 4/l4, 5/l4, 6/l3} {\draw[thin] (\a) -- (\b);}
\draw (l3) -- (2, 1) -- (l2);
\draw (3.5, -1.5) node {$\wh{F}(\rletter{1}^-\rletter{1}^{+}\rletter{2}^{-}{\color{red}\rletter{1}^{-}}{\color{red}\rletter{3}^{+}}\rletter{2}^+)$};
\end{tikzpicture}
&&
%(36) (23) 
%=
(632)
\leftrightarrow
\begin{tikzpicture}[scale = 0.5, baseline = 0.5*-0.1cm]
\draw[thin] (0.75, 0) -- (6.25, 0);
\foreach \x in {1, 2, 3, 4, 5, 6}{
	\draw[thick] (\x - 0.1, 0 - 0.1) -- (\x + 0.1, 0 + 0.1);
	\draw[thick] (\x - 0.1, 0 + 0.1) -- (\x + 0.1, 0 - 0.1);
	\draw (\x, 0) node[inner sep = -2pt]  (\x) {};
	\draw (\x) node[below] {$\scriptstyle \x$};}
\draw  (2.5, 0.5) circle (2pt) node[inner sep = -2pt] (l2) {};
\draw (3.5, 2.5) circle (2pt) node[inner sep = -2pt] (l3) {};
\foreach \a/\b in {2/l2, 3/l2, 6/l3} {\draw[thin] (\a) -- (\b);}
\draw (l3) -- (2, 1) -- (l2);
\draw (3.5, -1.5) node {$\wh{F}(\rletter{1}^-\rletter{1}^{+}\rletter{2}^{-}{\color{red}\rletter{1}^{-}}{\color{red}\rletter{3}^{-}}\rletter{2}^+)$};
\end{tikzpicture}
\end{align*}
\end{eg}

Going forward, we will abuse notation and treat $\Psi^{-}_{i}$ and $\Psi^{+}_{i}$ as maps on permutations rather than just permutation matrices. These are given by the group homomorphism $\Psi_i^-:S_{n-1}\hookrightarrow S_n$, induced by the increasing injection $\{1,\ldots,n-1\}\hookrightarrow \{1,\ldots,i-1,i+1,\ldots,n\}$ onto the subgroup of $S_n$ with $u(i)=i$, and $\Psi_i^+w=(\Psi_i^-w)s_i$.

\begin{proof}[Proof of \Cref{thm:noncrossing_fixed_points}]
We know from \Cref{thm:smoothTorbit} that the fixed point set $I_{\wh{F}}$ is given by the points $X_{\wh{G}}$ such that $\wh{G}\in \Vert(\wh{F})$.  
Therefore we need only verify the the statement when $|\wh{G}| = 0$, meaning that $\wh{G}$ is represented by a sequence of $\rletter{i}^{\pm}$.  

For $n=1$ we have $\wh{G}=\rletter{1}^-$ and $X_{\wh{G}} = \{\idem_{S_1}\} = \{\ncperm(\wh{G})\}$. 
 For $n>1$, by the recursive construction of $X_{\wh{G}}$ in Definition~\ref{def:XF} it suffices to show that
\begin{align}
\label{eq:ncperm_recurse}
\ncperm(\wh{F} \cdot \rletter{i}^{-}) = \Psi^{-}_{i}\big(\ncperm(\wh{F})\big)
\quad\text{and}\quad
\ncperm(\wh{F} \cdot \rletter{i}^{+}) = \Psi^{+}_{i}\big(\ncperm(\wh{F})\big).
    \end{align}
We check these by comparing the definition of $\wh{F}\cdot \rletter{i}^{\pm}$ with that of $\Psi^{\pm}_{i}$: the former is straightforward, and the latter follows from that fact that for a backwards cycle $C$ on a set $A$ containing $i$ but not $i+1$, the product $C(i\;i+1)$ is the backwards cycle on $A\sqcup \{i+1\}$. 
\end{proof}

We now characterize $I_{\wh{F}}$ in a manner that connects nested forests to factorizations of noncrossing partitions, as mentioned in Remark~\ref{rem:nstfor_to_NC}.
For each internal node $v \in \internal{\wh{F}}$, let \emph{$\tau_{v}$}  denote the transposition $(i\, j)$ of the rightmost leaf descendant $i$ of $v_{L}$ and the rightmost leaf descendant $j$ of $v$, as shown in Figure~\ref{fig:bnestfor_with_tau_v}.

\begin{figure}[!ht]
    \centering
    \includegraphics[scale=0.8]{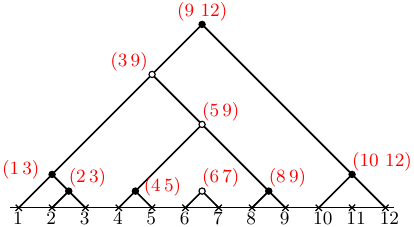}
    \caption{A bicolored nested forest with internal nodes labeled by transpositions $\tau_v$}
    \label{fig:bnestfor_with_tau_v}
\end{figure}

Say that a total order on $\internal{\wh{F}}$ is a \emph{linear extension} of $\wh{F}$ if each $v \in \internal{\wh{F}}$ is preceded by all of its ancestors.  
As we now explain, the $\tau_{v}$ have the property that any two product orders on
\begin{align}
\label{eq:tau_product}
\prod_{v \in \internal{\wh{F}}} \tau_{v}
\end{align}
which are linear extensions of $\wh{F}$ are related entirely by commuting factors past one another. Indeed, any two linear extensions are related by repeatedly swapping adjacent elements $v,v'$ which are not ancestor and descendant, and for such a pair the transpositions $\tau_v=(i,j)$ and $\tau_{v'}=(k,\ell)$ are disjoint and hence commute.

Thus \eqref{eq:tau_product} is unambiguously defined as the product of the vertices of $\wh{F}$ taken in any order dictated by a linear extension of $\wh{F}$. For the same reasons, one can define the product $\prod_{v \in S} \tau_{v}$ for any $S\subset\internal{\wh{F}}$.

\begin{thm}
\label{prop:cubical_factorization}
For each $\wh{F} \in \bnfor_{n}$, we have 
\[
I_{\wh{F}}  = \left\{ \prod_{v \in \internal{\wh{F}} \setminus S} \tau_{v} \suchthat S \subseteq \{\text{black vertices of $\wh{F}$}\}\right\}.
\]
  In particular, $\ncperm(\wh{F}) = \prod_{v \in \internal{\wh{F}}} \tau_{v}$.
\end{thm}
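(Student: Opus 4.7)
The plan is to first prove the ``in particular'' statement $\ncperm(\wh{F}) = \prod_{v \in \internal{\wh{F}}} \tau_{v}$ by induction on the length of a representative $\rt \in \reseq_{n}$ with $\wh{F}(\rt) = \wh{F}$, and then deduce the main characterization of $I_{\wh{F}}$ by combining this identity with Theorem~\ref{thm:noncrossing_fixed_points}.

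For the base case $\rt = \rletter{1}^{-}$, both sides equal the identity of $S_{1}$. For the inductive step, write $\rt = \rt' \cdot \xletter{i}$. The leaf relabeling produced by any of the three operations $\xletter{i}$ is given by the increasing injection $\iota\colon \{1,\ldots,n-1\} \hookrightarrow \{1,\ldots,n\}\setminus\{i\}$ underlying the homomorphism $\Psi_{i}^{-}\colon S_{n-1} \to S_{n}$. Tracing what happens to a rightmost leaf descendant $b$ of an existing node $v$ gives new label $\iota(b)$ in all cases: if $b \neq i$ this is the raw relabeling, while if $b = i$ (which occurs only when $\xletter{i} \in \{\rletter{i}^{+}, \eletter{i}\}$) then $v$'s subtree now contains $v_{\text{new}}$ and the rightmost descendant becomes $i+1 = \iota(i)$. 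Consequently $\tau_{v}^{\wh{F}(\rt)} = \Psi_{i}^{-}(\tau_{v}^{\wh{F}(\rt')})$ for each existing $v$, and for $\xletter{i} \in \{\rletter{i}^{+}, \eletter{i}\}$ the new node $v_{\text{new}}$ has $\tau_{v_{\text{new}}} = (i,i+1) = s_{i}$; since $v_{\text{new}}$ has no descendants, it may be placed last in any linear extension.

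Using the group-homomorphism property of $\Psi_{i}^{-}$ and the inductive hypothesis, the product becomes $\Psi_{i}^{-}(\ncperm(\wh{F}(\rt')))$ in the $\rletter{i}^{-}$ case and $\Psi_{i}^{-}(\ncperm(\wh{F}(\rt'))) \cdot s_{i} = \Psi_{i}^{+}(\ncperm(\wh{F}(\rt')))$ in the $\rletter{i}^{+}/\eletter{i}$ case. By the recursion $\ncperm(\wh{F}\cdot \rletter{i}^{\pm}) = \Psi_{i}^{\pm}\ncperm(\wh{F})$ established in (the proof of) Theorem~\ref{thm:noncrossing_fixed_points}, together with the observation that $\wh{F}\cdot \rletter{i}^{+}$ and $\wh{F}\cdot \eletter{i}$ share the same underlying noncrossing partition, each of these matches $\ncperm(\wh{F}(\rt))$.

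For the main characterization, Theorem~\ref{thm:noncrossing_fixed_points} identifies $I_{\wh{F}}$ with $\{\ncperm(\wh{G}) \suchthat \wh{G} \in \Vert(\wh{F})\}$, and Proposition~\ref{prop:bicoloredchange} gives a bijection $S \leftrightarrow \wh{G}_{S}$ between subsets of the black vertices of $\wh{F}$ and elements of $\Vert(\wh{F})$, where $\wh{G}_{S}$ is obtained by whitening each black vertex outside $S$ and left-edge-deleting each vertex in $S$. Applying the now-proven ``in particular'' statement to $\wh{G}_{S}$, it remains to verify $\internal{\wh{G}_{S}} = \internal{\wh{F}} \setminus S$ with $\tau_{v}^{\wh{G}_{S}} = \tau_{v}^{\wh{F}}$ for each surviving $v$. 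The equality on internal-node sets holds because left-edge deletion removes only the contracted node (the detached left subtree retains its internal nodes). The equality on transpositions holds because left-edge deletion preserves all leaf labels and all rightmost-leaf-descendant data: any rightmost path follows only right edges, so contracting a deleted node either lies off such a path or coincides with a right-edge step through $u_{R}$, leaving the terminal leaf unchanged; and no internal structure is altered inside any detached subtree.

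The main technical obstacle is the bookkeeping in the inductive step, specifically tracking the relabeling when the rightmost leaf descendant of an existing node equals the inserted position $i$, and confirming that the insertion of $v_{\text{new}}$ can always be placed last in a linear extension. A secondary subtlety is ensuring that the product on the right-hand side of the theorem is well defined independent of linear extension order; this reduces, as in the discussion preceding the theorem, to checking that transpositions $\tau_{v}, \tau_{v'}$ attached to non-comparable internal nodes are disjoint, hence commute.
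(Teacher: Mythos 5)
Your proof is correct, but it takes a genuinely different route from the paper's. Both arguments first reduce to establishing $\ncperm(\wh{F}) = \prod_{v\in\internal{\wh{F}}}\tau_v$ (the coloring is irrelevant for $\ncperm$ and the $\tau_v$), and then treat the general case by combining Theorem~\ref{thm:noncrossing_fixed_points} with the fact that left-edge deletion preserves the $\tau_v$ of surviving nodes. The difference lies in how the product identity is proved. The paper inducts on $|\internal{\wh{F}}|$, removing the root $v_0$ of an outermost tree and appealing to the cycle-algebra identity that for backwards cycles $c_A$, $c_B$ with $\max A < \min B$ one has $(\max A,\max B)c_A c_B = c_{A\sqcup B}$. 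You instead induct on the length of a word $\rt\in\reseq_n$ representing $\wh{F}$, tracking how the leaf relabeling $\iota$ transports each $\tau_v$ via the group homomorphism $\Psi_i^-$, identifying the new node's transposition as $s_i$ placed last in a linear extension, and reusing the recursion $\ncperm(\wh{F}\cdot\rletter{i}^{\pm}) = \Psi_i^{\pm}\ncperm(\wh{F})$ already established in the proof of Theorem~\ref{thm:noncrossing_fixed_points}. The paper's route is shorter and more intrinsic to the tree combinatorics; yours is more aligned with the recursive construction via $\reseq$-words (Definition~\ref{def:forest_insertion}) and leans on already-established machinery rather than a fresh algebraic identity.

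One small inaccuracy: you assert that the case $b = i$ (where $b$ is a rightmost leaf descendant of an existing node) ``occurs only when $\xletter{i}\in\{\rletter{i}^+,\eletter{i}\}$.'' This is false — when $\xletter{i}=\rletter{i}^-$ and $i\le n-1$, an existing node in $\wh{F}(\rt')$ can perfectly well have rightmost leaf descendant $i$, which is then relabeled to $\iota(i) = i+1$ by the insertion. Fortunately this does not affect the argument: in every case (both $\rletter{i}^-$ and $\rletter{i}^+/\eletter{i}$, whether $b=i$ or not) the new rightmost leaf descendant is $\iota(b)$, which is all you use. So the parenthetical should simply be dropped.
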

\begin{proof}
We first show the claim holds when $\wh{F}$ has only white nodes, so that $I_{\wh{F}} = \{\ncperm(\wh{F})\}$ by Theorem~\ref{thm:noncrossing_fixed_points} and the claim amounts to $\ncperm(\wh{F}) = \prod_{v\in \internal{\wh{F}}}\tau_v$. 
We proceed by induction on $|\internal{\wh{F}}|$.  
If $\wh{F}$ has no nodes, then the claim holds trivially.  
Otherwise let $v_{0}$ be the root of a tree in $\wh{F}$ that is not nested under any other tree, so that $v_{0}$ is the first element in some linear extension of $\wh{F}$.
Let $\wh{G}$ be the forest obtained by deleting $v_{0}$ and all incident edges.  
As $|\internal{\wh{G}}| < |\internal{\wh{F}}|$, our inductive hypothesis guarantees that $\ncperm(\wh{G}) = \prod_{v \in \internal{\wh{G}}} \tau_{v}$ is the unique element of $I_{\wh{G}}$.  
Moreover, $\ncperm(\wh{G}) = \prod_{v \in \internal{\wh{F}}-\{v_0\}} \tau_{v}$, since for $v \in \internal{\wh{G}}$ the value of $\tau_v$ does not depend on whether we consider $v$ as a node of $\wh{F}$ or $\wh{G}$.  
It therefore suffices to show that $\tau_{v_0}\ncperm(\wh{F})=\ncperm(\wh{G})$, which follows from that fact that if $c_A$ and $c_B$ are backwards cycles with $\max A<\min B$, then $(\max A, \max B)c_Ac_B=c_{A\sqcup B}$.

We now consider the general case of $\wh{F} \in \bnfor_{n}$.  
By Theorem~\ref{thm:noncrossing_fixed_points}, we have that $I_{\wh{F}} = \{ \ncperm(\wh{G}) \;|\; \wh{G} \in \Vert(\wh{F}) \}$.  
Further, each $\wh{G} \in \Vert(\wh{F})$ is obtained precisely by performing left edge deletion at some subset $S$ of black nodes from $\wh{F}$ and changing the remaining black nodes to white nodes.  
Thus applying the special case proved above to each $\wh{G} \in \Vert(\wh{F})$, we see that the theorem holds for $\wh{F}$.
\end{proof}

Recall that the moment polytope for each $X(\wh{F})$ is a combinatorial cube with vertices corresponding to the fixed point set $I_{\wh{F}}$.

\begin{cor}
\label{cor:sublattice}
For $\wh{F} \in \bnfor_{n}$, $I_{\wh{F}}$ is an induced Boolean sublattice in the Kreweras order, and $w \mapsto w \cdot \lambda$ maps $I_{\wh{F}}$ onto the vertices of the moment polytope of $X(\wh{F})$ for the dominant weight $\lambda$ in such a way that the Hasse diagram of $I_{\wh{F}}$ is identified with the $1$-skeleton of the polytope. 
\end{cor}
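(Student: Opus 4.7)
The plan is to identify $I_{\wh{F}}$ with the power set $2^{B}$ of the black vertices $B$ of $\wh{F}$, both as a Boolean lattice under the induced Kreweras order and as the vertex set of the cubical moment polytope. By \Cref{prop:cubical_factorization}, the assignment $S \mapsto w_S := \prod_{v \in \internal{\wh{F}} \setminus S} \tau_v$ is already a bijection $2^B \to I_{\wh{F}}$, so the question reduces to comparing orderings.

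The central combinatorial observation I would establish is that the transpositions $\{\tau_v\}_{v \in \internal{\wh{F}}}$, viewed as edges of the complete graph on $[n]$, form a spanning forest $F$ on $[n]$ whose connected components are precisely the blocks of the noncrossing partition $\ncperm(\wh{F})$, and more generally, for any $S \subseteq B$, the set partition underlying $w_S$ is the partition into connected components of the subgraph $F_S \subseteq F$ with edge set $\{\tau_v : v \in \internal{\wh{F}} \setminus S\}$. I would prove this by induction, peeling off a root of some tree of $\wh{F}$ and using (a) that transpositions at incomparable internal nodes act on disjoint leaf sets and hence commute, so all the products in sight are unambiguously defined, and (b) that multiplying by a transposition joining two components of an acyclic graph merges them into a single cycle. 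Together with injectivity of $v \mapsto \tau_v$ (a short case analysis on the ancestor relation shows distinct $v$ give distinct $(\max L_{v_L}, \max L_v)$ pairs) and the fact that a subgraph of a forest is determined by its component partition, this yields the order-reversing poset identification $w_{S'} \leq_K w_S \iff S \subseteq S'$. A brief additional verification --- that the partition-lattice join of $\pi(w_S)$ and $\pi(w_{S'})$ corresponds to connected components of $F_S \cup F_{S'} = F_{S \cap S'}$ and is therefore already noncrossing --- identifies Kreweras meets and joins in $I_{\wh{F}}$ with unions and intersections of subsets, confirming the sublattice property.

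For the polytope half of the corollary, I would invoke \Cref{thm:smoothTorbit}: the moment polytope of $X(\wh{F})$ is a combinatorial cube of dimension $|B|$, its vertices are exactly the images $\{w \cdot \lambda : w \in I_{\wh{F}}\}$ under a strict dominant weight, and its $1$-dimensional torus orbit closures are the $X(\wh{G})$ for $\wh{G} \in \Face(\wh{F})$ with $|\wh{G}| = 1$. Each such $\wh{G}$ carries a single black vertex $b \in B$, and by \Cref{thm:noncrossing_fixed_points} its two torus fixed points are $w_{S}, w_{S\triangle\{b\}}$ for the appropriate $S \subseteq B$. Hence the $1$-skeleton of the polytope realizes precisely the Boolean covers on $2^B$, which agree with the covers of the induced Kreweras order by the previous step.

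The main obstacle is the spanning-forest claim in the middle paragraph: identifications of this flavor typically hide subtle ordering or off-by-one pitfalls in the induction, and care is needed to check that both commutativity of $\tau_v$ at incomparable vertices and the merging behavior of acyclic products hold uniformly across the entire nested forest structure. Once this combinatorial backbone is in place, the remainder is a transparent combination with \Cref{thm:smoothTorbit}.
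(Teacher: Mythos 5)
Your argument is correct, and the overall structure (bijection $S \mapsto w_S$ via Proposition \ref{prop:cubical_factorization}, then matching Boolean covers to polytope edges via Theorem \ref{thm:smoothTorbit}) parallels the paper's. The genuine difference is in how the ``induced Boolean sublattice'' claim is handled: the paper simply quotes an external result, \cite[Lemma 2.11]{HS18}, which packages exactly the order-theoretic fact that subproducts of a minimal transposition factorization of a noncrossing partition form an induced Boolean sublattice of $\NC_n$; you instead reprove this from scratch via the spanning-forest argument --- the $\tau_v$ are the edges of a forest on $[n]$ whose components are the blocks of $\ncperm(\wh{F})$, subforests of a forest are determined by their component partitions, and partition-lattice meets and joins of these component partitions land back in $I_{\wh{F}}$ and are already noncrossing. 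Your reproof is valid and has the virtue of being self-contained; the paper's route is shorter but depends on the cited lemma. Two small points worth tightening if you keep the self-contained version: the injectivity of $v \mapsto \tau_v$ is cleanest via the canonical labeling (distinct internal nodes have distinct values of $\max L_{v_L}$, since different trees in a nested forest have disjoint leaf sets and within a tree these are distinct), and in the last paragraph you should make explicit that ranging over $\wh{G} \in \Face(\wh{F})$ with $|\wh{G}| = 1$ produces \emph{every} Boolean cover pair $(S, S \cup \{b\})$, not merely that each such $\wh{G}$ produces one --- this follows immediately from the cube description of $\Face(\wh{F})$ in Theorem \ref{thm:smoothTorbit}, but the corollary's claim that the Hasse diagram \emph{equals} the $1$-skeleton needs both directions.
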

\begin{proof}
By~\Cref{thm:noncrossing_fixed_points}, $I_{\wh{F}} \subseteq \NC_{n}$. By~\cite[Lemma 2.11]{HS18}, this is an induced Boolean sublattice of the Kreweras order. 
Moreover, using the description of $I_{\wh{F}}$ given in Proposition~\ref{prop:cubical_factorization}, the edges of the moment polytope connect exactly those pairs of elements of $I_{\wh{F}}$ which differ by the inclusion of a single $\tau_{v}$. 
\end{proof}

%%%%%%%%%%%%%%%%%%%%%%%%%%%%%%%%%%%%%%%%%%%%%%%%%%%%%%
\section{Translated Richardsons and polypositroids}
\label{sec:richardsons_and_polypositroids}
%%%%%%%%%%%%%%%%%%%%%%%%%%%%%%%%%%%%%%%%%%%%%%%%%%%%%% 

In this section we relate our $X(\wh{F})$ to certain Richardson varieties previously studied in~\cite{NST_a, NTremixed} and the quasisymmetric Schubert cycles of \cite{NST_c}; see \Cref{rem:qsymSchubert}. We then use this connection to describe the moment polytope of each $X(\wh{F})$ as a polypositroid ~\cite{LP20}.

Recall that a Richardson variety is the intersection $X^{v}_{w} = X^{v} \cap X_{w}$, which is nonempty if and only if $w \le v$.  
It is straightforward to see that $w \le w \cox$ if and only if $w\in S_{n-1}$, and in this case the Richardson variety $X^{w \cox}_{w}$ is known to be an $(n-1)$-dimensional toric variety \cite{NST_c}.
Consider now the image of each $X^{w \cox}_{w}$ under left multiplication by $w^{-1}$.   

\begin{thm}
\label{thm:samesetsrichardson}
We have
\[
\{X(T)\suchthat T\in \tree_n\}=\{w^{-1}X^{w\cox}_w\suchthat w\in S_{n-1}\}.
\]
In particular, there are $\cat{n-1}$ distinct translated Richardson varieties, one for each $T\in \tree_n$.
\end{thm}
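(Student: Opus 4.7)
The plan is to match each translated Richardson with some $X(T)$ by identifying their torus fixed point sets and Bott manifold structures; since both sides are torus orbit closures of dimension $n-1$ in $\fl{n}$, matching fixed point sets pins down the variety.

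First, for the inclusion $\{w^{-1}X^{w\cox}_w\} \subseteq \{X(T)\}$: given $w \in S_{n-1}$, the Richardson $X^{w\cox}_w$ is a smooth toric variety of dimension $n-1$ whose $T_n$-fixed points are the Bruhat interval $[w,w\cox]$, which has the structure of a Boolean lattice on $2^{n-1}$ elements. A reduced word $s_{i_1}s_{i_2}\cdots s_{i_{n-1}}$ for $w^{-1}w\cox$ exhibits the interval as $\{w s_J : J \subseteq [n-1]\}$ in the subword sense, and induces a Bott-Samelson style iterated $\mathbb{P}^1$-bundle structure on $X^{w\cox}_w$. Translating by $w^{-1}$ preserves this structure (up to twisting the $T_n$-action by conjugation by $w$) and yields a set of fixed points $w^{-1}\cdot[w,w\cox]$ containing $e$ and $\cox$. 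I would verify this set lies in $\NC_n$ by inducting on the reduced word: each step either fixes a point or multiplies by a transposition of the form $\tau_v$ for a naturally associated node $v$. Choosing the nodes to form a tree $T\in\tree_n$ (dictated by the nesting pattern of the indices $i_1,\ldots,i_{n-1}$) yields $w^{-1}\cdot[w,w\cox] = I_T$ by Proposition~\ref{prop:cubical_factorization}, and then $w^{-1}X^{w\cox}_w = X(T)$ by matching the moment cube via \Cref{cor:sublattice}.

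For the reverse inclusion, given $T\in\tree_n$ represented by $\rletter{1}^-\eletter{i_1}\cdots\eletter{i_{n-1}}$, the plan is to extract a $w\in S_{n-1}$ from the index sequence (e.g., by reading off a particular linear extension of the tree and aggregating transpositions as in Proposition~\ref{prop:cubical_factorization}) such that $(i_1,\ldots,i_{n-1})$ yields a reduced word for $w^{-1}w\cox$. Reversing the identification above then gives $w^{-1}X^{w\cox}_w = X(T)$. Finally, for the count of $\cat{n-1}$ distinct varieties: the $X(T)$ are pairwise distinct because $I_T$ determines the collection of transpositions $\{\tau_v : v\in\internal{T}\}$ and hence the tree $T$ itself; since $|\tree_n|=\cat{n-1}$, this yields the claimed count and shows that the surjective map $w\mapsto w^{-1}X^{w\cox}_w$ from $S_{n-1}$ has image of cardinality $\cat{n-1}$.

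The main obstacle is the precise combinatorial dictionary between reduced words for $w^{-1}w\cox$ (on the Richardson side) and $\reseq_n$ sequences representing $T$ (on the building operator side). The commutation relations of \Cref{lem:babyrtsame} need to be matched to the braid/commutation relations among simple reflections in such a way that the intermediate $\mathbb{P}^1$-bundle structures agree; verifying that each reduced word decomposition of the toric Richardson actually produces a sequence in $\reseq_n$ (with the required constraint $\xletter{i}\in\{\rletter{1}^-,\ldots,\rletter{i}^-,\ldots,\eletter{i-1}\}$) and builds the intended tree requires careful bookkeeping of leaf relabeling in Definition~\ref{def:forest_insertion}.
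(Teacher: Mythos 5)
Your approach is genuinely different from the paper's, which proceeds via a clean recursive lemma (\Cref{lem:Yuv}, showing $\PP_iY(u,v)=Y(\varepsilon_iu,\varepsilon_{i+1}v)$) imported almost directly from results of~\cite{NST_c}. Your plan is to match torus fixed point sets and Bott manifold structures instead. However, there are two genuine gaps.

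First, the assertion that ``matching fixed point sets pins down the variety'' is not justified and is not generally true for torus orbit closures in $\fl{n}$: two orbit closures can share the same set of $T_n$-fixed points (i.e.\ the same moment polytope) yet be distinct subvarieties. The paper does prove a statement of this flavor for the $X(\wh{F})$ --- namely $X(\wh{F})=X(\wh{G})$ iff $I_{\wh{F}}=I_{\wh{G}}$ --- but only in Section~\ref{sec:qsymvar} (\Cref{cor:containequiv}, \Cref{prop:varietydeterminedbytamari}), \emph{after} Theorem~\ref{thm:samesetsrichardson}, and even that result only compares two varieties both already of the form $X(\wh{F})$; it does not let you conclude that an arbitrary translated Richardson with fixed points $I_T$ must equal $X(T)$. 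What you actually need is to match the iterated $\PP^1$-bundle structures step by step, which is precisely what the paper's \Cref{lem:Yuv} accomplishes and what you correctly flag as ``the main obstacle.'' As written, this remains the missing step rather than a bookkeeping detail.

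Second, the Boolean structure of the Bruhat interval $[w,w\cox]$ is not of the form $\{ws_J:J\subseteq[n-1]\}$ for a reduced word $s_{i_1}\cdots s_{i_{n-1}}$ of $w^{-1}w\cox$; the cover relations are multiplication by transpositions $\tau$, not right multiplication by simple reflections indexed by subwords. The Bott--Samelson construction attached to a reduced word gives a birational resolution, not an equivariant isomorphism, so it does not directly yield the required iterated $\PP^1$-bundle presentation of $X^{w\cox}_w$. This imprecision propagates into the claimed identification $w^{-1}\cdot[w,w\cox]=I_T$. Finally, the distinctness claim rests on the assertion that $I_T$ determines $T$ by recovering $\{\tau_v\}$ and hence the tree; this is plausible but asserted without proof, whereas the paper defers it to the identification with quasisymmetric Schubert cycles of~\cite{NST_c} or to Section~\ref{sec:qsymvar}.
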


For $T\in \operatorname{Tree}_n$ recall that we have
\[
X(T)=\mathbb{P}_{i_{n-1}}\mathbb{P}_{i_{n-2}}\cdots \mathbb{P}_{i_1}\{\point\}\subset \fl{n}
\]
for any sequence $\rletter{1}^-\eletter{i_{1}}\eletter{i_{2}}\cdots\eletter{i_{n-1}}$ associated to $T$. 
For any $m$, let $\varepsilon_i$ be the map from $S_{m-1}\to S_m$ which, in one-line notation, inserts a $1$ into the $i$th position and increases the remaining numbers by $1$. 
For example $\varepsilon_3 15684237=261795348$. 
Note that this coincides with the map $\Psi_{1,i}^{-}$ restricted to permutation matrices.
However, unlike $\Psi_{1,i}^-$, we shall reserve $\varepsilon_i$ for use on permutations only.

%%%%%%%%%
\begin{lem}
\label{lem:Yuv}
    Let $Y(u,v)\coloneqq u^{-1}X^v_u$. Then $\mathbb{P}_iY(u,v)=Y({\varepsilon_i u},{\varepsilon_{i+1}v}).$
\end{lem}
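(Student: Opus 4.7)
The plan is to convert both sides of the claimed identity to a common framework using the pattern map $\Psi_{1,i}$, which was central to the construction in \cite[\S5]{NST_c}. The bridge consists of a collection of matrix identities. A block decomposition with respect to the partition $\{1,\ldots,i-1\}\sqcup\{i\}\sqcup\{i+1,\ldots,n+1\}$ shows that $\Psi_i^-\colon\GL_n\to\GL_{n+1}$ is a group homomorphism, and direct inspection of the insertion rules gives the matrix identity $\Psi_{1,i}(h)=c_i\cdot\Psi_i^-(h)$ where $c_i=(1\,2\,\cdots\,i)\in S_{n+1}$; specializing to permutations gives $\varepsilon_i u = c_i\cdot\Psi_i^-(u)$. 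Combined with $\Psi_i^+(h)=\Psi_i^-(h)\,s_i$ and the column-swap identity $\Psi_{1,i}(h)\,s_i=\Psi_{1,i+1}(h)$, these yield, for every $\mathcal{F}\in\fl{n}$,
\[
\Psi_i^-(u^{-1}\mathcal{F})=(\varepsilon_i u)^{-1}\Psi_{1,i}(\mathcal{F}),\qquad \Psi_i^+(u^{-1}\mathcal{F})=(\varepsilon_i u)^{-1}\Psi_{1,i+1}(\mathcal{F}).
\]

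Using the $\GL_{n+1}$-equivariance of $\pi_i\colon\fl{n+1}\to\GL_{n+1}/P_i$ and the definition $\mathbb{P}_i Z=\pi_i^{-1}(\pi_i\Psi_i^-(Z))$, the claim $\mathbb{P}_i Y(u,v)=Y(\varepsilon_i u,\varepsilon_{i+1}v)$ reduces to the Richardson-theoretic equality
\[
\pi_i^{-1}\bigl(\pi_i\Psi_{1,i}(X^v_u)\bigr)=X^{\varepsilon_{i+1}v}_{\varepsilon_i u}.
\]
I would first identify $\Psi_{1,i}(X^v_u)=X^{\varepsilon_i v}_{\varepsilon_i u}$: both are closed and irreducible, their torus-fixed-point sets coincide via the order-preserving map $\varepsilon_i$, and both have the same dimension $\ell(v)-\ell(u)$ using the length formula $\ell(\varepsilon_k w)=\ell(w)+k-1$. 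I would then invoke the standard $P_i$-saturation formula $\pi_i^{-1}\pi_i(X^{v'}_{u'})=X^{\max(v',v's_i)}_{\min(u',u's_i)}$. A one-line computation confirms $\varepsilon_i v\cdot s_i=\varepsilon_{i+1}v$ and $\varepsilon_i u\cdot s_i=\varepsilon_{i+1}u$, both of which strictly increase length because position $i$ in $\varepsilon_i v$ and $\varepsilon_i u$ holds the value $1$. Hence the maximum equals $\varepsilon_{i+1}v$ and the minimum equals $\varepsilon_i u$, which gives the desired equality.

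The main obstacle I anticipate is establishing the $P_i$-saturation formula for Richardsons, which requires showing that $X^{v'}_{u'}$ meets every $\pi_i$-fiber contained in its own image---this is not a formal consequence of the analogous statements $\pi_i^{-1}\pi_i(X^{v'})=X^{\max(v',v's_i)}$ and $\pi_i^{-1}\pi_i(X_{u'})=X_{\min(u',u's_i)}$ for the individual Schubert varieties. Once this standard geometric input is available, the rest of the argument is routine bookkeeping with the maps $\varepsilon_i$, $c_i$, and the left translation by $(\varepsilon_i u)^{-1}$.
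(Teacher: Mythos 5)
Your reduction to $\pi_i^{-1}\pi_i\Psi_{1,i}(X^v_u)=X^{\varepsilon_{i+1}v}_{\varepsilon_i u}$ --- via the operator identity $(\varepsilon_i u)^{-1}\Psi_{1,i}=\Psi_i^- u^{-1}$ and $\GL$-equivariance of $\pi_i$ --- is exactly the paper's argument; you derive the identity through the auxiliary cycle $c_i$ whereas the paper states it directly, but it is the same bridge. The divergence is that the paper cites \cite[\S 4]{NST_c} for $\pi_i^{-1}\pi_i\Psi_{1,i}(X^v_u)=X^{\varepsilon_{i+1}v}_{\varepsilon_i u}$, while you set out to reprove it, and it is there that the gaps lie.

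For $\Psi_{1,i}(X^v_u)=X^{\varepsilon_i v}_{\varepsilon_i u}$, the argument ``closed and irreducible, same torus fixed points, same dimension'' is not a proof: two closed irreducible $T$-invariant subvarieties of the same dimension with identical fixed-point sets need not coincide unless one is contained in the other. You still need to establish, say, $\Psi_{1,i}(X^v_u)\subseteq X^{\varepsilon_i v}_{\varepsilon_i u}$ before the dimension count can close the loop. More seriously, the ``standard $P_i$-saturation formula'' $\pi_i^{-1}\pi_i(X^{v'}_{u'})=X^{\max(v',v's_i)}_{\min(u',u's_i)}$ is false as a general statement. Take $n=3$, $i=1$, $u'=213$, $v'=231$: then $X^{231}_{213}$ is the $P_2$-stable $\mathbb{P}^1$ joining its two fixed points, so $\pi_1^{-1}\pi_1(X^{231}_{213})$ has dimension at most $2$, yet the formula would give $X^{321}_{123}=\fl{3}$, of dimension $3$. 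The formula does hold when at least one of $X^{v'}$, $X_{u'}$ is already $\pi_i$-saturated, i.e.\ when $v'(i)>v'(i+1)$ or $u'(i)<u'(i+1)$; in your situation $u'=\varepsilon_i u$ has $u'(i)=1<u'(i+1)$, so $X_{u'}$ is $\pi_i$-saturated and the Richardson saturation reduces to the genuinely standard Schubert saturation $\pi_i^{-1}\pi_i(X^{v'})=X^{v's_i}$ intersected with the fixed $X_{u'}$. So your route can be repaired, but only by noticing the saturation of $X_{\varepsilon_i u}$, which is in fact the same observation underlying your final $\max$/$\min$ computation; the unrestricted formula you wrote is not a safe thing to invoke.
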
  
\begin{proof}
    In \cite[\S 4]{NST_c} it was shown that $\pi_i^{-1}\pi_i\Psi_{1,i}X^v_u=X^{\varepsilon_{i+1}v}_{\varepsilon_i u}$.
    Since $\pi_i$ is equivariant with respect to left multiplication, we get the following sequence of equalities
    \begin{align*}
    Y(\varepsilon_i u,\varepsilon_{i+1}v)=&(\varepsilon_i u)^{-1}X_{\varepsilon_i u}^{\varepsilon_{i+1} v}=(\varepsilon_i u)^{-1}\pi_i^{-1}\pi_i \Psi_{1,i}X^v_u\\
    =&\pi_i^{-1}\pi_i (\varepsilon_i u)^{-1}\Psi_{1,i}X^v_u=\pi_i^{-1}\pi_i \Psi_{i}^- u^{-1}X^v_u=\mathbb{P}_i Y(u,v),
    \end{align*}
    where we use the fact that $(\varepsilon_i u)^{-1}\Psi_{1,i}=\Psi_{i}^- u^{-1}$.
\end{proof}
%%%%%%%%%

%%%%%%%%%
\begin{proof}[Proof of \Cref{thm:samesetsrichardson}]
Suppose first that $T\in \tree_n$ is associated to the sequence $\rletter{1}^-\eletter{i_{1}}\cdots \eletter{i_{n-1}}\in \reseq_n$.
    Let $v= \varepsilon_{i_{n-1}+1}\cdots \varepsilon_{i_1+1}\idem_{S_1}$ and $u=\varepsilon_{i_{n-1}}\cdots \varepsilon_{i_1}\idem_{S_1}$. 
    By \cite[Proposition B.4(2)]{NST_c} we have $v=u\cox$ and  $u(n)=n$.
    By repeated applications of \Cref{lem:Yuv} we have $X(T)=u^{-1}X_u^{v}=u^{-1}X_u^{u\cox}$.
    
    Conversely given $u\in S_{n-1}$, by induction one can show that $u=\varepsilon_{i_{n-1}}\cdots \varepsilon_{i_1}\idem_{S_1}$ for some sequence $i_{1},\ldots,i_{n-1}$ with $i_j\le j$, and the tree $T\in \tree_n$ associated to the sequence $\rletter{1}^-\eletter{i_{1}}\cdots \eletter{i_{n-1}}\in \reseq_n$ has $X(T)=u^{-1}X^{u\cox}_u$.

    Showing that there are $\cat{n-1}$-many distinct translated Richardson varieties amounts to showing that the $X(T)$ for $T\in \tree_n$ are distinct.  
    This follows either from the identification with the quasisymmetric Schubert cycles of \cite{NST_c} as described in \Cref{rem:qsymSchubert}.
    A second proof can be obtained using the results of  Section~\ref{sec:qsymvar}: we characterize when two forests in $\bnfor_n$ produce the same torus-orbit closure, and in particular show that this does not occur for any two trees.
\end{proof}
%%%%%%%%%

\begin{rem}
\label{rem:qsymSchubert}
    In \cite{NST_c} certain translates $u^{-1}X^v_u$ of Richardson varieties called \emph{quasisymmetric Schubert cycles} were defined for any forest $F\in \indexedforests_n$. The description of $X(T)$ for $T=\rletter{1}^-\eletter{i_1}\cdots \eletter{i_{n-1}}\in \tree_{n}$ as a translated Richardson variety is exactly the same as the description of the quasisymmetric Schubert cycle associated to $T$ in \cite{NST_c} (matching the notation, $T$ would have been described as associated to a sequence $\rletter{1}\tletter{i_1}\cdots \tletter{i_{n-1}}\in \rtseq_n$). More generally for $F\in \indexedforests_n$, applying \Cref{lem:Yuv} recursively to $X((\rletter{1}^-)^{n-k})=X^{\idem_{S_{n-k}}}_{\idem_{S_{n-k}}}\subset \fl{n-k}$ realizes each $X(F)$  
    as the quasisymmetric Schubert cycle associated to $F$.
\end{rem}

Recall that faces of Bruhat interval polytopes are themselves Bruhat interval polytopes.
Since the torus orbit closures in a fixed toric Richardson variety correspond to faces of the associated Bruhat interval polytope, we infer \cite[Proposition 7.12]{TW15} that every torus orbit closure in a toric Richardson variety is also a toric Richardson variety.
This yields the following corollary.

\begin{cor}
    Every $X(\wh{F})$ is the left-translate of a toric Richardson variety by an element of $S_n$.
\end{cor}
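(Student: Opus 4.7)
The plan is to combine Theorem~\ref{thm:samesetsrichardson} and Theorem~\ref{thm:smoothTorbit} with the cited consequence of \cite[Proposition 7.12]{TW15}. The key intermediate step is a combinatorial claim about the poset $(\bnfor_n,\le_{re})$: every $\wh{F} \in \bnfor_n$ satisfies $\wh{F} \le_{re} T$ for some $T \in \operatorname{Tree}_n$.

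Granting the combinatorial claim, the geometric argument proceeds as follows. By Theorem~\ref{thm:smoothTorbit}, $X(\wh{F})$ is a torus orbit closure in the Bott manifold $X(T)$, since $\wh{F} \in \Face(T)$. By Theorem~\ref{thm:samesetsrichardson}, $X(T) = u^{-1} X^{u\cox}_u$ for some $u \in S_{n-1}$, and $X^{u\cox}_u$ is a toric Richardson variety. Hence $u X(\wh{F}) \subseteq X^{u\cox}_u$ is a torus orbit closure in a toric Richardson variety, and therefore itself a toric Richardson by the cited fact. Consequently $X(\wh{F}) = u^{-1}Y$ for the toric Richardson $Y = u X(\wh{F})$, which is exactly the conclusion of the corollary.

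The main obstacle is establishing the combinatorial claim, which I would prove by induction on $n - |\wh{F}|$. In the base case $|\wh{F}| = n-1$, I use the observation that a nested forest with $n$ leaves and $k$ trees has exactly $n-k$ internal nodes; so $|\wh{F}| = n-1$ forces $k=1$ and all internal nodes black, giving $\wh{F} \in \operatorname{Tree}_n$ and $T = \wh{F}$. For the inductive step, I would exhibit an upward neighbor $\wh{G}$ with $\wh{F} <_{re} \wh{G}$ by considering two cases: either (i) some internal node of $\wh{F}$ is white, in which case recoloring it black reverses Proposition~\ref{prop:bicoloredchange}\ref{it1:etorplus}; or (ii) all internal nodes of $\wh{F}$ are black, in which case $|\wh{F}| < n-1$ forces more than one tree in the forest, and one reverses Proposition~\ref{prop:bicoloredchange}\ref{it2:etorminus} by introducing a new black internal node merging two suitably placed trees. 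Applying the inductive hypothesis to $\wh{G}$ then yields a tree $T$ with $T \ge_{re} \wh{G} >_{re} \wh{F}$. The delicate point in case (ii) is verifying that the merging operation produces a valid bicolored nested forest compatible with the defining relations of $\bnfor_n$; this amounts to the straightforward check that noncrossing partitions are closed under merging a block with an adjacent one.
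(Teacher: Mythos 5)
Your proposal follows the paper's argument: the paper reduces to the fact, cited from Tsukerman--Williams, that torus orbit closures in toric Richardson varieties are toric Richardsons, and applies this to $u X(\wh{F}) \subseteq X^{u\cox}_u$ after using Theorem~\ref{thm:samesetsrichardson} to identify $X(T)$ as a translated Richardson. You go a bit further than the paper by explicitly isolating and sketching a proof of the combinatorial claim that every $\wh{F} \in \bnfor_n$ satisfies $\wh{F} \le_{re} T$ for some $T \in \tree_n$ --- a step the paper leaves implicit --- and your inductive sketch of that claim is sound.
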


\begin{rem}
    \Cref{thm:samesetsrichardson} provides an alternate perspective on the presence of noncrossing partitions arising as torus fixed points of $X(\wh{F})$. 
    Indeed, consider a fixed point $u \in I_{T}$ for $T \in \tree_{n}$.  
    Using the fact that $X(T)$ is a translated Richardson variety we will show that $u \in \NC_{n}$.  
    To begin, choose a maximal chain from $w$ to $w\cox$ containing $wu$ in the Bruhat order.
    Left translation by $w^{-1}$ gives a factorization of $\cox$ as a product of transpositions $\tau_1\cdots \tau_{n-1}$ with $u=\tau_1\cdots \tau_{m}$ for $m=\ell(u)-\ell(w)$. 
    Hence $u\in \NC_n$ by the characterization of $\NC_{n}$ due to Biane \cite{Bi97}. 
   \end{rem}

The description of each $X(T)$ as a translated Richardson variety also leads to a description of the defining hyperplanes for the moment polytope of $X(T)$.  
Recall the canonical labelling of $T$ given in Section~\ref{sec:forest_prelims} and suppose that $i$ is the label of an internal node.  
Let $\operatorname{Right}(T,i)$ (resp. $\operatorname{Left}(T,i)$) denote the set containing $i$ and the labels of each internal node in the right (resp. left) subtree of $i$.  
These sets are necessarily intervals of $\NN$ containing $i$.

Recall that a polytope is a generalized permutahedron if its edges are parallel to vectors of the form $e_i-e_j$ for distinct $i$ and $j$, and that a polytope is an alcoved polytope if its facet normals are parallel to vectors of the form $e_i+e_{i+1}+\cdots+e_j$ for $i\leq j$.  
Following~\cite{LP20}, a \emph{polypositroid} is a polytope that is both a generalized permutahedron and an alcoved polytope.  
The following is essentially the content of \cite[Remark 6.11]{NTremixed}.
%%%%%%%%%
\begin{thm}    
The moment polytope of $X(T)$ in the hyperplane $z_1+\cdots+ z_n=\lambda_1+\cdots +\lambda_n$ is the polypositroid defined by the following inequalities: for each $i\in \{1,\dots,n-1\}$ we have
    \begin{align*}
        \sum_{j\in \operatorname{Right}(T,i)}z_j\geq \sum_{j\in \operatorname{Right}(T,i)}\lambda_{j+1},\quad  \text{ and } 
        \sum_{j\in \operatorname{Left}(T,i)}z_j\leq \sum_{j\in \operatorname{Left}(T,i)}\lambda_{j}.
    \end{align*}
\end{thm}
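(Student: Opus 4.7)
The plan is to identify the $2(n-1)$ facets of $P(X(T);\lambda)$ directly, combining the cubical structure of the fixed-point set with the Bott-manifold construction of $X(T)$.

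By Theorem \ref{thm:smoothTorbit} and Corollary \ref{cor:sublattice}, $P(X(T);\lambda)$ is a combinatorial $(n-1)$-cube with vertex set $\{v\cdot \lambda : v\in I_T\}$ parametrized by subsets $S\subseteq \internal{T}$ via $v=\prod_{u\in \internal{T}\setminus S}\tau_u$ (Proposition \ref{prop:cubical_factorization}). Hence the $2(n-1)$ facets split into $n-1$ opposite pairs $\{F_v^+,F_v^-\}$ indexed by internal nodes $v$, where $F_v^+$ (resp.\ $F_v^-$) consists of the vertices whose factorization excludes (resp.\ includes) $\tau_v$. If $v$ has canonical label $i$ and its subtree has leaves $\{a,\ldots,b\}$, one checks directly that $\operatorname{Left}(T,i)=\{a,\ldots,i\}$ and $\operatorname{Right}(T,i)=\{i,\ldots,b-1\}$, since the canonical labels of the internal nodes of any subtree with leaves $\{c,\ldots,d\}$ form exactly the interval $\{c,\ldots,d-1\}$.

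The main step is to show that every vertex of $F_v^+$ lies on the hyperplane $\sum_{j\in\operatorname{Left}(T,i)}z_j=\sum_{j=a}^{i}\lambda_j$ and every vertex of $F_v^-$ on $\sum_{j\in\operatorname{Right}(T,i)}z_j=\sum_{j=a}^{b-1}\lambda_{j+1}$, while the opposite half-space inequalities hold on all remaining vertices of the cube. I would argue by induction on $n$ using the Bott-manifold decomposition $X(T)=\mathbb{P}(\mc{F}_{i_0}/\mc{F}_{i_0-1}\oplus\mathbb{C}_{\chi_{i_0}})_{X(T')}$ from Proposition \ref{prop:XBottManifold}(3), where $T=T'\cdot \eletter{i_0}$ for some $T'\in\tree_{n-1}$. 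The newly-added internal node $v^*$ (with label $i_0$ and subtree leaves $\{i_0,i_0+1\}$) contributes the two facets $z_{i_0}=\lambda_{i_0}$ and $z_{i_0}=\lambda_{i_0+1}$, arising as the images of the distinguished sections $\Psi_{i_0}^{\pm}X(T')$, while the remaining facets pull back from those of $X(T')$ via the projection, with coordinate indices shifted to account for the new leaf at position $i_0$. The main obstacle will be tracking the equivariant twist of this $\mathbb{P}^1$-bundle: the line bundle $\mc{F}_{i_0}/\mc{F}_{i_0-1}$ restricted to $X(T')$ affects the constants on the right-hand sides of the inherited facet equations, and computing this twist requires evaluating the weight of $\mc{F}_{i_0}/\mc{F}_{i_0-1}$ at each torus fixed point of $X(T')$ and comparing to the canonical labels in $T$.

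The polypositroid claim follows immediately from the shape of the inequalities: the facet normals are indicator vectors of intervals $\sum_{j\in[p,q]}e_j$, so the polytope is alcoved, while Theorem \ref{thm:smoothTorbit} identifies its vertices as $S_n$-translates of $\lambda$ with edges parallel to roots $e_p-e_q$ (cf.\ the computation of edge directions from covers in $I_T$), showing it is a generalized permutahedron.
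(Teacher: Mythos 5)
Your framework is right: the moment polytope is a combinatorial $(n-1)$-cube by Theorem~\ref{thm:smoothTorbit}, its facets pair up as $F_v^{\pm}$ indexed by internal nodes via inclusion/exclusion of $\tau_v$ in Proposition~\ref{prop:cubical_factorization}, and your identification of $\operatorname{Left}(T,i)=\{a,\dots,i\}$ and $\operatorname{Right}(T,i)=\{i,\dots,b-1\}$ with leaf intervals of the two subtrees is correct. (Small typo: the $F_v^{-}$ supporting equation should be $\sum_{j=i}^{b-1}\lambda_{j+1}$, not $\sum_{j=a}^{b-1}\lambda_{j+1}$.) The polypositroid claim at the end is also handled correctly. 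Note that the paper does not give its own proof of this statement --- it defers to an external remark --- so a self-contained argument is genuinely useful here.

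The gap is the step you yourself flag: the verification that the vertices of $F_v^{\pm}$ actually lie on the stated hyperplanes is asserted but not carried out, and the Bott-manifold induction you propose stalls exactly at the equivariant-twist computation. That computation is the entire content of the theorem, so the proposal as written does not close. However, the induction is avoidable. By Theorem~\ref{thm:noncrossing_fixed_points} and Proposition~\ref{prop:cubical_factorization}, a vertex of $F_v^{+}$ has the form $w\cdot\lambda$ with $w=\ncperm(\wh G)$ for some $\wh G\in\Vert(T)$ obtained by performing left-edge deletion at $v$ (among other nodes); this deletion severs the left subtree of $v$ from the rest of $T$, so every block of the noncrossing partition $w$ meeting $\operatorname{Left}(T,i)$ is contained in $\operatorname{Left}(T,i)$. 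Hence $w^{-1}(\operatorname{Left}(T,i))=\operatorname{Left}(T,i)$, and $\sum_{j\in\operatorname{Left}(T,i)}(w\cdot\lambda)_j=\sum_{j\in\operatorname{Left}(T,i)}\lambda_{w^{-1}(j)}=\sum_{j\in\operatorname{Left}(T,i)}\lambda_j$, putting the whole facet on the claimed hyperplane. The same argument applies to $F_v^{-}$ and $\operatorname{Right}(T,i)$ using that $w\in I_T$ permutes the leaf set $\{a,\dots,b\}$ of $v$'s subtree, and the inequality direction on the remaining vertices follows because for any $w$ permuting $\{a,\dots,b\}$ one has $\sum_{j\in\operatorname{Left}(T,i)}\lambda_{w^{-1}(j)}\le\sum_{j=a}^{i}\lambda_j$ by dominance of $\lambda$. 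Replacing the induction with this direct combinatorial check would complete your argument.
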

As faces of polypositroids are polypositroids, we have in fact showed that the moment polytope of every $X(\wh{F})$ is a polypositroid. 
By \Cref{mainthm:plucker} (proved in \Cref{sec:PluckerCharacterization}), the $X(\wh{F})$ are the only irreducible subvarieties of $\fl{n}$ whose torus fixed points are contained in $\NC_n$.  
Thus the moment polytopes of our $X(\wh{F})$ account for all flag matroid polytopes which have vertices in $\NC_{n}$ and moreover have ``geometric origin.''

\begin{conj}
    Every polypositroid---and more generally every flag matroid polytope---whose vertices are contained in $\NC_n$ is the moment polytope of some $X(\wh{F})$.
\end{conj}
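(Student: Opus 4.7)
The plan is to attack the conjecture in two stages: first reduce it to a combinatorial classification of possible vertex sets in $\NC_n$, and then realize each admissible vertex set as $I_{\wh{F}}$ for some $\wh{F} \in \bnfor_n$.

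First, I would show that the vertex set of any flag matroid polytope $P$ with vertices of the form $\{u \cdot \lambda \suchthat u \in V\}$ for some $V \subseteq \NC_n$ must have $V$ an induced Boolean sublattice of the Kreweras lattice. The flag matroid property guarantees every edge of $P$ is parallel to some root $e_i - e_j$, so if $u \cdot \lambda$ and $v \cdot \lambda$ are adjacent vertices then $v = (i\,j)u$; thus $u,v$ are adjacent in $\cay{S_n}$, and hence adjacent in the Kreweras lattice by Biane's theorem. Combining this edge data with the convexity of $P$ and matroidal exchange axioms should force $V$ to be an induced Boolean sublattice, mirroring \Cref{cor:sublattice}. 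The main work here is ruling out more exotic face structures for $P$ consistent with $V \subseteq \NC_n$.

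Next, for each induced Boolean sublattice $L \subseteq \NC_n$ of rank $k$, I would construct a bicolored nested forest $\wh{F}$ with exactly $k$ black nodes such that $I_{\wh{F}} = L$. By \Cref{prop:cubical_factorization}, $I_{\wh{F}}$ is the set of partial products $\prod_{v \notin S}\tau_v$ where $S$ ranges over subsets of black nodes. Choosing any maximal chain in $L$ from its minimum $u_0$ to its maximum $u_1$ yields a minimal factorization $u_1 = \tau_1 \cdots \tau_k \cdot u_0$ into transpositions; by the standard correspondence between minimal transposition factorizations of noncrossing partitions and labeled forests (compare \Cref{rem:nstfor_to_NC}), these factor data assemble into a nested forest. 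The ``free directions'' of $L$ (those coordinates of the Boolean cube which can be toggled independently) should correspond to which internal nodes we color black, and the Boolean property of $L$ precisely enforces that the $\tau_v$ at the black nodes mutually commute in the required sense.

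Finally, having matched vertex sets, I would conclude that $P$ equals the moment polytope of $X(\wh{F})$ via \Cref{cor:sublattice}, which identifies both the vertices and the $1$-skeleton. Since both polytopes are flag matroid polytopes, it is a classical fact from \cite{GGMS87} that they are determined by their vertex sets, so they agree.

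The hard part will be the second stage: showing that every induced Boolean sublattice $L\subseteq \NC_n$ arises as some $I_{\wh{F}}$. One needs to simultaneously realize a spanning set of ``commuting transpositions'' coming from $L$ as the $\tau_v$'s of a single bicolored nested forest; this requires a delicate Hurwitz-style analysis of factorizations in $\NC_n$, matching commutation of transpositions to the nested structure of forests. The first and third stages are essentially structural, but the second is where all the real content lies.
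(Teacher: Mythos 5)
The statement you are addressing is labeled \texttt{\textbackslash begin\{conj\}} in the paper: it is an \emph{open conjecture}, and the paper offers no proof of it. So there is no paper argument to compare your proposal against; instead I'll assess the proposal on its own terms.

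Your three-stage plan is the natural one, and Stage~3 is sound: a polytope is the convex hull of its vertices, so once you know $V = I_{\wh{F}}$ the identification of $P$ with the moment polytope of $X(\wh{F})$ follows from \Cref{cor:sublattice} (the appeal to~\cite{GGMS87} is unnecessary). The problems are in Stages~1 and~2, both of which are asserted rather than proved, and both of which represent genuine mathematical content rather than bookkeeping.

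In Stage~1, your argument establishes only that the $1$-skeleton of $P$ is a subgraph of the Hasse diagram of $\cay{\NC_n}$. That is far from showing $V$ is a Boolean sublattice. Matroid and flag matroid polytopes in general can be simplices, octahedra, hypersimplices, etc.; nothing in the flag-matroid axioms per se forces the vertex set to be Boolean. The Booleanness has to come from the specific combinatorics of $\NC_n$ (cf.\ the appeal to~\cite{HS18} in the proof of \Cref{cor:sublattice}, which treats the special sets $I_{\wh{F}}$, not arbitrary vertex sets). Note also that Stage~1 is logically equivalent to the ``combinatorial cube'' half of the conjecture itself, since the moment polytope of every $X(\wh{F})$ is a cube. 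Writing ``matroidal exchange axioms should force $V$ to be a Boolean sublattice'' therefore begs the question: you must actually rule out, say, a triangle of $\NC_n$-vertices whose three edge directions are all roots, which requires a real argument about which configurations of transpositions are compatible with noncrossingness.

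Stage~2 is where you correctly locate the bulk of the difficulty, and you have not supplied the key step. Starting from a maximal chain in $L$ gives a minimal length-additive factorization of $u_1 u_0^{-1}$ into transpositions, but (i) not every such factorization arises from a nested forest (the correspondence of \Cref{rem:nstfor_to_NC} applies to a specific class of factorizations, not all of them), and (ii) even granting a forest, you would need the ``free directions'' of $L$ to correspond exactly to a collection of internal nodes that can be consistently colored black — i.e.\ a collection whose partial products all remain in $\NC_n$ in a way governed by the forest's nesting. This is precisely the Hurwitz-type analysis you defer. Moreover you are attempting to realize \emph{every} induced Boolean sublattice $L\subseteq\NC_n$, which is more than the conjecture demands: you only need the $L$ that actually arise as vertex sets of flag matroid polytopes, and it is plausible that the flag-matroid constraint rules out some Boolean $L$ that genuinely cannot be any $I_{\wh{F}}$. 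Distinguishing these two classes — Boolean sublattices that are flag matroid polytope vertex sets versus arbitrary ones — is itself likely part of what a real proof would have to do, and may indicate exactly which half of your plan is at risk if the conjecture turns out to be false.
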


\begin{eg}
Figure~\ref{fig:polypositroid_eg} (left) depicts $T\in\operatorname{Tree}_4$ with the canonical labeling of $\internal{T}$. 
On the right is the facet description inside the hyperplane $z_1+z_2+z_3+z_4=\lambda_1+\lambda_2+\lambda_3+\lambda_4$.

%%%%%%%%%%
\begin{figure}[!ht]
    \centering
    \includegraphics[scale=1]{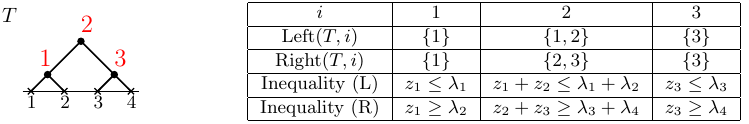}
    \caption{The facet inequalities for a particular moment polytope}
    \label{fig:polypositroid_eg}
\end{figure}
%%%%%%%%%%

By \Cref{prop:cubical_factorization} and \Cref{cor:sublattice}, the set of vertices of this polytope is given by
\[
\{u\cdot \lambda \suchthat u\text{ a subword of the product }(2,4)(3,4)(1,2)\}.
\]
\end{eg}

%%%%%%%%%%%%%%%%%%%%%%%%%%%%%%%%%%%%%%%%%%%%%%%%%%%%%%
\section{The quasisymmetric flag variety}
\label{sec:qsymvar}
%%%%%%%%%%%%%%%%%%%%%%%%%%%%%%%%%%%%%%%%%%%%%%%%%%%%%%

 We define the \emph{quasisymmetric flag variety} as the toric complex 
 \[
 \qfl_n\coloneqq \bigcup_{T\in \operatorname{Tree}_n} X(T)\subset \fl{n}.
 \]
The union defining $\qfl_{n}$ is not disjoint as there is some overlap between  distinct $X(T)$, $X(T')$. In this section we characterize this overlap in terms of the torus fixed point sets $I_{\wh{F}}$ described in Section~\ref{sec:torusfixedpoints}.

%%%%%%%%%%
\begin{thm}
\label{cor:containequiv}
For $\wh{F},\wh{G}\in \bnfor_n$ we have $X(\wh{G})\subset X(\wh{F})$ if and only if $I_{\wh{G}}\subset I_{\wh{F}}$.
\end{thm}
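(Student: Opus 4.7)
The forward direction is immediate: $I_{\wh{G}}=X(\wh{G})^{T}\subseteq X(\wh{F})^{T}=I_{\wh{F}}$.

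For the converse, assume $I_{\wh{G}}\subseteq I_{\wh{F}}$. My plan is to identify $X(\wh{G})$ with one of the torus orbit closures of $X(\wh{F})$ provided by \Cref{thm:smoothTorbit}, and the argument splits naturally into a combinatorial step and a geometric step.

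For the combinatorial step, I will construct $\wh{H}\in\Face(\wh{F})$ with $I_{\wh{H}}=I_{\wh{G}}$. By \Cref{cor:sublattice}, both $I_{\wh{F}}$ and $I_{\wh{G}}$ are induced Boolean sublattices of $\NC_{n}$ in the Kreweras order, whose Hasse diagrams coincide with the $1$-skeletons of their (cubical) moment polytopes. Because both Kreweras structures are inherited from $\NC_{n}$, the inclusion $I_{\wh{G}}\hookrightarrow I_{\wh{F}}$ preserves covers, exhibiting $I_{\wh{G}}$ as an induced Boolean sublattice of the $|\wh{F}|$-cube $I_{\wh{F}}$. An elementary poset observation---proved by induction on rank, using that each cover in the sublattice flips a single coordinate of the ambient cube and that all such flip directions must be distinct---identifies any induced Boolean sublattice of a combinatorial cube as a subcube (face). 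The bijection in \Cref{thm:smoothTorbit} between faces of $P(X(\wh{F}))$ and $\Face(\wh{F})$ then yields the desired $\wh{H}\in\Face(\wh{F})$ with $I_{\wh{H}}=I_{\wh{G}}$ and in particular $X(\wh{H})\subseteq X(\wh{F})$.

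For the geometric step, I will show $X(\wh{G})=X(\wh{H})$. Both are smooth torus orbit closures in $\fl{n}$ of common dimension $|\wh{G}|$ with identical $T$-fixed point sets. Inside $\fl{n}$ there is a unique $T$-stable $\mathbb{P}^{1}$ joining each pair of permutations adjacent in $\cay{S_{n}}$, and the $T$-stable $\mathbb{P}^{1}$'s of $X(\wh{G})$ (resp.\ $X(\wh{H})$) are exactly those joining Kreweras-covering pairs of $I_{\wh{G}}=I_{\wh{H}}$ by \Cref{cor:sublattice}. Hence the $1$-skeletons of $X(\wh{G})$ and $X(\wh{H})$ coincide inside $\fl{n}$, with the same inherited torus weights at each fixed point. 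Since a smooth toric subvariety of $\fl{n}$ is determined by this GKM data, we conclude $X(\wh{G})=X(\wh{H})\subseteq X(\wh{F})$.

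The main obstacle is the geometric step: justifying that agreement of GKM data inside $\fl{n}$ forces equality of embedded smooth toric subvarieties. Should this require further justification, one can instead induct on $|\wh{G}|$ using the iterated $\mathbb{P}^{1}$-bundle structure from \Cref{prop:XBottManifold}, extending an identification of smaller-dimensional pieces rung by rung through the bundles and their two distinguished sections.
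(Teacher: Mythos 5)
Your combinatorial step is essentially the paper's: the paper also shows $I_{\wh{G}}$ is a subcube of the Boolean lattice $I_{\wh{F}}$ (it does so by an absolute-length count, identifying $I_{\wh{G}}$ with the Kreweras interval between its top and bottom intersected with $I_{\wh{F}}$, rather than by the ``induced Boolean sublattice of a cube is a face'' lemma you invoke; both routes are fine). The forward direction is also the same.

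The geometric step is where there is a genuine gap. Your argument rests on the assertion that ``a smooth toric subvariety of $\fl{n}$ is determined by this GKM data,'' i.e., that two embedded smooth toric orbit closures in $\fl{n}$ that share the same $T$-fixed points, the same $T$-invariant $\PP^1$'s, and the same tangent weights must coincide. You correctly identify this as the main obstacle, but no proof or citation is given, and it is not a statement that can simply be taken for granted: agreement of the 1-skeleton inside $\fl{n}$ does not obviously propagate to equality of the full orbit closures without a substantial argument. The paper sidesteps this entirely: the implication $I_{\wh{G}}=I_{\wh{H}}\Rightarrow X(\wh{G})=X(\wh{H})$ is exactly the content of \Cref{prop:varietydeterminedbytamari} (the equivalence of \ref{it:equivalent_1} and \ref{it:equivalent_3}), which is established earlier in \Cref{sec:bruhat_max} via a purely combinatorial uniqueness argument: the normal form representative is reconstructible from $I_{\wh{F}}$ by first recovering the Bruhat-maximum element via \Cref{prop:ForToNCBruhatMax} and then the rest via \Cref{prop:bnfornf_construction}. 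So once you have found $\wh{H}\in\Face(\wh{F})$ with $I_{\wh{H}}=I_{\wh{G}}$, you should simply cite \Cref{prop:varietydeterminedbytamari} to conclude $X(\wh{G})=X(\wh{H})\subset X(\wh{F})$. Your suggested fallback of inducting through the $\PP^1$-bundle structure is viable in spirit, but the bookkeeping needed to compare two different bundle towers on the same variety is precisely what the colored Tamari relations and normal forms are designed to handle, so carrying it out honestly would amount to reproving \Cref{prop:varietydeterminedbytamari}.
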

%%%%%%%%%%

The criterion therein provides a combinatorial model for the toric structure of $\qfl_{n}$, see \Cref{fig:led_poset_kreweras_eg}.
First take the disjoint union of the moment polytopes for each $X(T)$, which we showed in Section~\ref{sec:BottManifold} are $(n-1)$-dimensional combinatorial cubes. 
Then create a polyhedral complex \emph{$\cqfl{n}$} by identifying faces from distinct polytopes that are equal in the sense that they share the same set of vertices. 
After identification, the faces of $\cqfl{n}$ are then in bijection with the distinct torus orbit closures in $\qfl_n$. 

\begin{figure}[!ht]
    \centering
    \includegraphics[width=\textwidth]{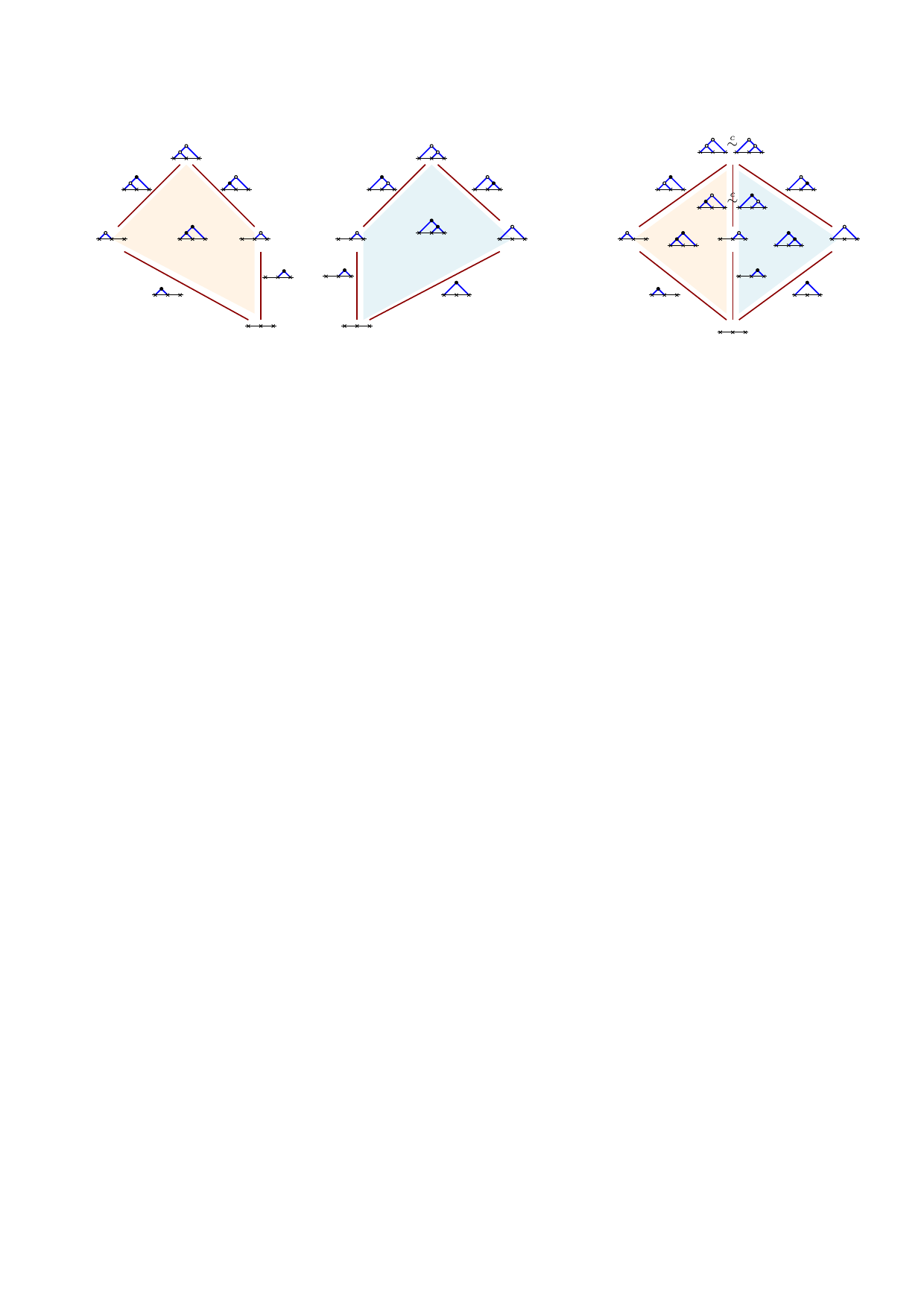}
    \caption{The combinatorial cubes corresponding to the toric orbit closures in each of the two components of $\qfl_3$ (left) and the global complex $\cqfl{3}$ encoding the inclusion order on all toric closures $X(\wh{F})$ in $\qfl_3$ (right).
    }
    \label{fig:led_poset_kreweras_eg}
\end{figure}

We prove Theorem~\ref{cor:containequiv} at the end of Section~\ref{sec:bruhat_max} after introducing an important equivalence relation on $\bnfor_n$ in Section~\ref{sec:coloredtamari}.
Section~\ref{sec:combinatorics_complex} contains further enumerative and structural results about $\cqfl{n}$.

%%%%%%%%%
\subsection{Colored Tamari equivalence and normal forms}
\label{sec:coloredtamari}
%%%%%%%%%

Every torus orbit closure in $\qfl_{n}$ is by definition contained in $X(T)$ for some $T \in \tree_{n}$. 
In Section~\ref{sec:BottManifold}, we showed every torus orbit closure in $X(T)$ is of the form $X(\wh{F})$ for a bicolored nested forest $\wh{F} \le_{re} T$. 
Thus the torus orbit closures in $\qfl_{n}$ can be parametrized by $\bnfor_n$.  However, there is some redundancy in this parametrization as is apparent from Figure~\ref{fig:led_poset_kreweras_eg}. 
This is explained by the following two additional relations that the building operations satisfy.

%%%%%%%%%
\begin{lem}
\label{lem:2additionalbuilding}
For all $1 \le i < n$ we have the relations
\[
\Psi_{i+1}^+\PP_{i}= \PP_{i}\Psi_{i}^+\quad \text{ and }\quad \Psi_{i+1}^+\Psi_{i}^+=\Psi_{i}^+\Psi_{i}^+.
\]
\end{lem}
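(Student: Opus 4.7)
My approach is to prove the second relation first, and then use it together with \Cref{lem:babyrtsame} to reduce the first relation to a short matrix check.

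For $\Psi_{i+1}^+\Psi_i^+=\Psi_i^+\Psi_i^+$, I would evaluate both compositions on an arbitrary flag $\mc{F}\in \fl{m-2}$ using the explicit formulas for $\Psi_i^+$ from \Cref{section:psi_plus_minus}. A direct index check shows the two resulting flags in $\fl{m}$ agree: writing $\epsilon_i^{(m)}$ for the zero-insertion $\CC^{m-1}\hookrightarrow\CC^m$ at position $i$, the first $i$ subspaces on both sides are $\epsilon_i^{(m)}\epsilon_i^{(m-1)}(\mc{F}_k)$, the $(i+1)$-st is $\epsilon_i^{(m)}\epsilon_i^{(m-1)}(\mc{F}_{i+1})$, the $(i+2)$-nd is the same with $\langle e_i\rangle$ adjoined, and each subspace beyond position $i+2$ additionally contains $\langle e_{i+1}\rangle$. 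The only input beyond the defining formulas is the elementary ``commuting zero-insertion'' identity $\epsilon_{i+1}^{(m)}\circ\epsilon_i^{(m-1)}=\epsilon_i^{(m)}\circ\epsilon_i^{(m-1)}$, which reflects the fact that inserting two adjacent zeros produces the same embedding regardless of order.

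For the first relation I would use the decomposition $\mathbb{P}_iZ=\Psi_i^-Z\sqcup\mathbb{G}_iZ\sqcup\Psi_i^+Z$ from \Cref{sec:pushpull} together with the injectivity of $\Psi_{i+1}^+$ as a closed embedding. The identity $\Psi_{i+1}^+\mathbb{P}_i=\mathbb{P}_i\Psi_i^+$ then splits into three pieces: $\Psi_{i+1}^+\Psi_i^-=\Psi_i^-\Psi_i^+$, which is the $j=i$ case of the relation $\Psi_j^-\Psi_i^+=\Psi_{i+1}^+\Psi_j^-$ in \Cref{lem:babyrtsame}; $\Psi_{i+1}^+\Psi_i^+=\Psi_i^+\Psi_i^+$, the second relation just established; and $\Psi_{i+1}^+\mathbb{G}_i=\mathbb{G}_i\Psi_i^+$. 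The last of these follows from the matrix definitions: $\mathbb{G}_iN$ differs from $\Psi_i^+N$ only in entry $(i,i)$, and inspection of the formula for $\Psi_{i+1}^+$ shows that the $(i,i)$-entry of an input matrix reappears in the output only at position $(i,i)$. Hence $\Psi_{i+1}^+\mathbb{G}_iN$ is $\Psi_{i+1}^+\Psi_i^+N$ with entry $(i,i)$ promoted to $+\in\CC^*$, while $\mathbb{G}_i\Psi_i^+N$ is $\Psi_i^+\Psi_i^+N$ with the same change, and the second relation makes these agree.

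The main obstacle is the index bookkeeping in the second relation: composing $\Psi_i^+$ with itself (or with $\Psi_{i+1}^+$) involves two successive insertions whose positions and ambient dimensions both shift, so one must carefully track which $\epsilon$ map goes from which source to which target. Once that bookkeeping is handled, the remainder of the argument is essentially formal.
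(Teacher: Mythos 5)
Your proof is correct and, given that the paper's proof amounts to the single sentence ``both relations can be verified with elementary matrix computations,'' it takes essentially the same approach while supplying the details the paper omits: you verify the second relation directly from the formulas for $\Psi_i^+$ (your ``commuting zero-insertion'' identity $\epsilon_{i+1}\circ\epsilon_i = \epsilon_i\circ\epsilon_i$ is exactly what makes the matrix check go through), and then reduce the first relation to three piecewise identities via the decomposition $\mathbb{P}_i = \Psi_i^- \sqcup \mathbb{G}_i \sqcup \Psi_i^+$, correctly invoking the $j=i$ case of $\Psi_j^-\Psi_i^+=\Psi_{i+1}^+\Psi_j^-$ from Lemma~\ref{lem:babyrtsame}, the second relation, and a short check that the $(i,i)$-entry of a matrix propagates under $\Psi_{i+1}^+$ only to position $(i,i)$. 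One minor remark: the injectivity of $\Psi_{i+1}^+$ that you mention is not actually needed, since showing each piece of the disjoint union agrees already gives equality of the unions.
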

\begin{proof}
Both relations can be verified with elementary matrix computations.  
\end{proof}

In the correspondence between words in $\rtseq_n$ and compositions of building operations, these relations correspond to $\eletter{i} \rletter{i+1}^+=\rletter{i}^+ \eletter{i}$ and $\rletter{i}^+ \rletter{i+1}^+=\rletter{i}^+ \rletter{i}^+$. 
Considering the relations at the level of binary forests leads to the following definition.

%%%%%%%%%
\begin{defn}
\label{fig:right_rotation}
We say that $\wh{F}, \wh{G}\in \bnfor_n$ are \emph{colored Tamari equivalent}, denoted by $\wh{F}\ctam \wh{G}$, if one can be transformed into the other by a sequence of \emph{colored Tamari rotations} shown below.
\begin{center}
\includegraphics[scale=1]{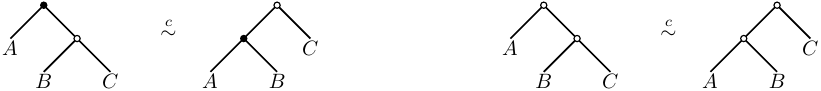}
\end{center}
\end{defn}
%%%%%%%%%

By the preceding discussion, we get the following result.

\begin{prop}
\label{prop:2additionalbuilding}
If $\wh{F},\wh{G}\in \bnfor$ satisfy $\wh{F}\ctam \wh{G}$, then $X(\wh{F})=X(\wh{G})$, and in particular  $I_{\wh{F}}=I_{\wh{G}}$.
\end{prop}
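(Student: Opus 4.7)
The plan is to identify each colored Tamari rotation with an application of one of the two operator identities from \Cref{lem:2additionalbuilding}, so that the equality $X(\wh{F})=X(\wh{G})$ follows directly from the validity of those identities.

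By transitivity and the fact that $\ctam$ is generated by single rotations, it suffices to treat the case where $\wh{G}$ is obtained from $\wh{F}$ by a single colored Tamari rotation at a pair of adjacent internal nodes $(v,u)$. Inspecting the two admissible rotations in \Cref{fig:right_rotation}, in each case $u$ is a white right child of $v$, while $v$ is either black or white.

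The key technical step is to choose a representative word $\rt \in \reseq_n$ for $\wh{F}$ of the form $\rt_1 \cdot x_v x_u \cdot \rt_2$, where $x_v \in\{\eletter{j},\rletter{j}^+\}$ is the letter that inserts $v$ and $x_u=\rletter{j+1}^+$ is the letter that inserts $u$, for some index $j$. Starting from any representative, I would commute the letter that creates $u$ leftward past each intermediate letter using the commutation rules in \Cref{defn:bnforrelations}. These commutations are always available because, at the moment $v$ is inserted, its right child is a leaf; hence every letter applied between the insertions of $v$ and $u$ acts in some other subtree of the forest, and the side conditions in the defining relations of $\bnfor_n$ are tailored precisely to permit the required moves (with the index of $x_u$ shifting down by one each time it crosses a letter placed to its left).

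Once the word is in this form, $x_v x_u$ is the left-hand side of one of the two identities $\eletter{j}\rletter{j+1}^+=\rletter{j}^+\eletter{j}$ or $\rletter{j}^+\rletter{j+1}^+=\rletter{j}^+\rletter{j}^+$ from \Cref{lem:2additionalbuilding}. Replacing it with the right-hand side produces a word $\rt'$ whose associated bicolored nested forest is $\wh{G}$, by direct unfolding of the insertion recipe in \Cref{def:forest_insertion}. Since $\rt$ and $\rt'$ define the same composition of building operations, we conclude $X(\wh{F})=X(\wh{G})$, and taking $T$-fixed points yields $I_{\wh{F}}=I_{\wh{G}}$. The main obstacle is the commutation step: while the intuition is transparent, rigorously tracking the index shift of $x_u$ as it migrates into position requires a careful case analysis on the colors and relative indices of the intervening letters to confirm that it arrives with index exactly $j+1$.
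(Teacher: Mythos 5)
Your proof plan is correct and follows the same route the paper intends: identify a single colored Tamari rotation with one of the two word relations from \Cref{lem:2additionalbuilding}, realize that relation as a rewrite on a word representative of $\wh{F}$ in which the letters $x_v$ and $x_u$ creating $v$ and $u$ are adjacent, and then invoke the operator identities. The paper compresses all of this into ``by the preceding discussion,'' so your write-up is a welcome unpacking of what is implicit there.

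The one place you flag as a worry --- the commutation argument that migrates $x_u$ leftward while tracking index shifts --- can be sidestepped entirely. Rather than starting from an arbitrary word representative and commuting, construct a representative directly: the insertion process (\Cref{def:forest_insertion}) is free to build $\wh{F}$ in any order compatible with inserting each internal node after its ancestor nodes and inserting an isolated leaf via $\rletter{\cdot}^{-}$ before anything is attached to it. Since $u$ is a child of $v$, nothing is forced to be inserted strictly between $v$ and $u$: insert enough of the forest so that $v$'s leaf position is available, then $x_v$, then immediately $x_u$, then the rest. At that moment $v$'s two children are fresh leaves at consecutive positions $j,j+1$, so $x_v\in\{\eletter{j},\rletter{j}^{+}\}$ and $x_u=\rletter{j+1}^{+}$ with no index bookkeeping required, and the resulting word lies in $\reseq_n$. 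Applying the rewrite from \Cref{lem:2additionalbuilding} to this pair produces a word for $\wh{G}$, and the operator identity gives $X(\wh{F})=X(\wh{G})$ directly. This removes the case analysis you identify as the obstacle, and is probably closer to what the authors have in mind. Otherwise your outline is sound.
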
 

\begin{defn}
    We say that $\wh{F} \in \bnfor_{n}$ is in \emph{normal form} if every right child in $\internal{\wh{F}}$ is a black node.  
Let \emph{$\bnfornf_n$} be the set of bicolored nested forests that are in normal form. 
\end{defn}

Every element of $\bnfor_n$ can be transformed to some element of $\bnfornf_n$ by applying colored Tamari rotations repeatedly. We will prove that this normal form is unique in the next section. Figure~\ref{fig:refornormalform} depicts a bicolored tree as well as its colored Tamari equivalent normal form.

%%%%%%%%%%%%%%%%%%
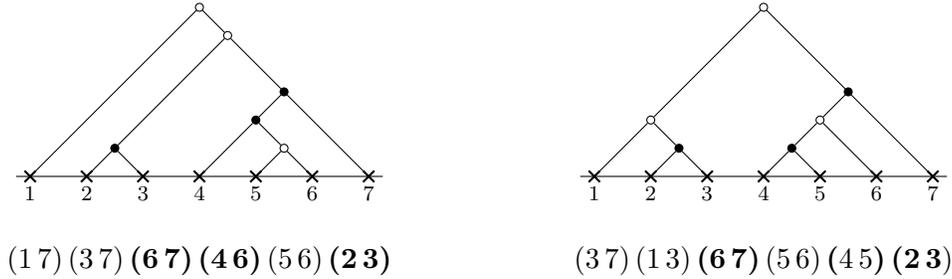
\begin{figure}[ht!]
    \centering
    \begin{tikzpicture}[scale = 0.75, baseline = 0.75*-0.1cm]
    \draw[thin] (0.75, 0) -- (7.25, 0);
    \foreach \x in {1, 2, 3, 4, 5, 6, 7}{
    	\draw[thick] (\x - 0.1, 0 - 0.1) -- (\x + 0.1, 0 + 0.1);
    	\draw[thick] (\x - 0.1, 0 + 0.1) -- (\x + 0.1, 0 - 0.1);
    	\draw (\x, 0) node[inner sep = -2pt]  (\x) {};
    	\draw (\x) node[below] {$\scriptstyle \x$};}
    \draw (4, 3)  circle (2pt) node[inner sep = 1pt] (l1) {};
    \draw[fill] (2.5, 0.5)  circle (2pt) node[inner sep = 1pt] (l2) {};
    \draw  (4.5, 2.5) circle (2pt) node[inner sep = 1pt] (l3) {};
    \draw[fill] (5, 1)  circle (2pt) node[inner sep = 1pt] (l4) {};
    \draw (5.5, 0.5)  circle (2pt) node[inner sep = 1pt] (l5) {};
    \draw[fill] (5.5, 1.5)  circle (2pt) node[inner sep = 1pt] (l6) {};
    \foreach \a/\b in {l1/1, l1/l3, l3/l2, l3/l6, l2/2, l2/3, l6/l4, l6/7, l4/4, l4/l5, l5/5, l5/6} {\draw[thin] (\a) -- (\b);}
    \draw (4, -1.5) node {$(1\,7)\,(3\,7)\,\mathbf{\boldsymbol(6\,7\boldsymbol)}\,\mathbf{\boldsymbol(4\,6\boldsymbol)}\,(5\,6)\,\mathbf{\boldsymbol(2\,3\boldsymbol)}$};
%%%%%%%%%%%%%%%%%%%%%%%%%%%%%%%%%%%%%%%%%%%%%%%%%%
    \begin{scope}[xshift = 10cm]
    \draw[thin] (0.75, 0) -- (7.25, 0);
    \foreach \x in {1, 2, 3, 4, 5, 6, 7}{
    	\draw[thick] (\x - 0.1, 0 - 0.1) -- (\x + 0.1, 0 + 0.1);
    	\draw[thick] (\x - 0.1, 0 + 0.1) -- (\x + 0.1, 0 - 0.1);
    	\draw (\x, 0) node[inner sep = -2pt]  (\x) {};
    	\draw (\x) node[below] {$\scriptstyle \x$};}
    \draw (2, 1)  circle (2pt) node[inner sep = 1pt] (l1) {};
    \draw[fill] (2.5, 0.5)  circle (2pt) node[inner sep = 1pt] (l2) {};
    \draw (4, 3)  circle (2pt) node[inner sep = 1pt] (l3) {};
    \draw[fill] (4.5, 0.5)  circle (2pt) node[inner sep = 1pt] (l4) {};
    \draw (5, 1)  circle (2pt) node[inner sep = 1pt] (l5) {};
    \draw[fill] (5.5, 1.5)  circle (2pt) node[inner sep = 1pt] (l6) {};
    \foreach \a/\b in {l3/l1, l3/l6, l1/1, l1/l2, l2/2, l2/3, l6/l5, l6/7, l5/6, l5/l4, l4/4, l4/5} {\draw[thin] (\a) -- (\b);}
    \draw (4, -1.5) node {$(3\,7)\,(1\,3)\,\bm{(6\,7)}\,(5\,6)\,\boldsymbol(4\,5\boldsymbol)\,\mathbf{\boldsymbol(2\,3)\boldsymbol}$};
    \end{scope} 
    %%%%%%%%%%%%%%%%%%%%%%%%%%%%%%%%%%%%%%%%%%%%%%%%%%
    \end{tikzpicture} 
    \caption{A bicolored tree which is not in normal form (left), its Tamari-equivalent normal form (right), and the associated factorizations of $\cox$ for each tree (below).}
    \label{fig:refornormalform}
\end{figure}
%%%%%%%%%%%%%%%%%%

\begin{rem}
Proposition~\ref{prop:cubical_factorization} relates bicolored nested forests to factorizations of noncrossing partitions, where the colored Tamari equivalence allows certain relations $(i\,k)(j\,k)=(j\,k)(i\,j)$.
\end{rem}

%%%%%%%%%%%%%%%%%%%%%%%%%%%%%%%%%%%%%%%%%%%%%%%%%%%%%%
\subsection{The Bruhat maximal element of $X(\wh{F})$ and uniqueness of normal form}
\label{sec:bruhat_max}
%%%%%%%%%%%%%%%%%%%%%%%%%%%%%%%%%%%%%%%%%%%%%%%%%%%%%%

We will need to understand the Bruhat order on $I_{\wh{F}}$. 
Since $X(\wh{F})$ is a torus orbit closure in $\fl{n}$, we have that $I_{\wh{F}}$ is a flag matroid~\cite{GGMS87}. 
Hence we have the following fact.

%%%%%%%%%
\begin{fact}[{\cite[\S~1.9]{BGW}}]
\label{fact:coxetermax}
  For any $\wh{F}\in\bnfor_n$, $I_{\wh{F}}$ has a unique Bruhat-maximum element. This element is characterized by the property that all adjacent vertices are lower in the Bruhat order.
\end{fact}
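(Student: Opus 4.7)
The plan is to deduce the Fact from general results on flag matroids. Since $X(\wh{F})$ is a torus-orbit closure in $\fl{n}$, the theorem of Gelfand--Goresky--MacPherson--Serganova~\cite{GGMS87} guarantees that the moment polytope $P(X(\wh{F});\lambda)$ is a flag matroid polytope, and its vertex set (identified with $I_{\wh{F}}$ via $u \mapsto u\cdot \lambda$ as in \Cref{cor:sublattice}) forms the collection of permutations underlying a flag matroid $M$ of type $A_{n-1}$.

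For existence and uniqueness of the Bruhat maximum, I would invoke the Maximality Property characterization of Coxeter matroids from \cite{BGW}: for every choice of Borel subgroup—equivalently, every choice of a shifted Bruhat order—a flag matroid admits a unique maximum element. Taking the standard upper-triangular $B_n$ recovers the ordinary Bruhat order and produces a unique Bruhat-maximum $u^\star \in I_{\wh{F}}$.

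For the asserted characterization of $u^\star$, the direction $u^\star \Rightarrow$ (all adjacent vertices lower) is immediate, since no element of $I_{\wh{F}}$ exceeds $u^\star$ in Bruhat order. For the converse, suppose $u \in I_{\wh{F}}$ has every polytope-adjacent vertex Bruhat-smaller but $u \ne u^\star$. The substantive step is a \emph{monotone exchange} property: in a flag matroid polytope, whenever $u < v$ in the Bruhat order on vertices, there is an edge of the polytope incident to $u$ ending at some $u'$ with $u < u'$, and moreover $u'$ can be chosen on a Bruhat-monotone edge-path to $v$. Applied with $v = u^\star$, this forces an adjacent vertex of $u$ above $u$ in Bruhat order, contradicting the assumption. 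I expect the monotone exchange property to be the main technical obstacle; it can be extracted from the flag-matroid exchange axioms, and in any case is one of the equivalent formulations of the Maximality Property developed in~\cite[\S1.9]{BGW}, from which both halves of the Fact follow uniformly.
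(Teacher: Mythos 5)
Your proposal is correct and follows essentially the same route as the paper: the paper states this as a Fact citing~\cite{BGW}, with the one-line justification that $I_{\wh{F}}$ is a flag matroid because $X(\wh{F})$ is a torus-orbit closure in $\fl{n}$~\cite{GGMS87}, and then appeals to the Maximality Property of Coxeter/flag matroids from~\cite[\S~1.9]{BGW} exactly as you do. Your identification of the monotone-exchange step as the substantive content behind the characterization of the maximum by local maximality is an accurate reading of what that citation is doing.
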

%%%%%%%%%

\newcommand{\rightchildren}{\operatorname{RC}}

In order to describe this distinguished element, we define a new map. For  $\wh{F}$ in $\bnfornf_n$, let $\rightchildren(\wh{F})$ denote the set of internal nodes that are right children. By definition of normal form, we note that all nodes in $\rightchildren(\wh{F})$ are black.
We define a map
\[
\begin{array}{rccc}
\ForToNC: &\bnfornf_n&\to& \NC_{n} \\
& \wh{F} & \mapsto & \ncperm\Big(\wh{F} \setminus \rightchildren(\wh{F})\Big)
\end{array}
\]
where $\wh{F} \setminus \rightchildren(\wh{F})$ denotes the bicolored nested forest obtained by left edge deletion for each node $v \in \rightchildren(\wh{F})$  and contracting it in the sense of~\Cref{prop:bicoloredchange}.  
This is a natural extension of the map on $\indexedforests_n$ which we defined in~\cite[Section 7.2]{BGNST1} using the same notation.

\begin{eg} We have
\[
\ForToNC\left( \begin{tikzpicture}[scale = 0.5, baseline = 0.5*1cm]
\draw[thin] (0.75, 0) -- (7.25, 0);
    \foreach \x in {1, 2, 3, 4, 5, 6, 7}{
    	\draw[thick] (\x - 0.1, 0 - 0.1) -- (\x + 0.1, 0 + 0.1);
    	\draw[thick] (\x - 0.1, 0 + 0.1) -- (\x + 0.1, 0 - 0.1);
    	\draw (\x, 0) node[inner sep = -2pt]  (\x) {};
    	\draw (\x) node[below] {$\scriptstyle \x$};}
    \draw (2, 1)  circle (2pt) node[inner sep = 0pt] (l1) {};
    \draw[fill] (2.5, 0.5)  circle (2pt) node[inner sep = 0pt] (l2) {};
    \draw (4, 3)  circle (2pt) node[inner sep = 0pt] (l3) {};
    \draw[fill] (4.5, 0.5)  circle (2pt) node[inner sep = 0pt] (l4) {};
    \draw (5, 1)  circle (2pt) node[inner sep = 0pt] (l5) {};
    \draw[fill] (5.5, 1.5)  circle (2pt) node[inner sep = 0pt] (l6) {};
    \foreach \a/\b in {l3/l1, l3/l6, l1/1, l1/l2, l2/2, l2/3, l6/l5, l6/7, l5/6, l5/l4, l4/4, l4/5} {\draw[thin] (\a) -- (\b);}
\end{tikzpicture} \right)
= \ncperm\left( \begin{tikzpicture}[scale = 0.5, baseline = 0.5*1cm]
    \draw[thin] (0.75, 0) -- (7.25, 0);
    \foreach \x in {1, 2, 3, 4, 5, 6, 7}{
    	\draw[thick] (\x - 0.1, 0 - 0.1) -- (\x + 0.1, 0 + 0.1);
    	\draw[thick] (\x - 0.1, 0 + 0.1) -- (\x + 0.1, 0 - 0.1);
    	\draw (\x, 0) node[inner sep = -2pt]  (\x) {};
    	\draw (\x) node[below] {$\scriptstyle \x$};}
    \draw (2, 1)  circle (2pt) node[inner sep = 0pt] (l1) {};
    \draw (4, 3)  circle (2pt) node[inner sep = 0pt] (l3) {};
    \draw[fill] (4.5, 0.5)  circle (2pt) node[inner sep = 0pt] (l4) {};
    \draw (5, 1)  circle (2pt) node[inner sep = 0pt] (l5) {};
    \foreach \a/\b in {l3/l1, l3/7, l1/1, l1/3, l5/6, l5/l4, l4/4, l4/5} {\draw[thin] (\a) -- (\b);}
\end{tikzpicture}  \right) = (731)(654)(2).
\]
\end{eg}

\begin{lem}
\label{lem:ForToNcPsi}
For $\wh{F} = \wh{G} \cdot \xletter{i} \in \bnfornf_{n}$ with $\wh{G}\in \bnfornf_{n-1}$, we have 
\[
\ForToNC(\wh{F} ) = \begin{cases}
\Psi_i^-\ForToNC(\wh{G})&\text{if $\xletter{i} = \rletter{i}^{-}$ or leaf $i$ is a right child in $\wh{G}$}\\
\Psi_i^+\ForToNC(\wh{G})&\text{otherwise.}\end{cases}
\]
\end{lem}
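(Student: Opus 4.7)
My plan is to reduce the lemma to the recursion $\ncperm(\wh{H}\cdot \rletter{i}^{\pm})=\Psi_i^{\pm}\ncperm(\wh{H})$ established as \eqref{eq:ncperm_recurse} in the proof of \Cref{thm:noncrossing_fixed_points}. Since $\ncperm$ depends only on the underlying uncolored nested forest structure, the same identities give $\ncperm(\wh{H}\cdot \eletter{i})=\ncperm(\wh{H}\cdot \rletter{i}^{+})=\Psi_i^{+}\ncperm(\wh{H})$.

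The key step is to establish the following four forest-level identities, which upon applying $\ncperm$ immediately yield the lemma:
\begin{enumerate}
\item[(i)] If $\xletter{i}=\rletter{i}^-$, then $\wh{F}\setminus \rightchildren(\wh{F}) = (\wh{G}\setminus \rightchildren(\wh{G}))\cdot \rletter{i}^-$.
\item[(ii)] If $\xletter{i}=\rletter{i}^+$, then $\wh{F}\setminus \rightchildren(\wh{F}) = (\wh{G}\setminus \rightchildren(\wh{G}))\cdot \rletter{i}^+$.
\item[(iii)] If $\xletter{i}=\eletter{i}$ and leaf $i$ is not a right child in $\wh{G}$, then $\wh{F}\setminus \rightchildren(\wh{F}) = (\wh{G}\setminus \rightchildren(\wh{G}))\cdot \eletter{i}$.
\item[(iv)] If $\xletter{i}=\eletter{i}$ and leaf $i$ is a right child in $\wh{G}$, then $\wh{F}\setminus \rightchildren(\wh{F}) = (\wh{G}\setminus \rightchildren(\wh{G}))\cdot \rletter{i}^-$.
\end{enumerate}

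For cases (i)--(iii), the new node $v_0$ added at position $i$ is not a right child: this is vacuous in (i); forced by the hypothesis $\wh{F}\in\bnfornf_n$ in (ii), since a white right child is prohibited in normal form; and given directly by hypothesis in (iii). Consequently $\rightchildren(\wh{F})=\rightchildren(\wh{G})$ under the natural identification, the node $v_0$ survives in $\wh{F}\setminus\rightchildren(\wh{F})$, and because left edge deletion is local in the sense that it only modifies the subtree at the right child in question, the operation of adding $\xletter{i}$ at position $i$ commutes with all deletions at the nodes of $\rightchildren(\wh{G})$. For case (iv), the new node $v_0$ takes over the right-child role of leaf $i$ in $\wh{G}$, so $\rightchildren(\wh{F})=\rightchildren(\wh{G})\sqcup\{v_0\}$; performing the left edge deletion at $v_0$ first detaches its left child as an isolated leaf at position $i$ in $\wh{F}$ and grafts its right child (a leaf at position $i+1$ in $\wh{F}$) into $v_0$'s former spot. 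The remaining deletions act on subtrees inherited from $\wh{G}$ and don't interact with the modification at $v_0$, so the net result is $\wh{G}\setminus\rightchildren(\wh{G})$ with an extra isolated leaf inserted at position $i$, which is precisely the operation $\cdot\rletter{i}^-$.

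The main technical point will be making these commutativity arguments precise, especially in case (iv) where the newly introduced $v_0$ is itself deleted and generates an isolated leaf that must be tracked through the other deletions. A clean way to organize this is to choose a linear extension of $\rightchildren(\wh{F})$ in which the deletion at $v_0$ is performed first, reducing each identity to a comparison of forests that differ only by a local transformation at $v_0$'s former location, which can be verified diagrammatically.
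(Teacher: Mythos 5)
Your proposal is correct and follows essentially the same route as the paper's proof: reduce to the forest-level identity
\[
\wh{F} \setminus \rightchildren(\wh{F}) = \begin{cases}
(\wh{G} \setminus \rightchildren(\wh{G})) \cdot \rletter{i}^{-}
&\text{if $\xletter{i} = \rletter{i}^{-}$ or leaf $i$ is a right child in $\wh{G}$}\\
(\wh{G} \setminus \rightchildren(\wh{G})) \cdot \xletter{i} &\text{otherwise,}\end{cases}
\]
then apply $\ncperm$ and the recursion \eqref{eq:ncperm_recurse}. The paper states this identity is ``immediate from the definitions,'' whereas you supply the verification by unwinding the four cases and arguing that left edge deletions at nodes unrelated to $v_0$ commute with the insertion of $\xletter{i}$; you also make explicit the step that $\ncperm(\wh{H}\cdot\eletter{i})=\ncperm(\wh{H}\cdot\rletter{i}^+)$ since $\ncperm$ is color-blind, which the paper leaves implicit when applying \eqref{eq:ncperm_recurse} to the $\eletter{i}$ subcase.
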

\begin{proof}
From the definitions we immediately verify 
\[
\wh{F} \setminus \rightchildren(\wh{F}) = \begin{cases}
(\wh{G} \setminus \rightchildren(\wh{G})) \cdot \rletter{i}^{-}
&\text{if $\xletter{i} = \rletter{i}^{-}$ or leaf $i$ is a right child in $\wh{G}$}\\
(\wh{G} \setminus \rightchildren(\wh{G})) \cdot \xletter{i} &\text{otherwise}\end{cases}
\]
after which the formulas~\eqref{eq:ncperm_recurse} complete the proof.   
\end{proof}

%%%%%%%%%
\begin{prop}
\label{prop:ForToNCBruhatMax}
    If $\wh{F} \in \bnfornf_n$, then $\ForToNC(\wh{F})$ is the Bruhat-maximum element of $I_{\wh{F}}$.
\end{prop}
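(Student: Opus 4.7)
The plan is to induct on $n$ using the recursive formula in \Cref{lem:ForToNcPsi}. Write $\wh{F}=\wh{G}\cdot\xletter{i}$ with $\wh{G}\in\bnfor_{n-1}$. A direct check shows $\wh{G}$ is also in normal form: for $\xletter{i}\in\{\rletter{i}^-,\eletter{i}\}$ this is immediate because the new internal node, if any, is black, while for $\xletter{i}=\rletter{i}^+$ the normal form of $\wh{F}$ forces leaf $i$ of $\wh{G}$ not to be a right child, so the inserted white node is automatically a left child or root in $\wh{F}$. By the inductive hypothesis, $\ForToNC(\wh{G})$ is the Bruhat maximum of $I_{\wh{G}}$.

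For $\xletter{i}\in\{\rletter{i}^-,\rletter{i}^+\}$, \Cref{prop:XBottManifold} gives $I_{\wh{F}}=\Psi_i^{\pm}(I_{\wh{G}})$, while \Cref{lem:ForToNcPsi} gives $\ForToNC(\wh{F})=\Psi_i^{\pm}(\ForToNC(\wh{G}))$; hence it suffices to show that each of $\Psi_i^-,\Psi_i^+\colon S_{n-1}\hookrightarrow S_n$ preserves Bruhat order, which follows from the rank criterion by a direct bookkeeping argument on the inserted fixed coordinate. For $\xletter{i}=\eletter{i}$, the $\mathbb{P}^1$-bundle description in \Cref{prop:XBottManifold} yields $I_{\wh{F}}=\Psi_i^-(I_{\wh{G}})\sqcup\Psi_i^+(I_{\wh{G}})$ with $\Psi_i^+(w)=\Psi_i^-(w)s_i$ for all $w\in I_{\wh{G}}$. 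Writing $u=\ForToNC(\wh{G})$, the explicit formula for $\Psi_i^-$ shows $\Psi_i^-(u)(i)=i$ and $\Psi_i^-(u)(i+1)\in\{u(i),u(i)+1\}$, so $\Psi_i^+(u)>\Psi_i^-(u)$ in Bruhat order if and only if $u(i)\geq i$. Combining this with Bruhat preservation of $\Psi_i^-$ and the standard Bruhat lifting lemma (if $a\leq b$ and $bs<b$ then $as\leq b$; if $a\leq b$ and $bs>b$ then $as\leq bs$) shows that the larger of $\Psi_i^-(u),\Psi_i^+(u)$ dominates every element of $I_{\wh{F}}$.

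To match this dichotomy with the case split in \Cref{lem:ForToNcPsi}, which is phrased in terms of whether leaf $i$ is a right child in $\wh{G}$, I would isolate the following auxiliary claim: \emph{for $\wh{G}\in\bnfornf$ with $u=\ForToNC(\wh{G})$, leaf $i$ is a right child in $\wh{G}$ if and only if $u(i)<i$}. Since each backwards cycle of $u$ sends every non-minimal element to a strictly smaller one, this is equivalent to saying that leaf $i$ is a right child in $\wh{G}$ precisely when $i$ is not the minimum of its tree in $\wh{G}\setminus\rightchildren(\wh{G})$. I would prove this auxiliary claim by a parallel induction on $\wh{G}$, using \Cref{lem:ForToNcPsi} together with a careful case analysis that tracks how right-child status of each leaf interacts with the shifted values $\Psi_i^{\pm}(u)(i)$ and $\Psi_i^{\pm}(u)(i+1)$. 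This auxiliary claim is the main obstacle: the recursive interplay between the tree structure of $\wh{G}$ and the cycle structure of $\ForToNC(\wh{G})$ must be tracked through all three building letters, and in particular through the case where leaf $i$ of $\wh{G}$ being a right child forces left edge deletion at the newly inserted internal node. Once the claim is in hand, the sign of $u(i)-i$ matches the criterion in \Cref{lem:ForToNcPsi}, and the induction closes.
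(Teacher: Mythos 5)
Your plan is correct in outline, but it takes a genuinely different route from the paper and has an acknowledged gap. For the $\eletter{i}$ case the paper invokes Fact~\ref{fact:coxetermax} (the Bruhat maximum of $I_{\wh{F}}$ is detected by its adjacent elements), which together with the cube structure of $I_{\wh{F}}$ and the face inclusion $\Psi_i^{\epsilon}$ reduces everything to the single inequality $w(i) > w(i+1)$ for $w = \ForToNC(\wh{F})$; this is then read off directly from the definition of $\ForToNC$ applied to $\wh{F}$, by considering whether the new black node $v$ is a right child. You instead dominate all of $I_{\wh{F}}$ directly via the lifting property, which is sound, but forces you to identify the larger of $\Psi_i^{\pm}(u)$ with $\ForToNC(\wh{F})$ --- and that requires your auxiliary claim relating the right-child status of leaf $i$ in $\wh{G}$ to the sign of $u(i) - i$.

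You correctly flag that auxiliary claim as the main obstacle, and as written the proposal does not prove it. The claim is in fact true: in $\wh{G}\setminus\rightchildren(\wh{G})$, leaf $i$ is the leftmost leaf of its component precisely when leaf $i$ is a left child or a root of $\wh{G}$ (trace the path from leaf $i$ upward and note that left-edge deletions split off exactly the left subtrees of right-child nodes), and for a noncrossing partition built from backwards cycles, being leftmost in its block is equivalent to $u(i) \geq i$. This is essentially the same combinatorial content the paper establishes in the two cases of the $w(i) > w(i+1)$ check, but phrasing it about $w$ and $\wh{F}$ rather than $u$ and $\wh{G}$ lets the paper skip both the explicit auxiliary lemma and the lifting-lemma machinery. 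To complete your route you would need to carry out that induction on the auxiliary claim; alternatively, adopting the reduction via Fact~\ref{fact:coxetermax} sidesteps it, which is what makes the paper's argument shorter.
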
 
\begin{proof} 
We proceed by induction on $n$.
For $n = 1$, $\ForToNC(\wh{F})$ is the only element of $I_{\wh{F}}$.  
For $n > 1$, we have $\wh{F} = \wh{G} \cdot \xletter{i}$ for $\xletter{i} \in \{\rletter{i}^{\pm}, \eletter{i}\}$ and $\wh{G}\in \bnfornf_{n-1}$.  
Setting $w = \ForToNC(\wh{F})$ and $u = \ForToNC(\wh{G})$, Lemma~\ref{lem:ForToNcPsi} states that $w = \Psi^{\epsilon}_{i}(u)$ for $\epsilon \in \{+, -\}$.  

For $\xletter{i} = \rletter{i}^{\epsilon}$, we note that both $\Psi_{i}^{-}$ and $\Psi_{i}^{+}$ preserve the Bruhat order.  
This follows, for instance, from the tableau criterion \cite[Theorem 2.6.3]{BjBr05}.
Thus resorting to our inductive hypothesis on $u$, we have that $w = \Psi^{\epsilon}_{i}(u)$ is the Bruhat maximum of $I_{\wh{F}} = \Psi^{\epsilon}_{i}(I_{\wh{G}})$.

Now suppose that $\xletter{i} = \eletter{i}$. 
By \Cref{fact:coxetermax}, it suffices to show that $w$ is greater than all adjacent fixed points in $I_{\wh{F}}$. 
As $I_{\wh{F}}$ is a combinatorial cube and $\Psi^{\epsilon}_{i}$ is a face inclusion, all but one of these adjacent elements are contained in $\Psi^{\epsilon}_{i}(I_{\wh{G}})$ and therefore covered by the previous argument. 
The remaining adjacent fixed point is  $ws_{i}$, so what remains is to show that $w(i) > w(i+1)$.  
Let $v$ be the (black) node associated to $\xletter{i}$.  
If $v$ is a right child in $\wh{F}$, then $w(i) = i$ and $w(i+1) < i+1$.  
If $v$ is not a right child, then $i$ is the smallest element of its cycle, which must also contain $i+1$, so again $w(i) > w(i+1)$.  \end{proof}
%%%%%%%%%

We need an alternative construction related to the map $\ForToNC$.
Let $(F,S)$ be a pair consisting of an indexed forest $F\in\indexedforests_n$ and a subset $S \subseteq \internal{F}$. We define $$\wh{(F,S)}\in \Face(F)\subset \bnfornf_n$$ by performing left-edge deletion on all vertices in $S\cap \rightchildren(F)$, and coloring the remaining vertices of $S$ white.
An example is shown in Figure~\ref{fig:ForToNC_cell} (left).

\begin{figure}[t]
    \centering
    \includegraphics[width=\textwidth]{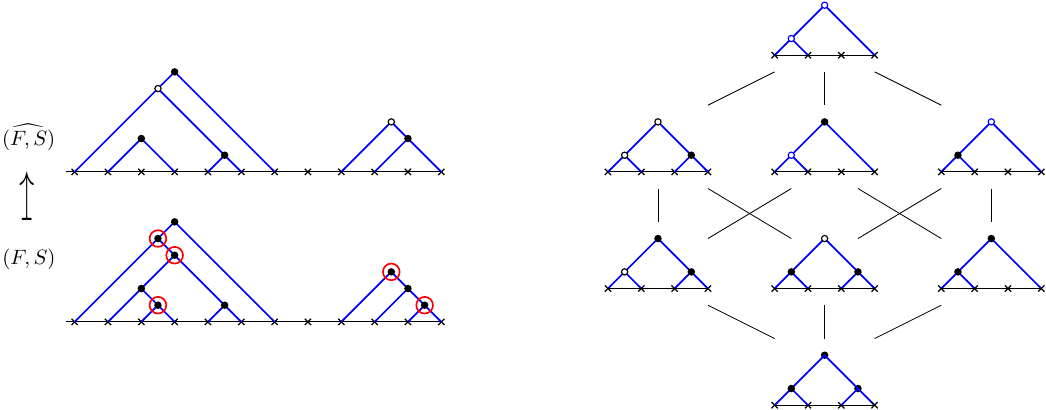}
    \caption{An example of the construction in Proposition~\ref{prop:bnfornf_construction} for $n=12$ (left) and  all elements  $\wh{(F,S)}\in \bnfornf_4$ for $F = \wh{F}(\rletter{1}^{-}\eletter{1}\eletter{1}\eletter{3}) \in \indexedforests_{4}$ (right).}
    \label{fig:ForToNC_cell}
\end{figure}

\begin{prop}
\label{prop:bnfornf_construction}
The map $(F,S)\mapsto \wh{(F,S)}$ is a bijection from $\{(F, S) \suchthat F \in \indexedforests_{n}, S \subseteq \internal{F}\}$ onto $\bnfornf_n$.  Furthermore $\ForToNC\wh{(F,S)}=\ForToNC(F)$ for all $(F, S)$.
\end{prop}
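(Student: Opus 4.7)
The plan is to prove the two assertions separately: first the bijection, then the identity $\ForToNC\,\wh{(F,S)} = \ForToNC(F)$.

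For well-definedness of the forward map I would verify $\wh{(F,S)} \in \bnfornf_n$. Starting from $F$ (which has no nesting and all internal nodes black), the construction removes the nodes of $S \cap \rightchildren(F)$ by left-edge deletion and recolors the nodes of $S \setminus \rightchildren(F)$ white. The latter only affects nodes that are roots or left children of $F$, and the former does not create any white right child. Consequently every right child of $\wh{(F,S)}$ is a black node of $\rightchildren(F) \setminus S$, placing $\wh{(F,S)}$ in $\bnfornf_n$.

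Next I would construct an explicit inverse $\Phi \colon \bnfornf_n \to \{(F,S)\}$. Given $\wh{K} \in \bnfornf_n$, define $F$ by un-nesting every nested tree in $\wh{K}$, in the sense that each such tree becomes the left subtree of a newly introduced black internal node (thereby reversing a left-edge deletion), and then repainting every remaining white internal node black; set $S$ to be the newly introduced nodes together with the originally white nodes of $\wh{K}$. The normal form hypothesis is precisely what guarantees that the un-nesting position is uniquely determined: if a nested tree were placed under a white right child, a colored Tamari rotation would violate the normal form of $\wh{K}$. Verifying that $\Phi$ is a two-sided inverse to $(F,S) \mapsto \wh{(F,S)}$ is then a routine case analysis, which one can organize by induction on $n$ via the recursive letter construction of bicolored nested forests.

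For the identity $\ForToNC\,\wh{(F,S)} = \ForToNC(F)$, the key combinatorial observation is that $\rightchildren(\wh{(F,S)}) = \rightchildren(F) \setminus S$. Indeed, left-edge deletion at some $v \in S \cap \rightchildren(F)$ removes $v$ and promotes $v$'s right child into the role of right child of $v$'s former parent, so no node outside $\rightchildren(F)$ ever becomes a right child, while elements of $\rightchildren(F) \setminus S$ remain right children. Consequently the forests $\wh{(F,S)} \setminus \rightchildren(\wh{(F,S)})$ and $F \setminus \rightchildren(F)$ are both obtained from $F$ by deleting exactly $\rightchildren(F)$, so they share the same underlying uncolored structure; since $\ncperm$ depends only on this structure, the desired equality follows.

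The main obstacle will be making the un-nesting step in the inverse construction rigorous—showing precisely that each nested tree in a normal-form bicolored nested forest uniquely specifies a left-edge deletion to reverse. This is most cleanly handled by tracking the $\reseq_n$ letter representations and invoking how the defining relations of $\bnfor_n$ in \Cref{defn:bnforrelations} interact with the colored Tamari rotations of \Cref{fig:right_rotation}.
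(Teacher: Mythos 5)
Your overall approach — construct an explicit inverse, then verify the $\ForToNC$ identity via the set $\rightchildren(\wh{(F,S)})$ — matches the structure of the paper's proof, and your observation that $\rightchildren(\wh{(F,S)}) = \rightchildren(F) \setminus S$ is correct and gives a clean derivation of the $\ForToNC$ identity (the paper itself is terser here, simply asserting that the identity follows from the definitions). However, the crux of your argument, the ``un-nesting position is uniquely determined'' claim, is both the acknowledged gap and also mis-attributed. The position where a nested tree must be re-attached is \emph{not} a consequence of the normal-form hypothesis; it is forced by elementary binary-tree combinatorics. If a nested tree $T$ sits between consecutive leaves $c_i$ and $c_{i+1}$ of its containing tree, then there is a \emph{unique} internal node $v$ with $c_i$ the rightmost leaf of $v_L$ and $c_{i+1}$ the leftmost leaf of $v_R$ (the lowest common ancestor of $c_i,c_{i+1}$), and the new black node must go on the edge from $v$ to $v_R$. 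This is precisely the paper's recipe: for each $v$, list the outermost nested trees below $v$ from left to right, insert black nodes $v_1',\dots,v_k'$ along the $v$-to-$v_R$ edge, and attach the nested-tree roots as their left children. Your phrasing leaves the insertion position unspecified.

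Where normal form \emph{does} enter is not in pinning down the insertion position, but in ensuring the round trip is the identity. If $\wh{K}$ had a white right child $w$, then your inverse would put $w$ into $S$, but $w$ would also lie in $\rightchildren(G)$, so the forward map $\wh{(G,S)}$ would perform left-edge deletion at $w$ instead of coloring it white — giving a different forest. The normal-form hypothesis rules this pathology out. Your Tamari-rotation argument gestures at this but conflates it with uniqueness of the insertion position. Finally, note that the paper establishes injectivity separately and more cheaply: for fixed $F$, the forests $\wh{(F,S)}$ are distinct elements of $\Face(F)$ because the choice of $S$ dictates which letters $\eletter{i_j}$ in a word for $F$ get converted to $\rletter{i_j}^{\pm}$; combined with the $\ForToNC$-invariance this gives injectivity without yet needing the inverse to be two-sided. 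This decoupling is a cleaner organization than attempting to verify a two-sided inverse by a ``routine case analysis.''
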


\begin{proof}
The fact that 
$\ForToNC\wh{(F,S)}=\ForToNC(F)$ follows from the definition of $\ForToNC$. For fixed $F$ the forests $\wh{(F,S)}$ are distinct as they correspond to distinct elements of $\Face(F)$: given a fixed sequence $(\rletter{1}^-)^{n-k}e_{i_1}\cdots e_{i_k}$ for $F$ the choice of $S$ determines the subset of $e_{i_1},\ldots,e_{i_k}$ to transform to $\rletter{i}^{\pm}$. This shows that the map $(F,S)\mapsto \wh{(F,S)}$ is an injection, so it remains to construct an inverse map.  
Given $\wh{G}$ in $\bnfornf_n$ we create $G\in \indexedforests_n$ by coloring all of its vertices black and then connecting the remaining nested trees using the procedure described below.  

For each $v\in \internal{\wh{G}}$, let $T_1,\ldots,T_k$ be the outermost trees which are nested in the subtree below $v$, listed from left to right. Denoting their roots $w_1,\ldots,w_k$ we create new black nodes $v_1',\ldots,v_k'$ in the interior of the edge from $v$ to $v_R$ in this order, and then for each $i$ we connect $w_i$ to $v_i'$.

The resulting forest $G$ does not depend on the order in which we apply the connection procedure. 
Setting $S \subseteq \internal{G}$ to be set consisting of the newly created nodes together with the black nodes that came from white nodes of $\wh{G}$, $\wh{G}\mapsto (G,S)$ is the inverse map. 
\end{proof}

%%%%%%%%%
\begin{prop}
\label{prop:equalIimpliesequalF} 
    Let $\wh{F},\wh{G}\in\bnfornf_{n}$.  If $I_{\wh{F}}=I_{\wh{G}}$  then $\wh{F}=\wh{G}$.
\end{prop}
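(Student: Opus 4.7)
The plan is to use the bijection in \Cref{prop:bnfornf_construction} to recover the data $(F, S)$ uniquely from $I_{\wh{F}}$. Writing $\wh{F} = \wh{(F, S)}$ and $\wh{G} = \wh{(G, T)}$ with $F, G \in \indexedforests_n$, we show first that $F = G$ and then that $S = T$.

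To identify $F$, we apply \Cref{prop:ForToNCBruhatMax}: the Bruhat-maximum element of $I_{\wh{F}}$ is $\ForToNC(\wh{F})$, which equals $\ForToNC(F)$ by \Cref{prop:bnfornf_construction}. The same holds for $\wh{G}$, so the equality $I_{\wh{F}} = I_{\wh{G}}$ forces $\ForToNC(F) = \ForToNC(G)$. We then appeal to the fact that $\ForToNC$ restricts to a bijection $\indexedforests_n \to \NC_n$ (established in~\cite[Section 7.2]{BGNST1}, and consistent with the cardinality equality $|\indexedforests_n| = |\NC_n| = \cat{n}$) to conclude $F = G$.

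With $F$ in hand, we recover $S$ by realizing $I_{\wh{F}}$ as a face of $I_F$. Because $F$ has only black internal nodes, \Cref{prop:cubical_factorization} and \Cref{cor:sublattice} identify $I_F$ with a combinatorial cube of dimension $|\internal{F}|$: the elements of $I_F$ are exactly $\sigma_{T_F} \coloneqq \prod_{v \in \internal{F} \setminus T_F} \tau_v$ for $T_F \subseteq \internal{F}$, and the map $T_F \mapsto \sigma_{T_F}$ is a bijection. Unwinding the construction $\wh{(F, S)}$, and observing that left-edge deletion at $v \in S \cap \rightchildren(F)$ does not affect the transpositions $\tau_{v'}$ at the surviving nodes $v' \neq v$, we obtain the sub-cube formula
\[
I_{\wh{F}} = \bigl\{\sigma_{T_F} \suchthat T_F \subseteq \internal{F}, \ T_F \cap S = S \cap \rightchildren(F)\bigr\}.
\]
This exhibits $I_{\wh{F}}$ as the face of the hypercube obtained by fixing each coordinate $v \in S$ to a value determined by $F$, so reading off the set of fixed coordinates recovers $S$ uniquely. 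Hence $S = T$, and we conclude $\wh{F} = \wh{G}$.

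The main obstacle is establishing the explicit sub-cube formula for $I_{\wh{F}} \subseteq I_F$. This requires careful bookkeeping of the construction in \Cref{prop:bnfornf_construction}: identifying the surviving nodes of $\wh{F}$ as $\internal{F} \setminus (S \cap \rightchildren(F))$, the black nodes as $\internal{F} \setminus S$, and verifying that the transposition $\tau_{v'}$ at a surviving node $v'$ agrees whether computed in $\wh{(F, S)}$ or in $F$. Once this formula is in hand, the recovery of $S$ reduces to elementary hypercube combinatorics.
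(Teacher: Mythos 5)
Your argument is correct and essentially matches the paper's: both first recover $F$ as the unique forest with $\ForToNC(F)$ equal to the Bruhat-maximum of $I_{\wh{F}}$ (via Propositions~\ref{prop:ForToNCBruhatMax} and~\ref{prop:bnfornf_construction}), then recover $S$ by viewing $I_{\wh{(F,S)}}$ as a face of the combinatorial cube $I_F$. The paper abbreviates the second step by appealing to the fact that a face of a cube is determined by its vertex set, while you make the same deduction explicit through the sub-cube formula $I_{\wh{F}} = \{\sigma_{T_F} \suchthat T_F \cap S = S \cap \rightchildren(F)\}$.
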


\begin{proof}
We show that we can reconstruct $\wh{F}$ from $I=I_{\wh{F}}$. 
First, we recover
 $w=\ForToNC(\wh{F})$ as the maximal element in $I$ with respect to the Bruhat order by \Cref{prop:ForToNCBruhatMax}.
 By \Cref{prop:bnfornf_construction}, this means $\wh{F}=\wh{(F,S)}$ for the unique $F \in \indexedforests_{n}$ with $\ForToNC(F) = w$ and some unique $S\subset \internal{F}$ so it remains to show that the $I_{\wh{(F,S)}}$ are distinct for fixed $F$. Indeed, by \Cref{prop:bnfornf_construction} again, as we vary $S$ we obtain distinct $\wh{(F,S)}\in \Face(F)$, so we conclude the vertex sets $I_{\wh{(F,S)}}$ are distinct as a face is determined by its vertex set.
\end{proof}

%%%%%%%%%
\begin{rem}
    One can show that $I_{\wh{(F,S)}}$ is the smallest sublattice of $I_F$ which contains $\ForToNC(F)$ and $\ncperm(\wh{(F,S)}\setminus A)$ for $A\subset \internal{\wh{(F,S)}}$ the subset of black nodes that are not right children.
\end{rem}

\begin{prop}
\label{prop:varietydeterminedbytamari}
Every colored Tamari equivalence class in $\bnfor_n$ has a unique normal form representative in $\bnfornf_n$. Moreover, for $\wh{F},\wh{G}\in \bnfor_n$, the following are equivalent:
    \begin{enumerate}[label=(\arabic*)]
        \item \label{it:equivalent_1} $X(\wh{F})=X(\wh{G})$,
        \item \label{it:equivalent_2} $\wh{F}\ctam \wh{G}$, and
        \item \label{it:equivalent_3} $I_{\wh{F}}=I_{\wh{G}}$.
    \end{enumerate}
\end{prop}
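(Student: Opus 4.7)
My plan is to prove the implications in the order (2) $\Rightarrow$ (1) $\Rightarrow$ (3) $\Rightarrow$ (2), and then derive uniqueness of the normal form as a consequence. The first two implications are essentially free from the results already established, so the real content is (3) $\Rightarrow$ (2), which will be handled by reducing to the already-proved Proposition~\ref{prop:equalIimpliesequalF} via the existence of normal forms.

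For (2) $\Rightarrow$ (1), this is precisely Proposition~\ref{prop:2additionalbuilding}, which states that colored Tamari equivalent forests have the same variety. For (1) $\Rightarrow$ (3), this is immediate from the definition $I_{\wh{F}} = X(\wh{F})^T$: if the varieties coincide, so do their torus fixed point sets.

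The key step is (3) $\Rightarrow$ (2). I will use the existence statement noted in the paragraph preceding Proposition~\ref{prop:equalIimpliesequalF}, namely that every $\wh{F} \in \bnfor_n$ can be transformed via colored Tamari rotations to some $\wh{F}^{\mathrm{nf}} \in \bnfornf_n$. Given $\wh{F},\wh{G} \in \bnfor_n$ with $I_{\wh{F}} = I_{\wh{G}}$, choose such normal form representatives $\wh{F}^{\mathrm{nf}}$ and $\wh{G}^{\mathrm{nf}}$. Applying the already-established implication (2) $\Rightarrow$ (3), we get $I_{\wh{F}^{\mathrm{nf}}} = I_{\wh{F}} = I_{\wh{G}} = I_{\wh{G}^{\mathrm{nf}}}$. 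Since both forests are in normal form, Proposition~\ref{prop:equalIimpliesequalF} forces $\wh{F}^{\mathrm{nf}} = \wh{G}^{\mathrm{nf}}$, and concatenating the two rotation sequences $\wh{F} \ctam \wh{F}^{\mathrm{nf}} = \wh{G}^{\mathrm{nf}} \ctam \wh{G}$ gives $\wh{F} \ctam \wh{G}$.

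Finally, for the uniqueness of normal form representatives, suppose $\wh{F}, \wh{G} \in \bnfornf_n$ are both normal form representatives of the same colored Tamari class, i.e.\ $\wh{F} \ctam \wh{G}$. By the equivalence just established, $I_{\wh{F}} = I_{\wh{G}}$, and Proposition~\ref{prop:equalIimpliesequalF} then yields $\wh{F} = \wh{G}$. I do not anticipate any serious obstacle; the main work has already been absorbed into Propositions~\ref{prop:2additionalbuilding} and~\ref{prop:equalIimpliesequalF}, and this proposition is essentially an organizational synthesis of those results with the existence of normal forms.
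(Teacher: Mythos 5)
Your proof is correct and follows essentially the same route as the paper: both establish (2)$\Rightarrow$(1)$\Rightarrow$(3) from Proposition~\ref{prop:2additionalbuilding} and the definition of $I_{\wh{F}}$, then prove (3)$\Rightarrow$(2) by passing to normal form representatives and invoking Proposition~\ref{prop:equalIimpliesequalF}, with uniqueness of normal forms obtained by the same combination of lemmas. The only difference is cosmetic: the paper proves uniqueness of the normal form first and then the three-way equivalence, whereas you prove the equivalence first and extract uniqueness as a corollary.
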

\begin{proof}
 If $\wh{F}\ctam \wh{G}$ are both in normal form, then  by \Cref{prop:2additionalbuilding} we have $X(\wh{F})=X(\wh{G})$, and in particular $I_{\wh{F}}=I_{\wh{G}}$ and  so by \Cref{prop:equalIimpliesequalF} we conclude that $\wh{F}=\wh{G}$.

We know already that ~\ref{it:equivalent_2} implies~\ref{it:equivalent_1} by \Cref{prop:2additionalbuilding}, and obviously ~\ref{it:equivalent_1} implies~\ref{it:equivalent_3}. It remains only to show that~\ref{it:equivalent_3} implies~\ref{it:equivalent_2}. 
By \Cref{prop:2additionalbuilding} colored Tamari rotations preserve $I_{\wh{F}}$. Let $\wh{F}'$ and $\wh{G}'$ be the normal form colored Tamari equivalents to $\wh{F}$ and $\wh{G}$. Then $I_{\wh{F}'}=I_{\wh{G}'}$ and so by \Cref{prop:equalIimpliesequalF} we deduce that $\wh{F}'=\wh{G}'$, and so conclude that $\wh{F}\ctam \wh{G}$.
\end{proof}
%%%%%%%%%

%%%%%%%%%
We are now ready to prove \Cref{cor:containequiv}.
\begin{proof}[Proof of \Cref{cor:containequiv}]
Clearly $X(\wh{G}) \subset X(\wh{F})$ implies $I_{\wh{G}}\subset I_{\wh{F}}$. Suppose now $I_{\wh{G}}\subset I_{\wh{F}}$. By \Cref{cor:sublattice}, both $I_{\wh{G}}$ and $I_{\wh{F}}$ are boolean lattices inside the Kreweras lattice. 
Recall the absolute length of a noncrossing partition $w\in \NC_n$ is $n$ minus the number of cycles of $w$, which is the minimal number of transpositions needed to multiply to $w$. Let $u, v \in I_{\wh{G}}$ be the top and bottom elements in the Kreweras order, of absolute lengths $a$ and $b$ respectively, so that $|I_{\wh{G}}|=2^{a-b}$. On the other hand the set of permutations in $I_{\wh{F}}$ between $u$ and $v$ in the Kreweras order is a subinterval of $I_{\wh{F}}$ with $2^{a-b}$ elements. It follows that $I_{\wh{G}}$ is this subinterval of $I_{\wh{F}}$, so 
there exists $\wh{G}'\in \Face(\wh{F})$ with  $I_{\wh{G}}=I_{\wh{G}'}$.
By \Cref{prop:varietydeterminedbytamari} this implies $X(\wh{G}')=X(\wh{G}) \subset X(\wh{F})$.
\end{proof}
%%%%%%%%%

%%%%%%%%%%%%
\subsection{Combinatorics of torus orbit closures}
\label{sec:combinatorics_complex}
%%%%%%%%%%%%
In this section we describe several enumerative aspects of the complex $\cqfl{n}$.  
We moreover show in Proposition~\ref{prop:cqfl_genfun} that the number of such orbits for $n=1,2,3,\ldots$ is given by generalized Catalan numbers~\cite[\href{https://oeis.org/A064062}{A064062}]{oeis}, while counting them according to dimension gives a refinement known as \emph{Borel's triangle}~\cite[\href{https://oeis.org/A234950}{A234950}]{oeis}.

We now describe the cell structure of $\cqfl{n}$. 
In general the intersection of two torus orbit closures under inclusion is a union of one or more torus orbit closures. As shown on the right in Figure~\ref{fig:led_poset_kreweras_eg}, the intersection of the top-dimensional orbit closures in $\cqfl{3}$ is the (non-disjoint) union of two intervals.  
Theorem~\ref{cor:containequiv} shows that faces of $\cqfl{n}$ are in bijection with the fixed point sets $I_{\wh{F}} \subseteq \NC_{n}$ for $\wh{F}\in \bnfor_n$, with inclusion of faces corresponding to inclusion of sets. The faces are also in bijection with normal form forests in $\bnfornf_n$, but inclusion is harder to compute with these objects; in particular it is strictly stronger than the restriction of the order $\le_{re}$.

\begin{prop}
\label{prop:cqfl_genfun}
    Let $G(z,u)=\sum_{n,k\geq 0}f_{n,k}z^nu^k$ where $f_{n,k}$ is the number of torus orbits in $\qfl_n$ of dimension $k$. Then
    \begin{align}
    \label{eq:cqfl_genfun}
    G(z,u)=\frac{1+2u-\sqrt{1-4(u+1)z}}{2(z+u)}.
    \end{align}
\end{prop}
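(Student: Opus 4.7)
The plan is to convert the enumeration to a sum over pairs $(F, S)$ and then solve the resulting generating-function equation using the Catalan generating function.

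First, by~\Cref{prop:varietydeterminedbytamari} the distinct torus orbit closures in $\qfl_n$ are enumerated by the set $\bnfornf_n$ of normal-form bicolored nested forests on $[n]$, and~\Cref{thm:smoothTorbit} gives $\dim X(\wh{F}) = |\wh{F}|$. By~\Cref{prop:bnfornf_construction} there is a bijection between $\bnfornf_n$ and pairs $(F, S)$ with $F \in \indexedforests_n$ and $S \subseteq \internal{F}$, sending $(F, S) \mapsto \wh{(F,S)}$. From the construction of $\wh{(F,S)}$, the black nodes of $\wh{(F, S)}$ are exactly the vertices of $\internal{F} \setminus S$ (vertices of $S$ are either deleted by left-edge deletion or recolored to white), so $|\wh{(F, S)}| = |\internal{F}| - |S|$.

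With this dimension formula in hand, the binomial theorem gives
\[
\sum_{\wh{F} \in \bnfornf_n} u^{|\wh{F}|} \;=\; \sum_{F \in \indexedforests_n} \sum_{S \subseteq \internal{F}} u^{|\internal{F}| - |S|} \;=\; \sum_{F \in \indexedforests_n} (u+1)^{|\internal{F}|}.
\]
Since an element $F \in \indexedforests_n$ decomposes canonically into an ordered sequence of binary trees on contiguous intervals of $[n]$, and a binary tree with $a$ leaves has $a-1$ internal nodes and appears in $C_{a-1}$ combinatorial types, the multivariate generating function factors as
\[
G(z, u) \;=\; \frac{1}{1 - T(z, u)}, \qquad T(z, u) \;=\; \sum_{a \geq 1} C_{a-1}\, z^a (u+1)^{a-1} \;=\; z\, C\bigl(z(u+1)\bigr),
\]
where $C(w) = \frac{1 - \sqrt{1 - 4w}}{2w}$ is the ordinary Catalan generating function.

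Finally, substituting produces $T(z, u) = \frac{1 - \sqrt{1 - 4z(u+1)}}{2(u+1)}$, whence
\[
G(z, u) \;=\; \frac{2(u+1)}{(2u+1) + \sqrt{1 - 4(u+1)z}},
\]
and multiplying numerator and denominator by the conjugate $(2u+1) - \sqrt{1 - 4(u+1)z}$ simplifies the denominator to $(2u+1)^2 - \bigl(1 - 4(u+1)z\bigr) = 4(u+1)(u+z)$, yielding~\eqref{eq:cqfl_genfun} after cancelling the common factor $2(u+1)$. The only step requiring real care is the verification of the dimension identity $|\wh{(F,S)}| = |\internal{F}| - |S|$, which is a direct consequence of how $\wh{(F,S)}$ is constructed; the rest is a routine algebraic manipulation of the Catalan generating function.
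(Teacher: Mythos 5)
Your proof is correct, and it takes a genuinely different route from the paper. The paper's argument first computes the generating function $G_{\cox}(z,u)$ for single bicolored binary trees in normal form (with $\ncperm(T)=\cox$) via a root decomposition and a quadratic functional equation, and then passes to the general case using the $R$-transform from free probability via the relation $G(z,u)=G_{\cox}(zG(z,u),u)$, exploiting the decomposition of $\wh{F} \in \bnfornf_n$ into a noncrossing partition with a normal-form tree on each cycle. Your argument instead exploits the bijection $(F,S)\mapsto\wh{(F,S)}$ from~\Cref{prop:bnfornf_construction}: the binomial theorem collapses the sum over $S\subseteq\internal{F}$ into a factor $(u+1)^{|\internal{F}|}$, reducing the count to a sum over plain indexed forests, and then the decomposition of each $F\in\indexedforests_n$ into a sequence of binary trees on contiguous intervals gives a geometric series in $T(z,u)=zC\bigl(z(u+1)\bigr)$. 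The key identity $|\wh{(F,S)}|=|\internal{F}|-|S|$ is indeed immediate from the construction (nodes of $S\cap\rightchildren(F)$ are contracted away, nodes of $S\setminus\rightchildren(F)$ are recolored white, and the rest stay black), and the remaining algebra checks out. Your route is more elementary, avoiding the $R$-transform and the need to directly enumerate normal-form bicolored trees, at the cost of depending on the more specific combinatorial bijection of~\Cref{prop:bnfornf_construction}; the paper's route is perhaps more robust, as the cycle decomposition and $R$-transform machinery would extend to weighted variants that keep track of the underlying noncrossing partition.
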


\begin{proof}
   We have that $f_{n,k}$ is the number of $\wh{F}\in\bnfornf_n$ with $k$ black nodes.  
   First let $G_{\cox}(z,u)$ be the generating function for trees $T \in \bnfornf_n$, so that $\ncperm(T)=\cox$.  
   Decomposition at the root gives a quadratic functional equation for $G_{\cox}(z,u)$ that has the solution
   \begin{align}
    \label{eq:cqfl_genfun_connected}
    G_{\cox}(z,u)=\frac{1+2u-z-\sqrt{1+z^2-2(2u+1)z}}{2u}.
    \end{align}  
  For the general case, note that each $\wh{F} \in \bnfornf_n$ is given by the choice of an element $w\in\NC_{n}$ and for each cycle $C = (c_{1}\,\cdots\,c_{k})$ of $w$, a tree in $\bnfornf_k$. 
  As in~\cite[\S 2.2]{ChapotonNadeau2017}, we can therefore apply the $R$-transform from free probability to obtain the equation $G(z,u)=G_{\cox}(zG(z,u),u)$. 
  This can be solved using the quadratic equation for $G_{\cox}(z,u)$, from which we obtain the desired result. 
\end{proof}

 The expression \eqref{eq:cqfl_genfun} is the generating function for Borel's triangle~\cite[\href{https://oeis.org/A234950}{A234950}]{oeis} whose entries have the explicit closed form
 \[f_{n,k}=\frac{1}{n}\binom{2n}{n-k-1}\binom{n+k-1}{k}.\]
 From the expression \eqref{eq:cqfl_genfun_connected}, it follows that the enumeration for trees is given by the classical \emph{large Schr\"{o}der numbers}~\cite[\href{https://oeis.org/A006318}{A006318}]{oeis} and the refined version according to $|\wh{F}|$ is \cite[\href{https://oeis.org/A088617}{A088617}]{oeis}.

\begin{rem}
    For comparison, every face of the complex attached to the complex $\operatorname{HHMP}_n$ discussed in the introduction is a cube, and intersections of faces are faces. 
    These are indexed bijectively by the words in $\reseq_n$ without any $\rletter{i}^{+}$, and a face $F_1$ contains a face $F_2$ if the word for $F_1$ can be transformed to that for $F_2$ by changing some letters $\eletter{i}$ to either $\rletter{i}^{-}$ or $\rletter{i+1}^{-}$. 
    The total number of faces is $1\cdot 3\cdot 5 \cdots (2n-1)$ \cite[\href{https://oeis.org/A001147}{A001147}]{oeis}, refined according to dimension by the generating polynomial $(1)(2+t)(3+2t)\cdots (n+(n-1)t)$. 
\end{rem}

%%%%%%%%%%%%%%%%%%%%%%%%%%%%%%%%%%%%%%%%%%%%%%%%%%%%%%
\section{The affine paving of $\qfl_n$}
\label{sec:affinepaving}
%%%%%%%%%%%%%%%%%%%%%%%%%%%%%%%%%%%%%%%%%%%%%%%%%%%%%%

We now describe a family of affine charts for $\qfl_{n}$ around each of its torus fixed points.  
We begin in Section~\ref{sec:paving1} by defining the charts in terms of the Bott manifold structure of the $X(\wh{F})$ and showing that they partition $\qfl_{n}$.  
Then in Section~\ref{sec:paving2} we explicitly construct each chart and show that our partition can be equivalently obtained by intersecting $\qfl_{n}$ with the Bruhat decomposition of $\fl{n}$.

\subsection{The paving}
\label{sec:paving1}

For each $F\in\indexedforests_n$, the Bott manifold structure of $X(F)$ from Section~\ref{sec:BottManifold} gives an affine chart $C(F)$ around the $T$-fixed point $\ForToNC(F)\in X(F)$. 
Explicitly, $C(F)$ is isomorphic to an affine space $\mathbb{A}^{|F|}$ of dimension $|F|$ and  decomposes into sub-torus-orbits of $X(F)$ as
\begin{align}
\label{eq:C_F}
C(F) = \bigsqcup_{\substack{\wh{H} \in \Face(F) \\ \ForToNC(F) \in I_{\wh{H}}}} X^{\circ}(\wh{H}).    
\end{align}

\begin{thm}
\label{thm:paving_Bott}
The affine charts form a partition of $\qfl_n$:
\begin{equation}
\label{eq:chart_decomposition}
\qfl_n = \bigsqcup_{F\in\indexedforests_n} C(F).
\end{equation}
Moreover, for any total ordering $F_1,F_2,\ldots,F_{|\NC_{n}|}$ of $\indexedforests_n$ that extends the pullback of the Bruhat order via $\ForToNC$, we have $\bigsqcup_{i=1}^{k} C(F_{i})=\bigcup_{i=1}^k X(F_i)$.
\end{thm}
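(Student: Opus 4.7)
The plan is to decompose $\qfl_n$ into distinct torus orbits and show these orbits are grouped exactly according to the charts $C(F)$.  I first observe that $C(F) \subseteq \qfl_n$ for each $F \in \indexedforests_n$: since $C(F) \subseteq X(F)$, it suffices to exhibit $T \in \tree_n$ with $F \le_{re} T$.  Such a $T$ can be built by joining the constituent trees of $F$ (which are supported on disjoint contiguous intervals) via $n - |F| - 1$ newly added black internal nodes forming a ``right-comb'' spine, so that left edge deletion at each added node recovers $F$.  Then $X(F) \subseteq X(T) \subseteq \qfl_n$ by \Cref{cor:containequiv}.

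To prove \eqref{eq:chart_decomposition}, combining \Cref{thm:smoothTorbit} with \Cref{prop:varietydeterminedbytamari} shows that $\qfl_n$ partitions into distinct torus orbits $X^\circ(\wh{H})$ indexed by $\wh{H} \in \bnfornf_n$, which by \Cref{prop:bnfornf_construction} correspond bijectively to pairs $(F, S)$ via $\wh{H} = \wh{(F, S)}$.  The key claim is that such an orbit lies in $C(F')$ if and only if $F' = F$.  The ``if'' direction follows from \Cref{prop:bnfornf_construction}: $\wh{(F,S)} \in \Face(F)$ and $\ForToNC(F) = \ForToNC(\wh{(F,S)}) \in I_{\wh{(F,S)}}$.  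For ``only if'', containment in $C(F')$ implies $I_{\wh{H}} \subseteq I_{F'}$ via \Cref{cor:containequiv}, which gives $\ForToNC(\wh{H}) \le \ForToNC(F')$ in Bruhat order since the Bruhat maximum of a subset is at most that of the full set; the defining condition $\ForToNC(F') \in I_{\wh{H}}$ combined with \Cref{prop:ForToNCBruhatMax} yields the reverse inequality.  These force $\ForToNC(F') = \ForToNC(\wh{H}) = \ForToNC(F)$, whence $F = F'$ since $\ForToNC \colon \indexedforests_n \to \NC_n$ is a bijection.

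Finally, fix an ordering $F_1, \ldots, F_{|\NC_n|}$ of $\indexedforests_n$ extending the Bruhat pullback.  The inclusion $\bigsqcup_{i \le k} C(F_i) \subseteq \bigcup_{i \le k} X(F_i)$ follows from $C(F_i) \subseteq X(F_i)$.  For the reverse, decompose $X(F_i)$ into its distinct torus orbits $X^\circ(\wh{H})$ (for $\wh{H} \in \bnfornf_n$ with $I_{\wh{H}} \subseteq I_{F_i}$); by the partition argument each lies in $C(F_j)$ where $\ForToNC(F_j) = \ForToNC(\wh{H})$, and the containment $I_{\wh{H}} \subseteq I_{F_i}$ forces $\ForToNC(F_j) \le \ForToNC(F_i)$ in Bruhat order, so $j \le i$.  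The main conceptual subtlety is the uniqueness direction in the partition argument, which requires squeezing $\ForToNC(F')$ between $\ForToNC(\wh{H})$ and itself via \Cref{prop:ForToNCBruhatMax}; beyond this, the remaining steps amount to direct bookkeeping.
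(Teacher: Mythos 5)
Your proof is correct and follows essentially the same route as the paper: decompose $\qfl_n$ into torus orbits indexed by $\bnfornf_n$, group them by $\ForToNC$-value using the Bruhat-squeeze argument via Proposition~\ref{prop:ForToNCBruhatMax}, and order via the Bruhat pullback. The only organizational difference is that you inline the content of the paper's Lemma~\ref{lem:charts_grouping} rather than isolating it, and you helpfully make explicit the preliminary fact that each $X(F)\subset\qfl_n$ (via exhibiting $T\in\tree_n$ with $F\le_{re}T$), which the paper leaves implicit when invoking Equation~\eqref{eq:torus_decomposition}.
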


We prove the theorem after the following remark and lemma.

\begin{rem}
\label{rem:complex_paving}
The decomposition in Theorem~\ref{thm:paving_Bott} can be interpreted as a partition of $\cqfl{n}$ by associating each $C(F)$ with the half-open subspace of the moment polytope for $X(F)$ around the vertex $\ForToNC(F)$; see Figure~\ref{fig:complex_paving}.
\end{rem}

\begin{figure}[!ht]
    \centering
    \includegraphics[height=6cm]{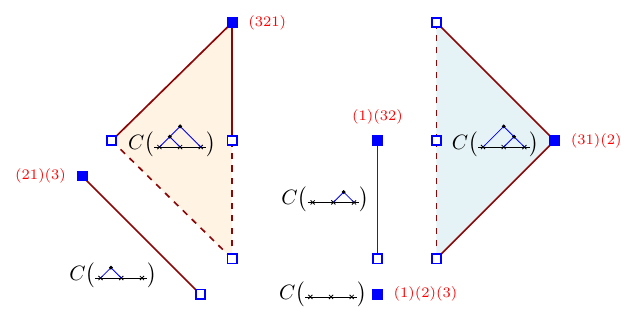}
    \caption{The decomposition of $\cqfl{3}$ induced by our affine paving of $\qfl_{3}$ as described in Remark~\ref{rem:complex_paving}; compare to Figure~\ref{fig:led_poset_kreweras_eg}.}
    \label{fig:complex_paving}
\end{figure}

\begin{lem}
\label{lem:charts_grouping}
Let $F\in\indexedforests_n$ and $\wh{H}\in\bnfornf_{n}$.  Then $\ForToNC(\wh{H})=\ForToNC(F)$ if and only if $\wh{H} \in \Face(F)$ and $\ForToNC(F) \in I_{\wh{H}}$ .
\end{lem}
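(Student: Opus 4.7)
The plan is to prove each direction separately, using~\Cref{prop:ForToNCBruhatMax} (which identifies $\ForToNC(\wh{H})$ as the Bruhat-maximum of $I_{\wh{H}}$) and~\Cref{prop:bnfornf_construction} (which parametrizes $\bnfornf_n$ via pairs $(F, S)$) as the principal inputs.

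For the reverse direction, I would first observe that $\wh{H} \in \Face(F)$ implies $I_{\wh{H}} \subseteq I_{F}$: by~\Cref{thm:smoothTorbit}, $X(\wh{H})$ is a torus-orbit closure inside $X(F)$, so the fixed point sets inject. By~\Cref{prop:ForToNCBruhatMax}, $\ForToNC(F)$ is the Bruhat-maximum of $I_F$, so the hypothesis $\ForToNC(F) \in I_{\wh{H}}$ makes it also the Bruhat-maximum of the smaller set $I_{\wh{H}}$. A second application of~\Cref{prop:ForToNCBruhatMax} to $\wh{H}$ then yields $\ForToNC(\wh{H}) = \ForToNC(F)$.

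For the forward direction, assume $\ForToNC(\wh{H}) = \ForToNC(F)$. The containment $\ForToNC(F) \in I_{\wh{H}}$ is immediate from~\Cref{prop:ForToNCBruhatMax}, since the Bruhat-maximum $\ForToNC(\wh{H})$ lies in $I_{\wh{H}}$. For the membership $\wh{H} \in \Face(F)$, I would apply~\Cref{prop:bnfornf_construction} to write $\wh{H} = \wh{(F', S)}$ uniquely with $F' \in \indexedforests_n$ and $\ForToNC(F') = \ForToNC(\wh{H}) = \ForToNC(F)$. The injectivity of $\ForToNC$ restricted to $\indexedforests_n$ (established in~\cite{BGNST1} and invoked in the proof of~\Cref{prop:equalIimpliesequalF}) then forces $F' = F$, so $\wh{H} = \wh{(F, S)}$. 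Finally, inspecting the construction of $\wh{(F, S)}$ shows it is obtained from $F$ by left-edge deletions at the nodes in $S \cap \rightchildren(F)$ and black-to-white recolorings of the remaining nodes of $S$; by~\Cref{prop:bicoloredchange} both operations are generators of the order $\le_{re}$, so $\wh{H} \le_{re} F$, i.e., $\wh{H} \in \Face(F)$.

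The only subtle ingredient is the injectivity of $\ForToNC|_{\indexedforests_n}$, which belongs to the combinatorial framework already set up in~\cite{BGNST1}; otherwise the proof is a clean application of the preceding structural results, with no computational obstacles.
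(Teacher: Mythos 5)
Your proof is correct and takes essentially the same route as the paper's: both directions hinge on \Cref{prop:ForToNCBruhatMax} (Bruhat-maximality of $\ForToNC$) and the $(F,S)$-parametrization from \Cref{prop:bnfornf_construction}, with injectivity of $\ForToNC|_{\indexedforests_n}$ pinning down the first coordinate. The only minor deviations are cosmetic: you derive $\ForToNC(F)\in I_{\wh{H}}$ directly from \Cref{prop:ForToNCBruhatMax} (arguably cleaner than the paper's appeal to the definition of $\le_{re}$), and you invoke \Cref{thm:smoothTorbit} for the containment $I_{\wh{H}}\subseteq I_F$ where the paper cites \Cref{cor:containequiv}, but since $\wh{H}\in\Face(F)$ is already assumed, both amount to the same one-line observation.
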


\begin{proof}
If $\ForToNC(\wh{H})=\ForToNC(F)$, then $H$ corresponds to a pair of the form $(F,S)$ by the construction of \Cref{prop:bnfornf_construction}. It follows from the definition of $\le_{re}$ that $\wh{H} \in \Face(F)$ and likewise $\ForToNC(F)\in I_{\wh{H}}$. 
Conversely, if $\wh{H} \in\bnfornf_{n}$ satisfies these two conditions, we know that $I_{\wh{H}}\subset I_{F}$ by \Cref{cor:containequiv}. Since $\ForToNC(F)$ is the Bruhat-maximum element of $I_F$ by \Cref{prop:ForToNCBruhatMax}, it must be the Bruhat-maximum element of $I_{\wh{H}}$ as well, which implies $\ForToNC(\wh{H})=\ForToNC(F)$ by \Cref{prop:ForToNCBruhatMax} again.
\end{proof}

\begin{proof}[Proof of Theorem~\ref{thm:paving_Bott}]
By Proposition~\ref{prop:varietydeterminedbytamari}, we have
\begin{align}
    \label{eq:torus_decomposition}
    \qfl_n=\bigsqcup_{\wh{F}\in\bnfornf_n}X^{\circ}(\wh{F}).
\end{align}
By Lemma~\ref{lem:charts_grouping} and Equation~\eqref{eq:C_F}, we have
\begin{align}
\label{eq:C_F_bis}
C(F) = \bigsqcup_{\substack{\wh{H} \in \bnfornf \\ \ForToNC(\wh{H})=\ForToNC(F)}} X^{\circ}(\wh{H}).    
\end{align}
As $\ForToNC$ is surjective when restricted to $\indexedforests_{n}$, it follows immediately that Equation~\eqref{eq:C_F_bis} coarsens the partition in Equation~\eqref{eq:torus_decomposition} into the one in Equation~\eqref{eq:chart_decomposition}.

We now show that $\bigcup_{i=1}^k C(F_i)=\bigcup_{j=1}^k X(F_j)$ for any $k$. Since $C(F)\subset X(F)$ for any $F\in\indexedforests$, we only have to show that any $\mathcal{F}\in X(F_j)$ for some $j\leq k$ is included in $C(F_i)$ for some $i\leq k$. We have that $\mathcal{F}\in X^{\circ}(\wh{G})$ for some $\wh{G}\in\bnfornf_n$ such that $X^{\circ}(\wh{G})\subset X(F_j)$. By \Cref{cor:containequiv}, this implies that $I_{\wh{G}}\subset I_{F_j}$. In particular, using the characterization of \Cref{prop:ForToNCBruhatMax},  we have $\ForToNC(\wh{G})\le \ForToNC(F_j)$ in Bruhat order. By our choice of total order we then have $\ForToNC(\wh{G})=\ForToNC(F_i)$ for some $i\leq j$. Because of \eqref{eq:C_F_bis} we then have $X^{\circ}(\wh{G})\subset C(F_i)$, and thus $\mathcal{F}\in C(F_i)$, which concludes the proof.
\end{proof}

\begin{rem}[Points over $\mathbb{F}_q$]
The definition of $\qfl_n$ and all arguments used so far make sense over any field, not just $\mathbb{C}$, and so using \eqref{eq:chart_decomposition} and \eqref{eq:torus_decomposition} we may count the number of points of $\qfl_n$ over a finite field $\mathbb{F}_q$. 
    In this case $C(F)\simeq \mathbb{F}_q^{\internal{F}}$ has cardinality $q^{|F|}$. Summing over all $F\in\indexedforests_n$ we get
    \[\#\qfl_n(\mathbb{F}_q)=\sum_{F\in\indexedforests_n}q^{|F|}=\sum_{k=0}^{n-1}c_{n,k}q^k\]
    where $c_{n,k}=\frac{n-k}{n+k}\binom{n+k}{k}$ \cite{NT_forest}. Using \eqref{eq:torus_decomposition} we get the alternative expression
    \[\#\qfl_n(\mathbb{F}_q)=\sum_{F\in\bnfor_n}(q-1)^{|F|}=\sum_{k=0}^{n-1}f_{n,k}(q-1)^k\]
    where the numbers $f_{n,k}$ were introduced in \Cref{sec:combinatorics_complex}. These two expressions are polynomial in $q$, and thus one can extract $f_{n,k}=\sum_{m=k}^n\binom{m}{k}c_{n,m}$. This gives another proof that the numbers $f_{n,k}$ are given by Borel's triangle ~\cite[\href{https://oeis.org/A234950}{A234950}]{oeis} as seen in \Cref{sec:combinatorics_complex}.  
\end{rem}

%%%%%%%%%%%%%%%%%%%%%%%%%%
\subsection{Paving with noncrossing Bruhat cells}
\label{sec:paving2}
%%%%%%%%%%%%%%%%%%%%%%%%%%%%

We now give a combinatorial description of our affine paving for $\qfl_{n}$.  
Throughout, we use the convention that a matrix $M$ with entries in  $\CC \cup \{\ast, +\}$ represents the set of all matrices whose entries are, depending on the corresponding entry of $M$, either a particular complex number (if in $\CC$) or taken freely from either $\CC$ (if $\ast$) or $\CC^{\times}$ (if $+$).

We first recall the combinatorial construction of Bruhat cells in $\fl{n}$.  
The \emph{inversion set} of $w \in S_{n}$ is $\inv{w}=\{(i,j) \suchthat \text{$i<j$ and $w(i)>w(j)$}\}$.  
For $w \in S_{n}$, let \emph{$M(w)$} be the matrix with $1$'s in positions $(w(i), i)$, $\ast$'s in positions $(w(j),i)$ for $(i, j) \in \inv{w}$, and $0$'s elsewhere; see for example Figure~\ref{fig:f_to_mf}. 
Then $M(w)$ is isomorphic to an affine space where each $\ast$ represents a coordinate, and this gives a complete set of representatives for the Bruhat cell $BwB \subseteq \fl{n}$.  
In order to reproduce the standard action on $\fl{n}$, we have $T$ act on elements of $M(w)$ by scaling the $k, \ell$ entry by the character $\chi_{k} \chi_{w(\ell)}^{-1}$.  
Thus as a $T$-representation
\begin{equation}
\label{eq:bruhat_chars}
M(w)\cong \bigoplus_{(i,j)\in \inv{w}}\mathbb{C}_{\chi_{w(j)}\chi_{w(i)}^{-1}}.
\end{equation}

In order to state a similar result for $\qfl_{n}$, we introduce an important subset of the inversion set of a noncrossing partition. 

\begin{defn}
\label{defn:invnc}
The \emph{noncrossing inversion set} of $w \in \NC_{n}$ is
\[
\invnc{w}\coloneqq \{(i,j)\in \inv{w}:w(i,j)\in \NC_n\}.
\]
\end{defn}

%%%%%%%%%
\begin{defn}
The \emph{noncrossing Bruhat cell} for $w\in \NC_n$ is the set represented by the matrix $M_{\NC}(w)$ with entries $1$ in position $(w(i), i)$ for each $i \in [n]$, $\ast$ in position $(w(j),i)$ for each $(i,j)\in \invnc{w}$, and $0$ elsewhere.
For $F\in \indexedforests_n$, we define 
\[
M_{\NC}(F) \coloneqq M_{\NC}(\ForToNC(F)).
\]
\end{defn}
%%%%%%%%%

For $w = \ForToNC(F)$, we have a canonical identification
\[
M_{\NC}(w)\cong \bigoplus_{(i,j)\in \invnc{w}}\mathbb{C}_{\chi_{w(j)}\chi_{w(i)}^{-1}}\subset \bigoplus_{(i,j)\in \inv{w}}\mathbb{C}_{\chi_{w(j)}\chi_{w(i)}^{-1}}\cong M(w).
\]
See for example Figure~\ref{fig:f_to_mf}. We now state the main result of the section.

%%%%%%%%%
\begin{figure}[t]
\[
M(w)=\begin{bmatrix}
\ast&1&0&0&0&0\\
\ast&0&1&0&0&0\\
\ast&0&0&\ast&\ast&1\\
\ast&0&0&\ast&1&0\\
\ast&0&0&1&0&0\\
1&0&0&0&0&0
\end{bmatrix}
\hspace{6em}
M_{\NC}(w)=\begin{bmatrix}
\ast&1&0&0&0&0\\
\ast&0&1&0&0&0\\
\ast&0&0&\ast&0&1\\
0&0&0&\ast&1&0\\
0&0&0&1&0&0\\
1&0&0&0&0&0
\end{bmatrix}
\]
\caption{The Bruhat cell and noncrossing Bruhat cell for $w = 612543$.
}
\label{fig:f_to_mf}
\end{figure}
%%%%%%%%%

%%%%%%%%%
\begin{thm}
\label{thm:paving}
For $w\in \NC_n$ we have $\qfl_n\cap BwB=M_{\NC}(w)B$ and
\begin{equation}
\label{eq:hhmpequalsunion}
\qfl_n = \bigsqcup_{w\in \NC_n} M_{\NC}(w)B.
\end{equation}
Moreover, if for any total ordering $w_1,w_2,\ldots,w_{|\NC_{n}|}$ of $\NC_n$ that extends the Bruhat order we set
\[
X_k = \bigcup_{i=1}^{k} M_{\NC}(w_{i})B,
\]
then each $X_k$ is closed, $X_{1} \subseteq X_{2} \subseteq \cdots \subseteq X_{|\NC_{n}|} = \qfl_{n}$, and $X_{k+1}\setminus X_k=M_{\NC}(w_{k+1})B$.
\end{thm}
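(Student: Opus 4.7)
The plan is to reduce the theorem to a single identity: for each $F \in \indexedforests_n$ with $w_F := \ForToNC(F)$, the affine chart $C(F)$ equals $M_{\NC}(w_F) B$. Granting this, all three assertions of the theorem follow formally from \Cref{thm:paving_Bott}. Since $\ForToNC$ restricts to a bijection $\indexedforests_n \to \NC_n$ (its surjectivity is used in the proof of \Cref{thm:paving_Bott} and both sides have the same Catalan cardinality), the decomposition $\qfl_n = \bigsqcup_F C(F)$ becomes $\qfl_n = \bigsqcup_{w \in \NC_n} M_{\NC}(w) B$; intersecting with $BwB$ (which contains $M_{\NC}(w) B$ by construction) yields the first claim; and any linear extension of the Bruhat order on $\NC_n$ pulls back via $\ForToNC^{-1}$ to a linear extension of the pullback Bruhat order used in \Cref{thm:paving_Bott}, so the filtration assertion transfers directly.

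I would establish $C(F) = M_{\NC}(w_F) B$ by induction on $|F|$, with the base case immediate. For the inductive step, write $F = F' \cdot \xletter{i}$ with $F' \in \indexedforests_{n-1}$. When $\xletter{i} = \rletter{1}^-$, \Cref{lem:ForToNcPsi} gives $w_F = \Psi_1^- w_{F'}$ and \Cref{prop:XBottManifold} gives $C(F) = \Psi_1^- C(F')$, so the claim reduces to the matrix identity $\Psi_1^- M_{\NC}(w_{F'}) = M_{\NC}(\Psi_1^- w_{F'})$. This is verified by comparing $1$- and $*$-positions: insertion of a new row and column with $1$ at $(1,1)$ shifts all entries by $(+1,+1)$, and one checks directly that $\invnc{\Psi_1^- w_{F'}} = \{(i+1,j+1) : (i,j) \in \invnc{w_{F'}}\}$ using that $\Psi_1^-$ preserves the noncrossing condition.

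The main obstacle is the case $\xletter{i} = \eletter{i}$, where \Cref{prop:XBottManifold} yields $C(F) = \Psi_i^\epsilon C(F') \cup \mathbb{G}_i C(F')$ with $\epsilon \in \{+,-\}$ determined by \Cref{lem:ForToNcPsi}. My plan is a direct matrix computation showing that $\Psi_i^\epsilon M_{\NC}(w_{F'})$ realizes a codimension-one subset of $M_{\NC}(w_F)$ obtained by freezing one specific $*$-entry at $0$, while $\mathbb{G}_i M_{\NC}(w_{F'})$ provides the complementary $\mathbb{C}^*$-values of that same entry (after a right multiplication by an element of $B$ using the $1$ at $(i,i+1)$ to normalize the free parameter $+ \in \mathbb{C}^*$ into the appropriate $*$-coordinate); together these give all of $M_{\NC}(w_F) B$. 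The essential combinatorial input is that $|\invnc{w_F}| = |\invnc{w_{F'}}| + 1$ and that the ``extra'' pair is precisely the one reached by this construction, which I would verify case-by-case in $\epsilon = \pm$ by checking directly that the new transposition (encoding the tangent direction at $w_F$ from the $\mathbb{P}^1$-fiber) is a noncrossing inversion of $w_F$.
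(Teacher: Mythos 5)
Your proposal follows the paper's route exactly: reduce Theorem~\ref{thm:paving} to the identity $C(F)=M_{\NC}(\ForToNC(F))B$ for $F\in\indexedforests_n$, establish it by tracking how the noncrossing Bruhat cells interact with the building operations $\Psi_i^{\pm}$ and $\mathbb{G}_i$, and then deduce the three assertions of the theorem formally from Theorem~\ref{thm:paving_Bott}. The outer reductions are correct. A small structural remark: in an induction on $|F|$ the case $\xletter{i}=\rletter{1}^-$ does not arise as a reductive step, since applying $\rletter{j}^{\pm}$ leaves $|F|$ unchanged; the unique base case is $F=(\rletter{1}^-)^n$, where both sides are the single point $\idem_{S_n}B$, and every $F$ with $|F|>0$ decomposes as $G\cdot\eletter{i}$ with $G\in\indexedforests_{n-1}$.

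The substantive gaps are the two facts your sketch names but does not establish, and together they constitute the bulk of Section~\ref{sec:paving2}. Write $w=\ForToNC(F)$ and $u=\ForToNC(G)$. First, to see that $\Psi_i^{\epsilon}M_{\NC}(u)$ is exactly the codimension-one slice of $M_{\NC}(w)$ obtained by freezing one $*$-entry at $0$, you need the precise recursion $\invnc{w}=\{(j+\delta_{j\ge i},\,k+\delta_{k\ge i})\suchthat (j,k)\in\invnc{u}\}\cup\{(i,i+1)\}$; the cardinality $|\invnc{w}|=|\invnc{u}|+1$ that you take as given already presupposes the identification of $\invnc{\cdot}$ with the set of spreads of internal nodes (Proposition~\ref{prop:spreadset}), a separate combinatorial statement. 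Second, when leaf $i$ is a right child in $G$ the operator $\mathbb{G}_i$ places the free parameter $+\in\CC^{\times}$ at matrix position $(i,i)$, whereas the new $*$-entry of $M_{\NC}(w)$ sits at the different position $(w(i+1),i)$; converting one into the other by a right $B$-multiplication requires the explicit pair of column operations given in Lemma~\ref{lem:Gshift}, and the hypothesis of that lemma---that column $i$ of $M_{\NC}(u)$ has no $*$'s---must itself be verified, again via Proposition~\ref{prop:spreadset} (a right-child leaf $i$ cannot be the left endpoint of any spread, hence carries no noncrossing inversion of $u$ ending in $i$). Your outline correctly locates these as the main obstacles, but both must still be proved.
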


We will prove this result at the end of the section after some preparation. 

\begin{rem}
The second part of Theorem~\ref{thm:paving} shows that the $X_{i}$ form an affine paving of $\qfl_{n}$; see Section~\ref{sec:GKM_recall} for precise definitions.
\end{rem}

Our first step is to extend a combinatorial characterization of the noncrossing inversion set stated in~\cite[Remark 8.18]{BGNST1}.  
For $\wh{F} \in \bnfor_{n}$, the \emph{spread} of $v \in \internal{\wh{F}}$ is the pair $(i, j)$ consisting of the leftmost leaf descendant $i$ of $v$ and the rightmost leaf descendant $j$ of $v$.
Figure~\ref{fig:bnestfor_with_spread} depicts a bicolored nested forest in which each internal node is labeled by its spread.
Note that the spread of an internal node $v$ is not necessarily the transposition $\tau_v$ assigned to each $v \in \internal{\wh{F}}$ earlier in Section~\ref{sec:torusfixedpoints}.

\begin{figure}[!h]
    \centering
    \includegraphics[scale=0.8]{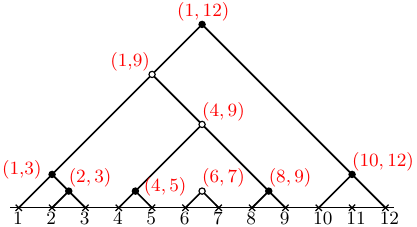}
    \caption{A bicolored nested forest with spreads recorded for each internal node}
    \label{fig:bnestfor_with_spread}
\end{figure}

\begin{prop}
\label{prop:spreadset}
If $\wh{F} \in \bnfornf_{n}$ and $w = \ForToNC(\wh{F})$, then 
\[
\{(i, j) \in \invnc{w} \suchthat w(i, j) \in I_{\wh{F}} \}
=
\{\text{spreads $(i, j)$ of black nodes $v \in \internal{\wh{F}}$}\}.
\]
In particular, if ${F} \in \indexedforests_{n}$ then $\invnc{w}$ is the set of all spreads for $v \in \internal{{F}}$.
\end{prop}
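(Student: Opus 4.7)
The plan is to reduce to the ``in particular'' case $\wh{F} = F \in \indexedforests_{n}$, which is the content of \cite[Remark~8.18]{BGNST1}. By \Cref{prop:bnfornf_construction}, write $\wh{F} = \wh{(F,S)}$ uniquely with $F \in \indexedforests_{n}$ and $S \subseteq \internal{F}$; then $w := \ForToNC(\wh{F}) = \ForToNC(F)$, and the black nodes of $\wh{F}$ are precisely $\internal{F} \setminus S$.

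The first step is a spread-preservation lemma: for each $v \in \internal{\wh{F}}$, the spread of $v$ in $\wh{F}$ equals its spread in $F$. Indeed, left-edge-deletion at a right child $u \in \rightchildren(F)$ preserves the leftmost and rightmost leaf descendants of every other internal node: $u$'s right subtree (whose rightmost leaf coincides with $u$'s) takes $u$'s place under $u$'s parent, the leftmost descendant of any ancestor is determined by left-going paths unaffected by the operation, and subtrees not in ancestral relation with $u$ are untouched. Iterating over $u \in S \cap \rightchildren(F)$ yields the lemma.

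Combining \Cref{prop:ForToNCBruhatMax} and \Cref{cor:sublattice}, $I_{\wh{F}}$ is realized as a Boolean sub-cube of $I_F$ sharing the Bruhat-maximum $w$; the Kreweras-neighbors of $w$ in $I_F$ are indexed by $\internal{F}$ via toggling, and those lying in $I_{\wh{F}}$ are indexed by the black nodes $\internal{F}\setminus S$. The second assertion, applied to $F$, identifies $\invnc{w}$ with the set of spreads of $v \in \internal{F}$; combined with Bruhat-maximality of $w$ and a cardinality count, each $v \in \internal{F}$ is matched with the unique Kreweras-neighbor $w \cdot (i_v, j_v)$ of $w$ in $I_F$, where $(i_v, j_v)$ is the spread of $v$. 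Restricting to the sub-cube $I_{\wh{F}}$ and invoking spread preservation then identifies $\{(i,j) \in \invnc{w} : w(i,j) \in I_{\wh{F}}\}$ with the set of spreads of black nodes of $\wh{F}$.

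The main technical point will be the matching in the previous paragraph: that toggling $v$ in the cube of $I_F$ corresponds to right multiplication of $w$ by the spread transposition $(i_v, j_v)$. I would verify this combinatorially via a cycle-structure analysis of $w_S = \prod_{v \notin S}\tau_v$, showing that its cycles are the leaf sets of the connected components of $F$ obtained after left-edge-deleting each $v \in S$, and then checking that the spread of $v_0$ implements the expected split (when $v_0 \notin S_{\max}$) or merge (when $v_0 \in S_{\max}$) of these noncrossing cycles; alternatively, one can induct on the construction of $F$ via the building operations of \Cref{sec:BuildingOperators}, tracking the $\mathbb{P}^1$-fiber direction of the iterated Bott tower $X(F)$.
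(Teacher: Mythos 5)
Your proof takes a genuinely different route from the paper's, which inducts directly on $n$ by writing $\wh{F}=\wh{G}\cdot\xletter{k}$ and tracking, for each internal node, how its spread and the associated adjacent element of $I_{\wh{F}}$ change simultaneously under $\Psi_k^\pm$. Your preparatory steps are sound: the reduction $\wh{F}=\wh{(F,S)}$, the spread-preservation lemma under left-edge deletion, and the cardinality argument (Bruhat-maximality forces the transpositions along edges of $I_F$ at $w$ to lie in $\invnc{w}$, and both sets have $|\internal{F}|$ elements).

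But there is a genuine gap at what you rightly flag as the ``main technical point.'' The cardinality count yields only a \emph{set}-equality between $\{\text{spreads of } \internal{F}\}$ and $\{\text{edge-transpositions of the cube } I_F \text{ at } w\}$; it does not establish which edge carries which spread. Concretely, it does not show that inserting or removing $\tau_{v_0}$ from the factorization $w=\prod_{v\in\internal{F}\setminus\rightchildren(F)}\tau_v$ of \Cref{prop:cubical_factorization} right-multiplies $w$ by the spread of $v_0$, as opposed to the spread of some \emph{other} node. That node-by-node matching is exactly what your final step requires, because the sub-cube $I_{\wh{F}}\subseteq I_F$ is specified by which \emph{toggles} (black nodes) remain available, not directly by a subset of $\invnc{w}$. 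You name two possible strategies for establishing the matching but carry out neither, and this matching is precisely what the paper's induction builds up one letter at a time---so the reduction has relocated the main difficulty rather than resolved it. A secondary caution: the paper only says the ``in particular'' case is \emph{stated} in \cite[Remark 8.18]{BGNST1} and then reproves the full proposition from scratch, so you should check whether that remark actually supplies a proof before importing it as a black box.
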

\begin{proof}
By~\Cref{prop:ForToNCBruhatMax}, it is sufficient to show that the set of elements of $I_{\wh{F}}$ that are adjacent to $w$ give the spreads of all black nodes in $\wh{F}$.  
We do so using induction on $n$.  
As $n = 1$ is vacuous, assume that $n > 1$ and write $\wh{F} = \wh{G} \cdot \xletter{k}$ and $u = \ForToNC(\wh{G})$ with $\wh{G}\in \bnfornf_{n-1}$. We split into cases according to \Cref{lem:ForToNcPsi}. First, suppose that either $\xletter{k}=\rletter{k}^-$ or that leaf $k$ is a right child of $\wh{G}$ (so in particular $\xletter{k}\ne \eletter{k}$). Then for any $v \in \internal{\wh{G}}$ with spread $(i, j)$ we have $\Psi_{k}^{-}\big(u(i\, j)\big) = w \Psi_{k}^{-}\big((i\, j)\big)=w(i + \delta_{i \ge k},  j + \delta_{j \ge k})$, and the node in $\internal{\wh{F}}$ corresponding to $v$ has spread $(i + \delta_{i \ge k},  j + \delta_{j \ge k})$ so the claim holds.

Now suppose $\xletter{k}\ne \rletter{k}^-$ and $k$ is not a right child in $\wh{G}$. If $\xletter{k}=\rletter{k}^+$, $\Psi_k^+(u(ij))=ws_k(i+\delta_{i\ge k},j+\delta_{j\ge k})s_k=w(i+\delta_{i\ge k+1},j+\delta_{j\ge k+1})$, and we conclude as before. If $\xletter{k} = \eletter{k}$ 
then the same reasoning applies except that $\wh{F}$ has one more black node than $\wh{G}$, with spread $(k, k+1)$, and $w$ has the additional adjacent element $w s_{k}$.
\end{proof}

We next give a recursive characterization of the charts in $\fl{n}$ determined by the $M_{\NC}(F)$ using the building operations from Section~\ref{sec:BuildingOperators}.

\begin{prop}
\label{prop:M_chart_recurse}
The representatives of the noncrossing Bruhat cells in $\fl{n}$ are characterized recursively by $M_{\NC}(\rletter{1}^{-})B = \fl{1}$ and for $F = G \cdot \eletter{i} \in \indexedforests_{n}$,
\[
M_{\NC}(F)B = 
\begin{cases}
\mathbb{G}_{i}M_{\NC}(G)B \sqcup \Psi_i^{-}M_{\NC}(G)B & \text{if leaf $i$ is a right child in $G$} \\
\mathbb{G}_{i}M_{\NC}(G)B \sqcup \Psi_i^{+}M_{\NC}(G)B & \text{otherwise.}
\end{cases}
\]
\end{prop}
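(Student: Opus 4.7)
The plan is to analyze both sides of the claimed identity as explicit matrix loci and reduce the equality to careful bookkeeping of spreads. Set $v = \ForToNC(G)$ and $w = \ForToNC(F)$, and let $\epsilon \in \{-, +\}$ be the sign for which $w = \Psi_i^{\epsilon} v$ as given by \Cref{lem:ForToNcPsi}, so that $\epsilon = -$ in the first case of the proposition and $\epsilon = +$ in the second. Now $\internal{F}$ consists of $\internal{G}$ together with one new black node of spread $(i, i+1)$; every other internal node's spread $(c, d)$ shifts to $(c + \delta_{c > i}, d + \delta_{d \geq i})$ when passing from $G$ to $F$. Via \Cref{prop:spreadset} this yields an explicit description of $\invnc{w}$ as the union of $\{(i, i+1)\}$ with a suitably shifted image of $\invnc{v}$.

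With this in hand I would show that $\Psi_i^{\epsilon} M_{\NC}(G)$ coincides with $M_{\NC}(w)$ after zeroing the single distinguished entry at position $(w(i+1), i)$ corresponding to the new inversion $(i, i+1)$. The key structural observation is that in the first case no internal node of $G$ has leftmost descendant $i$, while in the second case no internal node has rightmost descendant $i$: in a binary tree the leftmost (respectively rightmost) leaf of a subtree is reached by descending through left (respectively right) children, which in each case contradicts the hypothesis on leaf $i$ in $G$. Under this observation the shift rules for $\Psi_i^{\epsilon}$ and for spreads agree on exactly the pairs $(c, d) \in \invnc{v}$ that arise, so all non-distinguished $*$-positions of $M_{\NC}(w)$ match those of $\Psi_i^{\epsilon} M_{\NC}(G)$. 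A parallel argument then compares $\mathbb{G}_i M_{\NC}(G)$ with the complementary locus of $M_{\NC}(w)B$. In the second case the distinguished position is exactly $(i, i)$ and $\mathbb{G}_i M_{\NC}(G)$ is literally $\Psi_i^+ M_{\NC}(G)$ with $(+) \in \mathbb{C}^*$ placed there, giving an immediate identification. In the first case however $\mathbb{G}_i M_{\NC}(G)$ is built from $\Psi_i^+ M_{\NC}(G)$ while $M_{\NC}(w)$ comes from $\Psi_i^- v$, so a nontrivial $B$-action is required: columns $i$ and $i+1$ of $\mathbb{G}_i M_{\NC}(G)$ are $e_{w(i+1)} + (+) e_i$ and $e_i$, and scaling column $i$ by $1/(+)$ and then combining $(+) \cdot (\text{new column } i) - (+) \cdot (\text{old column } i+1)$ to form the new column $i+1$ normalizes these to $e_i + (1/(+))\, e_{w(i+1)}$ and $e_{w(i+1)}$, matching the corresponding columns of $M_{\NC}(w)$ with distinguished entry $1/(+)$.

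Disjointness of the three loci is immediate from \Cref{thm:ZBuilding}: the subsets $\Psi_i^-\fl{n-1}$, $\mathbb{G}_i\fl{n-1}$, and $\Psi_i^+\fl{n-1}$ of $\fl{n}$ are pairwise disjoint, arising as the two distinguished sections and the complementary $\mathbb{C}^*$-bundle in the $\mathbb{P}^1$-bundle $\mathbb{P}_i\fl{n-1}\to \pi_i\Psi_i(\fl{n-1})$. The main technical obstacle is the $B$-action bookkeeping in the first case, where one must simultaneously verify that the transformation of columns $i$ and $i+1$ leaves every other $*$-entry in the position dictated by the shift rules; this reduces to the right-child observation above, combined with the fact that $\Psi_i^-$ and $\Psi_i^+$ shift the remaining columns of $M_{\NC}(G)$ identically, so no interference arises.
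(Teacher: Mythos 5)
Your proposal is correct and follows essentially the same route as the paper: analyze the $*$-positions of both sides via \Cref{prop:spreadset}, identify $\Psi_i^{\epsilon}M_{\NC}(G)$ with $M_{\NC}(w)$ after zeroing the $(w(i+1),i)$ entry, and handle the $\epsilon=-$ (right-child) case by the column manipulation sending $(+)\mapsto 1/(+)$. Your inline column operations are exactly the content of \Cref{lem:Gshift}, and the observation that a right (resp.\ left) child cannot be the leftmost (resp.\ rightmost) leaf of any subtree is precisely what the paper uses to verify the hypotheses of that lemma, since it guarantees column $i$ of $M_{\NC}(G)$ is a standard basis vector $e_j$ with $j<i$. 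Two small remarks: first, your uniform spread-shift rule $(c,d)\mapsto(c+\delta_{c>i},d+\delta_{d\ge i})$ is actually the correct one, whereas the paper states the shift as $(j+\delta_{j\ge i},k+\delta_{k\ge i})$, which is only valid in the right-child case; the two agree on the spreads that actually occur thanks to the left/right-child constraint, which is why the paper's conclusion still holds, but your version is cleaner. Second, you supply a disjointness argument via \Cref{thm:ZBuilding}, whereas the paper gets disjointness for free from the unique-representative property of $M(w)$ (since $M_{\NC}(w;0)$ and $M_{\NC}(w;+)$ partition the canonical representatives); both work.
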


We prove the proposition using a technical argument involving a modified version of the $\mathbb{G}_{i}$ operation.  Given a matrix $M$ with column $i$ equal to the $j$th basis vector for $j < i$, let $\mathbb{G}_i'M$ be obtained from $\Psi_i^-M$ by setting the $j,i$-entry to $+$.

%%%%%%%%%
\begin{eg}
We have
\[
\mathbb{G}_2\begin{bmatrix}a&1&b\\c&0&d\\e&0&f\end{bmatrix}=\begin{bmatrix}a&1&0&b\\0&+&1&0\\c&0&0&d\\e &0&0&f\end{bmatrix}
\qquad\text{and}\qquad
\mathbb{G}'_2\begin{bmatrix}a&1&b\\c&0&d\\ e &0&f\end{bmatrix}=\begin{bmatrix}a&+&1&b\\0&1&0&0\\c&0&0&d\\e &0&0&f\end{bmatrix}
\]
\end{eg}
%%%%%%%%%

%%%%%%%%%
\begin{lem}
\label{lem:Gshift}
If $M$ is an $(m-1)\times (m-1)$ matrix whose $i$th column is $e_j$ for some $j<i$, then we have an equality of sets $(\mathbb{G}_iM)B_m=(\mathbb{G}_i'M)B_m$.
\end{lem}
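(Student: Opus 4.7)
The plan is to write out both matrices $\mathbb{G}_i M$ and $\mathbb{G}_i' M$ explicitly and then compare, column by column, the flags they determine. The hypothesis that the $i$-th column of $M$ equals $e_j$ for some $j<i$ is the crucial ingredient that makes the two sets of representatives coincide modulo $B_m$.

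First I would unpack the recipes for $\Psi_i^\pm$ applied to $M$. Tracking where the distinguished $i$-th column $e_j$ gets placed after inserting the new row and new column, one finds that the columns of $\mathbb{G}_i M$ and $\mathbb{G}_i' M$ agree identically in positions $1,\ldots,i-1$ and $i+2,\ldots,m$. Writing $t, s \in \mathbb{C}^*$ for the parameter ``$+$'' on each side, the two remaining columns work out as follows: column $i$ of $\mathbb{G}_i M$ is $e_j + t\, e_i$ and column $i+1$ is $e_i$; column $i$ of $\mathbb{G}_i' M$ is $s\, e_j + e_i$ and column $i+1$ is $e_j$.

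Let $V_k$ and $V_k'$ denote the spans of the first $k$ columns of $\mathbb{G}_i M$ and $\mathbb{G}_i' M$, respectively. Automatically $V_k = V_k'$ for $k \le i-1$ and for $k \ge i+2$, and $V_{i-1} = V_{i-1}'$ lies in the hyperplane $\{x_i = 0\} \subset \mathbb{C}^m$, so in particular $e_i \notin V_{i-1}$. Moreover $V_{i+1} = V_{i-1} + \langle e_j, e_i \rangle = V_{i+1}'$ independently of $s$ and $t$. Hence the lemma reduces to showing that for each $t \in \mathbb{C}^*$ one can choose $s \in \mathbb{C}^*$ with $V_i = V_i'$, and conversely. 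This splits into two cases: if $e_j \in V_{i-1}$, then both $V_i$ and $V_i'$ coincide with $V_{i-1} + \langle e_i \rangle$ for every choice of $s, t$; if $e_j \notin V_{i-1}$, then $s = 1/t$ gives $\tfrac{1}{t}(e_j + t\, e_i) = s\, e_j + e_i$, so the two spanning vectors differ by a nonzero scalar modulo $V_{i-1}$ and determine the same subspace.

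I do not anticipate a genuine obstacle. The only point that requires care is the bookkeeping for how $\Psi_i^-$ and $\Psi_i^+$ shift the special column $e_j$ of $M$; once the relevant columns are written out, the subspace comparison is elementary.
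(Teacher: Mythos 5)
Your argument is correct and essentially mirrors the paper's proof: the paper exhibits an explicit $b \in B_m$ with $(\mathbb{G}_iM)(x)\,b = (\mathbb{G}_i'M)(x^{-1})$ (scale column $i$ by $x^{-1}$, then replace column $i+1$ by $x\,c_i - x\,c_{i+1}$), while you compare the flag positions $V_k$, $V'_k$ directly and arrive at the same substitution $s = 1/t$. One minor remark: your degenerate case $e_j \in V_{i-1}$ cannot actually occur, since $M$ is to be interpreted as an invertible matrix and $e_j$ is its $i$th column, hence lies outside the span of columns $1,\ldots,i-1$; the case split does no harm but is superfluous.
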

\begin{proof}    
For $x \in \CC^{\times}$, let $(\mathbb{G}_{i}M)(x)$ and $(\mathbb{G}_{i}'M)(x^{-1})$ denote the matrices obtained by setting the newly introduced $+$'s to $x$ and $x^{-1}$, respectively.  
Let $c_{i}$ and $c_{i+1}$ be the $i$th and $i+1$st columns of $(\mathbb{G}_{i}M)(x)$.
Then  $(\mathbb{G}_{i}'M)(x^{-1})$ is obtained from $(\mathbb{G}_{i}M)(x)$ by performing the column operations $c_i \mapsto x^{-1}c_i$ followed by $c_{i+1} \mapsto x c_i -x c_{i+1}$.  
Both forward column operations correspond to right multiplication by elements of $B_m$, so we have $(\mathbb{G}_{i}M)(x)B_m = (\mathbb{G}_{i}'M)(x^{-1})B_m$.  
We conclude as $x\mapsto x^{-1}$ is a bijection on $\mathbb{C}^*$.
\end{proof}
%%%%%%%%%

\begin{proof}[Proof of Proposition~\ref{prop:M_chart_recurse}]
Let $w = \ForToNC(F)$ and $u = \ForToNC(G)$. 
By Lemma~\ref{lem:ForToNcPsi}, we have $w = \Psi_{i}^{\pm}(u)$.  Further, by Proposition~\ref{prop:spreadset}, we have
\[
\invnc{w} = \{ (j + \delta_{j \ge i}, k + \delta_{k \ge i}) \suchthat (j, k) \in \invnc{u}\} \cup \{(i, i+1)\}.
\]
Thus, if we write $M_{\NC}(w;  x)$ for the matrix obtained from $M_{\NC}(w)$ by setting the $(w(i+1), i)$ entry equal to $x$, then we have
\[
M_{\NC}(w; 0) = \begin{cases}
\Psi^{-}_{i}M_{\NC}(u) & \text{if leaf $i$ is a right child in $G$} \\ 
\Psi^{+}_{i}M_{\NC}(u) & \text{otherwise.}
\end{cases}
\]
Moreover, $M_{\NC}(w) = M_{\NC}(w; 0) \sqcup M_{\NC}(w; +)$, so we complete the proof by showing that 
\[
    \mathbb{G}_{i}M_{\NC}(u)B = M_{\NC}(w; +)B.
\]

If leaf $i$ is not a right child in $G$, then $w(i) > w(i+1)$ and we have a direct equality $M_{\NC}(w; +) = \mathbb{G}_{i}M_{\NC}(u)$.  
If leaf $i$ is a right child in $G$, then $w(i) < w(i+1)$ and $M_{\NC}(w;  +) = \mathbb{G}_{i}'M_{\NC}(u)$, so we must use Lemma~\ref{lem:Gshift} to relate $\mathbb{G}_{i}'$ and $\mathbb{G}_{i}$.  
Indeed, as a right child $i$ can never be the left endpoint of an internal node, Proposition~\ref{prop:spreadset} implies that  $u$ has no noncrossing inversion ending in $i$.  
Thus $M_{\NC}(G)$ has no $\ast$'s in column $i$ and the hypotheses of Lemma~\ref{lem:Gshift} are satisfied, giving $M_{\NC}(w; +)B = \mathbb{G}_{i}M_{\NC}(u)B$.
\end{proof}

\begin{prop}
\label{prop:charts_are_charts}
Let $w \in \NC_{n}$, and let $F$ be the unique forest in $\indexedforests_n$ such that  $w=\ForToNC(F)$.  
We have $M_{\NC}(F)B = C(F)$, and as a consequence there is a  $T$-equivariant isomorphism
\[
C(F) \cong  M_{\NC}(w) \cong  \bigoplus_{(i,j)\in \invnc{w}}\mathbb{C}_{\chi_{w(j)}\chi_{w(i)}^{-1}}.
\]
\end{prop}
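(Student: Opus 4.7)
The plan is to establish the equality $M_{\NC}(F)B = C(F)$ by induction on $|F|$, using the recursion in Proposition~\ref{prop:M_chart_recurse} for $M_{\NC}(F)B$ and deriving a matching recursion for $C(F)$ from the Bott manifold structure of $X(F)$. The base case $|F|=0$ is immediate: $\ForToNC(F) = \id$, $M_{\NC}(\id)B = B$, and $C(F)$ is the single flag corresponding to $\id$.

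For the inductive step, write $F = G \cdot \eletter{i}$ with $G \in \indexedforests_{n-1}$, and set $u = \ForToNC(G)$, $w = \ForToNC(F)$. By Proposition~\ref{prop:XBottManifold}, $X(F) = \PP_i X(G)$ is the $\PP^1$-bundle $\proj((\mc{F}_i/\mc{F}_{i-1}) \oplus \CC_{\chi_i})_{X(G)}$ with equivariant sections $\Psi_i^{-}$ and $\Psi_i^{+}$ corresponding to the $\CC_{\chi_i}$ and $\mc{F}_i/\mc{F}_{i-1}$ summands respectively. By Lemma~\ref{lem:ForToNcPsi}, the fixed point $w$ projects to $u$ and lies on the $\Psi_i^{-}$ section if leaf $i$ is a right child in $G$, and on the $\Psi_i^+$ section otherwise. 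To turn this into a recursion for $C(F)$, recall from the proof of Theorem~\ref{thm:smoothTorbit} that $\Face(F) = \bigcup_{\wh{K} \in \Face(G)} \{\wh{K} \cdot \rletter{i}^-, \wh{K} \cdot \rletter{i}^+, \wh{K} \cdot \eletter{i}\}$, with corresponding torus orbit closures having fixed point sets $\Psi_i^- I_{\wh{K}}$, $\Psi_i^+ I_{\wh{K}}$, and $\Psi_i^- I_{\wh{K}} \sqcup \Psi_i^+ I_{\wh{K}}$. Since $\Psi_i^{-}$ fixes $i$ while $\Psi_i^{+}$ sends $i \mapsto i+1$, their images in $S_n$ are disjoint; hence $w$ appears as a fixed point only in the orbits attached to $\wh{K}$-forests satisfying $u \in I_{\wh{K}}$, and only on its own section plus the open fiber direction. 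This yields the decomposition
\[
C(F) = \Psi_i^{\epsilon} C(G) \sqcup \GG_i C(G),
\]
with $\epsilon = -$ in the right-child case and $\epsilon = +$ otherwise, exactly mirroring Proposition~\ref{prop:M_chart_recurse}. Combined with the inductive hypothesis $C(G) = M_{\NC}(G)B$, this gives $C(F) = M_{\NC}(F)B$.

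The claimed $T$-equivariant isomorphism $C(F) \cong M_{\NC}(w) \cong \bigoplus_{(i,j) \in \invnc{w}} \CC_{\chi_{w(j)} \chi_{w(i)}^{-1}}$ is then essentially a restatement of the equality just proved: by construction $M_{\NC}(w)$ is an affine space with one coordinate for each $\ast$ in position $(w(j), i)$ with $(i, j) \in \invnc{w}$, and the standard $T$-action scaling the $(k, \ell)$ entry by $\chi_k \chi_{w(\ell)}^{-1}$ recorded in~\eqref{eq:bruhat_chars} restricts to the stated character decomposition on these coordinates.

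The main obstacle is translating the abstract definition of $C(F)$ in~\eqref{eq:C_F} into the concrete recursive geometric form that matches Proposition~\ref{prop:M_chart_recurse}. The key observation making this possible is the disjointness of the images of $\Psi_i^{-}$ and $\Psi_i^{+}$ in $S_n$, which guarantees that the torus fixed point $w$ only contributes the section it lies on plus the middle $\GG_i$-stratum to the chart---and precisely nothing from the opposite section.
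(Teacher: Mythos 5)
Your proof is correct and follows essentially the same route as the paper's: both proceed by the recursion $F = G\cdot \eletter{i}$, use Lemma~\ref{lem:ForToNcPsi} to determine the section $\Psi_i^{\epsilon}$ on which $w=\ForToNC(F)$ sits, decompose $\Face(F)$ in terms of $\Face(G)$, and conclude $C(F) = \GG_i C(G) \sqcup \Psi_i^{\epsilon}C(G)$ to match Proposition~\ref{prop:M_chart_recurse}. One small imprecision: you write that ``$\Psi_i^{+}$ sends $i\mapsto i+1$,'' but this is not the case; by definition $\Psi_i^+v = (\Psi_i^-v)s_i$, so $(\Psi_i^+v)(i+1)=i$ while $(\Psi_i^+v)(i)=(\Psi_i^-v)(i+1)$ can be any value other than $i$. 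The disjointness of the images of $\Psi_i^{-}$ and $\Psi_i^{+}$ that you actually need still holds, for the simpler reason that $\Psi_i^-v$ always fixes $i$ while $\Psi_i^+v$ never does, so the argument goes through unchanged.
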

\begin{proof}
We show that $C(F)$ agrees with the recursive characterization of $M_{\NC}(F)B$ given in Proposition~\ref{prop:M_chart_recurse}.  
As $C((\rletter{1}^{-})^{n-k}) = \idem_{S_{n-k}}\in \fl{n-k}$, this amounts to showing that for $F = G \cdot \eletter{i} \in \indexedforests_{n}$,
\[
C(F)=\begin{cases}
\mathbb{G}_iC(G) \sqcup \Psi_i^-C(G)&\text{if leaf $i$ is a right child in $G$}\\
\mathbb{G}_iC(G) \sqcup \Psi_i^+C(G)&\text{otherwise.}\end{cases}
\]

By the definition of $\Face(\wh{F})$ and the $\le_{re}$ order, we have 
\[
\Face(\wh{F})
=
\{\wh{H}'\cdot \xletter{i} 
\suchthat 
\text{$\xletter{i} \in \{\rletter{i}^{-}, \rletter{i}^{+}, \eletter{i}\}$ and $\wh{H}' \in \Face(G)$}\}.
\]
If we furthermore write $w = \ForToNC(F)$ and $u = \ForToNC(G)$,
then by Lemma~\ref{lem:ForToNcPsi} we have $w = \Psi^{\epsilon}_{i}(u)$ for $\epsilon \in \{+, -\}$.  
Thus
\[
\{\wh{H} \in \bnfor_{n} \suchthat w \in I_{\wh{H}} \}
=
\{\wh{H}'\cdot \xletter{i} 
\suchthat 
\text{$\xletter{i} \in \{\rletter{i}^{\epsilon}, \eletter{i}\}$, $\wh{H}' \in \bnfor_{n-1}$ and $u \in I_{\wh{H}'}$}\}.
\]
By intersecting the sets above, we arrive at the following description of $C(F)$:
\begin{multline*}
C(F) = \bigsqcup_{\substack{ \wh{H}' \le_{re} G \\ u \in I_{\wh{H}}}} \Big( X^{\circ}(\wh{H} \cdot \eletter{i}) \sqcup X^{\circ}(\wh{H} \cdot \rletter{i}^{\epsilon})\Big)\\[-4ex]
= \bigsqcup_{\substack{ \wh{H}' \le_{re} G \\ u \in I_{\wh{H}}}} \Big( \mathbb{G}_{i} X^{\circ}(\wh{H}) \sqcup \Psi_{i}^{\epsilon} X^{\circ}(\wh{H})\Big)
= \mathbb{G}_{i}C(G) \sqcup \Psi_{i}^{\epsilon}C(G)
\end{multline*}
This completes the proof as by Lemma~\ref{lem:ForToNcPsi} $\epsilon = -$ if leaf $i$ is a left child and $+$ otherwise.
\end{proof}
%%%%%%%%%

We can now complete the proof of Theorem~\ref{thm:paving}.

\begin{proof}[Proof of \Cref{thm:paving}]  
By Theorem~\ref{thm:paving_Bott}, $\qfl_{n}$ is the disjoint union of the charts $C(F)$ for $F \in \indexedforests_{n}$.  
By Proposition~\ref{prop:charts_are_charts}, these charts are exactly the $M_{\NC}(w)B$ for $w\in\NC_n$, giving Equation~\eqref{eq:hhmpequalsunion}. 
As $M_{\NC}(w)B \subseteq M(w)B=BwB$, and the $B w B$ form a partition of $\fl{n}$ by \eqref{eq:bruhat_decomposition}, it follows that $M_{\NC}(w)B = \qfl_n\cap M(w)B$.

Now for any $w\in S_n$, the Schubert variety $X^{w} = \overline{BwB}$ consists of all $BuB$ with $u \le w$ in Bruhat order. It follows that $X_{k} = \qfl_n\cap \big( \bigcup_{i=1}^{k} X^{w_{i}} \big)$ is closed, which concludes the proof.
\end{proof}

%%%%%%%%%%%%%%%%%%%%%%%%%%%%%%%%%%%%%%%%%%%%%%%%%%%%%%%%%%%%%%%%%%%%%%%%%
\section{Intrinsic characterizations of $\qfl_n$}
\label{sec:Intrinsic}
%%%%%%%%%%%%%%%%%%%%%%%%%%%%%%%%%%%%%%%%%%%%%%%%%%%%%%%%%%%%%%%%%%%%%%%%%
We now give two intrinsic characterizations of $\qfl_n$.  
These characterizations are independent of one another and are presented in Sections~\ref{sec:PluckerCharacterization} and~\ref{sec:EquivCharacterization}.

%%%%%%%%%%%%%%%%%%%%%%%%%%%%%%%%%%%%%%%%%%%%%%%%%%%%%%%%%%%%%%%%%%%%%%%%%
\subsection{Characterization with Pl\"{u}cker functions}
\label{sec:PluckerCharacterization}
%%%%%%%%%%%%%%%%%%%%%%%%%%%%%%%%%%%%%%%%%%%%%%%%%%%%%%%%%%%%%%%%%%%%%%%%%

This section proves Theorem~\ref{mainthm:plucker}, which states that
\[
\qfl_n=\hspace{-0.75em}\bigcap_{w\in S_n\setminus \NC_n}\hspace{-0.75em}\{\Pl_{w} =0\}.
\]
The proof is given at the end of the section.  We begin with a classical observation, which can be found for instance in \cite[Proposition 2.6]{leemasudapark2024torusorbitclosuresflag}.

\begin{prop}
\label{prop:plucker_points}
For $\mc{F} \in \fl{n}$, the set of torus fixed points in $\overline{T \cdot \mc{F}}$ is $\{wB \suchthat \Pl_{w}(\mc{F}) \neq 0\}$.
\end{prop}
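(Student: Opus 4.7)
The plan is to reduce to the case of a torus acting diagonally on a projective space, via the Plücker embedding. Take $\lambda = (n, n-1, \ldots, 1)$ so that the characters $w \cdot \lambda$ are pairwise distinct, giving a $T$-equivariant closed embedding $\Pl \colon \fl{n} \hookrightarrow \PP^{n!-1}$ where $T$ acts diagonally with character $\chi_1^{\lambda_{w^{-1}(1)}} \cdots \chi_n^{\lambda_{w^{-1}(n)}}$ on the coordinate indexed by $w$. Since $\Pl$ is a closed $T$-equivariant embedding it commutes with torus orbit closure and restriction to $T$-fixed loci, so it suffices to compute the $T$-fixed points of $\overline{T \cdot \Pl(\mc{F})}$ in $\PP^{n!-1}$ and then pull back.

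The $T$-fixed points of $\PP^{n!-1}$ under this diagonal action are the coordinate points $[e_w]$ (the characters being distinct), which pull back under $\Pl$ to the fixed points $wB$ of $\fl{n}$. Thus I need to show: if $\Pl(\mc{F}) = [\sum_{w \in S} a_w e_w]$ with $S = \{w : \Pl_w(\mc{F}) \ne 0\}$ and all $a_w \ne 0$, then the $T$-fixed points of $\overline{T \cdot [\sum a_w e_w]}$ are exactly the $[e_w]$ for $w \in S$.

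For this I invoke the standard fact that for a diagonal torus action on $\PP^{N-1}$ with distinct weights, the $T$-orbit closure of $[\sum_{w \in S} a_w e_w]$ is the projective toric variety whose moment polytope is $\operatorname{conv}\{w \cdot \lambda : w \in S\}$, with $T$-fixed points $[e_w]$ corresponding to the vertices of this polytope. The containment $\{[e_w] : w \in S\} \supseteq \overline{T \cdot [v]}^T$ (vertices are a subset of $S$) is straightforward from one-parameter subgroup limits.

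The only remaining point is the reverse containment: every $w \in S$ gives a vertex. This is where one might worry, but it follows immediately from the classical observation that each $w \cdot \lambda$ is already a vertex of the full permutohedron $\operatorname{Perm}(\lambda) = \operatorname{conv}\{u \cdot \lambda : u \in S_n\}$, and if $v$ is a vertex of a polytope $P$ and $v \in S \subseteq \Vert(P)$, then $v$ remains a vertex of $\operatorname{conv}(S)$ (since $v$ is not a convex combination of other points of $P$, a fortiori not of other points of $S$). No appeal to GGMS or to flag matroid structure is needed. Pulling back through $\Pl$ then yields the proposition.
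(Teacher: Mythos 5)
Your proof is correct. The paper does not give its own proof of this proposition; it cites it as a classical observation from [Lee--Masuda--Park, Proposition 2.6], so there is no in-text argument to compare against. Your argument is the standard and self-contained one: pass through the Plücker embedding $\Pl_\lambda$ with $\lambda$ regular (strictly decreasing) so that the torus weights on the $n!$ coordinates are pairwise distinct; then the $T$-fixed points of the orbit closure of $[\sum_{w\in S} a_w e_w]$ in $\PP^{n!-1}$ are the $[e_w]$ at which some one-parameter subgroup degenerates the whole orbit, i.e.\ those $w\in S$ for which $w\cdot\lambda$ is a vertex of $\operatorname{conv}\{u\cdot\lambda : u\in S\}$. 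The crux is precisely the step you single out, that every $w\in S$ does give a vertex, and your justification is correct and pleasantly elementary: $w\cdot\lambda$ is already a vertex of the full permutahedron $\operatorname{Perm}(\lambda)$, and a vertex of a polytope $P$ that lies in a subset $S\subseteq P$ remains a vertex of $\operatorname{conv}(S)$ because it cannot be a proper convex combination of other points of $P$, a fortiori of other points of $\operatorname{conv}(S)$. You are also right that no appeal to the GGMS flag matroid theorem is needed; that machinery characterizes \emph{which} subsets of $\{w\cdot\lambda\}$ arise as vertex sets of moment polytopes, but the regularity of $\lambda$ alone suffices for the statement at hand.
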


Every element of $BwB$ has a unique representative $h$ in the set $M(w)$ defined in Section~\ref{sec:affinepaving}.  Thus when restricted to $BwB$, we can view the Pl\"{u}cker functions as polynomials in the matrix entries which are not uniformly $0$ or $1$ across all of $M(w)$, namely the $h_{w(j), i}$ for $(i, j) \in \inv{w}$.  
Going forward, we define a $\ZZ^{n}$-grading on such polynomials by setting
\[
\operatorname{degree}(h_{w(j), i}) = e_{w(j)} - e_{w(i)},
\]
where $e_{k}$ denotes the $k$th standard basis vector.  
This grading is the weight of the character of the $T$-action on each entry of $M(w)$ under the action of $T$ as described in Section~\ref{sec:affinepaving}. 
%%%%%%%%%%%%%%
\begin{obs}
\label{obs:annoying}
Let $w \in S_{n}$.  
Expressed in the entries of $h \in M(w)$, $\Pl_{u}(h)/\Pl_{w}(h)$ is a homogeneous polynomial of degree $\sum (u^{-1}(i)-   w^{-1}(i))e_{n+1-i}$ for each $u \in S_{n}$.
\end{obs}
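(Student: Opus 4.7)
The plan is to reduce the claim to the fact that $\Pl_w$ is identically $1$ on $M(w)$, after which polynomiality is automatic and the homogeneity with the claimed degree follows from $T$-equivariance.

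First I would verify that $\Pl_w(h)=1$ for every $h\in M(w)$. Fix $k\in[n]$ and consider the submatrix $h_{w(1),\ldots,w(k);\,1,\ldots,k}$: its $(i,j)$-entry is $h_{w(i),j}$, and the structure of $M(w)$ forces $h_{w(i),i}=1$ while $h_{w(i),j}=0$ for $j>i$, since $\ast$-entries appear in row $w(i)$ only at columns $\ell<i$ coming from inversions $(\ell,i)\in\inv{w}$. Hence this submatrix is unit lower triangular, and each factor in the product defining $\Pl_w$ equals $1$. It follows that $\Pl_u/\Pl_w$ restricted to $M(w)$ equals $\Pl_u$, which is manifestly a polynomial in the entries of $h$.

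For the degree, I would use $T$-equivariance. The $T$-action on $M(w)$ recalled at the start of \Cref{sec:affinepaving} scales the coordinate $h_{w(j),i}$ by $\chi_{w(j)}\chi_{w(i)}^{-1}$, so a polynomial is homogeneous of degree $\mu$ precisely when it is a $T$-eigenfunction with character $\mu$. On the other hand, the standard transformation rules for Pl\"ucker functions under left multiplication by $t$ (acting on rows) and right multiplication by a diagonal matrix (acting on columns) determine how $\Pl_u$ transforms on $M(w)$ once we recognize $t\cdot h$ as $t\,h\,\mathrm{diag}(\chi_{w(1)}^{-1}(t),\ldots,\chi_{w(n)}^{-1}(t))$, the right factor serving to restore the $1$s at the pivot positions. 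Combining row contributions (indexed by $u$) and column contributions (indexed by $w$), and specializing to $\lambda=(n,n-1,\ldots,1)$, yields the asserted degree.

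The main bookkeeping hurdle is the telescoping identity $\sum_{k\ge j}(\lambda_k-\lambda_{k+1})=\lambda_j$, which collapses the product of partial-column determinants defining $\Pl_u$ into a clean exponent $\lambda_j$ on each diagonal variable. Once this identity is in hand, aggregating the row and column characters produces the stated expression, and this is the only place in the argument where the specific fundamental weight enters.
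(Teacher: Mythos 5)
Your approach is the same as the paper's: verify that $\Pl_w\equiv 1$ on $M(w)$ (your unit-lower-triangularity argument for the submatrices $h_{w(1),\ldots,w(k)}$ is a nice way to flesh out the paper's terse ``we compute directly''), then read off homogeneity and the degree from $T$-equivariance of the Pl\"ucker map. One caution before you take ``yields the asserted degree'' on faith: the character by which the $M(w)$-action scales $\Pl_u/\Pl_w$ has weight $\sum_i\bigl(\lambda_{u^{-1}(i)}-\lambda_{w^{-1}(i)}\bigr)e_i=\sum_i\bigl(w^{-1}(i)-u^{-1}(i)\bigr)e_i$ for $\lambda=(n,\ldots,1)$. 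This is the form implicitly used in the proof of Theorem~\ref{mainthm:plucker} (with $u=w(a\,b)$ it gives $\alpha=(b-a)(e_{w(b)}-e_{w(a)})$), but it does not in general match the printed expression $\sum(u^{-1}(i)-w^{-1}(i))e_{n+1-i}$: for $n=3$, $w=312$, $u=\mathrm{id}$ one computes directly $\Pl_u/\Pl_w=-h_{1,1}h_{2,1}$ of degree $e_1+e_2-2e_3$, while the printed formula yields $2e_1-e_2-e_3$. The displayed degree in the Observation appears to contain an indexing typo, so do not be alarmed if your careful bookkeeping produces the former expression rather than reproducing the statement verbatim.
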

\begin{proof}
We compute directly that $\Pl_{w}(h) = 1$, so $\Pl_{u}(h)/\Pl_{w}(h) = \Pl_{u}(h)$ has denominator $1$. 
The claim now follows from the fact that the Pl\"{u}cker functions are $T$-equivariant and the weight of the character by which $T$ scales $\Pl_{u}(h)/\Pl_{w}(h)$ is $\sum u^{-1}(i)e_{n+1-i} - \sum  w^{-1}(i)e_{n+1-i}$.
\end{proof}
%%%%%%%%%%%%%%

In order to perform a more granular analysis on the degree of each Pl\"{u}cker function, we establish some notation using the root system of type $A_{n-1}$.  
The positive roots in this system are the vectors $e_{i} - e_{j}$ for $1 \le i < j \le n$ and the negative roots are $e_{j} - e_{i}$ for $1 \le i < j \le n$.  
Given $F\in \indexedforests_{n}$, we define the polyhedral cone $\cone{F}$ by 
\[
    \cone{F}=\mathbb{R}_{\geq 0}\{e_{j} - e_{i}\suchthat (i,j) \text{ a spread in } F\},
\]
and for $w\in \NC_n$ we define $\cone{w}= w \cdot \cone{F}$ where $F\in\indexedforests_n$ is the unique forest such that $w=\ForToNC(F)$.
In view of Proposition~\ref{prop:spreadset}
\[
\cone{w} = \mathbb{R}_{\geq 0}\{e_{w(j)} - e_{w(i)}\suchthat (i,j)\in \invnc{w}\}.
\]

Spreads are characterized by that fact that if $(i, j)$ is a spread in $F \in \indexedforests_{n}$, then no spread has the form $(j, k)$.  
Such sets (and their associated cones) were first studied in~\cite{GGP97} and are commonly known as  \emph{noncrossing alternating forests}; see for example~\cite{AlNa09}. 
The following result can be found  in~\cite[\S 6]{GGP97}.

\begin{prop}
\label{prop:cones_are_all_good}
For each $w \in \NC_{n}$, $\cone{w}$ is simplicial and the only roots it contains are the generators $e_{w(j)} - e_{w(i)}$ for $(i,j)\in \invnc{w}$.
\end{prop}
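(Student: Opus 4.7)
The plan is to give a direct combinatorial argument built on the alternating characterization of spreads stated just before the proposition, although this is also essentially \cite{GGP97}. First I would reduce to the identity case: since $\cone{w} = w \cdot \cone{F}$ for the unique $F \in \indexedforests_n$ with $\ForToNC(F) = w$, and $w$ acts by permuting coordinates of $\mathbb{R}^n$, both simpliciality and the root-detection property transfer between $\cone{w}$ and $\cone{F}$, whose generators are $\{e_j - e_i : (i,j) \text{ a spread of } F\}$.

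For simpliciality I would induct on the tree structure of $F$. The forest decomposes into trees on pairwise-disjoint contiguous intervals of $[n]$, with the spread-vectors from each tree living in the coordinate sub-hyperplane indexed by its leaves, so the general case reduces to a single tree $T$. For $T$ with left and right subtrees on leaf sets $S_L$ and $S_R$, the subtree spread-vectors are linearly independent within $\{\sum_{k \in S_L} x_k = 0\}$ and $\{\sum_{k \in S_R} x_k = 0\}$ respectively by induction, while the root-spread vector $e_{\max S_R} - e_{\min S_L}$ lies outside their sum because it has nonzero coordinate-sum on $S_L$.

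For the root-detection statement, let $L, R \subseteq [n]$ denote the sets of left and right endpoints of spreads of $F$; the alternating property asserts $L \cap R = \emptyset$. Given $e_p - e_q = \sum_{(i,j)} \lambda_{ij}(e_j - e_i)$ with all $\lambda_{ij} \geq 0$, the coefficient at coordinate $k$ equals $\sum_{(i,k)} \lambda_{ik}$ when $k \in R$ (non-negative), $-\sum_{(k,j)} \lambda_{kj}$ when $k \in L$ (non-positive), and zero otherwise. Matching signs forces $p \in R$ and $q \in L$, while vanishing at the remaining coordinates forces $\lambda_{ij} = 0$ whenever $j \in R \setminus \{p\}$ or $i \in L \setminus \{q\}$. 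The only possibly nonzero coefficient is therefore $\lambda_{qp}$, which compels $(q,p)$ to be an actual spread and $\lambda_{qp} = 1$, recovering the generator.

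I expect the main delicacy to lie in this final step, since multiple spreads can share a common right endpoint (as happens along the rightmost path descending from an internal node), so each coordinate equation constrains several $\lambda_{ij}$ simultaneously; the alternating property is exactly the ingredient that makes this bookkeeping work, guaranteeing that all terms contributing to a given coordinate share a common sign so that non-negativity forces each one to vanish individually.
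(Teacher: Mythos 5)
Your proof is correct. A key point of context: the paper does not actually prove Proposition~\ref{prop:cones_are_all_good} — it simply cites \cite[\S 6]{GGP97} for the result. Your argument fills in that citation with a self-contained proof, which is a reasonable thing to do and proceeds along the lines one would expect that reference to take.

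The structure is sound. The reduction to $\cone{F}$ is harmless since $w$ acts by a coordinate permutation of $\mathbb{R}^n$, which preserves both simpliciality and the type $A_{n-1}$ root system. The simpliciality induction is correct: the forest splits over disjoint contiguous intervals, and within a single tree the root-spread vector $e_{\max S_R} - e_{\min S_L}$ has coordinate sum $-1$ over $S_L$ while every subtree vector has coordinate sum $0$ over both $S_L$ and $S_R$, which gives the inductive step cleanly (and handles the degenerate case of leaf subtrees vacuously). For root detection, the entire argument hinges on $L \cap R = \emptyset$, which is exactly the alternating property the paper states immediately before the proposition (``if $(i,j)$ is a spread, then no spread has the form $(j,k)$''). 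You use it to ensure that all contributions to a fixed coordinate $k$ of $\sum \lambda_{ij}(e_j - e_i)$ carry the same sign, so nonnegativity of the $\lambda_{ij}$ together with vanishing off $\{p,q\}$ kills every coefficient except $\lambda_{qp}$. Your final caution is also apt: the only reason this sign-matching works despite repeated endpoints (e.g.~along a rightmost descending path, several spreads share a right endpoint) is precisely the alternating property. Notice that this part of the argument does not in fact rely on simpliciality, since you show directly that \emph{every} nonnegative representation of $e_p - e_q$ must be the trivial one $\lambda_{qp} = 1$, so the two halves of the proposition are logically independent in your write-up.
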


The sets $(w(b), w(a))$ for $(a, b)$ a spread in $F$ also appear in the literature as a generalization of noncrossing alternating forests.  
Specifically,~\cite[\S 6]{JosuatNadeau2023Koszulity} shows that these are canonically in bijection with the set of $\cox$-clusters.
\begin{proof}[Proof of Theorem~\ref{mainthm:plucker}]
First, we take $\mc{F}\in \qfl_n$. 
By~\Cref{thm:noncrossing_fixed_points}, the torus fixed points in $\overline{T \cdot \mc{F}}$ are all noncrossing partitions.  
By~\Cref{prop:plucker_points}, this means that the nonvanishing Pl\"{u}cker functions of $\mathcal{F}$ are also indexed by elements of $\NC_{n}$.  

Conversely, suppose that $\Pl_{u}\mc{F} = 0$ for all $u\in S_n\setminus \NC_n$.  
Let $w\in S_n$ be such that $\mc{F}\in BwB$ and let $h \in \GL_n$ be the representative of $\mc{F}$ in $M(w)$. 
As $\Pl_w\mc{F}\ne 0$ on $BwB$ we conclude that $w\in \NC_n$. We now claim that for each $(a, b)\in \inv{w}\setminus \invnc{w}$ we have $h_{w(b),a}=0$.  

We proceed by induction on $w(a) - w(b)$.  Let $u = w(a\,b)$ and let $\alpha = (b-a)(e_{w(b)} - e_{w(a)})$.
We consider the set $S$ consisting of all multisubsets $M$ of $\inv{w}$ with the property that $\sum_{(i, j) \in M} e_{w(j)} - e_{w(i)} = \alpha$ so that by \Cref{obs:annoying} we have
\[
\Pl_{u}(h)/\Pl_{w}(h) = \sum_{M \in S} c_{M} \prod_{(i, j) \in M} h_{w(j), i}
\qquad\text{for some $c_{M} \in \ZZ$}.
\]
We now describe some properties of the elements  $M \in S$.  
First, by considering the first and last nonzero coordinate in the sum $\sum_{(i, j) \in M} e_{w(j)} - e_{w(i)} = \alpha$, every $e_{w(j)} - e_{w(i)} \in M$ has either $w(j) - w(i) < w(b) - w(a)$ or $(i, j) = (a, b)$.  
Second, by Proposition~\ref{prop:cones_are_all_good}, $M$ must contain at least one element of the form $e_{w(j)} - e_{w(i)}$ for $(i, j) \in \inv{w}\setminus \invnc{w}$.
Finally, a direct computation of $\Pl_{u}(h)/\Pl_{w}(h)$ shows that the coefficient of $(h_{w(b), a})^{b-a}$ is nonzero: for $i < a$ or $i \ge b$, $\det(h_{u(1), \ldots, u(i)}) = \det(h_{w(1), \ldots, w(i)}) = \pm 1$, while for $a \le i < b$, $\det(h_{u(1), \ldots, u(i)})$ contains $h_{w(b), a}$ with a coefficient of $\pm 1$.  
Thus by our assumption on $\Pl_{u}$ and our inductive hypothesis, we have $0 = (h_{w(b), a})^{b-a}$. This proves the claim. 
This shows $h\in M_{\NC}(w)$ and finally $\mc{F}\in\qfl_n$ by~\Cref{prop:charts_are_charts}.
\end{proof}

%%%%%%%%%%%%%%%%%%%%%%%%%%%%%%%%%%%%%%%%%%%%%%%%%%%%%%%%%%%%%%%%%%%%%%%%%
\subsection{Characterization via equivalence of flags}
\label{sec:EquivCharacterization}
%%%%%%%%%%%%%%%%%%%%%%%%%%%%%%%%%%%%%%%%%%%%%%%%%%%%%%%%%%%%%%%%%%%%%%%%%
We now give our second characterization of $\qfl_{n}$ as flags that can be obtained from $\id_{\fl{n}}$, the standard coordinate flag $\{0\}\subset \{e_1\}\subset \{e_1,e_2\}\subset \cdots \subset \{e_1,\dots,e_n\}$,  via certain elementary operations.

\begin{defn}\label{def:relation_for_characterization}
    Define $\sim$ to be the equivalence relation on complete flags generated by the relations $\sim_i$ for $1\le i \le n-1$ given by $\mc{F}\sim_i \mc{G}$ if
    \begin{enumerate}[label=(\arabic*)]
     \item \label{it1:intrinsic} 
     $\mc{F}_j=\mc{G}_{j}$ for all $j\ne i$,  and
        \item \label{it2:intrinsic} 
    $e_i\in \mc{F}_{i+1}$ and $\mc{F}_{i-1}\subset \{x_i=0\}$.
    \end{enumerate}
\end{defn}

We have that $\mc{F}\sim_i \mc{G}$ for $1\leq i\leq n-1$ if and only if  there exists $\mc{H}\in\fl{n-1}$ such that $\mc{F},\mc{G}\in \mathbb{P}_{i}\mc{H}$, where $\mathbb{P}_{i}$ is defined in Section~\ref{sec:pushpull}. In particular note that $\Psi_i^{-}\mc{H}\sim_i \Psi_i^{+}\mc{H}$.

\begin{thm}
\label{cor:IntrinsicQFL}
   The quasisymmetric flag variety $\qfl_n\subset \fl{n}$ is the equivalence class of $\sim$ containing the standard coordinate flag.
\end{thm}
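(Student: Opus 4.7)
The plan is to prove the two inclusions separately.

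For the forward inclusion $\qfl_n\subset [\id_{\fl{n}}]_\sim$, I would first establish the intermediate statement that for every $\wh{F}\in \bnfor_n$, the variety $X(\wh{F})$ lies in a single $\sim$-equivalence class of $\fl{n}$. This is proved by induction on the length of a representative in $\reseq_n$. If $\wh{F} = \wh{G}\cdot \rletter{i}^{\pm}$, a direct check shows that any $\sim_j$-equivalence on $\fl{n-1}$ lifts through $\Psi_i^{\pm}$ to a chain of $\sim$-equivalences on $\fl{n}$ (the chain may require more than one step because $\Psi_i^+$ changes two subspaces when the source changes at position $j=i$, but one routes through a $\Psi_i^-$ flag to recover the equivalence). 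If $\wh{F} = \wh{G}\cdot \eletter{i}$, then $X(\wh{F}) = \mathbb{P}_i X(\wh{G})$ is a union of $\pi_i$-fibers of the form $\pi_i^{-1}(\pi_i\Psi_i\mc{H})$ for $\mc{H}\in X(\wh{G})$, and by the definition of $\sim_i$ every flag in such a fiber is $\sim_i$-equivalent to $\Psi_i^-\mc{H}\in \Psi_i^-X(\wh{G})$, which is itself a single $\sim$-class by the previous case. Granting this, I observe that $\id_{\fl{n}}\in X(T)$ for every $T\in \tree_n$: applying Proposition~\ref{prop:cubical_factorization} with $S = \internal{T}$, the empty product of transpositions $\tau_v$ equals $\id$, so $\id\in I_T$. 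Hence all $X(T)$ sit in the single $\sim$-class $[\id_{\fl{n}}]_\sim$, and so does $\qfl_n = \bigcup_T X(T)$.

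For the backward inclusion $[\id_{\fl{n}}]_\sim\subset \qfl_n$, since $\id_{\fl{n}}\in \qfl_n$ it suffices to show that $\qfl_n$ is closed under each $\sim_i$. Given $\mc{F}\in \qfl_n$ with $\mc{F}\sim_i\mc{G}$, I use the stratification $\qfl_n = \bigsqcup_{\wh{F}\in \bnfornf_n}X^\circ(\wh{F})$ from Proposition~\ref{prop:varietydeterminedbytamari} to locate $\mc{F}\in X^\circ(\wh{F})$ for a unique $\wh{F}$. Since conditions (1)--(2) of $\sim_i$ are $T$-invariant, the closure $X(\wh{F})$ lies entirely in $\mathbb{P}_i\fl{n-1}$. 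The key combinatorial claim is that this containment forces $\wh{F}\le_{re}\wh{H}$ for some $\wh{H} = \wh{F}'\cdot\eletter{i}\in\bnfor_n$; granting this, $X(\wh{H}) = \mathbb{P}_i X(\wh{F}')\subset \qfl_n$ is a union of $\pi_i$-fibers containing the one through $\mc{F}$, so $\mc{G}\in X(\wh{H})\subset\qfl_n$.

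The main obstacle will be the combinatorial claim in the backward direction. By Proposition~\ref{prop:cubical_factorization}, the containment $X(\wh{F})\subset\mathbb{P}_i\fl{n-1}$ translates to the requirement $u^{-1}(i)\in\{i,i+1\}$ for every $u\in I_{\wh{F}}$, and I would extract the desired representative by analyzing which transpositions $\tau_v$ can appear in the factorizations of elements of $I_{\wh{F}}$ and invoking the rearrangement relations defining $\bnfor_n$ to produce a sequence ending in a letter indexed by $i$, upgraded to $\eletter{i}$ if it ends in $\rletter{i}^{\pm}$. As an alternative approach, one could use Theorem~\ref{mainthm:plucker} together with the linearity $\Pl_w(\mc{F}') = a\,\Pl_w(\Psi_i^+\mc{H}) + b\,\Pl_w(\Psi_i^-\mc{H})$ of Plücker functions along the $\pi_i$-fiber over $\pi_i\Psi_i\mc{H}$, combined with an inductive argument (on $n$) that $\mc{H}\in\qfl_{n-1}$ whenever $\mc{F}\in\qfl_n$ satisfies condition (2) of $\sim_i$, so that $\Psi_i^{\pm}\mc{H}\in\qfl_n$ (via $\Psi_i^{\pm}X(T')\subset X(T'\cdot\eletter{i})\subset\qfl_n$ for $T'\in\tree_{n-1}$), forcing $\Pl_w$ to vanish identically on the fiber for every $w\in S_n\setminus\NC_n$ and yielding $\mc{G}\in\qfl_n$.
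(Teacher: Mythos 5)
Your two-inclusion plan matches the paper's: for $\qfl_n\subset[\idem_{\fl{n}}]_\sim$ you lift $\sim$-equivalences through the building operations, and for $[\idem_{\fl{n}}]_\sim\subset\qfl_n$ you use the torus-orbit stratification plus a combinatorial claim that $X(\wh{F})\subset\PP_i\fl{n-1}$ forces $X(\wh{F})\subset\PP_iX(\wh{G})$. The key intermediate statements you identify are exactly what the paper proves: the lifting step is the Claim inside its proof (which needs a three-way case split on the index $j$ of the $\sim_j$-equivalence relative to $i$), and the combinatorial claim is \Cref{lem:annoying}. So the strategy is the paper's.

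The gap is in your sketch of the combinatorial claim. You propose to ``analyze which $\tau_v$ appear in factorizations of elements of $I_{\wh{F}}$ and invoke the rearrangement relations defining $\bnfor_n$'' to produce a representative word ending in $\rletter{i}^{\pm}$ or $\eletter{i}$. But the defining relations of $\bnfor_n$ (\Cref{defn:bnforrelations}) \emph{preserve} the bicolored nested forest; they only reorder the word. In the cases where $w(i+1)=i$ for all $w\in I_{\wh{F}}$, or where $i\in\{w(i),w(i+1)\}$ with both occurring, the node with canonical label $i$ generically has a non-leaf left or right subtree, and one must actually \emph{change} the forest by colored Tamari rotations (\Cref{fig:right_rotation}) along the path from leaf $i+1$ up to that node. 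These rotations are exactly the two extra relations from \Cref{lem:2additionalbuilding}; they are not deducible from \Cref{defn:bnforrelations}. Correspondingly, your conclusion ``$\wh{F}\le_{re}\wh{H}$'' is too strong: in general only a colored Tamari equivalent of $\wh{F}$ is $\le_{re}$ some $\wh{G}\cdot\eletter{i}$, although the geometric containment $X(\wh{F})\subset\PP_iX(\wh{G})$ you actually need follows via \Cref{cor:containequiv}. Your Pl\"{u}cker alternative does not escape the issue: it still needs ``$\mc{H}\in\qfl_{n-1}$ whenever $\mc{F}\in\qfl_n$ lies in $\PP_i\fl{n-1}$,'' and that is precisely the content of \Cref{lem:annoying}. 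A smaller remark for the forward direction: the lifting of $\sim_{i-1}$ through $\Psi_i^-$ also routes through the section $\Psi_i^+$ and uses the relation $\Psi_{i+1}^+\PP_i=\PP_i\Psi_i^+$ of \Cref{lem:2additionalbuilding}, not merely the commutation relations of \Cref{lem:babyrtsame}, so ``a direct check'' undersells it; the same two extra relations power both inclusions.
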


We prove this at the end of the subsection after a preparatory lemma.

\begin{lem}
\label{lem:annoying}
Let $\wh{F}\in \bnfor_n$. Suppose there exists $i$ such that every element  $w\in I_{\wh{F}}$ satisfies $i\in \{w(i),w(i+1)\}$. Then we have $X(\wh{F})\subset\mathbb{P}_{i}X(\wh{G})$ for some $\wh{G}\in \bnfor_{n-1}$. 
\end{lem}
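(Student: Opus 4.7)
The plan is to show $X(\wh{F}) \subseteq V := \mathbb{P}_i \fl{n-1}$, then descend via $\pi_i$ to a $T_{n-1}$-orbit closure $Z \subseteq \fl{n-1}$ with $X(\wh{F}) \subseteq \mathbb{P}_i Z$, and finally identify $Z$ as $X(\wh{G})$ for some $\wh{G} \in \bnfor_{n-1}$. A general principle I will use twice is the following: whenever a $T$-invariant closed subvariety $W \subseteq \fl{n}$ has the form $W = \bigcap_{u \in S}\{\Pl_u = 0\}$, then $W^T = S^c$, and combining \Cref{prop:plucker_points} with the density of the generic orbit in an irreducible $T$-orbit closure $Y$ shows that $Y \subseteq W$ if and only if $Y^T \subseteq W^T$.

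I will verify $V = \bigcap_{u^{-1}(i) \notin \{i, i+1\}} \{\Pl_u = 0\}$: the forward inclusion follows from the factorization $\Pl_u(h) = \prod_j \det h_{u(1),\ldots,u(j)}$ combined with the Schubert-type conditions defining $V$; for the reverse inclusion, if $\mc{F} \notin V$ then a nonvanishing Grassmannian Plücker $p_J(\mc{F}_{i-1})$ with $i \in J$ (resp.\ $p_K(\mc{F}_{i+1})$ with $i \notin K$) can be extended via a standard flag-basis-extension argument to a full $u$ with $\Pl_u(\mc{F}) \ne 0$ and $u^{-1}(i) \notin \{i, i+1\}$, contradicting the assumed vanishings. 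Since $V^T = \{w \in S_n : w^{-1}(i) \in \{i, i+1\}\}$ contains $I_{\wh{F}}$ by hypothesis, the principle above gives $X(\wh{F}) \subseteq V$. This validates the definition $Z := (\pi_i \Psi_i)^{-1} \pi_i(X(\wh{F}))$, and since the maps involved are $T_n$-equivariant (with the $\gamma_i$-twisted action on $\fl{n-1}$) and $\gamma_i$ is surjective, $Z$ is an irreducible $T_{n-1}$-orbit closure with $X(\wh{F}) \subseteq \pi_i^{-1} \pi_i \Psi_i(Z) = \mathbb{P}_i Z$.

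To identify $Z^T$, I trace each $w \in I_{\wh{F}}$ through $(\pi_i \Psi_i)^{-1} \pi_i$: if $w(i) = i$ the corresponding fixed point of $Z$ is $(\Psi_i^-)^{-1}(w)$, which lies in $\NC_{n-1}$ because $\{i\}$ is a singleton block of the noncrossing partition $w$; if $w(i+1) = i$, then $i+1$ and $i$ are consecutive in a backwards cycle of $w$, and $ws_i$ is obtained from $w$ by splitting $i$ off of that cycle as a singleton, which preserves noncrossingness, so the fixed point $(\Psi_i^-)^{-1}(ws_i)$ again lies in $\NC_{n-1}$. With $Z^T \subseteq \NC_{n-1}$ in hand, applying the general principle inside $\fl{n-1}$ together with \Cref{mainthm:plucker} gives $Z \subseteq \qfl_{n-1}$. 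Irreducibility of $Z$ places it in some $X(T)$ with $T \in \tree_{n-1}$, and \Cref{thm:smoothTorbit} then guarantees $Z = X(\wh{G})$ for some $\wh{G} \in \Face(T) \subseteq \bnfor_{n-1}$, yielding $X(\wh{F}) \subseteq \mathbb{P}_i X(\wh{G})$ as desired. The main obstacle I foresee is the combinatorial verification that $Z^T \subseteq \NC_{n-1}$, particularly the claim that $ws_i$ remains noncrossing when $w(i+1) = i$; the Plücker-cut-out description of $V$ is also somewhat delicate but standard.
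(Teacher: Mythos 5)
Your proposal is correct, but takes a genuinely different route from the paper's.

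The paper proves this lemma by a purely combinatorial case analysis on the bicolored nested forest $\wh{F}$: it splits into three cases according to whether every $w \in I_{\wh{F}}$ has $w(i)=i$, every $w$ has $w(i+1)=i$, or the mixed case, and in each it explicitly produces $\wh{G}$ by showing $\wh{F} = \wh{G}\cdot\rletter{i}^-$ (case one) or $\wh{F} \ctam \wh{G}\cdot\rletter{i}^+$ or $\wh{F}\ctam\wh{G}\cdot\eletter{i}$ via colored Tamari rotations propagated along paths from the node canonically labeled $i$ to the leaves $i$ and $i+1$. This gives more than the inclusion: it identifies which building letter relates $\wh{F}$ and $\wh{G}$, and it is self-contained within the combinatorics of $\bnfor_n$. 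Your proof is geometric: cut out $\mathbb{P}_i\fl{n-1}$ by Pl\"ucker hyperplanes, use Proposition~\ref{prop:plucker_points} to deduce the containment $X(\wh{F})\subseteq\mathbb{P}_i\fl{n-1}$ from the fixed-point hypothesis, descend to the orbit closure $Z$ in the base, verify $Z^T\subseteq\NC_{n-1}$, and then invoke \Cref{mainthm:plucker} plus \Cref{thm:smoothTorbit} to recognize $Z$ as some $X(\wh{G})$. This is cleaner conceptually (the punchline is ``an irreducible orbit closure with noncrossing fixed points must be one of the $X(\wh{G})$'') but non-constructive, and it leans on the full strength of the Pl\"ucker characterization from \Cref{sec:PluckerCharacterization}. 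Since that section precedes this lemma, there is no circularity, so the dependency is legitimate. Two small points worth tightening: the Pl\"ucker cut-out description of $\mathbb{P}_i\fl{n-1}$ as $\bigcap_{u^{-1}(i)\notin\{i,i+1\}}\{\Pl_u=0\}$ is correct, but the reverse inclusion deserves a sentence more, namely that a nonvanishing Grassmann Pl\"ucker coordinate of $\mc F_{i-1}$ (or $\mc F_{i+1}$) extends, by the flag-matroid exchange property, to a nonvanishing full $\Pl_u$ with the required position of $i$; and the Grassmann Pl\"uckers $p_J$ are not formally introduced in the paper, so a brief aside fixing notation would be appropriate.
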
 
\begin{proof}
We will prove the following statements, from which the conclusion follows immediately.
 \begin{enumerate}[label=(\roman*)]
        \item \label{it:case1} If $w(i)=i$ for all $w\in I_{\wh{F}}$, then $\wh{F}=\wh{G} \cdot \rletter{i}^-$ for some $\wh{G} \in \bnfor_{n-1}$.
        \item \label{it:case2} If $w(i+1)=i$ for all  $w\in I_{\wh{F}}$, then $\wh{F}\ctam \wh{G}\cdot \rletter{i}^+$ for some $\wh{G} \in \bnfor_{n-1}$.
        \item \label{it:case3} If $i\in \{w(i),w(i+1)\}$ for all $w\in I_{\wh{F}}$, but we are not in a scenario covered by Cases~\ref{it:case1} and~\ref{it:case2}, then $\wh{F}\ctam \wh{G}\cdot \eletter{i}$ for some $\wh{G} \in \bnfor_{n-1}$.
\end{enumerate}

    If we are in case~\ref{it:case1},  then $\ncperm(\wh{F})\in I_{\wh{F}}$ has $i$ as a fixed point, implying that the leaf labeled $i$ is a singleton tree in $\wh{F}$. 
    This immediately yields $\wh{F}=\wh{G}\cdot \rletter{i}^-$. 

    Now suppose we are in a situation described in~\ref{it:case2}.
    Let $v\in \internal{\wh{F}}$ have canonical label $i$.
    Let $P_1$ (respectively $P_2$) be the path beginning from the leaf labeled $i$ (respectively $i+1$) and terminating in $v$.
    Observe that all but the final edge in $P_1$ connect a right child to its parent node and similarly all edges but the final edge in $P_{2}$ connect a left child to its parent. 
    We claim that $v$ must be white. 
    Indeed, if $v$ were black, then left edge deletion at $v$ would result in an element in $I_{\wh{F}}$ that has $i$ and $i+1$ in different cycles.
    For  similar reasons we infer that all nodes in $P_2$ are necessarily white. 
    Thus we are in a situation depicted on the left in Figure~\ref{fig:tamari_annoying_1} where the ``half-filled'' nodes could be black or white.  
    By performing colored Tamari rotation as in Definition~\ref{fig:right_rotation}, first along $P_1$ and then along $P_2$ as in Figure~\ref{fig:tamari_annoying_1}, one can obtain a bicolored nested forest wherein $v$ has left and right children being leaves with labels $i$ and $i+1$.  
    Then as described in Definition~\ref{def:forest_insertion}, there exists a $\wh{G} \in \bnfor_{n-1}$ satisfying $\wh{F}=\wh{G}\cdot\rletter{i}^+$.

    \begin{figure}[!h]
        \centering
        \includegraphics[scale=1.2]{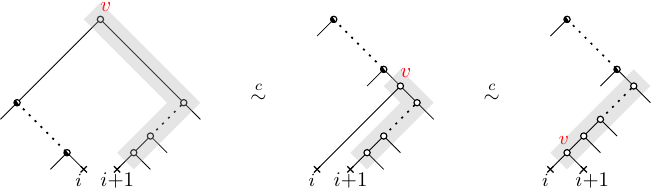}
        \caption{Case~\ref{it:case2} (left) and a bicolored nested forest that is colored Tamari equivalent. The half-filled nodes could be either black or white.}
        \label{fig:tamari_annoying_1}
    \end{figure}

Finally we consider~\ref{it:case3}.
Let $v\in \internal{\wh{F}}$ have canonical label $i$, like before.
For the condition in~\ref{it:case3} to hold, $v$ must be black. Indeed if $v$ were white then no element of $I_{\wh{F}}$ has $i$ as a fixed point.
For this same reason the left child of $v$ is necessarily the leaf labeled $i$.
As above, the path from the leaf labeled $i+1$ to $v$ can only contain white nodes, as this ensures that $i$ and $i+1$ are in the same cycle if $i$ is not a fixed point.
We use colored Tamari rotations exactly as in Case~\ref{it:case2} to obtain a bicolored nested forest where $v$ has left and right children given by $i$ and $i+1$.
 Left edge deletion at $v$ now gives $\wh{G}\in \bnfor_{n-1}$ satisfying $\wh{F}=\wh{G}\cdot\eletter{i}$.
Figure~\ref{fig:tamari_annoying_2} outlines this case.
\begin{figure}[!h]
    \centering
    \includegraphics[scale=1.2]{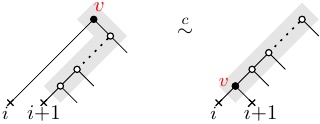}
    \caption{Case~\ref{it:case3} (left) and a bicolored nested forest that is colored Tamari equivalent}
    \label{fig:tamari_annoying_2}
\end{figure}
\end{proof}

\begin{proof}[Proof of Theorem~\ref{cor:IntrinsicQFL}]
    First we show that $\qfl_n$ is closed under these relations.  Suppose first that $\mc{F}\in \qfl_{n}$ satisfies condition \ref{it2:intrinsic} in Definition~\ref{def:relation_for_characterization} for a fixed $i$, which means that $\mc{F}\in \mathbb{P}_i\fl{n-1}$. 
    
    We claim that in that case $\mc{F}\in \mathbb{P}_i\qfl_{n-1}$. Condition \ref{it2:intrinsic} is closed and invariant under the action of $T$, so it is satisfied by all elements of the torus-orbit closure $\overline{T \cdot \mc{F}}$.  By Theorem~\ref{thm:XF_torus_orbit}, $\overline{T \cdot \mc{F}}=X(\wh{F})$ for some $\wh{F} \in \bnfor_{n}$, so the conditions \ref{it2:intrinsic} also apply to the set of torus fixed points $I_{\wh{F}}\subset \NC_n$. 
    This means that  $i$ is a fixed point or $i,i+1$ are in the same cycle for each element of $I_{\wh{F}}$, so Lemma~\ref{lem:annoying} guarantees the existence of $\wh{G} \in \bnfor_{n-1}$ such that  $X(\wh{F})\subset\mathbb{P}_{i}X(\wh{G})$. This proves the claim since $\mc{F} \in X(\wh{F})$ and $X(\wh{G})\subset\qfl_{n-1}$. 

    Now if $\mc{F}\sim_i\mc{H}$, then by condition \ref{it1:intrinsic} we get $\mc{H}\in \mathbb{P}_iX(\wh{G})\subset  \mathbb{P}_i\qfl_{n-1}$ as well, and thus $\mc{H}\in \qfl_{n}$.

    Conversely, we now show that we can reduce every element of $\qfl_n$ to $\idem_{\fl{n}}$ using these relations. We do this by induction on $n$, the case $n=2$ being trivial.
    
    Let $\mc{F}\in \qfl_n$. Since $\qfl_n=\mathbb{P}_1\qfl_{n-1}\cup \cdots \cup \mathbb{P}_{n-1}\qfl_{n-1}$  and every element of $\mathbb{P}_i\mc{H}$ is $\sim_i$-equivalent to $\Psi_i^-\mc{H}$, there exist $j\in\{1,\ldots,n-1\}$ and $\mc{G}\in \qfl_{n-1}$ such that $\mathcal{F}\sim_j\Psi_j^-\mc{G}$. By the inductive hypothesis we know that $\mc{G}\sim \idem_{\fl{n-1}}$.
    
       We claim that for any $i$, $\mc{H}\sim_i \mc{H}'$ in $\qfl_{n-1}$ implies  $\Psi_j^-\mc{H}\sim\Psi_j^-\mc{H}'$ in $\qfl_n$. To prove this, we let $\mc{K}\in \qfl_{n-2}$ be such that $\mc{H},\mc{H}'\in \mathbb{P}_i\mc{K}$ and consider different cases.
    \begin{enumerate}[label=(\roman*)]
        \item If $i\ge j$ then by Lemma~\ref{lem:babyrtsame} we have $\Psi_j^-\mathbb{P}_i\mc{K}=\mathbb{P}_{i+1}\Psi_{j}^-\mc{K}$ and so $\Psi_j^-\mc{H}\sim_{i+1}\Psi_j^-\mc{H}'$.
        \item If $j\ge i+2$ then by Lemma~\ref{lem:babyrtsame} we have $\Psi_j^-\mathbb{P}_i\mc{K}=\mathbb{P}_i\Psi_{j-1}^-\mc{K}$ and so $\Psi_j^-\mc{H}\sim_i\Psi_j^-\mc{H}'$.
        \item Finally suppose $j=i+1$. 
        By Lemma~\ref{lem:2additionalbuilding} we have
        $  \Psi_{i+1}^+\mathbb{P}_i\mc{K}=\mathbb{P}_i\Psi_{i}^+\mc{K}$. This in turn implies 
        \[
            \Psi_{i+1}^-\mc{H}\sim_{i+1}\Psi_{i+1}^+\mc{H}\sim_i\Psi_{i+1}^+\mc{H}'\sim_{i+1} \Psi_{i+1}^-\mc{H}'.
        \]
    \end{enumerate}
Thus $\Psi_j^{-}\mc{H}\sim \Psi_{j}^{-}\mc{H}'$ in all cases and the claim is proved. By induction we get that if $\mc{H}\sim \mc{H}'$ in $\qfl_{n-1}$ then $\Psi_j^-\mc{H}\sim\Psi_j^-\mc{H}'$. We apply this to $\mc{G}$ and $\idem_{\fl{n-1}}$ and get $\Psi_j^-\mc{G}\sim \Psi_j^-\idem_{\fl{n-1}}=\idem_{\fl{n}}$, which concludes the proof since $\mathcal{F}\sim_j\Psi_j^-\mc{G}$.
\end{proof}

%%%%%%%%%%%%%%%%%%%%%%%%%%%%%%%%%%%%%%%%%%%%%%%%%%%%%%
\section{The GKM presentation of $H^{\bullet}_{T_{n}}(\qfl_{n})$}
\label{sec:CoinvariantsGKM}
%%%%%%%%%%%%%%%%%%%%%%%%%%%%%%%%%%%%%%%%%%%%%%%%%%%%%%

In this section we give a presentation of $H^\bullet_{T_n}(\qfl_n)$ and $H^\bullet(\qfl_n)$ in terms of a certain combinatorially defined ``graph cohomology ring,'' and describe a free $\ZZ[\tl_n]$-basis for this ring. 
These results will be used in the next section to give a Borel-type presentation.

We appeal to GKM theory, which is a technique for computing the equivariant cohomology ring of a variety $X$ under the action of an algebraic torus  under suitable hypothesis. 
While originally developed for rational cohomology by Goresky, Kottwitz, and MacPherson~\cite{GKM98} with inspiration from Chang and Skjelbred \cite{ChSk74}, we present a variant with stricter hypotheses that computes integral cohomology.   

Throughout we use the case $X = \fl{n}$ as a motivating example.

\subsection{The GKM ring}
\label{sec:GKM_recall}

Without loss of generality we take $T = T_{n}$. 
As is standard in algebraic combinatorics, we denote by $t_{i}$ the \textit{negative} first Chern class $-c_{1}^{T}(\CC_{\chi_{i}}) \in H^{2}_{T_{n}}(\point)$.  
We then have a homomorphism of abelian groups
\[
\begin{array}{rcl}
-c_{1}^{T}(\CC_{(-)}) : \{\text{Characters of $T$}\} &\to& H^{2}_{T_{n}}(\point) \\
\chi_{1}^{a_{1}} \chi_{2}^{a_{2}} \cdots \chi_{n}^{a_{n}} & \mapsto & -(a_{1}t_{1} + a_{2}t_{2} + \cdots + a_{n}t_{n}).
\end{array}
\]
The equivariant cohomology ring of a point is freely generated by the $t_{i}$ and we identify $H^{\bullet}_{T_{n}}(\point) = \ZZ[\tl_{n}]$, so that all $T_n$-equivariant cohomology rings are $\ZZ[\tl_{n}]$-algebras.

For an edge labeled graph $G$ with vertices $V$, edges $E$, and edge labels given by a function
\[
\chi\colon  E\to \text{linear nonzero polynomials in $\ZZ[\tl_n]$}/\pm,
\]
we define the \emph{graph cohomology ring} for $G$ to be the $H^\bullet_{T_n}(\point)$-algebra
\[
H_{T_n}^\bullet(G)\coloneqq \{(f_v)_{v\in V}\suchthat \chi(uv)\text{ divides }f_v-f_u\text{ for all }uv\in E\}\subset \ZZ[\tl_n]^{\oplus V}
\]
with multiplication defined pointwise.

We now describe sufficient conditions for $H_{T_n}^\bullet(G)$ to be the cohomology ring for a variety $X$.  
Say that $X$ has a \emph{good affine paving} if there is a filtration $\emptyset=X_0\subset X_1\subset X_2\subset \cdots \subset X_{\ell}=X$ by closed subvarieties $X_{i}$ such that for each $i\ge 1$ the following hold.
\begin{enumerate}
\item The set $X_i\setminus X_{i-1}$ contains a unique $T$-fixed point $p_i$, and there is a $T$-equivariant isomorphism of algebraic varieties $X_i\setminus X_{i-1}\cong V_i$ for some linear $T$-representation $V_i$.

\item The representation $V_i$ decomposes into a direct sum of one-dimensional $T$-representations
\[
V_i=\bigoplus_{j\in A_i}V_{i,j}
\qquad\text{where}\qquad A_i\subset \{1,\ldots,i-1\}
\]
such that $\overline{V_{i,j}}=V_{i,j}\cup \{p_j\}$ and topologically $\overline{V_{i,j}}\cong \mathbb{P}^1$.

\item For each $j \in A_{i}$, $f_{i,j}=-c_1^T(V_{i, j}) \in H^\bullet_{T_n}(G)$ satisfies:
\begin{enumerate}
    \item $f_{i, j} \neq \pm f_{i, k}$ for $j \neq k$, and 
    \item $f_{i, j}$ is reduced, meaning that if $f_{i, j} = a_{1} t_{1} + a_{2} t_{2} + \cdots + a_{n} t_{n}$, then $\gcd(a_{1}, a_{2}, \ldots, a_{n}) = 1$.  
\end{enumerate}
\end{enumerate}

A good affine paving on $X$ defines a \emph{GKM graph}, which is an undirected, edge-labeled graph $G_{X}$ with vertex set given by the fixed points $X^{T} = \{p_{1}, \ldots, p_{\ell}\}$.  
For each one-dimensional summand $V_{i, j}$ in (2), $G_{X}$ has an edge $p_{i}p_{j}$, and this edge is labeled by $-c_{1}^T(V_{i, j})$.

\begin{eg}
\label{eg:fl_GAP}
Let $X = \fl{n}$.  
For any total order $w_{1}, \ldots, w_{n!}$ of $S_{n}$ that extends the Bruhat order, $X_{k} = \bigcup_{i = 1}^{k} Bw_{i}B$ defines a good affine paving.  
Then $V_{i} \cong M(w)$ with $p_{k} = w_{k}B$, and following Equation~\eqref{eq:bruhat_chars} we have $j \in A_{i}$ if and only if $w_{j} = w_{i}(a\,b)$ for $(a, b) \in \inv{w}$ and $f_{i, j} = t_{w(b)} - t_{w(a)}$. It follows that the GKM graph is obtained from $\cay{S_{n}}$ by labeling edges of the form $w$ to $(i, j)w$ by $t_{j} - t_{i}$.
\end{eg}

We now consider $\qfl_{n}$.  
Let $\cay{\NC_{n}}$ denote the Hasse diagram of the Kreweras order on $\NC_{n}$ as defined in Section~\ref{sec:ncp}, which is an induced subgraph of $\cay{S_{n}}$. 

\begin{thm}
\label{thm:good_affine_paving}
Theorem~\ref{thm:paving} gives a good affine paving of $\qfl_{n}$, and its GKM graph is obtained from $\cay{\NC_{n}}$ by labeling edges of the form $w$ to $(i, j)w$ by $t_{j} - t_{i}$.  
\end{thm}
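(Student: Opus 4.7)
The plan is to verify the three conditions of a good affine paving for the filtration $\emptyset = X_0 \subset X_1 \subset \cdots \subset X_{|\NC_n|} = \qfl_n$ from \Cref{thm:paving} and then read off the GKM graph. \Cref{prop:charts_are_charts} immediately gives a $T$-equivariant isomorphism
\[
V_k \coloneqq X_k \setminus X_{k-1} \;\cong\; \bigoplus_{(a,b) \in \invnc{w_k}} \mathbb{C}_{\chi_{w_k(b)}\chi_{w_k(a)}^{-1}},
\]
with unique $T$-fixed point $p_k = w_k B$ and $1$-dimensional summands $V_{k,(a,b)}$ indexed by $\invnc{w_k}$, handling condition~(1).

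The central step---and expected main obstacle---is to show that for each $(a,b) \in \invnc{w_k}$, the closure $\overline{V_{k,(a,b)}}$ in $\qfl_n$ is a $T$-invariant $\mathbb{P}^1$ whose other fixed point is $w_k(a,b) B = p_j$ for some $j < k$. The index condition is automatic: $(a,b) \in \inv{w_k}$ forces $w_k(a,b) <_{\text{Bruhat}} w_k$. For the $\mathbb{P}^1$ structure, I would let $F$ be the unique forest in $\indexedforests_n$ with $\ForToNC(F) = w_k$; by \Cref{prop:spreadset}, $(a,b)$ is the spread of an internal node of $F$. The variety $X(F) \subset \qfl_n$ is a Bott manifold (\Cref{thm:smoothTorbit}) and the $1$-skeleton of its cubical moment polytope is the Hasse diagram of $I_F$ in Kreweras order (\Cref{cor:sublattice}); in particular, $w_k$ and $w_k(a,b)$ are endpoints of an edge of this polytope, corresponding to a $T$-invariant $\mathbb{P}^1 \subset X(F)$ with those two fixed points. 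A direct computation gives
\[
w_k(a,b) \cdot \lambda - w_k \cdot \lambda \;=\; (\lambda_a - \lambda_b)\bigl(e_{w_k(b)} - e_{w_k(a)}\bigr)
\]
for any weight $\lambda$, so this $\mathbb{P}^1$ carries the $T$-character $\chi_{w_k(b)}\chi_{w_k(a)}^{-1}$. Since a $1$-dimensional $T$-orbit through a $T$-fixed point is determined by its character, the open orbit of this $\mathbb{P}^1$ coincides with $V_{k,(a,b)} \setminus \{p_k\}$. Because $X(F)$ is closed in $\qfl_n$, the closure in $\qfl_n$ agrees with the closure in $X(F)$, which is precisely this $\mathbb{P}^1$.

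For condition (3), $-c_1^T(V_{k,(a,b)}) = t_{w_k(b)} - t_{w_k(a)}$ is visibly reduced, and bijectivity of $w_k$ ensures that distinct noncrossing inversions yield distinct $\pm$-classes. Reading off the GKM graph, setting $i = w_k(b)$ and $j = w_k(a)$ gives an edge from $w_k$ to $w_k(a,b) = (i,j) w_k$ with label $\pm(t_j - t_i)$, as claimed. Finally, the underlying graph equals $\cay{\NC_n}$ via the identification in \Cref{sec:ncp} of Kreweras covers with Cayley adjacencies in $\NC_n$: every GKM edge lies in $\cay{\NC_n}$ by the definition of $\invnc{}$, while conversely any edge of $\cay{\NC_n}$ from $v$ down to $u = (i,j) v$ in Bruhat order is produced by the noncrossing inversion $(a,b) = (v^{-1}(i), v^{-1}(j)) \in \invnc{v}$, satisfying $v(a,b) = u$, so no edge is missed.
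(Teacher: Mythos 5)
Your proof is correct, but it takes a noticeably longer route than the paper's own argument. The paper's proof is almost a one-liner: conditions (2) and (3) are inherited directly from the good affine paving of $\fl{n}$ (Example~\ref{eg:fl_GAP}), because each one-dimensional summand $V_{k,(a,b)} \subseteq M_{\NC}(w_k)$ is literally the same coordinate line inside $M(w_k)$, and the closure of that line in $\fl{n}$ was already seen to be the $\mathbb{P}^1$ equal to $V_{k,(a,b)} \cup \{w_k(a\,b)B\}$. Since $(a,b) \in \invnc{w_k}$ means by definition that $w_k(a\,b) \in \NC_n$, this other endpoint is a $T$-fixed point of $\qfl_n$, so the entire $\mathbb{P}^1$ lies in the closed subvariety $\qfl_n$; hence the closure of $V_{k,(a,b)}$ in $\qfl_n$ coincides with its closure in $\fl{n}$, and conditions (2)--(3) carry over verbatim. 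Your alternative is to manufacture the $\mathbb{P}^1$ from scratch as an edge of the cubical moment polytope of the Bott manifold $X(F)$ via Proposition~\ref{prop:spreadset} and Corollary~\ref{cor:sublattice}, and then match it to the coordinate line by comparing $T$-characters. This is valid and a nice sanity check, but it invokes heavier machinery (the Bott-manifold and polytope structure) than is needed, and the character-matching step tacitly uses that the isotropy representation of $\fl{n}$ at each $T$-fixed point is multiplicity-free --- without that, a $1$-dimensional $T$-orbit approaching a fixed point would not be determined by its character. If you keep this route, you should state that fact explicitly. The paper's approach sidesteps it entirely: closedness of $\qfl_n$ plus noncrossingness of the other fixed point already pins down the closure.
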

\begin{proof}
Theorem~\ref{thm:paving} verifies condition (1) directly and shows that $\qfl_{n}^{T_{n}} = \{wB \suchthat w \in \NC_{n}\}$.  
Conditions (2) and (3) then follow from the fact that our filtration is obtained by intersecting $\qfl_{n}$ with a good affine paving for $\fl{n}$.  
The same reasoning computes the edges and edge labels for the GKM graph.
\end{proof}

%%%%%%%%%
\begin{thm}
\label{thm:GKM_1}
If $X$ has a good affine paving, then:
\begin{enumerate}
\item $X$ has a $T$-invariant homology basis given by the classes $[\overline{X_i\setminus X_{i-1}}]\in H_\bullet(X)$,

\item $H^\bullet_T(X)\cong H^\bullet_T(G_X)$, the graph cohomology ring, and

\item if $H^\bullet_T(X)$ is a free $\ZZ[\tl_n]$-module then  $H^\bullet_T(X)/\langle t_1,\ldots,t_n\rangle\cong H^\bullet(X)$.
\end{enumerate}
\end{thm}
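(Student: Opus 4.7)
The plan is to proceed in the order \textup{(1)}, \textup{(2)}, \textup{(3)}, bootstrapping the equivariant statements from the combinatorics of the paving. For \textup{(1)}, I will argue by induction on the filtration length $\ell$ using the long exact sequence in (Borel--Moore) homology associated with the closed-open pair $X_{i-1} \hookrightarrow X_i \hookleftarrow X_i \setminus X_{i-1}$. Since $X_i \setminus X_{i-1} \cong V_i \cong \mathbb{A}^{|A_i|}$, its homology is concentrated in degree $2|A_i|$ and equals $\ZZ$, with the tautological generator being the fundamental class. Because the connecting maps have image and domain in different parities of degree, the long exact sequence splits, so inductively $H_\bullet(X_i)$ is free with basis $\{[\overline{X_j\setminus X_{j-1}}]\}_{j\le i}$. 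The $T$-invariance of these cycles is immediate from the $T$-equivariance of the paving.

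For \textup{(2)}, the same inductive argument carried out in $T$-equivariant Borel--Moore homology (using that $V_i$ is a linear $T$-representation with equivariant BM class supported in a single bidegree) shows that $H^\bullet_T(X)$ is a free $\ZZ[\tl_n]$-module with basis the equivariant classes $[\overline{X_i\setminus X_{i-1}}]_T$. The usual localization theorem then gives an injection
\[
\iota\colon H^\bullet_T(X) \hookrightarrow H^\bullet_T(X^T) = \bigoplus_{p\in X^T} \ZZ[\tl_n],
\]
because the quotient is $\ZZ[\tl_n]$-torsion and $H^\bullet_T(X)$ is torsion-free. To see that the image lies in $H^\bullet_T(G_X)$, I restrict to each closure $\overline{V_{i,j}}\cong \PP^1$: this is a $T$-invariant $\PP^1$ with fixed points $p_i,p_j$ and a one-dimensional orbit whose character has equivariant Chern class $\pm f_{i,j}$, so pulling back gives $H^\bullet_T(\PP^1) = \ZZ[\tl_n][\xi]/(\xi(\xi+f_{i,j}))$, forcing $\iota(c)_{p_i} \equiv \iota(c)_{p_j} \pmod{f_{i,j}}$ for every class $c$. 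To see that the image is all of $H^\bullet_T(G_X)$, I will show both sides are free $\ZZ[\tl_n]$-modules of the same rank $|X^T|$: for the left side this is the basis above; for $H^\bullet_T(G_X)$, I will build a triangular basis by ordering vertices along the filtration and, for each $i$, producing a class supported on $\{p_j : j \ge i\}$ whose value at $p_i$ is $\prod_{j\in A_i} f_{i,j}$ (using reducedness and $f_{i,j}\ne \pm f_{i,k}$ to do so one edge at a time). Comparing with the localization $\iota([\overline{X_i\setminus X_{i-1}}]_T)$, which is computed by the equivariant self-intersection formula and vanishes on $p_j$ for $j<i$ while equaling $\prod_{k\in A_i}f_{i,k}$ at $p_i$, shows the transition matrix is unit-triangular over $\ZZ[\tl_n]$.

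For \textup{(3)}, the standard Leray--Serre spectral sequence of the Borel fibration $X \to X\times_T ET \to BT$ has $E_2 = H^\bullet(BT) \otimes H^\bullet(X)$ and abuts to $H^\bullet_T(X)$. Since $H^\bullet_T(X)$ is free over $\ZZ[\tl_n] = H^\bullet(BT)$ of the correct rank $|X^T| = \dim_\QQ H^\bullet(X;\QQ)$ (the latter by \textup{(1)}), a rank count forces degeneration at $E_2$, and the resulting short exact sequence of filtered modules gives $H^\bullet(X) \cong H^\bullet_T(X)\otimes_{\ZZ[\tl_n]}\ZZ = H^\bullet_T(X)/\langle t_1,\dots,t_n\rangle$.

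The main obstacle is the surjectivity step in \textup{(2)}: it is what forces integrality conditions to appear, via the reducedness hypothesis on the edge labels. Without reducedness, the triangular basis construction in $H^\bullet_T(G_X)$ only produces classes after inverting appropriate integers, and the transition matrix from the paving basis to the graph-cohomology basis need not be a unit over $\ZZ[\tl_n]$. The inductive construction of the triangular basis in $H^\bullet_T(G_X)$, propagating a value from $p_i$ outward across each edge of $G_X$ while preserving divisibility by every $f_{i',j}$ that has already been imposed, is the technically delicate part and must be organized carefully using a total order compatible with the paving.
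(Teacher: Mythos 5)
The paper's own proof of this theorem is a three-line citation (to Fulton's \emph{Intersection Theory} for (1), to Harada--Henriques--Holm for (2), and to Franz--Puppe for (3)), whereas you attempt a self-contained argument. The overall scaffold of your proof is sound and follows the "expected'' route that those references formalize: paving $\Rightarrow$ even-degree free (co)homology, freeness $\Rightarrow$ injective localization, triangular/flowup comparison $\Rightarrow$ surjectivity onto the GKM ring, parity degeneration $\Rightarrow$ the Borel-type quotient formula. Your (1) is correct, and your (3) is correct once one knows $H^\bullet(X)$ is free in even degree, which (1) supplies.

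There is, however, a genuine gap in your (2). You write that the inductive argument "carried out in $T$-equivariant Borel--Moore homology'' shows $H^\bullet_T(X)$ is free with basis the classes $[\overline{X_i\setminus X_{i-1}}]_T$, and you then compute the restriction of these to the fixed points. But these fundamental classes live in equivariant Borel--Moore \emph{homology}, not in $H^\bullet_T(X)$, and the space $X$ here (e.g.\ $\qfl_n$, which is reducible) is \emph{not} smooth, so Poincar\'e duality does not identify the two. The basis of $H^\bullet_T(X)$ that actually has the flowup property you need (vanishing at $p_j$ for $j<i$, value $\pm\prod_{k\in A_i}f_{i,k}$ at $p_i$) consists of lifts $\tau_i$ of the Thom classes coming from the \emph{cohomological} filtration spectral sequence, i.e.\ generators of $H^\bullet_T(X_i,X_{i-1})\cong H^\bullet_{T,c}(V_i)$ pushed into $H^\bullet_T(X)$. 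These $\tau_i$ are Kronecker \emph{dual} to the homology classes $[\overline{X_i\setminus X_{i-1}}]$, not equal to them. Indeed, the fundamental class of $\overline{X_i\setminus X_{i-1}}$, if one forces it into cohomology, is supported (roughly) on $\{p_j: j\le i\}$ rather than $\{p_j: j\ge i\}$, since $p_j\in \overline{V_{i,j}}\subseteq \overline{X_i\setminus X_{i-1}}$ for $j\in A_i$. So the "self-intersection'' claim you make is false as stated and true only for the cohomological Thom classes. The fix is not difficult --- replace the homology classes by the $\tau_i$, observe $\tau_i|_{X_{i-1}}=0$ hence $\tau_i|_{p_j}=0$ for $j<i$, and that $\tau_i|_{p_i}$ is the equivariant Euler class of $V_i$ --- but as written the step is wrong. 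A secondary, more minor point: you do not need to independently construct a flowup basis of $H^\bullet_T(G_X)$ and compare transition matrices. Once the $\tau_i$ are seen to be a flowup basis inside $H^\bullet_T(G_X)$, the standard peeling argument (take $c\in H^\bullet_T(G_X)$, let $i$ be minimal with $c_{p_i}\ne 0$, use pairwise coprimality of the reduced, pairwise non-associate $f_{i,k}$ to conclude $\prod_{k\in A_i}f_{i,k}\mid c_{p_i}$, and subtract a $\ZZ[\tl_n]$-multiple of $\tau_i$) directly shows they span, which is exactly the content of the paper's later Proposition on flowup bases.
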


While variants of Theorem~\ref{thm:GKM_1} appear as~\cite[Theorem 2.3]{MR2166181} and \cite[Theorem 1.2.2]{GKM98}, we did not find the exact statement required for $\qfl_{n}$.  Therefore we include a proof for completeness.

\begin{proof}
The first part follows from \cite[see Example 1.9.1 and 19.1.11]{Fulton_Intersection} (in fact the existence of the filtration where each $X_i\setminus X_{i-1}$ is isomorphic to an affine space suffices). 
The second part follows from \cite[Theorem 3.1]{MR2166181}. 
The last part follows from the implication $(iii)\implies (i)$ of \cite[Theorem 1.1]{MR2308029} after noting that $(S^1)^n$-equivariant cohomology is identical to $T_n$-equivariant cohomology because $\mathbb{C}^\ast \cong \mathbb{R}\times S^1$ and $\mathbb{R}$ is contractible.
\end{proof}
%%%%%%%%%

%%%%%%%%%
\begin{eg}
\label{eg:fl_GKM_ring}
By Theorem~\ref{thm:GKM_1}, $H^{\bullet}_{T_{n}}(\fl{n})$ is isomorphic to the graph cohomology ring 
\[
H^{\bullet}_{T_{n}}\big(\cay{S_{n}}\big) = \{(f_{w})_{w \in S_{n}} \;|\; \text{$t_{i} - t_{j}$ divides $f_{w} - f_{(i\,j)w}$ for all $(i \neq j)$}\} \subseteq \ZZ[\tl_{n}]^{\oplus S_{n}}.
\]
Moreover, the $[X^{w}] = [\overline{BwB}]$ give a homology basis for $H_{\bullet}(\fl{n})$.
\end{eg}

From~\Cref{thm:good_affine_paving}, we obtain the following corollary about $\qfl_{n}$.

\begin{cor}
\label{cor:qfl_GKM}
We have 
\[
H^{\bullet}_{T_{n}}(\qfl_{n}) \cong 
\{\big(f_{w}\big)_{w \in \NC_{n}} \suchthat \text{$t_{b} - t_{a}$ divides $f_{w} - f_{(a\,b)w}$ whenever $w, (a\,b)w \in \NC_{n}$}\} \subseteq \ZZ[\tl_{n}]^{\oplus \NC_{n}}.
\]
\end{cor}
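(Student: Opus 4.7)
The plan is to derive this corollary as essentially a direct consequence of Theorems~\ref{thm:good_affine_paving} and~\ref{thm:GKM_1}, with all the substantive work already contained in those results. First, I would invoke Theorem~\ref{thm:good_affine_paving}, which establishes that $\qfl_n$ admits a good affine paving (coming from the filtration in Theorem~\ref{thm:paving}) and identifies the associated GKM graph $G_{\qfl_n}$ as $\cay{\NC_n}$, with the edge between $w$ and $(a\,b)w$ labeled by $t_b - t_a$.

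Next I would apply Theorem~\ref{thm:GKM_1}(2) to conclude $H^\bullet_{T_n}(\qfl_n) \cong H^\bullet_{T_n}(G_{\qfl_n})$. The final step is purely a matter of unpacking the definition of the graph cohomology ring for this particular edge-labeled graph: the vertex set is $\NC_n$, so an element is a tuple $(f_w)_{w \in \NC_n} \in \ZZ[\tl_n]^{\oplus \NC_n}$, and the divisibility condition is imposed exactly on pairs $\{w, (a\,b)w\}$ that form an edge of $\cay{\NC_n}$, namely pairs where both $w$ and $(a\,b)w$ lie in $\NC_n$. The label on such an edge being $t_b - t_a$ gives the divisibility condition as stated.

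There is no real obstacle here: the corollary is genuinely an immediate specialization once one knows both the good affine paving and the GKM graph description, both of which were established in Theorem~\ref{thm:good_affine_paving}. The only minor care needed is to check that the edge labels $t_b - t_a$ are reduced and pairwise non-proportional among edges at a given vertex (parts (3a) and (3b) of the ``good affine paving'' definition), but these follow because distinct transpositions $(a\,b) \neq (a'\,b')$ give linearly independent linear forms $t_b - t_a$ and $t_{b'} - t_{a'}$ (up to sign) in $\ZZ[\tl_n]$, and each such form is manifestly reduced. These verifications are implicit in the proof of Theorem~\ref{thm:good_affine_paving} via restriction from $\fl{n}$, where the corresponding statements were noted in Example~\ref{eg:fl_GAP}.
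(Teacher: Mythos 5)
Your proposal is correct and matches the paper's derivation exactly: the paper also obtains this corollary as a direct specialization of Theorem~\ref{thm:GKM_1}(2) using the GKM graph computed in Theorem~\ref{thm:good_affine_paving}. Your observation that the paving conditions (3a)--(3b) are inherited by restriction from $\fl{n}$ is precisely the reasoning used in the proof of Theorem~\ref{thm:good_affine_paving}.
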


Further recall that the affine charts from Proposition~\ref{prop:charts_are_charts} have closures $X(F)$ for $F \in \indexedforests_{n}$.

\begin{cor}
\label{cor:qfl_homology_basis}
The homology group $H_{\bullet}(\qfl_{n})$ has a homology basis $[X(F)]$ for $F \in \indexedforests_{n}$.
\end{cor}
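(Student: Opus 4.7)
The plan is to obtain the homology basis as a direct consequence of the good affine paving established in~\Cref{thm:good_affine_paving}, combined with \Cref{thm:GKM_1}(1), by transporting the resulting basis indexed by $\NC_n$ to one indexed by $\indexedforests_n$ via the bijection $\ForToNC\colon \indexedforests_n \to \NC_n$.

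Concretely, I would proceed as follows. Fix a linear extension $w_1,\ldots,w_{|\NC_n|}$ of the Bruhat order on $\NC_n$ and let $X_k = \bigcup_{i=1}^k M_{\NC}(w_i)B$ be the associated filtration from~\Cref{thm:paving}, which by~\Cref{thm:good_affine_paving} is a good affine paving. \Cref{thm:GKM_1}(1) then provides a homology basis of $H_\bullet(\qfl_n)$ given by the classes $[\overline{X_k \setminus X_{k-1}}] = [\overline{M_{\NC}(w_k)B}]$, where the closure is taken in $\qfl_n$.

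Next I would translate this basis to $\indexedforests_n$. Let $F_k \in \indexedforests_n$ be the unique forest with $\ForToNC(F_k) = w_k$; then $F_1,\ldots,F_{|\NC_n|}$ is a total ordering of $\indexedforests_n$ extending the pullback of the Bruhat order via $\ForToNC$, so it is a valid ordering in the sense of the second part of~\Cref{thm:paving_Bott}. By~\Cref{prop:charts_are_charts}, $M_{\NC}(w_k)B = C(F_k)$, so the basis classes become $[\overline{C(F_k)}]$. It remains to identify $\overline{C(F_k)}$ with $X(F_k)$, which is the main (and only nontrivial) content of the argument.

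For this identification, observe that by \Cref{thm:paving_Bott}, $\bigsqcup_{i=1}^k C(F_i) = \bigcup_{i=1}^k X(F_i)$, so $C(F_k) \subseteq X(F_k) \subseteq X_k$, with $X(F_k)$ closed in $\qfl_n$ since it is closed in $\fl{n}$ by \Cref{thm:smoothTorbit}. By \Cref{thm:smoothTorbit} the variety $X(F_k)$ is a Bott manifold, in particular irreducible of dimension $|F_k|$, while by \Cref{prop:charts_are_charts} $C(F_k)$ is isomorphic to an affine space of the same dimension $|\invnc{\ForToNC(F_k)}| = |F_k|$ (the equality of cardinalities following from \Cref{prop:spreadset}). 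Hence $C(F_k)$ is open and dense in $X(F_k)$, giving $\overline{C(F_k)} = X(F_k)$ as desired. Substituting into the basis obtained from \Cref{thm:GKM_1}(1) yields the claimed basis $\{[X(F)] : F \in \indexedforests_n\}$. The only subtlety — essentially bookkeeping — is confirming that the closure of the affine chart $C(F_k)$ inside $\qfl_n$ agrees with the Bott manifold $X(F_k)$; this is handled by the irreducibility and dimension comparison above.
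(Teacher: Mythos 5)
Your proposal is correct and follows essentially the same route as the paper: apply the good affine paving from \Cref{thm:good_affine_paving} together with \Cref{thm:GKM_1}(1) to get a basis of cell-closures, and identify each $\overline{C(F)}$ with $X(F)$. The paper treats the equality $\overline{C(F)} = X(F)$ as something to "recall," and your irreducibility-plus-dimension argument supplies a clean justification for it (note that dimension and irreducibility give density of $C(F_k)$ in $X(F_k)$, which is all you need; the additional "open" claim, while true via $C(F_k)=X_k\setminus X_{k-1}$, does not follow from the dimension count you cite).
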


\begin{rem}
Simple generalizations of Theorem~\ref{thm:GKM_1} exist to compute generalized cohomology theories such as equivariant $K$-theory.  However, determining a good basis for the resulting rings is a combinatorially specific task which does not transfer easily between theories.  
\end{rem}

%%%%%%%%%%%%%%%%%
\subsection{Flowup bases and double forest polynomials}
\label{sec:flowup}
%%%%%%%%%%%%%%%%%

The following definition characterizes a distinguished subset of $H_{T_n}^\bullet(G)$; the reader should compare this to the definition of \textit{generating family}
by Guillemin--Zara \cite[Definition 2.3]{GZ03} or that of \textit{canonical classes} by Tymoczko \cite[\S 2.2]{Tym08}.
%%%%%%%%%%%
\begin{defn}
\label{def:flowup}
Let $G=(V,E,\chi)$ be a GKM graph. Given a partial ordering $\le$ on $V$, a \emph{flowup basis} for  $H_{T_n}^\bullet(G)$ is a collection of elements $\{f_v\suchthat v\in V\}\subset H^\bullet_{T_n}(G)$ such that
\begin{enumerate}[label=(\arabic*)]
    \item \label{it1:flowup} $(f_v)_w=0$ if $v\not\le w$, and 
    \item \label{it2:flowup} $(f_v)_v=\pm \prod_{uv\in E\text{ and }u\le v} \chi(uv)$.
\end{enumerate}
\end{defn}
%%%%%%%%%

The following fact is classical and a key tool for producing $\ZZ[\tl_n]$-bases for $H^\bullet_{T_n}(G)$.

%%%%%%%%%
\begin{prop}
    Any flowup basis is a free $\ZZ[\tl_n]$-basis for $H^\bullet_{T_n}(G)$. 
\end{prop}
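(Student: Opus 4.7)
The plan is to prove linear independence and spanning separately, both by exploiting the ``triangular'' structure that conditions \ref{it1:flowup} and \ref{it2:flowup} impose with respect to a chosen linear extension $v_1 < v_2 < \cdots < v_N$ of the partial order $\le$ on $V$.

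For linear independence, I would take an arbitrary $\ZZ[\tl_n]$-relation $\sum_v c_v f_v = 0$ and look at a $\le$-minimal $v$ with $c_v \ne 0$. Evaluating the relation in coordinate $v$, the contribution from each $u$ with $u \not\le v$ vanishes by \ref{it1:flowup}, while contributions from $u < v$ vanish by minimality. What remains is $c_v (f_v)_v = 0$. Since $(f_v)_v$ is a nonzero product of nonzero linear polynomials in the integral domain $\ZZ[\tl_n]$, this forces $c_v = 0$, a contradiction.

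For spanning, the plan is to peel off coordinates one vertex at a time. Given $g=(g_v)_v \in H^\bullet_{T_n}(G)$, set $g^{(1)} = g$, and inductively construct $c_k \in \ZZ[\tl_n]$ so that $g^{(k+1)} := g^{(k)} - c_k f_{v_k}$ vanishes at $v_1, \ldots, v_k$. The earlier vanishing is preserved by condition \ref{it1:flowup}, since $v_k \not\le v_i$ for $i<k$ forces $(f_{v_k})_{v_i} = 0$. After $N$ steps we will have $g^{(N+1)} = 0$, giving $g = \sum_i c_i f_{v_i}$.

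The main obstacle is the key step inside this induction: one must verify that $c_k$ actually exists in $\ZZ[\tl_n]$, equivalently that $(f_{v_k})_{v_k}$ divides $g^{(k)}_{v_k}$. For each edge $u v_k \in E$ with $u \le v_k$, the linear extension forces $u = v_i$ for some $i<k$, so $g^{(k)}_u = 0$; the GKM compatibility $\chi(uv_k) \mid g^{(k)}_u - g^{(k)}_{v_k}$ then yields $\chi(uv_k) \mid g^{(k)}_{v_k}$. The edge labels at $v_k$ are pairwise non-associate reduced linear forms (reducedness is standing, and non-associatedness at a vertex is built into the setup, cf.\ condition (3) of a good affine paving), hence pairwise coprime irreducibles in the UFD $\ZZ[\tl_n]$. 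Their product, which equals $(f_{v_k})_{v_k}$ up to sign by \ref{it2:flowup}, therefore divides $g^{(k)}_{v_k}$, and one sets $c_k := \pm g^{(k)}_{v_k}/(f_{v_k})_{v_k} \in \ZZ[\tl_n]$.
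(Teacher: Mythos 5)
Your proof is correct and follows the same classical ``flow-up / triangularity'' strategy that the paper delegates to Tymoczko \cite[\S 2.2]{Tym08} without reproducing: linear independence via evaluation at a $\le$-minimal vertex with nonzero coefficient, and spanning by inductively subtracting multiples of $f_{v_k}$ along a linear extension, with the divisibility check hinging on the pairwise non-associate (hence pairwise coprime irreducible) reduced down-edge labels. You correctly localize the one hypothesis that makes the divisibility step go through — condition (3) of the good affine paving, which is built into the paper's definition of a GKM graph — so the argument is complete as stated.
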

\begin{proof}
An outline of the classical proof can be found in \cite[\S 2.2]{Tym08}.
\end{proof}
 
In this section we describe a flowup basis for $H^{\bullet}_{T_{n}}(\qfl_{n})$ using the double forest polynomials defined in \cite{BGNST1}.
For $w \in S_{n}$, let
\begin{equation}
\label{eq:evaluation_map_definition}
\begin{array}{rcl}
\ev_{w}: \ZZ[\tl_{n}][\xl_{n}] & \to & \ZZ[\tl_{n}] \\
f(\xl_{n}; \tl_{n}) & \mapsto & f(t_{w(1)}, t_{w(2)}, \ldots, t_{w(n)}; \tl_{n}).
\end{array}
\end{equation}

%%%%%%%%%
\begin{eg}
\label{eg:classical_schubert_flowup}
Consider the graph cohomology ring for $\fl{n}$ from Example~\ref{eg:fl_GKM_ring}.  
Taking $\le$ to be the Bruhat order on $S_{n}$, we have a flowup basis with
\[
(f_{v})_{w} = \ev_{w}\Big(\schub{v}(\xl_{n}; \tl_{n})\Big),
\qquad\text{for $v, w \in S_{n}$}
\]
where $\schub{v}(\xl_{n}; \tl_{n})$ is the double Schubert polynomial; see Section~\ref{sec:flag_recall}.  The fact that conditions~\ref{it1:flowup} and~\ref{it2:flowup} in Definition~\ref{def:flowup} are met is nontrivial but follows from the AJS--Billey theorem \cite{AJS, Bil99}.
\end{eg}
%%%%%%%%%

The analogous statement for $\qfl_{n}$ makes use of the double forest polynomials defined in~\cite[\S 4]{BGNST1}, which we denote by $\forestpoly{F}(\xl_{n}, \tl_{n}) \in \ZZ[\xl_{n}][\tl_{n}]$ for each $F \in \indexedforests_{n}$.  
As with Schubert polynomials, we postpone a precise definition of double forest polynomials to Section~\ref{sec:qfl_brl}.

\begin{thm}
\label{thm:forest_flowup}
Taking $\le$ to be the Bruhat order restricted to $\NC_{n}$, double forest polynomials define a flowup basis for the graph cohomology ring $H^{\bullet}_{T_{n}}(\cay{\NC_{n}})$.
Specifically, for $v,w\in \NC_n$ we have
\[
    (f_{v})_{w} = \ev_{w}\left(\forestpoly{F}\right)
\]
where $F\in \indexedforests_n$ is the unique forest such that $v=\ForToNC(F)$.
\end{thm}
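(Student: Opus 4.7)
The plan is to directly verify the three defining properties of a flowup basis for the purported classes $f_v = \bigl(\ev_w(\forestpoly{F})\bigr)_{w \in \NC_n}$, indexed via the bijection $F \leftrightarrow \ForToNC(F)$ from \Cref{prop:bnfornf_construction} restricted to $\indexedforests_n$.

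First, that each $f_v$ actually lies in $H^\bullet_{T_n}(\cay{\NC_n})$ comes essentially for free. For any $g \in \ZZ[\tl_n][\xl_n]$ and any adjacent pair $w,(a\,b)w$, the evaluations $\ev_w(g)$ and $\ev_{(a\,b)w}(g)$ are obtained from $g$ by substitutions that differ only by swapping $t_a$ and $t_b$ at the two positions $w^{-1}(a)$ and $w^{-1}(b)$; hence their difference has the form $h(t_a,t_b)-h(t_b,t_a)$ and is automatically divisible by $t_b - t_a$. No combinatorics of $\forestpoly{F}$ is used here.

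Second, for the vanishing condition $\ev_w(\forestpoly{F})=0$ when $\ForToNC(F)\not\le w$ in Bruhat order, the route is to import an expansion of $\forestpoly{F}$ in the double-Schubert basis from \cite{BGNST1}: writing $\forestpoly{F} = \sum_\sigma c^F_\sigma(\tl_n)\,\schub{\sigma}(\xl_n;\tl_n)$ where every contributing $\sigma$ satisfies $\sigma \ge \ForToNC(F)$ in Bruhat order, the classical AJS--Billey vanishing for double Schuberts gives the claim (if $\ForToNC(F)\not\le w$, then \emph{a fortiori} $\sigma\not\le w$ for each such $\sigma$). Third, for the diagonal at $v = \ForToNC(F)$ we need $\ev_v(\forestpoly{F}) = \pm\prod_{(i,j)}(t_{v(i)}-t_{v(j)})$ with $(i,j)$ ranging over spreads of internal nodes of $F$. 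By \Cref{prop:spreadset} these spreads are precisely $\invnc{v}$, and each such $(i,j)$ gives a descending Bruhat edge $v\to v(i,j)$ in $\cay{\NC_n}$ whose GKM label \Cref{thm:good_affine_paving} is exactly $t_{v(i)}-t_{v(j)}$; matching \Cref{def:flowup}\ref{it2:flowup}. The identity itself is a ``principal specialization at a noncrossing partition'' formula for $\forestpoly{F}$ established in \cite{BGNST1}, provable inductively by a divided-difference recursion mirroring \Cref{lem:ForToNcPsi}.

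The main obstacles are the two inputs from \cite{BGNST1}: the Schubert-positivity-style expansion underwriting the vanishing, and the principal specialization formula underwriting the diagonal entry. Everything else is bookkeeping: \Cref{prop:ForToNCBruhatMax} ensures the indexing $F\leftrightarrow \ForToNC(F)$ is a well-defined bijection $\indexedforests_n\to\NC_n$ (so the right number of basis elements is produced), \Cref{prop:spreadset} aligns descending edges at $v$ with spreads of $F$, and the GKM divisibility needed to even have $f_v\in H^\bullet_{T_n}(\cay{\NC_n})$ is automatic. Once all three flowup properties are checked, the classical fact that any flowup basis is a free $\ZZ[\tl_n]$-basis (\textit{cf.} \cite{Tym08, GZ03}) concludes the proof.
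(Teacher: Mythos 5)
Your overall strategy — verify the three flowup conditions directly and appeal to \cite{BGNST1} for the two nontrivial ones — is the same as the paper's, which simply cites \cite[Theorems 8.14, 8.17]{BGNST1} for the vanishing and diagonal conditions respectively. Your observation that membership of $f_v$ in $H^\bullet_{T_n}(\cay{\NC_n})$ is automatic from GKM divisibility of pointwise evaluations is correct and a nice remark that the paper leaves implicit, and your treatment of the diagonal entry via \Cref{prop:spreadset} and a specialization formula is exactly right.

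However, the route you propose for the vanishing condition contains a genuine gap. You posit a double Schubert expansion $\forestpoly{F} = \sum_\sigma c^F_\sigma(\tl_n)\,\schub{\sigma}(\xl_n;\tl_n)$ in which every contributing $\sigma$ satisfies $\sigma \ge \ForToNC(F)$ in Bruhat order. This cannot hold for degree reasons: $\forestpoly{F}$ has $x$-degree $|F| = |\invnc{\ForToNC(F)}|$, while $\schub{\sigma}$ has $x$-degree $\ell(\sigma)$, and in general $\ell(\ForToNC(F)) > |\invnc{\ForToNC(F)}|$. For instance, take $n=3$ and $v = \ForToNC(F) = 321 = (3\,1) \in \NC_3$: then $\ell(v) = 3$ but $|\invnc{v}| = 2$, so there is simply no $\sigma \ge v$ of Coxeter length $2$, and the top-degree part of $\forestpoly{F}$ could not be produced by such an expansion. (The discrepancy $\ell(v) > |\invnc{v}|$ is typical, not exceptional — it reflects precisely the fact that noncrossing Bruhat cells $M_{\NC}(v)$ are proper subspaces of the usual Bruhat cells $M(v)$.) The correct import is the direct vanishing statement for double forest polynomials, \cite[Theorem 8.14]{BGNST1}, which asserts $\ev_w(\forestpoly{F}) = 0$ whenever $\ForToNC(F) \not\le w$ without passing through a Schubert expansion. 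Replacing your claimed expansion by that citation closes the gap and brings your argument into line with the paper's.
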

\begin{proof}
The claim follows from the analogue of the AJS--Billey theorem for double forest polynomials proved in~\cite[\S 8]{BGNST1}. 
In particular,~\cite[Theorem 8.14]{BGNST1} shows that $(f_{v})_{w} = 0$ whenever $v \not\le w$, and ~\cite[Theorem 8.17]{BGNST1} shows that $(f_{v})_{v} = \prod_{(i, j) \in \invnc{v}}( t_{v(j)} - t_{v(i)})$.
\end{proof}

Figure~\ref{fig:flowup_forest} shows one element of the flowup basis for $H^{\bullet}_{T_{n}}(\cay{\NC_{n}})$.

\begin{figure}[!htb]
    \centering
    \includegraphics[scale=0.75]{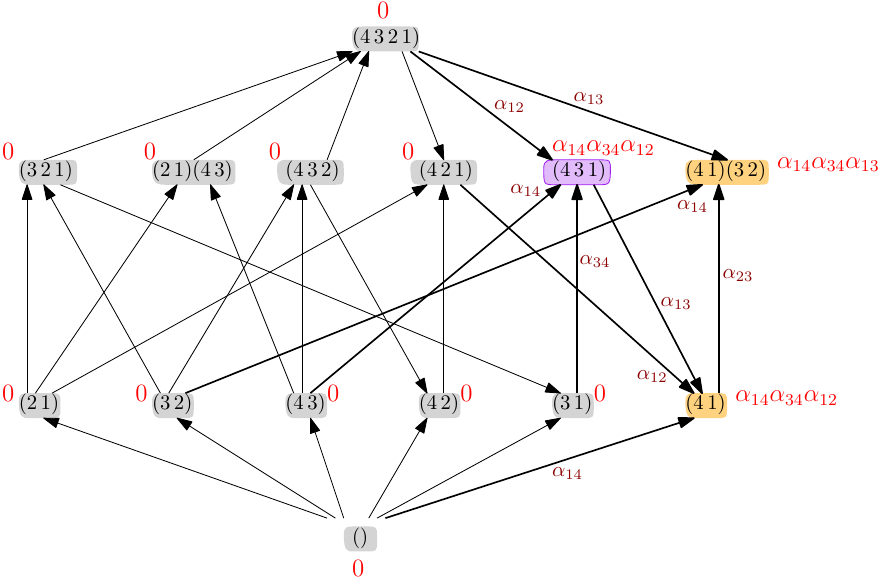}
    \caption{The flowup basis element of $H^{\bullet}_{T_{4}}(\cay{\NC_{4}})$  indexed by $(4\, 3\, 1)$.  
    This is an evaluation of the double forest polynomial for $F = \wh{F}(\rletter{1}^{-}\eletter{1}\eletter{1}\eletter{2})$.  For clarity we have set $\alpha_{ij}\coloneqq t_j-t_i$.}
    \label{fig:flowup_forest}
\end{figure}

%%%%%%%%%%%%%%%%%%%%%%%%%%%%%%%%%%%%%%%%%%%%%%%%%%%%%%%%%%%%
\section{The Borel presentation of $H^{\bullet}_{T_{n}}(\qfl_{n})$}
\label{sec:equivbuildingopscohom}
%%%%%%%%%%%%%%%%%%%%%%%%%%%%%%%%%%%%%%%%%%%%%%%%%%%%%%

%%%%%%%%%%%%%%%%%%%%%%%%%%%%%%%%%%%%%%%%%%%%%%%%%%%%%%
\subsection{Recollections on the equivariant cohomology of $\fl{m}$}
\label{sec:flag_recall}
%%%%%%%%%%%%%%%%%%%%%%%%%%%%%%%%%%%%%%%%%%%%%%%%%%%%%%

We begin by briefly reviewing key aspects of the equivariant cohomology of the complete flag variety that are relevant to us. 
The reader is referred to~\cite{AF24} for a more complete exposition.

The following is due to Borel~\cite{Bor53}.  The equivariant cohomology ring $H^{\bullet}_{T_{m}}(\fl{m})$ is generated by the character lattice of $T_{m}$ and the Chern classes $c_1^{T_{m}}(\mc{F}_i/\mc{F}_{i-1})\in H^{2}_{T_{m}}(\fl{m})$.  We therefore have a map
\[
\begin{array}{rcl}
\ZZ[\tl_{m}][\xl_{m}] & \to & H^{\bullet}_{T_{m}}(\fl{m}) \\[0.5ex]
x_{i} & \mapsto & -c_1^{T_m}(\mc{F}_i/\mc{F}_{i-1}) \\
t_{i} & \mapsto & -c_1^{T_m}(\CC_{\chi_{i}}).
\end{array}
\]
The kernel of this map is the ideal $\esymide{m} = \langle f(x_1,\ldots,x_m)-f(t_1,\ldots,t_m):f\in \sym{m}\rangle$, where $\sym{m}$ denotes the ring of symmetric polynomials in $\xl_{m}$.  Thus going forward we identify 
\begin{equation}
\label{eq:borel}
H^{\bullet}_{T_{m}}(\fl{m}) \cong \ZZ[\tl_{m}][\xl_{m}] / \esymide{m}.
\end{equation}

In light of the GKM presentation of $H^{\bullet}_{T_{m}}(\fl{m})$ given in Example~\ref{eg:fl_GKM_ring}, Borel's presentation has the following meaning. 
The inclusion of each torus fixed point $wB \in \fl{m}$ gives a pullback map
\[
\ev_{w}: H^{\bullet}_{T_{m}}(\fl{m}) \to H_{T_{m}}^\bullet(wB) \cong \ZZ[\tl_{m}].
\]
In Borel's presentation, we have $\ev_{w}(x_{i}) = t_{w(i)}$ and $\ev_{w}(t_{i}) = t_{i}$, so we can identify $\ev_{w}$ with the map of the same name defined on polynomials in~\eqref{eq:evaluation_map_definition}. 
In other words, the map
\[
\prod_{w \in S_{m}} \ev_{w} : \ZZ[\tl_{m}][\xl_{m}] \to \ZZ[\tl_{m}]^{\oplus S_{m}}
\]
surjects onto the graph cohomology ring $H^{\bullet}_{T_{m}}(\cay{S_{m}}) \subseteq \ZZ[\tl_{m}]^{\oplus S_{m}}$.  

Now let $X$ be an algebraic variety with an action of $T_{m}$ and $Z \subseteq X$ a $T_{m}$-invariant subvariety.  
For a cohomology class $f \in H^{\bullet}_{T_{m}}(X)$, we denote the  $T_{m}$-equivariant degree of $f$ on $Z$ by
\[
\degint_{Z} f\coloneqq \kappa^Z_*(\iota^*f)=\kappa^X_{\ast} (\mathbbm{1}_{Z} f)  \in \ZZ[\tl_{m}].
\]
where $\kappa^Z$ denotes the unique map $Z \to \point$,  $\mathbbm{1}_{Z} \in H^{\bullet}_{T_{m}}(X)$ is the pushforward of $1\in {H^{\bullet}_{T_{m}}(Z)}$ along the inclusion $\iota: Z \to X$, and the equality $\kappa^Z_*(\iota^*f)=\kappa^X_{\ast} (\mathbbm{1}_{Z} f)$ is the push--pull formula.

Borel's presentation provides a simple way to compute the degree on a Schubert variety using the \emph{divided difference operations} $\partial_{i}: \ZZ[\tl_{m}][\xl_{m}] \to \ZZ[\tl_{m}][\xl_{m}]$ defined by
\[
\partial_{i} f(\xl_{m}; \tl_{m}) = \frac{f(\xl_{m}, \tl_{m}) - f(x_{1}, \ldots, x_{i-1}, x_{i+1}, x_{i}, x_{i+2}, \ldots, x_{m}; \tl_{m})}{x_{i} - x_{i+1}}.
\]
Recall the forgetful map
$\pi_{i} \colon \fl{n}\to \GL_{m}/P_i$ from Section~\ref{sec:pushpull}.  
The following is due to Bernstein--Gelfand--Gelfand~\cite{BGG73} and Demazure~\cite{Dem74}; see \cite[Chapter 10, Lemma 6.5]{AF24} for textbook treatment.

%%%%%%%%%
\begin{prop}
\label{prop:BGG}
The map $(\pi_i)^*(\pi_i)_*:H^\bullet_{T_{m}}(\fl{m})\to H^\bullet_{T_{m}}(\fl{m})$ is given by $f\mapsto \partial_i f$.  Moreover, for $w \in S_{m}$ with $w(i) < w(i+1)$, we have
\[
\degint_{X^{ws_{i}}} f = \degint_{X^{w}} \partial_{i} f.
\]
\end{prop}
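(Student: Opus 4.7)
The plan is to establish part (1) by computing the pushforward $(\pi_i)_*$ directly in the $\mathbb{P}^1$-bundle $\pi_i \colon \fl{m} \to \GL_m/P_i$, and then to deduce part (2) through the projection formula applied to appropriate Schubert cycles.

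For part (1), I would invoke the fact recalled in Section~\ref{sec:pushpull} that $\pi_i$ realizes $\fl{m}$ as the $\mathbb{P}^1$-bundle $\proj(\mc{F}_{i+1}/\mc{F}_{i-1})_{\GL_m/P_i}$, with $\mc{F}_i/\mc{F}_{i-1}$ restricting fiberwise to the tautological subbundle $\mathcal{O}(-1)$. Integration along the fiber then gives $(\pi_i)_*(1)=0$ and $(\pi_i)_*(x_i)=1$. In Borel's presentation, $\pi_i^*\big(H^\bullet_{T_m}(\GL_m/P_i)\big)$ is precisely the subalgebra of classes symmetric in $x_i$ and $x_{i+1}$. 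Using $x_i^2 = (x_i+x_{i+1})x_i - x_ix_{i+1}$, every element of $H^\bullet_{T_m}(\fl{m})$ admits a unique decomposition $f = a + bx_i$ with $a,b$ symmetric in $x_i, x_{i+1}$. The projection formula then yields $(\pi_i)_*(f) = a(\pi_i)_*(1) + b(\pi_i)_*(x_i) = b$, and a direct computation gives $\partial_i f = b$ since $\partial_i$ is $s_i$-symmetric linear with $\partial_i(x_i)=1$. Pulling back, $\pi_i^*(\pi_i)_*(f) = b = \partial_i f$ because $b$ already lies in the symmetric subring.

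For part (2), the hypothesis $w(i) < w(i+1)$ ensures that $w$ is the minimal-length representative of its coset modulo $\langle s_i\rangle$. Standard Bruhat theory then supplies three key facts: $\pi_i|_{X^w}\colon X^w \to Y := \pi_i(X^w) = \overline{BwP_i/P_i} \subset \GL_m/P_i$ is an isomorphism onto a Schubert variety; $X^{ws_i} = \pi_i^{-1}(Y)$ as reduced subschemes (as $X^{ws_i}$ is the unique minimal $P_i$-stable Schubert variety containing $X^w$); and $\pi_i|_{X^{ws_i}}$ is a $\mathbb{P}^1$-bundle over $Y$. Flatness of $\pi_i$ gives the fundamental-class identity $\mathbbm{1}_{X^{ws_i}} = \pi_i^*(\mathbbm{1}_Y)$, after which the projection formula and part (1) yield
\[
\degint_{X^{ws_i}} f = \degint_{\fl{m}} \pi_i^*(\mathbbm{1}_Y)\cdot f = \degint_{\GL_m/P_i} \mathbbm{1}_Y \cdot (\pi_i)_*(f) = \degint_Y \partial_i f = \degint_{X^w} \partial_i f,
\]
where the final equality uses that $\pi_i|_{X^w}$ is an isomorphism.

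The main potential obstacle will be verifying the scheme-theoretic equality $X^{ws_i} = \pi_i^{-1}(Y)$ and the resulting fundamental-class identity $\mathbbm{1}_{X^{ws_i}} = \pi_i^*(\mathbbm{1}_Y)$, together with the sign bookkeeping needed to confirm $(\pi_i)_*(1) = 0$ and $(\pi_i)_*(x_i) = 1$ under the conventions of the paper. Both are classical; see for instance \cite[Chapter~10]{AF24}. Once these geometric inputs are in hand, the entire argument reduces to the elementary identity $\partial_i(a + bx_i) = b$.
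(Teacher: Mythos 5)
The paper gives no proof of this proposition; it is stated as classical, with a pointer to \cite[Chapter~10, Lemma~6.5]{AF24}. Your sketch reconstructs the standard argument along the same lines as the cited references, and part~(1) is correct as written: the identification of $\pi_i$ with $\proj(\mc{F}_{i+1}/\mc{F}_{i-1})_{\GL_m/P_i}$, the Gysin formulas $(\pi_i)_*(1)=0$ and $(\pi_i)_*(x_i)=1$ (using $x_i=-c_1^{T_m}(\mc{F}_i/\mc{F}_{i-1})$), the decomposition $f=a+bx_i$ over the $s_i$-invariant subring, and the identity $\partial_i(a+bx_i)=b$.

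In part~(2), one of the three ``standard Bruhat theory facts'' you invoke is false as stated: $\pi_i|_{X^w}\colon X^w\to Y$ need not be an isomorphism. Take $m=3$, $i=1$, $w=s_1s_2$, so $w(1)=2<3=w(2)$. Here $Y=\pi_1(X^{s_1s_2})=\GL_3/P_1\cong\mathbb{P}^2$ has the same dimension as $X^{s_1s_2}$, yet since both $e\le w$ and $s_1\le w$ in Bruhat order, the entire $\pi_1$-fiber $\mathbb{P}^1$ through $eB$ and $s_1B$ lies in $X^{s_1s_2}$; thus $\pi_1|_{X^{s_1s_2}}$ contracts a curve and is not an isomorphism. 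What is true, and what your final equality actually needs, is that $\pi_i|_{X^w}$ is a proper \emph{birational} surjection onto $Y$ (it restricts to an isomorphism $BwB/B\to BwP_i/P_i$ over the open cell). This gives $(\pi_i|_{X^w})_*(1)=1$, whence $\degint_{X^w}\pi_i^*g=\degint_Y g$ by the projection formula --- exactly the last step in your chain. Your other two geometric inputs (the scheme-theoretic identity $X^{ws_i}=\pi_i^{-1}(Y)$, which follows from $X^{ws_i}=\overline{BwP_iB/B}$ being $P_i$-stable, and $\pi_i|_{X^{ws_i}}$ being a $\mathbb{P}^1$-bundle) do hold as stated, and with ``isomorphism'' replaced by ``proper birational surjection'' your argument for part~(2) is complete.
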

%%%%%%%%%

The \emph{double Schubert polynomials} are the unique family of polynomials $\{\schub{w}(\xl_{m}; \tl_{m}) \suchthat w \in S_{m}\}$ such that each $\schub{w}(\xl_{m}; \tl_{m})$ does not depend on $x_{m}$ and moreover satisfies
\[
\schub{w}(\tl_{m}; \tl_{m}) = \delta_{w, \operatorname{id}_{S_{m}}}
\qquad\text{and}\qquad
\partial_{i} \schub{w}(\xl_{m}; \tl_{m}) = \begin{cases}
\schub{ws_{i}}(\xl_{m}; \tl_{m}) & \text{if $w(i) > w(i+1)$,} \\
0 & \text{otherwise.}
\end{cases}
\]
We therefore have a $T_{m}$-equivariant Kronecker duality between double Schubert polynomials and the homology basis of Schubert cycles $[X^{w}]$ described in Example~\ref{eg:fl_GKM_ring}, as we have
\[
\degint_{X^{w}} \schub{v}(\xl_{m}; \tl_{m}) 
= \delta_{w,w'}.
\]

In the following subsections, we tell a parallel story for $\qfl_{m}$ using equivariantly quasisymmetric polynomials and double forest polynomials.  

\begin{rem}
Borel's presentation for ordinary cohomology is $H^{\bullet}(\fl{m}) \cong \ZZ[\xl_{m}]/\symide{m}$, which can be recovered from the equivariant version by setting $t_{i} \mapsto 0$, as is formalized in Theorem~\ref{thm:GKM_1}.  
\end{rem}

%%%%%%%%%%%%%%%%%%%%%%%%%%%%%%%%%%%%%%%%%%%%%%%%%%%%%%
\subsection{Borel's theorem for equivariant quasisymmetry}
\label{sec:qfl_brl}
%%%%%%%%%%%%%%%%%%%%%%%%%%%%%%%%%%%%%%%%%%%%%%%%%%%%%%

In order to state our analogue of Borel's theorem, we review several aspects of equivariant quasisymmetry from~\cite{BGNST1}.  
In Section~\ref{sec:geom_degree_1} we will give geometric interpretations which motivate these definitions.  
We define the \emph{equivariant Bergeron--Sottile maps}
\begin{align*}
\rope{i}^-f&=f(x_1,\ldots,x_{i-1},t_i,x_i,x_{i+1},\ldots;\tl)\\
\rope{i}^+f&=f(x_1,\ldots,x_{i-1},x_i,t_i,x_{i+1},\ldots;\tl),
\end{align*}
and the \emph{equivariant quasisymmetric divided difference}
\begin{align*}
\eope{i}f&=\frac{\rope{i}^+f-\rope{i}^-f}{x_i-t_i}=\rope{i}^-\partial_if=\rope{i}^+\partial_if.
\end{align*}
The \emph{double forest polynomials} are then the unique family of polynomials $\{\forestpoly{F}(\xl_n;\tl_n)\suchthat F\in \indexedforests_{n} \}$ such  that each $\forestpoly{F}(\xl_{n};\tl_{n})$ does not depend on $x_{n}$ and moreover satisfies
\[
\forestpoly{F}(\tl_n;\tl_n)=\delta_{F,\emptyset}
\qquad\text{and}\qquad
\eope{i}\forestpoly{F}=\begin{cases}
\forestpoly{G}(\xl_{n};\tl_{[n]/\{i\}}) & \text{if $F = G \cdot \eletter{i}$,}\\
0 & \text{otherwise}\end{cases}
\]
where $\tl_{[n]/\{i\}} = (t_{1}, \ldots, t_{i-1}, t_{i+1}, \ldots, t_{n})$.

\begin{rem}
\label{rem:foreststability}
While it is not immediately obvious from the definition,~\cite[Corollary 4.8]{BGNST1} shows that double forest polynomials have the following stability property: for each $F \in \indexedforests_{n-1}$, we have $\forestpoly{F}(\xl_{n-1}; \tl_{n-1}) = \forestpoly{F \cdot \rletter{n}^{-}}(\xl_{n}; \tl_{n})$ as polynomials.
\end{rem}

The ring of \emph{equivariantly quasisymmetry polynomials} $\eqsym{n}$ is
\[
\eqsym{n}=\{f(\xl_n;\tl_n)\in \ZZ[\tl_n][\xl_n]\suchthat \text{$\rope{i}^-f=\rope{i}^+f$ for $1\le i \le n-1$}\} \subseteq \ZZ[\tl_{n}][\xl_{n}],
\]
which is a subring of $\ZZ[\tl_{n}][\xl_{n}]$.  We also denote $\eqsymide{n} = \langle f(\xl_{n}; \tl_{n}) - f(\tl_{n}; \tl_{n}) \suchthat f \in \eqsym{n} \rangle$.

\begin{thm}
We have 
\[
H^{\bullet}_{T_{n}}(\qfl_{n}) \cong \ZZ[\tl_{n}][\xl_{n}] / \eqsymide{n},
\]
and moreover the double forest polynomials $\forestpoly{F}(\xl_{n}; \tl_{n})$ where $F\in\indexedforests_n$ give a free $\ZZ[\tl_{n}]$-basis for this ring.
\end{thm}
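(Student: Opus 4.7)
The plan is to leverage the GKM machinery of Section~\ref{sec:CoinvariantsGKM} together with the flowup basis of double forest polynomials. Borel's presentation~\eqref{eq:borel} combined with the pullback along the inclusion $\qfl_n \hookrightarrow \fl{n}$ and the GKM identification of Corollary~\ref{cor:qfl_GKM} produces a ring homomorphism
\[
\Phi \colon \ZZ[\tl_n][\xl_n] \longrightarrow H^\bullet_{T_n}(\qfl_n), \qquad f \longmapsto (\ev_w f)_{w\in \NC_n}.
\]
The goal is to show $\Phi$ is surjective with kernel $\eqsymide{n}$, from which both the presentation and the basis statement will follow.

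Surjectivity is immediate from Theorem~\ref{thm:forest_flowup}: the tuples $\Phi(\forestpoly{F}) = (\ev_w \forestpoly{F})_{w\in\NC_n}$, as $F$ ranges over $\indexedforests_n$, form a flowup basis and hence a free $\ZZ[\tl_n]$-basis for $H^\bullet_{T_n}(\cay{\NC_n})$. For the kernel, the nontrivial inclusion $\eqsymide{n} \subseteq \ker(\Phi)$ reduces to the evaluation invariance statement
\[
\ev_w f = f(\tl_n;\tl_n)\quad\text{for every } f \in \eqsym{n} \text{ and every } w \in \NC_n.
\]
Granted this, the induced surjection $\ZZ[\tl_n][\xl_n]/\eqsymide{n} \twoheadrightarrow H^\bullet_{T_n}(\qfl_n)$ carries the $\ZZ[\tl_n]$-basis of double forest polynomials for the equivariant quasisymmetric coinvariant ring (established in~\cite{BGNST1}) onto the flowup basis of Theorem~\ref{thm:forest_flowup}, so it is a bijection of free $\ZZ[\tl_n]$-modules and yields both claims.

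The main obstacle is verifying the evaluation invariance. The defining relations $\rope{i}^-f = \rope{i}^+f$ only guarantee that $f$ is invariant under swapping its $i$-th and $(i+1)$-th arguments when one of them equals $t_i$, whereas Kreweras covers transpose values at potentially non-adjacent positions. The plan is to use connectivity of $\cay{\NC_n}$ and argue edge by edge: for a Kreweras cover $(a\,b)w \lessdot w$ I expect one can rewrite $\ev_w f - \ev_{(a\,b)w} f$ as a telescoping sum of elementary substitutions where the quasisymmetric relations apply directly, invoking the noncrossing factorization structure from Proposition~\ref{prop:cubical_factorization} to ensure that the requisite intermediate $t_i$'s are in position. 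As a cheap warm-up one checks by direct substitution into $\rope{a}^-f = \rope{a}^+f$ that $\ev_{s_a}f = f(\tl_n;\tl_n)$; the combinatorial bookkeeping required to extend this to all of $\NC_n$ will likely parallel the AJS--Billey-style reasoning underlying~\cite[Section 8]{BGNST1}.
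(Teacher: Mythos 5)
Your high-level strategy matches the paper's: define $\mathbf{ev}_{\NC}=\prod_{w\in\NC_n}\ev_w$, deduce surjectivity from the flowup basis of Theorem~\ref{thm:forest_flowup}, and then pin down the kernel. However, there are two substantive gaps.

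First, your argument only establishes $\eqsymide{n}\subseteq\ker(\mathbf{ev}_{\NC})$, and you close the loop by asserting that the double forest polynomials $\{\forestpoly{F}:F\in\indexedforests_n\}$ are already known to form a free $\ZZ[\tl_n]$-basis of $\ZZ[\tl_n][\xl_n]/\eqsymide{n}$ ``established in \cite{BGNST1}.'' That is not what is available: \cite[Corollary 4.7(2)]{BGNST1} gives a basis of the \emph{whole} ring $\ZZ[\tl_n][\xl_n]$ indexed by the larger set $\LTFor_n$, and the restriction to $\indexedforests_n$ as a basis of the \emph{quotient} is precisely what the paper proves in Appendix~\ref{appendix} (it is equation \eqref{eqn:freebasis}, stated there ``as a consequence'' of $\eqsymide{n}=\ker(\mathbf{ev}_{\NC})$). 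Assuming it as known makes your proof circular, since it is logically equivalent to the equality you are trying to establish. What the paper actually supplies to break the tie is a separate inductive argument that every $\forestpoly{F}(\xl_n;\tl_n)$ with $F\in\LTFor_n\setminus\indexedforests_n$ lies in $\eqsymide{n}$, combined with the $\LTFor_n$-indexed decomposition and the flowup decomposition, to force $M_1=\eqsymide{n}=\ker(\mathbf{ev}_{\NC})$. Your proposal has no substitute for this step.

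Second, the evaluation invariance $\ev_w f=f(\tl_n;\tl_n)$ for $f\in\eqsym{n}$ and $w\in\NC_n$ is treated in your writeup as an open problem to be solved by an as-yet-unspecified telescoping argument (``I expect one can \dots''), citing \cite[Section~8]{BGNST1} as an analogy. In fact this statement is exactly \cite[Theorem~7.1]{BGNST1} and can simply be cited (the paper does so). Your sketch, which checks only $w=s_a$ and gestures at an extension to general Kreweras covers, does not constitute a proof of this nontrivial fact; the transposed positions $a,b$ in a Kreweras cover are generally not adjacent and the relations $\rope{i}^-f=\rope{i}^+f$ do not directly apply, which is precisely the difficulty you acknowledge but do not resolve.
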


Our proof primarily relies on results from Section~\ref{sec:CoinvariantsGKM}, but we make use of one result which is deferred to Appendix~\ref{appendix} for ease of exposition.

\begin{proof}
By Theorem~\ref{thm:GKM_1}, we know that $H^{\bullet}_{T_{n}}(\qfl_{n})$ is isomorphic to the graph cohomology ring $H^{\bullet}_{T_{n}}(\cay{\NC_{n}}) \subseteq \ZZ[\tl_{n}]^{\NC_{n}}$.  
We prove the theorem by showing that the map
\[
\begin{array}{rcl}
\mathbf{ev}_{\NC} : \ZZ[\tl_{n}][\xl_{n}]  & \to & \ZZ[\tl_{n}]^{\NC_{n}} \\
f(\xl_{n}; \tl_{n}) & \mapsto & \big( \ev_{w}(f) \big)_{w \in \NC_{n}}
\end{array}
\]
induces the desired isomorphism onto the graph cohomology ring.  

The image of $\mathbf{ev}_{\NC}$ is contained in the graph cohomology ring, as for any permutation $w$, $t_{b} - t_{a}$ divides $\ev_{w}(f) - \ev_{(a\,b)w}(f)$.
Moreover, by Theorem~\ref{thm:forest_flowup}, the double forest polynomials map to a free (flowup) basis of $H^{\bullet}_{T_{n}}(\cay{\NC_{n}})$, so $\mathbf{ev}_{\NC}$ is surjective.  

What remains is to show that $\ker(\mathbf{ev}_{\NC}) = \eqsymide{n}$, which is Theorem~\ref{thm:we_forgot_to_prove_this} in Appendix~\ref{appendix}.
\end{proof}

We finally consider ordinary cohomology, proving Theorem~\ref{main:A}.  
As in the introduction write $\qsymide{n}$ for the ideal generated by quasisymmetric polynomials with no constant term.  

\begin{cor}
We have 
\[
H^{\bullet}(\qfl_{n}) \cong \ZZ[\xl_{n}] / \qsymide{n}.
\]
Moreover, the forest polynomials $\forestpoly{F}(\xl_{n}; 0)$ where $F\in\indexedforests_n$ give a free $\ZZ$-basis for this ring.
\end{cor}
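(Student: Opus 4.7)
The plan is to derive the corollary from the preceding equivariant theorem by specializing $t_1 = \cdots = t_n = 0$, using the GKM-theoretic bridge provided by Theorem~\ref{thm:GKM_1}(3). Since that theorem exhibits $H^\bullet_{T_n}(\qfl_n) \cong \ZZ[\tl_n][\xl_n]/\eqsymide{n}$ as a free $\ZZ[\tl_n]$-module with basis the double forest polynomials $\forestpoly{F}(\xl_n;\tl_n)$, the freeness hypothesis of Theorem~\ref{thm:GKM_1}(3) is satisfied, giving
\[
H^\bullet(\qfl_n) \;\cong\; H^\bullet_{T_n}(\qfl_n)/\langle t_1,\ldots,t_n\rangle \;\cong\; \ZZ[\xl_n]/\bar{I},
\]
where $\bar{I}$ is the image of $\eqsymide{n}$ under the ring map $\tl_n \mapsto 0$. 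Flatness of the quotient over $\ZZ[\tl_n]$ ensures that specializing the basis produces a free $\ZZ$-basis $\{\forestpoly{F}(\xl_n;0)\}_{F\in\indexedforests_n}$ of $\ZZ[\xl_n]/\bar{I}$. In particular, this quotient has $\ZZ$-rank equal to $|\indexedforests_n| = C_n$, the Catalan number.

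The remaining task is to identify $\bar{I} = \qsymide{n}$. The containment $\bar{I}\subseteq\qsymide{n}$ is straightforward: for $f\in\eqsym{n}$, setting $\tl_n=0$ in the defining relation $\rope{i}^- f = \rope{i}^+ f$ yields
\[
f(x_1,\ldots,x_{i-1},0,x_i,\ldots;0) = f(x_1,\ldots,x_{i-1},x_i,0,x_{i+1},\ldots;0),
\]
which is a standard reformulation of quasisymmetry, so $f(\xl_n;0)\in\qsym{n}$ and every generator of $\bar{I}$ lies in $\qsymide{n}$. For the reverse inclusion I will argue by a rank count: the natural surjection $\ZZ[\xl_n]/\bar{I}\twoheadrightarrow\qscoinv{n}$ has source a free $\ZZ$-module of rank $C_n$, while the classical result of Aval--Bergeron--Bergeron~\cite{ABB04} gives $\dim_\QQ(\qscoinv{n}\otimes\QQ) = C_n$. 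Consequently the kernel $\qsymide{n}/\bar{I}$ has $\ZZ$-rank zero; being a submodule of the torsion-free module $\ZZ[\xl_n]/\bar{I}$, it must vanish, so $\bar{I}=\qsymide{n}$.

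There is no genuine obstacle here: once the equivariant theorem is in hand, Theorem~\ref{thm:GKM_1}(3) does the real work, and identifying the specialized ideal reduces to the known dimension computation of $\qscoinv{n}$. The basis statement for $\{\forestpoly{F}(\xl_n;0)\}$ then transfers automatically from the equivariant $\ZZ[\tl_n]$-basis statement.
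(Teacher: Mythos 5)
Your proposal is correct and leads to the same conclusion, but it handles the central identification $\bar I=\qsymide{n}$ by a genuinely different route from the paper. The paper's proof simply asserts that base change along $t_i\mapsto 0$ carries $\eqsym{n}$ onto $\qsym{n}$ (and hence $\eqsymide{n}$ onto $\qsymide{n}$); this implicitly rests on the fact, established in~\cite{BGNST1}, that the double fundamental quasisymmetric polynomials form a $\ZZ[\tl_n]$-basis of $\eqsym{n}$ whose specializations at $\tl_n=0$ are the fundamental quasisymmetric polynomials, a $\ZZ$-basis of $\qsym{n}$. You instead prove the easy containment $\bar I\subseteq\qsymide{n}$ directly from the $\rope{i}^{\pm}$-relations, and then close the argument by a rank count: the free $\ZZ$-module $\ZZ[\xl_n]/\bar I$ of rank $|\indexedforests_n|=|\NC_n|=C_n$ surjects onto $\qscoinv{n}$, whose rational dimension is $C_n$ by Aval--Bergeron--Bergeron, forcing the kernel to be torsion and hence zero inside a free module. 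Your route trades the explicit surjectivity of $\eqsym{n}\twoheadrightarrow\qsym{n}$ for the classical dimension computation of \cite{ABB04}; both are legitimate, and your version is arguably more self-contained and spells out a step the paper leaves implicit, at the cost of invoking the external dimension count.
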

\begin{proof}
By Theorem~\ref{thm:GKM_1}, we can obtain $H^{\bullet}(\qfl_{n})$ from  $H^{\bullet}_{T_{n}}(\qfl_{n})$ by performing a change of scalars along the homomorphism $\ZZ[\tl_{n}] \to \ZZ$ given by $t_{i} \mapsto 0$.  
Applying the base change to our Borel presentation, we have a canonical identification between the images of  $\ZZ[\tl_{n}][\xl_{n}]$, $\eqsym{n}$, $\eqsymide{n}$, and $\forestpoly{F}(\xl_{n}; \tl_{n})$ with $\ZZ[\xl_{n}]$, $\qsym{n}$, $\qsymide{n}$, and $\forestpoly{F}(\xl_{n}; 0)$.
\end{proof}

%%%%%%%%%%%%%%%%%%%%%%%%%%%%%%%%%%%%%%%%%%%%%%%%%%%%%%
\subsection{Geometric realizations of $\rope{i}^{\pm}$ and $\eope{i}$}
\label{sec:geom_degree_1}
%%%%%%%%%%%%%%%%%%%%%%%%%%%%%%%%%%%%%%%%%%%%%%%%%%%%%%

We now show that the equivariant Bergeron--Sottile maps correspond to equivariant cohomology operations that are adjoint to the building maps $\Psi_{i,j}$ and $\mathbb{P}_i$ defined in Section~\ref{sec:BuildingOperators}.

%%%%%%%%%
\begin{fact}
\label{fact:biggertorus}
Let $\gamma:T\to T'$ be a coordinate projection between two algebraic tori $T$ and $T'$. 
For $X$ a variety with a $T$-action, we have 
\[
    H^\bullet_{T}(X)=H^\bullet_{T}(\point)\otimes_{H^\bullet_{T'}(\point)} H^\bullet_{T'}(X).
\]
Furthermore, if $\Phi:X\to Y$ is a $T'$-equivariant map of varieties with a $T'$-action, then $\Phi^*:H^\bullet_{T}(Y)\to H^\bullet_{T}(X)$ and $\Phi_*:H^\bullet_{T}(X)\to H^\bullet_{T}(Y)$ are given by extending the corresponding maps on $T'$-equivariant cohomology by the identity map on $H^\bullet_{T}(\point)$.
\end{fact}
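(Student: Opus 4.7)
The plan is to exploit the fact that a coordinate projection $\gamma\colon T\to T'$ admits a canonical splitting $T\cong T'\times K$ with $K=\ker\gamma$ itself a (sub)torus, and moreover that on any $T'$-variety $X$ the induced $T$-action is trivial on the factor $K$. I would first model $T$-equivariant cohomology via the Borel construction using a classifying space of the form $ET=ET'\times EK$, so that the triviality of the $K$-action yields a natural homeomorphism
\[
ET\times_T X\;\cong\;(ET'\times_{T'}X)\times BK.
\]

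Next, since $H^\bullet(BK)$ is a polynomial ring, hence free as a $\ZZ$-module, the Künneth theorem applies to give
\[
H^\bullet_T(X)\;\cong\;H^\bullet_{T'}(X)\otimes_\ZZ H^\bullet_K(\point).
\]
Applying the same identification to the case $X=\point$ produces $H^\bullet_T(\point)\cong H^\bullet_{T'}(\point)\otimes_\ZZ H^\bullet_K(\point)$, and rewriting the previous Künneth formula through this identification gives the desired base-change statement
\[
H^\bullet_T(X)\;\cong\;H^\bullet_T(\point)\otimes_{H^\bullet_{T'}(\point)}H^\bullet_{T'}(X),
\]
with the $H^\bullet_{T'}(\point)$-module structures on both sides matching those coming from $\gamma^*:H^\bullet_{T'}(\point)\to H^\bullet_T(\point)$ and from the canonical action on $H^\bullet_{T'}(X)$.

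For the functoriality statement, I would invoke naturality in the $T'$-space variable. A $T'$-equivariant $\Phi\colon X\to Y$ induces a map on Borel constructions of the form $(\Phi\times_{T'}\id_{ET'})\times\id_{BK}$. Because the Künneth isomorphism is natural in both factors, the $T$-equivariant pullback $\Phi^*$ is identified with the $T'$-equivariant pullback tensored with $\id_{H^\bullet_K(\point)}$, which is precisely extension of scalars along $H^\bullet_{T'}(\point)\hookrightarrow H^\bullet_T(\point)$. The pushforward assertion is handled analogously, using that proper (or Gysin) pushforward is natural with respect to external products with a fixed factor $BK$.

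The main obstacle I foresee is the pushforward half, not the pullback half: while pullback is obviously compatible with any Künneth decomposition, pushforward requires either properness of $\Phi$ or a careful choice of a Gysin/umkehr map, and one must verify that $(\Phi\times\id_{BK})_*$ genuinely factors as $\Phi_*\otimes\id$ on cohomology. This compatibility is standard in equivariant intersection theory, but it is the only step where the bookkeeping is nontrivial enough to merit careful verification.
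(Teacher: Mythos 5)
The paper states this result as a \emph{Fact} and gives no proof, so there is no argument to compare yours against; the task here is simply to vet your argument on its own terms, and it holds up. The splitting $T\cong T'\times K$ with $K=\ker\gamma$ acting trivially on $X$, the identification $ET\times_T X\cong(ET'\times_{T'}X)\times BK$, and the K\"unneth isomorphism (valid because $H^\bullet(BK)$ is a free, finite-type polynomial ring in degree-two generators, while the $T'$-equivariant cohomology of a finite-type variety is degreewise finitely generated) together give exactly the claimed base change, and your verification that the $H^\bullet_{T'}(\point)$-module structures match under $(A\otimes_\ZZ B)\otimes_A M\cong M\otimes_\ZZ B$ is the right point to spell out. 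Your caution about the pushforward half is also well placed: $\Phi_*$ in this setting is only defined for proper maps (or via a chosen Gysin/umkehr map), and the compatibility $(\Phi\times\id_{BK})_*=\Phi_*\otimes\id$ is then a standard consequence of the projection formula together with the observation that $\Phi\times\id_{BK}$ is again proper with the same relative orientation. One small cosmetic point: it is worth remarking explicitly that the maps used in the paper ($\pi_i$ a $\PP^1$-bundle, $\Psi_i^{\pm}$ closed embeddings) are all proper, so the hypothesis needed for $\Phi_*$ is automatically satisfied in every application the authors make of this fact.
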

%%%%%%%%%

As in Definition~\ref{defn:gamma_action} we write $\gamma_{i}: T_{m} \to T_{m-1}$ for the $i$th coordinate projection and $\fl{m-1}^{\gamma_i}$ for $\fl{m-1}$ with the action of $T_{m}$ induced by $\gamma_{i}$.  
With Fact~\ref{fact:biggertorus}, the GKM presentation of the $T_{m-1}$-equivariant cohomology of $\fl{m-1}$ implies that
\[
H^\bullet_{T_m}(\fl{m-1}^{\gamma_i})\cong \{(g_w)_{w\in S_{m-1}}|t_{j+\delta_{j\ge i}}-t_{k+\delta_{k\ge i}}\text{ divides }g_w-g_{(j,k)w},\forall (j\ne k)\}\subset \mathbb{Z}[\tl_m]^{\oplus S_{m-1}}
\]
and the Borel presentation similarly implies
\[
H^\bullet_{T_m}(\fl{m-1}^{\gamma_i})\cong \frac{\ZZ[\tl_m][\xl_{m-1}]}{\langle f(\xl_{m-1})-f(t_1,\ldots,t_{i-1},t_{i+1},\ldots,t_m) \suchthat f\in \sym{m-1}\rangle}.
\]
The isomorphism between these two presentations is given by
$$
f\mapsto \big( f(t_{w(1)+\delta_{w(1)\ge i}},\ldots,t_{w(m-1)+\delta_{w(m-1)\ge i}};\tl_m) \big)_{w\in S_{m-1}}.
$$

%%%%%%%%%
\begin{prop}
\label{prop:pullbacks}
The pullback map
$\Psi_{i,j}^*\colon H^\bullet_{T_m}(\fl{m})\to H^\bullet_{T_m}(\fl{m-1}^{\gamma_i})$
sends $t_k\mapsto t_k$ for all $k$, and 
$$x_k\mapsto \begin{cases}x_{k-\delta_{k>j}}&k\ne j\\t_{i}&k=j.\end{cases}$$
\end{prop}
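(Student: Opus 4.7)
The plan is to work directly with Borel's presentation: $H^\bullet_{T_m}(\fl{m})$ is generated (as a ring) by the $t_k$ and the $x_k = -c_1^{T_m}(\mc{F}_k/\mc{F}_{k-1})$, so it suffices to compute the pullback of these classes. Since $\Psi_{i,j}\colon \fl{m-1}^{\gamma_i}\to \fl{m}$ is $T_m$-equivariant and the $t_k$ come from $H^\bullet_{T_m}(\point)$ via the structure map, we have $\Psi_{i,j}^\ast t_k = t_k$ automatically. Thus the content lies in computing $\Psi_{i,j}^\ast x_k$, which reduces to identifying the pullback bundle $\Psi_{i,j}^\ast(\mc{F}_k/\mc{F}_{k-1})$ as a $T_m$-equivariant line bundle on $\fl{m-1}^{\gamma_i}$.

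For this, I would use the explicit description of $\Psi_{i,j}$ on flags given in Section~\ref{section:psi_plus_minus}: writing $\epsilon_i\colon \mathbb{C}^{m-1}\hookrightarrow \mathbb{C}^m$ for the $T_m$-equivariant inclusion skipping coordinate $i$, the $k$th subspace of the flag $\Psi_{i,j}\mc{F}$ is
\[
(\Psi_{i,j}\mc{F})_k = \begin{cases} \epsilon_i(\mc{F}_k) & k < j,\\ \epsilon_i(\mc{F}_{j-1})\oplus \langle e_i\rangle & k=j,\\ \epsilon_i(\mc{F}_{k-1})\oplus \langle e_i\rangle & k>j.\end{cases}
\]
Taking successive quotients yields the $T_m$-equivariant identification
\[
\Psi_{i,j}^\ast(\mc{F}_k/\mc{F}_{k-1}) \cong \begin{cases} \mc{F}_k/\mc{F}_{k-1} & k<j,\\ \mathbb{C}_{\chi_i} & k=j,\\ \mc{F}_{k-1}/\mc{F}_{k-2} & k>j,\end{cases}
\]
where on the right-hand side $\mathbb{C}^{m-1}$ carries the $T_m$-action through $\gamma_i$ so that $\epsilon_i$ is equivariant.

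Applying $-c_1^{T_m}$ to both sides and using the Borel dictionary (so that $-c_1^{T_m}(\mc{F}_k/\mc{F}_{k-1})$ on $\fl{m-1}^{\gamma_i}$ is $x_k$, while $-c_1^{T_m}(\mathbb{C}_{\chi_i})=t_i$) immediately gives the stated formula $x_k\mapsto x_{k-\delta_{k>j}}$ for $k\ne j$ and $x_j\mapsto t_i$. The only slightly delicate point, which I would verify carefully, is that the quotient $(\epsilon_i(\mc{F}_{j-1})\oplus \langle e_i\rangle)/\epsilon_i(\mc{F}_{j-1})$ really is $\mathbb{C}_{\chi_i}$ as a $T_m$-equivariant line bundle---this is immediate once one recalls that $T_m$ acts on $\langle e_i\rangle$ by the character $\chi_i$ and that $\epsilon_i$ is $T_m$-equivariant when the source is given the $\gamma_i$-action. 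Beyond this bookkeeping, the proof is purely mechanical.
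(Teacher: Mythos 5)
Your proof is correct, but it takes a genuinely different route from the paper's. The paper verifies the formula pointwise on torus fixed points: it computes $\Psi_{i,j}(w)(k)$ for $w\in S_{m-1}$ and then checks that $\ev_w(\Psi_{i,j}^*x_k) = \ev_{\Psi_{i,j}(w)}(x_k)$ agrees with $\ev_w$ applied to the claimed image, which suffices because the GKM localization map is injective. You instead work directly with the geometry: you pull back the tautological line bundles along $\Psi_{i,j}$, identify the successive quotients $(\Psi_{i,j}\mc{F})_k/(\Psi_{i,j}\mc{F})_{k-1}$ as $T_m$-equivariant line bundles via the explicit flag-level description of $\Psi_{i,j}$, and then apply $-c_1^{T_m}$. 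Both routes are sound. The paper's fixed-point argument is more mechanical but sidesteps the need to write out the full flag formula for general $\Psi_{i,j}$ (the paper only records it for $\Psi_i^\pm$; you correctly extrapolate from the matrix description). Your bundle-theoretic argument is arguably cleaner and more conceptual, and it makes the origin of the $t_i$ in the $k=j$ case transparent: the inserted coordinate line $\langle e_i\rangle$ carries the character $\chi_i$. Each approach is a reasonable substitute for the other.
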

\begin{proof}
We have that $\Psi_{i,j}^*t_k=t_k$ holds since $\Psi_{i,j}$ is a $T_m$-equivariant map, so it suffices to show the result for $x_k$.
To avoid notational overlap we will let $x_1^{\gamma_i},\ldots,x_{m-1}^{\gamma_i}$ denote the $x_i$ generators in $\fl{m-1}^{\gamma_i}$. It suffices to show that for all $w \in S_{m-1}$
\[
\ev_{w}
(\Psi_{i,j}^*x_k) =\begin{cases}\ev_{w} (x^{\gamma_i}_{k-\delta_{k>j}}) &k\ne j\\t_i&k=j.\end{cases}
\]
Note that
\[
\Psi_{i,j}(w)(k)=\begin{cases}
w(k)+\delta_{w(k)\ge i} &k < j\\
t_i&k=j\\
w(k-1)+\delta_{w(k-1)\ge i} &k>j.\end{cases}
\]

For $w\in S_{m-1}$ we have $\ev_{w}(x_\ell^{\gamma_i}) =\gamma_i^* t_{w(\ell)}=t_{w(\ell)+\delta_{w(\ell)\ge i}}$, and therefore for $k\ne j$ we have $$\ev_{w}(x^{\gamma_i}_{k-\delta_{k>j}}) =t_{\Psi_{i,j}(w)(k)}.$$
On the other hand, because $\ev_w$ and $\ev_{\Psi_{i,j}(w)}$ are pullbacks under the inclusions $\{w\}\hookrightarrow \fl{m-1}^{\gamma_i}$ and $\{\Psi_{i,j}(w)\}\hookrightarrow \fl{m}$, we have
\begin{equation*}
    \ev_{w}(\Psi_{i,j}^* x_k) = \ev_{\Psi_{i,j}(w)}(x_k) =t_{\Psi_{i,j}(w)(k)}=\begin{cases}\ev_{w} (x^{\gamma_i}_{k-\delta_{k>j}}) &k\ne j\\t_i&k=j.\end{cases}\qedhere
\end{equation*}
\end{proof}
%%%%%%%%%

\begin{prop}
\label{prop:Howbuilderscohomology_notdegree}
    The maps $(\Psi_i^{\pm})^*:H^\bullet_{T_m}(\fl{m})\to H^\bullet_{T_m}(\fl{m-1}^{\gamma_i})$ are given by 
    \begin{align}\label{eq:composition_giving_ri}
        (\Psi_i^{\pm})^*f=\rope{i}^{\pm}f.
    \end{align}
    The map $(\Psi_i^{\pm})^*(\pi_i)^*(\pi_i)_*:H^\bullet_{T_m}(\fl{m})\to H^\bullet_{T_m}(\fl{m-1}^{\gamma_i})$ is given by
    \begin{align}\label{eq:composition_giving_ei}
        (\Psi_i^{\pm})^*(\pi_i)^*(\pi_i)_*f=\eope{i}f.
    \end{align}
\end{prop}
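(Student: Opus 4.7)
Both equalities are essentially substitution computations, and the main work is identifying the explicit substitution rules from Proposition~\ref{prop:pullbacks} with the definitions of the equivariant Bergeron--Sottile operators.

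For the first equality, I would simply specialize Proposition~\ref{prop:pullbacks} to $j=i$ (giving $\Psi_i^-=\Psi_{i,i}$) and to $j=i+1$ (giving $\Psi_i^+=\Psi_{i,i+1}$). In the first case the formula reads $t_k \mapsto t_k$, $x_k\mapsto x_k$ for $k<i$, $x_i\mapsto t_i$, and $x_k \mapsto x_{k-1}$ for $k>i$; this is exactly the substitution defining $\rope{i}^-$. In the second case we get $t_k\mapsto t_k$, $x_k\mapsto x_k$ for $k\le i$, $x_{i+1}\mapsto t_i$, and $x_k\mapsto x_{k-1}$ for $k>i+1$, which is exactly $\rope{i}^+$. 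So \eqref{eq:composition_giving_ri} holds at the level of Borel presentations, hence at the level of equivariant cohomology.

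For the second equality, I would combine Proposition~\ref{prop:BGG}, which states $(\pi_i)^*(\pi_i)_* f = \partial_i f$ as an endomorphism of $H^\bullet_{T_m}(\fl{m})$, with the first part of the proposition to obtain
\[
(\Psi_i^{\pm})^*(\pi_i)^*(\pi_i)_* f = (\Psi_i^{\pm})^* \partial_i f = \rope{i}^{\pm} \partial_i f.
\]
Now $\partial_i f$ is symmetric in $x_i$ and $x_{i+1}$ by construction, so substituting $(x_i, t_i)$ or $(t_i, x_i)$ into the $(i,i+1)$-slots yields the same polynomial; that is, $\rope{i}^- \partial_i f = \rope{i}^+ \partial_i f$. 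By the defining identity $\eope{i} f = \rope{i}^{\pm}\partial_i f$ recalled at the start of Section~\ref{sec:qfl_brl}, this common value equals $\eope{i} f$, establishing \eqref{eq:composition_giving_ei}.

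There is no real obstacle here beyond bookkeeping: the hardest step is making sure the $\delta_{k>j}$ shifts in Proposition~\ref{prop:pullbacks} align precisely with the index shifts built into the definitions of $\rope{i}^\pm$, which is just a matter of checking the case $k=i$ (respectively $k=i+1$) carefully and verifying the shift on the remaining indices.
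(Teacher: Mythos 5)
Your proposal is correct and follows exactly the same route as the paper's proof: specialize Proposition~\ref{prop:pullbacks} at $j=i$ and $j=i+1$ to match $\rope{i}^{\pm}$, then combine with Proposition~\ref{prop:BGG} and the identity $\eope{i}f=\rope{i}^{\pm}\partial_i f$. The only (harmless) extra content is that you re-derive the equality $\rope{i}^-\partial_i f=\rope{i}^+\partial_i f$ via the $s_i$-symmetry of $\partial_i f$, whereas the paper simply invokes this identity from the definition of $\eope{i}$ in Section~\ref{sec:qfl_brl}.
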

\begin{proof}
    Specializing the computation of $\Psi_{i,j}^*$ in Proposition~\ref{prop:pullbacks} to $j=i,i+1$ gives~\eqref{eq:composition_giving_ri}. 
    Since $(\pi_i)^*(\pi_i)_*$ computes $\partial_i$ by \Cref{prop:BGG}, we obtain~\eqref{eq:composition_giving_ei} from the identity $\eope{i}f=\rope{i}^{\pm}\partial_if$. 
\end{proof}

%%%%%%%%%
\begin{thm}
\label{thm:Howbuilderscohomology}
Let $Z\subset \fl{m-1}^{\gamma_i}$ be a $T_{m}$-invariant subvariety. 
Then for $f\in H^\bullet_{T_m}(\fl{m})$, we have equalities of $T_{m}$-equivariant degrees
\begin{equation}\label{eq:degrees_ei_ri}
\degint_{\Psi_i^{\pm}Z}f=\degint_Z \rope{i}^{\pm}f
\qquad\text{and}\qquad
\degint_{\mathbb{P}_i Z}f=\degint_Z\eope{i}f.
\end{equation}
\end{thm}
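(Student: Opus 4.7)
My plan is to derive both identities from Proposition~\ref{prop:Howbuilderscohomology_notdegree}, which already identifies the pullbacks $(\Psi_i^{\pm})^*$ and $(\Psi_i^{\pm})^* \pi_i^* (\pi_i)_*$ with $\rope{i}^{\pm}$ and $\eope{i}$ respectively. All that remains is to convert these pullback identities into degree identities via the push--pull formula and the functoriality of $\kappa_*$ to a point.

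For the first identity, since $\Psi_i^{\pm}\colon \fl{m-1}^{\gamma_i} \hookrightarrow \fl{m}$ is a $T_m$-equivariant closed embedding, it restricts to a $T_m$-equivariant isomorphism $\phi\colon Z \overset{\sim}{\to} \Psi_i^{\pm} Z$. Functoriality of the equivariant pushforward through $\phi$ yields $\kappa^{\Psi_i^{\pm} Z}_* = \kappa^Z_* \circ \phi^*$, and the commuting square $\iota' \phi = \Psi_i^{\pm} \iota$ of inclusions (with $\iota\colon Z \hookrightarrow \fl{m-1}^{\gamma_i}$ and $\iota'\colon \Psi_i^{\pm} Z \hookrightarrow \fl{m}$) converts
\[
\degint_{\Psi_i^{\pm} Z} f \;=\; \kappa^{\Psi_i^{\pm}Z}_*(\iota'^{*} f) \;=\; \kappa^Z_*(\iota^* (\Psi_i^{\pm})^* f)
\]
into $\degint_Z \rope{i}^{\pm} f$, invoking Proposition~\ref{prop:Howbuilderscohomology_notdegree} at the final step.

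For the second identity I will start from the fiber square
\begin{center}
\begin{tikzcd}
\mathbb{P}_i Z \ar[r,hookrightarrow] \ar[d] & \fl{m} \ar[d,"\pi_i"] \\
Z \ar[r,hookrightarrow,"\pi_i \Psi_i"] & \GL_m/P_i
\end{tikzcd}
\end{center}
that defines $\mathbb{P}_i Z$. Because $\pi_i$ is a smooth proper $\mathbb{P}^1$-bundle, flat base change gives $\mathbbm{1}_{\mathbb{P}_i Z} = \pi_i^* \mathbbm{1}_{\pi_i \Psi_i Z}$ in $H^{\bullet}_{T_m}(\fl{m})$. Combined with the projection formula this yields
\[
\degint_{\mathbb{P}_i Z} f = \kappa^{\GL_m/P_i}_* (\pi_i)_* (\mathbbm{1}_{\mathbb{P}_i Z} \cdot f) = \kappa^{\GL_m/P_i}_* (\mathbbm{1}_{\pi_i \Psi_i Z} \cdot (\pi_i)_* f) = \degint_{\pi_i \Psi_i Z} (\pi_i)_* f.
\]
Since $\pi_i \Psi_i$ is itself a $T_m$-equivariant closed embedding, applying the same isomorphism argument as in the first part reduces the right-hand side to $\degint_Z (\pi_i \Psi_i)^* (\pi_i)_* f = \degint_Z (\Psi_i^{\pm})^* \pi_i^* (\pi_i)_* f = \degint_Z \eope{i} f$, again by Proposition~\ref{prop:Howbuilderscohomology_notdegree}.

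The only non-formal step is the base-change identity $\mathbbm{1}_{\mathbb{P}_i Z} = \pi_i^* \mathbbm{1}_{\pi_i \Psi_i Z}$, which is the main obstacle but entirely standard: the square above is Cartesian and $\pi_i$ is smooth, so fundamental classes pull back under flat base change in equivariant cohomology. Everything else amounts to straightforward chasing of definitions.
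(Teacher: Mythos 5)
Your proof is correct and takes essentially the same approach as the paper's: both hinge on Proposition~\ref{prop:Howbuilderscohomology_notdegree} plus push--pull/projection-formula reasoning, and your base-change identity $\mathbbm{1}_{\mathbb{P}_iZ}=\pi_i^*\mathbbm{1}_{\pi_i\Psi_iZ}$ is precisely the paper's $\mathbbm{1}_{\mathbb{P}_iZ}=\pi_i^*(\pi_i\Psi_i)_*\mathbbm{1}_Z$ (the paper just packages the isomorphism $Z\cong\Psi_i^{\pm}Z$ as $(\Psi_i^{\pm})_*\mathbbm{1}_Z=\mathbbm{1}_{\Psi_i^{\pm}Z}$ instead of invoking it at the level of $\kappa_*$).
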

\begin{proof}
As the $\Psi_i^{\pm}$ are closed embeddings we have $\mathbbm{1}_{\Psi_i^{\pm}Z} = (\Psi_{i}^{\pm})_* \mathbbm{1}_{Z}\in H^\bullet_{T_m}(\fl{m})$. 
The first degree equation now comes from 
$$
\degint_{\Psi_i^{\pm}Z}f
=\degint_{\fl{m}} \left((\Psi_{i}^{\pm})_{\ast} \mathbbm{1}_{Z}\right)f
=\degint_{\fl{m-1}^{\gamma_i}}\mathbbm{1}_{Z}(\Psi_i^{\pm})^{\ast} f 
= \degint_Z\rope{i}^{\pm}f,
$$
where we used the push-pull formula in all three equalities.  
For the second degree equation, because the $\pi_i\Psi_i$ are closed embeddings, we have
\[\mathbbm{1}_{\mathbb{P}_iZ}=
\mathbbm{1}_{\pi_i^{-1}\pi_i\Psi_i Z} =\pi_i^*(\pi_i \Psi_i)_* \mathbbm{1}_{Z} =\pi_i^*(\pi_i)_*(\Psi_i^{-})_* \mathbbm{1}_{Z}\in H^\bullet_{T_m}(\fl{m}).
\]
Therefore using a similar argument as above,
\[
\degint_{\PP_iZ} f=\degint_{\fl{m}}\left(\pi_i^*(\pi_i)_*(\Psi_i^{-})_*\mathbbm{1}_{Z}\right)f=\degint_{\fl{m-1}^{\gamma_i}}\mathbbm{1}_{Z}(\Psi_i^{-})^*(\pi_i)^*(\pi_i)_*f=\degint_Z \eope{i}f,
\]
where the last equality uses~\eqref{eq:composition_giving_ei}. This  establishes the second part of~\eqref{eq:degrees_ei_ri}.
\end{proof}
%%%%%%%%%

\subsection{The degree on $X(\wh{F})$ using the $\star$-monoid}

We now describe a combinatorial procedure for computing the degree on the $X(\wh{F})$ varieties using the \emph{$\star$-monoid $\mc{S}$} from~\cite[\S 9.2]{BGNST1}.
For each $A \subseteq [n]$, writing $A=(i_1<\cdots < i_k)$ we define a map
\[
\begin{array}{rccl}
\gamma_A\coloneqq \gamma_{i_1}\cdots \gamma_{i_k}: & T_n & \to& T_{n-|A|},
%(t_{1,A},\ldots,t_{n-|A|,A})
\end{array}
\]
the coordinate projection away from $i_1,\ldots,i_k$.
To compose these maps, we define an operation on subsets $A,B\subset [n]$ with $|A| + |B| \le n$:
\[
A\star B =\{([n] \setminus B)_i\suchthat i\in A\}\cup B,
\]
where $([n] \setminus B)_i$ denotes the $i$th element of $[n] \setminus B$ in increasing order.  
The following proposition is straightforward and we omit the proof.
\begin{prop}
\label{eq:composition_of_gammas}
For $A, B \subseteq [n]$  with $|A| + |B| \le n$:, $\gamma_{A} \circ \gamma_{B} = \gamma_{A\star B}$.  In particular, $\gamma_i \circ \gamma_A =\gamma_{i \star A}$. 
\end{prop}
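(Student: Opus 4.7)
The approach will be to unwind both sides as coordinate projections from $T_n$ and verify that they project away the same set of indices (in the same residual order). This reduces the proposition to a purely combinatorial bookkeeping check about the $\star$ operation, as there is no geometry or algebra beyond the definition of $\gamma_A$.

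First I would describe explicitly what $\gamma_B$ does: it projects $T_n$ onto its coordinates indexed by $[n] \setminus B$ and relabels the surviving indices $1, 2, \ldots, n-|B|$ in the order inherited from $[n]$, so that the new $i$th coordinate of $T_{n-|B|}$ is the original $([n] \setminus B)_i$-th coordinate of $T_n$. Next I would apply $\gamma_A$ to this intermediate torus, which removes the coordinates whose new index lies in $A$; translating back through the bijection just described, these are precisely the original coordinates of $T_n$ indexed by the set $\{([n] \setminus B)_i : i \in A\}$. Concatenating the two projections, the composition $\gamma_A \circ \gamma_B$ is the projection of $T_n$ away from the set of coordinates
\[
B \cup \{([n] \setminus B)_i : i \in A\},
\]
which is the defining expression for $A \star B$. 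Since $([n] \setminus B)_i \in [n] \setminus B$, this union is disjoint and has the expected cardinality $|A|+|B|$, so $\gamma_{A \star B}$ is well-defined. The special case follows by setting the ``$A$'' in the general statement equal to the singleton $\{i\}$ and the ``$B$'' equal to $A$.

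The main (and essentially only) obstacle is ensuring that the residual coordinate order after $\gamma_A \circ \gamma_B$ matches the residual coordinate order used to define $\gamma_{A \star B}$. In both cases, however, the order on the surviving index set $[n] \setminus (A \star B)$ is just the increasing order inherited from $[n]$ (on the left, because each $\gamma$ preserves inherited order, and a relabeling followed by a further projection preserves the underlying order; on the right, by definition of $\gamma_{A \star B}$). Once this verification is in hand, the identity of the two maps is immediate, consistent with the authors' remark that the proposition is straightforward.
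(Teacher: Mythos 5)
Your proof is correct, and it is the natural unwinding of the definitions; the paper itself omits the proof as ``straightforward,'' so there is nothing to compare against beyond noting that your argument is exactly the bookkeeping the authors had in mind. The one point worth making explicit, which you do address but could state slightly more sharply, is that $\gamma_A$ for $A = \{i_1 < \cdots < i_k\}$ is the composite $\gamma_{i_1} \circ \cdots \circ \gamma_{i_k}$ and that this composite really does delete precisely the coordinates in $A$: applying $\gamma_{i_k}$ first removes coordinate $i_k$, and because $i_{k-1} < i_k$ the label $i_{k-1}$ is unaffected by the ensuing relabeling, so $\gamma_{i_{k-1}}$ then removes the original coordinate $i_{k-1}$, and so on. Once that is nailed down, your identification of the deleted set of $\gamma_A \circ \gamma_B$ with $B \cup \{([n]\setminus B)_i : i \in A\} = A \star B$, together with the observation that the residual order is in both cases inherited from $[n]$, completes the argument.
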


We let $\fl{[n]/A}$ denote $\fl{n-|A|}^{\gamma_A}$, and let $X(\wh{G})_{[n]/A}$ denote $X(\wh{G})\subset \fl{n-|A|}^{\gamma_A}$ equipped with the torus action of $T_{n}$ induced by $\gamma_A$. 
Taken in conjunction with~\Cref{fact:biggertorus}, this allows us to transfer our results about the $T_m$-varieties $\fl{m}$ and $\fl{m-1}^{\gamma_i}$ to results about the $T_n$-varieties $\fl{[n]/A}$ and $\fl{[n]/(i\star A)}$.

For $A \subseteq [n]$, we denote $t_{i, A} = t_{([n] \setminus A)_{i}}$ and $\tl_{[n]/A} = (t_{1, A}, t_{2, A}, \ldots, t_{n-|A|, A})$. 
We have isomorphisms
\[
H^\bullet_{T_n}(\fl{[n]/A})
\cong
\frac{\ZZ[\tl_n][\xl_{n-|A|}]}{\langle f(\xl_{n-|A|})-f(\tl_{[n]/A}):f\in \sym{n-|A|}\rangle}
\]
and a GKM presentation
\[
H^\bullet_{T_n}(\fl{[n]/A})\cong \left\{(g_w)_{w\in S_{n-|A|}}
\suchthat \text{$t_{j, A}-t_{k, A}$ divides $g_w-g_{(j,k)w}$ for all $(j\ne k)$}\right\}\subset \mathbb{Z}[\tl_n]^{\oplus S_{n-|A|}}
\]
with the isomorphism between the Borel presentation and the GKM presentation given by 
\[
f\mapsto (f(t_{w(1),A},\ldots,t_{w(n-|A|),A};t_1,\ldots,t_n))_{w\in S_{n - |A|}}.
\]

For $f\in \ZZ[\tl_n][\xl_n]$ define
\begin{align*}
\rope{i,A}^-f(\xl_n;\tl_n)&=f(x_1,\ldots,x_{i-1},t_{i,A},x_i,x_{i+1},\ldots,x_{n-1};\tl_n)\\
\rope{i,A}^+f(\xl_n;\tl_n)&=f(x_1,\ldots,x_{i-1},x_{i},t_{i,A},x_{i+1},\ldots,x_{n-1};\tl_n)\\
\eope{i, A}f(\xl_n; \tl_n)&= \frac{\rope{i,A}^+f(\xl_n; \tl_n) -\rope{i,A}^-f(\xl_n; \tl_n) }{x_i-t_{i,A}}.
\end{align*}

%%%%%%%%%
\begin{defn} 
For $A \subseteq [n]$ and $\rt \in \reseq_{n - |A|}$, let
\[
[\Phi_{\rt}]_{A} = \begin{cases}
\operatorname{id} & \text{if $\rt=\emptyset$} \\
[\Phi_{\rt'}]_{i\star A} \circ \rope{i, A}^{\pm} & \text{if $\rt=\rt' \cdot \rletter{i}^{\pm}$} \\ 
[\Phi_{\rt'}]_{i \star A} \circ \eope{i, A} & \text{if $\rt=\rt' \cdot \eletter{i}$.} \\ 
\end{cases}
\]
For $\rt \in \reseq_{n}$ we write $[\Phi_{\rt}] = [\Phi_{\rt}]_{\emptyset}$.
\end{defn}
%%%%%%%%%

As was shown in~\cite[Theorem 10.5]{BGNST1}, the operation $[\Phi_{\rt}]$ only depends on the colored Tamari equivalence class of the bicolored nested forest $\wh{F}(\rt)$ associated to $\rt$, we we can write $[\Phi_{\wh{F}}]$ for $\wh{F} \in \bnfor_{n}$ without ambiguity.

%%%%%%%%%
\begin{thm}
\label{thm:degonXfhat}
For $\wh{F}\in \bnfor_n$ and $f \in H^{\bullet}_{T_{n}}(\fl{n})$, we have
\[
\degint_{X(\wh{F})}f=[\Phi_{\wh{F}}]f.
\]
In particular, the double forest polynomials $\forestpoly{F}(\xl_{n}; \tl_{n})$ are Kronecker dual to the homology basis $[X(F)]$ of $H^{\bullet}_{T_{n}}(\qfl_{n})$ given in~\Cref{cor:qfl_homology_basis}.  
\end{thm}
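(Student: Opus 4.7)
The plan is to establish the degree formula by induction on the length of a word $\rt \in \reseq_n$ representing $\wh{F}$, using \Cref{thm:Howbuilderscohomology} as the engine of the induction, and then to derive the Kronecker duality statement by combining this formula with the orthogonality of double forest polynomials under the $\star$-monoid.

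To allow the recursion to unfold cleanly, I would prove the slightly more general statement: for any $A\subseteq [n]$ with $|A|=n-m$, any $\wh{G}\in\bnfor_m$, and any $g\in H^\bullet_{T_n}(\fl{[n]/A})$,
\[
\degint_{X(\wh{G})_{[n]/A}} g \;=\; [\Phi_{\wh{G}}]_A\, g .
\]
The theorem is the case $A=\emptyset$. The base case $\wh{G}=\rletter{1}^-$ reduces $X(\wh{G})$ to a single point in $\fl{[n]/A}$ (with $|A|=n-1$), and both sides collapse to the substitution $x_1\mapsto t_{1,A}$. For the inductive step, write $\wh{G}=\wh{G}'\cdot \xletter{i}$ with $\wh{G}'\in\bnfor_{m-1}$. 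By construction, together with \Cref{eq:composition_of_gammas} identifying $\gamma_{i\star A}$ with $\gamma_i\circ \gamma_A$, the variety $X(\wh{G})_{[n]/A}$ is obtained by applying $\Psi_i^{\pm}$ or $\mathbb{P}_i$ to $X(\wh{G}')_{[n]/(i\star A)}$. \Cref{thm:Howbuilderscohomology}, extended to the $T_n$-equivariant setting via \Cref{fact:biggertorus}, then yields
\[
\degint_{X(\wh{G})_{[n]/A}} g \;=\; \degint_{X(\wh{G}')_{[n]/(i\star A)}}\, D_i\, g,
\]
where $D_i$ is $\rope{i,A}^{\pm}$ or $\eope{i,A}$ according to the letter $\xletter{i}$. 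Invoking the inductive hypothesis on $\wh{G}'$ and comparing with the recursive definition of $[\Phi_{\wh{G}}]_A$ closes the induction.

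The main subtlety, beyond routine bookkeeping with the $\star$-operation, will be verifying that the scalar-extended forms of $\rope{i}^{\pm}$ and $\eope{i}$ from \Cref{prop:Howbuilderscohomology_notdegree} agree with $\rope{i,A}^{\pm}$ and $\eope{i,A}$. This rests on the fact that under the coordinate projection $\gamma_A:T_n\to T_m$, the generator $t_i$ of $H^\bullet_{T_m}(\point)$ pulls back to $t_{([n]\setminus A)_i} = t_{i,A}$. Well-definedness of $[\Phi_{\wh{G}}]_A$ independent of the representing word is supplied by \cite[Theorem 10.5]{BGNST1}, and is in any case automatic from the identity once established, since the left-hand side depends only on $\wh{G}$.

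For the Kronecker duality, we specialize to $\wh{F}=F\in\indexedforests_n$ and $f=\forestpoly{F'}(\xl_n;\tl_n)$:
\[
\degint_{X(F)} \forestpoly{F'}(\xl_n;\tl_n) \;=\; [\Phi_F]\,\forestpoly{F'}(\xl_n;\tl_n) \;=\; \delta_{F,F'}.
\]
The second equality is the orthogonality of double forest polynomials under the $\star$-monoid from \cite{BGNST1}: iterating $\eope{i}\forestpoly{G\cdot\eletter{i}}=\forestpoly{G}$ together with the vanishing of $\eope{i}$ on double forest polynomials whose forest admits no representative ending in $\eletter{i}$, the operators in $[\Phi_F]$ peel off the nodes of $F'$ in sequence, producing $1$ precisely when $F=F'$ and $0$ otherwise.
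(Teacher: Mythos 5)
Your proof is correct and follows essentially the same route as the paper: both establish the degree formula by recursion on the representing word, using \Cref{thm:Howbuilderscohomology} together with \Cref{fact:biggertorus} to peel off one building operation at a time, and both reduce the Kronecker duality statement to the peeling behavior of the $\eope{i}$ operators on double forest polynomials from~\cite[\S 10]{BGNST1}. You spell out the base case, the $\gamma_A$-compatibility of the operators, and the orthogonality $[\Phi_F]\forestpoly{F'}=\delta_{F,F'}$ more explicitly than the paper, which simply cites~\cite[\S 10]{BGNST1} for those details.
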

\begin{proof}
By~\Cref{thm:Howbuilderscohomology}, we have that for $A\subset [n]$ and $\wh{F} = \wh{G} \cdot \xletter{i} \in \bnfor_{n-|A|}$, we have 
\[
\degint_{X(\wh{F})_{[n]/A}} f
= \begin{cases}
\degint_{X(\wh{G})_{[n]/(i\star A)}}\rope{i,A}^{\pm}f & \text{if $\xletter{i} = \rletter{i}^{\pm}$,} \\
\degint_{X(\wh{G})_{[n]/(i\star A)}}\eope{i,A}f & \text{if $\xletter{i} = \eletter{i}$}
\end{cases}
\]
after which the theorem follows recursively from the definition of $[\Phi_{\wh{F}}]$; see~\cite[\S 10]{BGNST1} for more details on applying these operators to double forest polynomials.   
\end{proof}
%%%%%%%%%

\begin{eg}
Let $\Omega=\rletter{1}^{-}\rletter{1}^{+}\eletter{2}\eletter{1}\rletter{2}^+\eletter{3}$.
Then
\[
    [\Phi_{\Omega}]=\rope{1,\{1,2,3,4,5\}}^-\,\rope{1,\{1,2,3,5\}}^+\,\eope{2,\{1,2,3\}}\,\eope{1,\{2,3\}}\,\rope{2,\{3\}}^+\,\eope{3}.
\]
\end{eg}

A polynomial $a(\tl_{n})\in \ZZ[\tl_{n}]$ is called \emph{Graham-positive} if it lies in $\ZZ_{\geq 0}[t_2-t_1,\dots, t_{n} - t_{n-1}]$.
As shown by Graham \cite{Gra01}, for any $T$-invariant subvariety $X\subset \fl{n}$, the decomposition 
\[
[X]=\sum a_w(\tl_{n})\,[X^w]
\]
into Schubert cycles has Graham-positive coefficients $a_w(\tl_{n})=\int_X\schub{w}(\xl_n;\tl_n)$. The Graham-positivity of $[\Phi_{\wh{F}}]\schub{w}(\xl_n;\tl_n)$ was shown through purely combinatorial means in ~\cite[Theorem 11.4]{BGNST1}, which we can now interpret geometrically.

\begin{cor}[{~\cite[Theorem 11.4]{BGNST1}}]
\label{cor:schub_to_for}
For $F \in \indexedforests_{n}$, the coefficient
\[
a_w(\tl_n)=\degint_{X(\wh{F})}\schub{w}(\xl_n;\tl_n)=[\Phi_{\wh{F}}]\schub{w}(\xl_n;\tl_n)
\]
is Graham positive.
\end{cor}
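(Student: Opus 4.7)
My proof plan is to recognize this statement as an immediate consequence of the conjunction of Graham's positivity theorem for torus-invariant subvarieties (quoted in the paragraph preceding the corollary) and the preceding Theorem~\ref{thm:degonXfhat}; no new input is needed.

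First I would observe that by Theorem~\ref{thm:XF_torus_orbit}, the variety $X(\wh{F})$ is the closure of a $T_n$-orbit in $\fl{n}$, and hence in particular a $T_n$-invariant closed subvariety. The Schubert expansion
\[
[X(\wh{F})] \;=\; \sum_{w \in S_n} a_w(\tl_n)\,[X^w]
\]
therefore falls under the scope of Graham's theorem~\cite{Gra01}, so each coefficient
\[
a_w(\tl_n) \;=\; \degint_{X(\wh{F})} \schub{w}(\xl_n;\tl_n)
\]
automatically lies in $\ZZ_{\ge 0}[t_2 - t_1,\dots, t_n - t_{n-1}]$; this is the desired Graham positivity, and also gives the first of the two equalities in the corollary.

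To obtain the second equality, I would invoke Theorem~\ref{thm:degonXfhat}, which for any class $f \in H^\bullet_{T_n}(\fl{n})$ asserts $\degint_{X(\wh{F})} f = [\Phi_{\wh{F}}]f$. Specializing to $f = \schub{w}(\xl_n;\tl_n)$ immediately yields $a_w(\tl_n) = [\Phi_{\wh{F}}]\schub{w}(\xl_n;\tl_n)$, completing the identification.

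I do not anticipate any genuine obstacle here: all the substance of the statement has already been assembled in the previous sections. The combinatorial proof of the same positivity in~\cite[Theorem~11.4]{BGNST1} is significantly more delicate, as it must track cancellations among the $\rope{i,A}^{\pm}$ and $\eope{i,A}$ operators applied to an arbitrary double Schubert polynomial; here we avoid all of that by exchanging the combinatorial problem for the geometric one, where Graham's positivity does the work. The only item to double-check is the $T_n$-invariance of $X(\wh{F})$ (rather than only $\gamma_A(T_n)$-invariance for some proper subtorus), which is guaranteed by the recursive construction of $X(\wh{F})$ via the $T_n$-equivariant operations $\Psi_i^{\pm}$ and $\mathbb{P}_i$ of Section~\ref{sec:BuildingOperators}.
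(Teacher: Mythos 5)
Your proposal is correct and matches the paper's intended argument, which is not spelled out as a formal proof but is laid out in the paragraph immediately preceding the corollary (Graham's positivity for $T$-invariant subvarieties) combined with Theorem~\ref{thm:degonXfhat} (which identifies $\degint_{X(\wh{F})}f$ with $[\Phi_{\wh{F}}]f$). The only slight amplification over the paper is your explicit verification that $X(\wh{F})$ is $T_n$-invariant via Theorem~\ref{thm:XF_torus_orbit}, a detail the paper leaves implicit but which is indeed the crux of applying Graham's theorem.
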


\begin{rem}
In~\cite[Theorem 11.6]{BGNST1} we also show that the product of two double forest polynomials has a Graham positive expansion as a sum of double forest polynomials for combinatorial reasons.  
While the analogous result for double Schubert polynomials has a geometric explanation due to Graham~\cite{Gra01}, we do not have a geometric explanation for this positivity. We also show in~\cite[Theorem 11.4]{BGNST1} that $[\Phi_{\wh{F}}]\forestpoly{G}(\xl_n;\tl_n)$ is Graham positive, and we also do not have a geometric explanation for this positivity.
\end{rem}

%%%%%%%%%%%%%%%%%%%%%%%%%%%%%%%%%%%%%%%%%%%%%%
\appendix
\section{Double forest polynomials}
\label{appendix}
%%%%%%%%%%%%%%%%%%%%%%%%%%%%%%%%%%%%%%%%%%%%%%

The purpose of this appendix is to give an alternative description for the ideal $\eqsymide{n}$ as the kernel of the map
\[
\mathbf{ev}_{\NC} = \prod_{w \in \NC_{n}} \ev_{w} : \ZZ[\tl_{n}][\xl_{n}] \to \ZZ[\tl_{n}]^{\oplus \NC_{n}}.
\]

\begin{thm}
\label{thm:we_forgot_to_prove_this}
We have $\eqsymide{n} = \ker(\mathbf{ev}_{\NC})$ and as a consequence
\begin{align}
\label{eqn:freebasis}
\ZZ[\tl_n][\xl_n]= \eqsymide{n} \oplus \bigoplus_{F \in \indexedforests_{n}}\ZZ[\tl_n]\forestpoly{F}(\xl_n;\tl_n).
\end{align}
\end{thm}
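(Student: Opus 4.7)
Modulo the inclusion $\eqsymide{n} \subseteq \ker(\mathbf{ev}_{\NC})$, both assertions of the theorem follow from two facts: (i) by~\Cref{thm:forest_flowup}, the evaluations of $\{\forestpoly{F}\}_F$ form a free $\ZZ[\tl_n]$-basis of $H^\bullet_{T_n}(\cay{\NC_n})$, so $\mathbf{ev}_{\NC}$ is injective on $\bigoplus_F \ZZ[\tl_n]\forestpoly{F}$; and (ii) from~\cite{BGNST1}, the family $\{\forestpoly{F}\}_F$ spans $\ZZ[\tl_n][\xl_n]$ modulo $\eqsymide{n}$, i.e., $\ZZ[\tl_n][\xl_n] = \eqsymide{n} + \bigoplus_F \ZZ[\tl_n]\forestpoly{F}$. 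Granting the inclusion, any $f \in \ker(\mathbf{ev}_{\NC})$ can be written via~(ii) as $f = g + h$ with $g \in \eqsymide{n}$ and $h$ in the forest span; then $\mathbf{ev}_{\NC}(h) = \mathbf{ev}_{\NC}(f) - \mathbf{ev}_{\NC}(g) = 0$, so $h = 0$ by~(i), giving $f = g \in \eqsymide{n}$. The direct sum decomposition~\eqref{eqn:freebasis} follows by applying the same reasoning to the relation $0 = g + h$.

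To prove $\eqsymide{n} \subseteq \ker(\mathbf{ev}_{\NC})$, it suffices to show $\ev_w(f) = f(\tl_n; \tl_n)$ for every $f \in \eqsym{n}$ and $w \in \NC_n$. Since the Kreweras lattice on $\NC_n$ is connected and its covers coincide with the Cayley adjacencies in $\NC_n$ (Section~\ref{sec:ncp}), this reduces to proving $\ev_u(f) = \ev_v(f)$ for each Kreweras cover $u \lessdot v = (a\,b)u$. For such a pair, I choose $\wh{F} \in \bnfor_n$ whose Boolean cube $I_{\wh{F}}$ contains $u$ and $v$ as adjacent vertices; such $\wh{F}$ exists by~\Cref{cor:sublattice}---for example, any tree $T \in \tree_n$ obtained by extending a maximal Kreweras chain through $u \lessdot v$ up to $\cox$ has $u, v \in I_T$ in the appropriate configuration. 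Applying~\Cref{thm:degonXfhat} to the zero-dimensional orbit closures $\{u\}, \{v\} \subset X(\wh{F})$ yields $\ev_u(f) = [\Phi_{\wh{G}_u}]f$ and $\ev_v(f) = [\Phi_{\wh{G}_v}]f$ for elements $\wh{G}_u, \wh{G}_v \in \Vert(\wh{F})$ that differ in exactly one letter at the step corresponding to the distinguished black node $v'$ with $\tau_{v'} = (a\,b)$: one sequence uses $\rletter{j}^-$ (translating to $\rope{j,A}^-$) while the other uses $\rletter{j}^+$ (translating to $\rope{j,A}^+$), for some index $j$ and parameter set $A$.

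The final ingredient is a preservation principle: if $g$ satisfies the ``$A$-equivariantly quasisymmetric'' condition $\rope{i,A}^- g = \rope{i,A}^+ g$ for all admissible $i$, then so does $\rope{j,A}^\pm g$ with respect to the shifted parameter set $j\star A$. This is verified by directly comparing the nested substitutions performed by $\rope{i, j\star A}^{\pm}\rope{j, A}^{-}$ for each admissible $i$---the arguments of $g$ after the composite substitution agree precisely because $\rope{i, A}^- g = \rope{i, A}^+ g$ holds on the ``raw'' $g$. Granting this, the polynomial produced from $f \in \eqsym{n}$ by the portion of $[\Phi_{\wh{G}_u}]$ applied before the differing operator is still equivariantly quasisymmetric relative to the current parameter set, so the subsequent application of $\rope{j,A}^-$ versus $\rope{j,A}^+$ produces the same output. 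Hence $[\Phi_{\wh{G}_u}]f = [\Phi_{\wh{G}_v}]f$ and $\ev_u(f) = \ev_v(f)$ as required. The main technical obstacle is the preservation check itself; this is mechanical rather than conceptually deep, but requires careful bookkeeping around the shifted index sets $j \star A$.
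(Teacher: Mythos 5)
Your overall skeleton is right: once you have the inclusion $\eqsymide{n} \subseteq \ker(\mathbf{ev}_{\NC})$, the flowup theorem and a spanning statement together force equality and the direct-sum decomposition. This matches the paper's logic. But there are two genuine gaps.

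First, your step~(ii) — that $\ZZ[\tl_n][\xl_n] = \eqsymide{n} + \bigoplus_{F \in \indexedforests_n}\ZZ[\tl_n]\forestpoly{F}(\xl_n;\tl_n)$ — is not a result you can simply pull from \cite{BGNST1}. Combined with the full forest-polynomial basis of $\ZZ[\tl_n][\xl_n]$ (\cite[Corollary 4.7(2)]{BGNST1}, Equation~\eqref{eq:forestbasis}), it is equivalent to showing $\forestpoly{F}(\xl_n;\tl_n) \in \eqsymide{n}$ for every nonempty $F \in \LTFor_n\setminus\indexedforests_n$, and this is precisely the hardest part of the paper's proof: an induction on $|F|$ that uses Theorem~\ref{thm:singleforestideal} (that single forest polynomials with $n \in \LT(F)$ form a $\ZZ$-basis of $\qsymide{n}$), the fact that double fundamental quasisymmetric polynomials indexed by $\zigzag{n}$ lie in $\eqsymide{n}$, and a degree-drop observation to close the induction. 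You have silently assumed away the real work.

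Second, for $\eqsymide{n} \subseteq \ker(\mathbf{ev}_{\NC})$, you take a genuinely different route from the paper: the paper simply invokes \cite[Theorem 7.1]{BGNST1} (that $\ev_w(f) = \ev_{\id}(f)$ for $f \in \eqsym{n}$ and $w \in \NC_n$), while you re-derive this from the geometry via Kreweras covers, $\Cref{thm:degonXfhat}$, and a ``preservation principle'' asserting that $\rope{j,A}^\pm$ carries $A$-equivariantly-quasisymmetric functions to $(j\star A)$-equivariantly-quasisymmetric ones. This principle is plausible — and you can check small cases by composing the Bergeron--Sottile substitutions — but you state it without proof and label it ``mechanical.'' It is the entire content of the inclusion, essentially a rephrasing of \cite[Theorem 7.1]{BGNST1}, so leaving it unproved means you have not actually established the inclusion. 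Either cite the result directly, as the paper does, or prove the preservation principle (e.g.\ by verifying that for admissible $i$ the composites $\rope{i,\,j\star A}^{\pm}\rope{j,A}^{-}$ and $\rope{i,\,j\star A}^{\pm}\rope{j,A}^{+}$ realize evaluations that already agree on $A$-equivariantly-quasisymmetric inputs, which is a case analysis on the relative positions of $i$ and $j$).

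In short: the reduction to the inclusion is sound and matches the paper, but both of the ingredients you treat as black boxes — your~(ii) and the preservation principle — are substantial claims that require proof and are where the actual work lies.
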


The proof appears at the end of the appendix.  

\begin{rem}
Theorem~\ref{thm:we_forgot_to_prove_this} belongs to a family of results known as ``Orbit Harmonics.''  
In~\cite{BeGa23}, the first two authors show that $\QQ \otimes_{\ZZ} \qscoinv{n}$ is the associated graded of the coordinate ring for the set of noncrossing partitions, considered as points using one-line notation.  
Specializing $t_{i} \mapsto i$ in Theorem~\ref{thm:we_forgot_to_prove_this}, we recover this result and find a new cohomological interpretation for it.
\end{rem}

We begin by extending the definition of forest polynomials to a basis of the full polynomial ring $\ZZ[\tl_{n}][\xl_{n}]$.  
First define
\[
\indexedforests = \bigsqcup_{m \ge 0} \indexedforests_{m} \Big/ \{\text{$F \sim G$ if $F \in \bnfor_{m}$ and $G = F \cdot (\rletter{m+1}^{-})^{k}$}\},
\]
so that we identify two forests if they differ only by some number of trailing isolated leaves.  
There is an obvious bijection between the internal nodes of any two forests identified in this manner, so we can speak of the internal nodes of $F \in \indexedforests$ without ambiguity.  
Say that an internal node $v$ of  $F \in \indexedforests$ is \emph{terminal} if both of its children are leaves, and let
\[
\LT(F) = \{i \suchthat \text{$F$ has a terminal node with children $i$ and $i+1$} \},
\]
so that $i \in \LT(F)$ if and only if $F = G \cdot \eletter{i}$ for some $G \in \indexedforests$.  
We then define
\[
\LTFor_{n} = \{F \in \indexedforests \suchthat  \LT(F) \subseteq [n]\}.
\]

We now consider double forest polynomials indexed by $\LTFor_{n}$.  Recall that as described in Remark~\ref{rem:foreststability}, we have $\forestpoly{F}(\xl_{m}; \tl_{m}) = \forestpoly{F \cdot (\rletter{m+1}^{-})^{k}}(\xl_{m+k}; \tl_{m+k})$ for all $m, k \ge 0$ and $F \in \indexedforests_{m}$.  
Thus each class $F \in \LTFor_{n}$ defines a unique forest polynomial in any set of $x$ and $t$ variables $(\xl_{m}; \tl_{m})$ such that $m \ge \max \supp{F}$.  

\begin{defn}
For $F \in \LTFor_{n}$, the \emph{$n$-truncated double forest polynomial} is defined to be
\[
\forestpoly{F}(\xl_{n}; \tl_{n}) \coloneq \forestpoly{F}(\xl_{n}, 0, \ldots, 0; \tl_{n}, 0, \ldots, 0),
\]
where the right-hand side is the specialization of the unique forest polynomial defined by $F$.
\end{defn}

The truncated forest polynomials have the property that 
\[
\forestpoly{F}(\tl_{n};\tl_{n})=\delta_{F,\emptyset}
\qquad\text{and}\qquad
\eope{i}\forestpoly{F}=\begin{cases}
\forestpoly{G}(\xl_{n};\tl_{[n]/\{i\}}, 0) & \text{if $F = G \cdot \eletter{i}$,}\\
0 & \text{otherwise,}
\end{cases}
\]
for $1 \le i \le n$, where $\eope{1}, \ldots, \eope{n-1}$ are as defined in Section~\ref{sec:qfl_brl} and 
\[
\eope{n}f(\xl_{n}; \tl_{n}) = \frac{f(\xl_{n}; \tl_{n}) - f(x_{1}, \ldots, x_{n-1}, t_{n}; \tl_{n})}{x_{n} - t_{n}}.
\]

In~\cite[Corollary 4.7 (2)]{BGNST1} we show that 
\begin{equation}
\label{eq:forestbasis}
\ZZ[\tl_{n}][\xl_{n}] = \bigoplus_{ F\in \LTFor_{n}} \ZZ[\tl_{n}] \forestpoly{F}(\xl_n;\tl_n).
\end{equation}
Thus as a consequence, we obtain a $\ZZ$-basis of $\ZZ[\xl_{n}]$ consisting of (single) forest polynomials
\[
\forestpoly{F}(\xl_n) \coloneq \forestpoly{F}(\xl_n; 0).
\]
We prove in~\cite[Corollary 4.6]{BGNST1} that these are the same forest polynomials studied  in~\cite{NST_a,NT_forest}.

\begin{thm}[{\cite[Theorem 9.7]{NST_a}}, {\cite[Theorem 3.7]{NT_forest}}]
\label{thm:singleforestideal}
The forest polynomials $\forestpoly{F}(\xl_n;0)$ with $F \in \LTFor_{n}$ and $n \in \LT(F)$ are a $\ZZ$-basis for $\qsymide{n}$.
\end{thm}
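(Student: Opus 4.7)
The plan is to use the basis decomposition~\eqref{eq:forestbasis} specialized at $\tl_n = 0$, giving
\[
\ZZ[\xl_n] \;=\; \bigoplus_{F \in \LTFor_n} \ZZ\cdot\forestpoly{F}(\xl_n;0),
\]
together with the partition $\LTFor_n = \LTFor_{n-1} \sqcup \{F \in \LTFor_n \suchthat n\in\LT(F)\}$. This reduces the theorem to two claims: (a) $\forestpoly{F}(\xl_n;0) \in \qsymide{n}$ whenever $n\in\LT(F)$, and (b) the classes of $\forestpoly{F}(\xl_n;0)$ for $F\in\LTFor_{n-1}$ are $\ZZ$-linearly independent in $\qscoinv{n}$.

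Step (a) is the substantive content; I would proceed by induction on $|F|$. Writing $F = G\cdot\eletter{n}$ and specializing the defining recursion $\eope{n}\forestpoly{F}(\xl_n;\tl_n) = \forestpoly{G}(\xl_n;\tl_{[n]/\{n\}},0)$ at $\tl_n = 0$, then unwinding the definition of $\eope{n}$ produces the identity
\[
\forestpoly{F}(\xl_n;0) \;=\; x_n\cdot\forestpoly{G}(\xl_n;0) \;+\; \forestpoly{F}(x_1,\ldots,x_{n-1},0;0),
\]
expressing $\forestpoly{F}$ as $x_n\cdot\forestpoly{G}(\xl_n;0)$ plus a residual polynomial in $\xl_{n-1}$. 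One then shows that this combined expression is a $\ZZ$-combination of monomial quasisymmetric polynomials in $\qsym{n}$ with vanishing constant term. Concretely, expanding $\forestpoly{G}(\xl_n;0)$ as a sum of monomials $x_{i_1}^{a_1}\cdots x_{i_k}^{a_k}$ with $i_k < n$, each product $x_n \cdot x_{i_1}^{a_1}\cdots x_{i_k}^{a_k}$ is completed to the monomial quasisymmetric polynomial $M_{(a_1, \ldots, a_k, 1)}(\xl_n)$, and the resulting ``error terms'' telescope against the residual $\forestpoly{F}(x_1, \ldots, x_{n-1}, 0; 0)$ up to smaller-support forest polynomials handled by the inductive hypothesis.

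Step (b) follows by a Hilbert series comparison. Aval--Bergeron--Bergeron~\cite{ABB04} computed the graded rank of $\qscoinv{n}$, which in degree $d$ matches $|\{F\in\LTFor_{n-1} \suchthat |F|=d\}|$. Since step (a) guarantees that $\qsymide{n}$ has at least the complementary rank in each degree, the basis decomposition~\eqref{eq:forestbasis} forces $\{\forestpoly{F}(\xl_n;0) \suchthat F\in\LTFor_{n-1}\}$ to descend to a $\ZZ$-basis of $\qscoinv{n}$, proving (b) and hence the theorem.

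The main obstacle is step (a): while the recursion produces a clean formula, explicitly identifying the quasisymmetric witnesses for arbitrary $F$ with $n\in\LT(F)$ demands careful combinatorial bookkeeping over the internal structure of $F$ and exploits Hopf-algebraic splitting identities on $\qsym{n}$. This argument is carried out in detail in~\cite{NT_forest,NST_a}.
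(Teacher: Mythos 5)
This statement is not proved in the paper; it is imported verbatim from \cite{NST_a} and \cite{NT_forest}, so there is no internal argument against which to compare your proposal. That said, your reduction framework is sound: combined with the free-basis decomposition~\eqref{eq:forestbasis} and the partition $\LTFor_n = \LTFor_{n-1}\sqcup\{F\in\LTFor_n : n\in\LT(F)\}$, claims (a) and (b) do together yield the theorem, and your derivation of the identity $\forestpoly{F}(\xl_n;0)=x_n\forestpoly{G}(\xl_n;0)+\forestpoly{F}(x_1,\dots,x_{n-1},0;0)$ from the defining recursion via $\eope{n}$ is correct. For step (b), one small caveat: Aval--Bergeron--Bergeron's Hilbert-series computation is over $\QQ$, so the passage to a $\ZZ$-basis needs the extra (easy) observation that a graded submodule of a finitely generated free $\ZZ$-module is free, hence rank zero forces it to vanish; stating this explicitly would close that part cleanly.

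The genuine gap is step (a), which is the entire substance of the theorem. Your ``complete each monomial of $x_n\forestpoly{G}(\xl_n;0)$ to a monomial quasisymmetric polynomial and telescope'' mechanism is a heuristic, not a proof: you give no reason why the completion errors should cancel against $\forestpoly{F}(x_1,\dots,x_{n-1},0;0)$, even up to lower-degree forest polynomials already placed in $\qsymide{n}$ by induction. Nothing in the recursion forces the monomials of $\forestpoly{G}$ to interact with monomial quasisymmetric completions in this straightforward way, and the assertion that the bookkeeping works out is precisely what must be established. You then acknowledge this and defer to \cite{NST_a,NT_forest} for exactly the step those references are being cited to supply, so the proposal does not actually close the loop --- it restates the theorem as a plan and then invokes the external proof.
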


We also define the set of \emph{zigzag forests} to be
\[
\zigzag{n} = \{F \in \LTFor_{n} \suchthat \LT(F) = \{n\}\}.
\]
The $\forestpoly{F}(\xl_{n}; \tl_{n})$ for $F \in \zigzag{n}$ are called \emph{double fundamental quasisymmetric polynomials}, and in~\cite[\S 4 and \S 7]{BGNST1} we show that they form a basis for $\eqsym{n}$.  
Via~\cite{NST_a},~\cite{BGNST1} also show that the $\forestpoly{F}(\xl_{n}; 0)$ are the classical fundamental quasisymmetric basis for $\qsym{n}$.  

\begin{proof}[Proof of Theorem~\ref{thm:we_forgot_to_prove_this}]

Theorem~\ref{thm:forest_flowup} shows that 
\[
\ZZ[\tl_n][\xl_n]= \ker(\mathbf{ev}_{\NC}) \oplus \bigoplus_{F \in \indexedforests_{n}}\ZZ[\tl_n]\forestpoly{F}(\xl_n;\tl_n),
\]
so we only need to show that $\eqsymide{n} = \ker(\mathbf{ev}_{\NC})$.  By \eqref{eq:forestbasis}, it suffices to show the inclusions
\begin{equation}
\label{eq:appendix_inclusions}
\bigoplus_{\substack{\emptyset \neq F \in \LTFor_{n}\setminus \indexedforests_n}} \ZZ[\tl_{n}]\forestpoly{F}(\xl_{n}; \tl_{n})\subseteq \eqsymide{n} \subseteq \ker(\mathbf{ev}_{\NC}).
\end{equation}

For the second inclusion in Equation~\eqref{eq:appendix_inclusions} we use the fact, proved in~\cite[Theorem 7.1]{BGNST1}, that if $f(\xl_n;\tl_n)\in \eqsym{n}$ then $\ev_{w}f=\ev_{\idem}f$ for all $w \in \NC_n$, so clearly 
\[
\eqsymide{n} = \{f(\xl_n;\tl_n) - f(\tl_n;\tl_n) \suchthat f \in \eqsym{n}\} \subseteq \ker(\mathbf{ev}_{\NC}).
\] 

We now establish the first inclusion by showing that for all $F \in \LTFor_{n}\setminus \indexedforests_n$ we have $\forestpoly{F}(\xl_n;\tl_n) \in \eqsymide{n}$. 
We proceed by induction on $|F|$.  
Our base case is $|F|=1$, wherein the assumption $n \in \LT(T)$ implies that $F = (\rletter{1}^{-})^{n-1}\cdot \eletter{n}$, so that $\forestpoly{F}(\xl_n;\tl_n) \in \eqsym{n}$ as $F\in \zigzag{n}$ (alternatively as $\forestpoly{F}(\xl_n;\tl_n)=x_1+\cdots+x_n-t_1-\cdots-t_n)$.

Now assume that $|F| > 1$.  By \cite[Theorem 9.7]{NST_a}, the (single) forest polynomial $\forestpoly{F}(\xl_n)$ lies in $\qsymide{n}$, which is generated by the fundamental quasisymmetric polynomials $\forestpoly{G}(\xl_n)$ for $\emptyset\ne G \in \zigzag{n}$.  
One may then write 
\[
\forestpoly{F}(\xl_n)=\sum_{\emptyset\ne G\in \zigzag{n}}  f_G(\xl_n)\,\forestpoly{G}(\xl_n).
\]
As the double fundamental quasisymmetric polynomial $\forestpoly{G}(\xl_n;\tl_n)$ lies in $\eqsymide{n}$ for $\emptyset\neq G\in \zigzag{n}$, the difference
\begin{align}
\label{eq:the_difference}
\forestpoly{F}(\xl_n;\tl_n)-\sum_{\emptyset\neq G\in \zigzag{n}} f_G(\xl_n) \forestpoly{G}(\xl_n;\tl_n)
\end{align}
can be written as a $\ZZ[\tl_{n}]$-linear combination of double forest polynomials $\forestpoly{H}(\xl_n; \tl_{n})$ with $H\in \LTFor_{n}\setminus \indexedforests_n$.  
Furthermore, the difference~\eqref{eq:the_difference} contains no monomials consisting entirely of $x$-variables, so each $H$ must have $|H| < |F|$.  
By induction, we have now expressed $\forestpoly{F}(\xl_n;\tl_n)$ as an element of $\eqsymide{n}$, completing the proof.  
\end{proof}
%%%%%%%%%%
\bibliographystyle{hplain}
\bibliography{qfl.bib}

\begin{thebibliography}{10}

\bibitem{AlNa09}
M.~Albenque and P.~Nadeau.
\newblock Growth function for a class of monoids.
\newblock In {\em 21st {I}nternational {C}onference on {F}ormal {P}ower
  {S}eries and {A}lgebraic {C}ombinatorics ({FPSAC} 2009)}, Discrete Math.
  Theor. Comput. Sci. Proc., AK, pages 25--38. Assoc. Discrete Math. Theor.
  Comput. Sci., Nancy, 2009.

\bibitem{AJS}
H.~H. Andersen, J.~C. Jantzen, and W.~Soergel.
\newblock Representations of quantum groups at a {$p$}th root of unity and of
  semisimple groups in characteristic {$p$}: independence of {$p$}.
\newblock {\em Ast\'{e}risque}, (220):321, 1994.

\bibitem{AF24}
D.~Anderson and W.~Fulton.
\newblock {\em Equivariant cohomology in algebraic geometry}, volume 210 of
  {\em Cambridge Studies in Advanced Mathematics}.
\newblock Cambridge University Press, Cambridge, 2024.

\bibitem{ABB04}
J.-C. Aval, F.~Bergeron, and N.~Bergeron.
\newblock Ideals of quasi-symmetric functions and super-covariant polynomials
  for {$S_n$}.
\newblock {\em Adv. Math.}, 181(2):353--367, 2004.

\bibitem{BeGa23}
N.~Bergeron and L.~Gagnon.
\newblock The excedance quotient of the {B}ruhat order, quasisymmetric
  varieties, and {T}emperley--{L}ieb algebras.
\newblock {\em J. Lond. Math. Soc. (2)}, 110(4):Paper No. e13007, 2024.

\bibitem{BGNST1}
N.~Bergeron, L.~Gagnon, P.~Nadeau, H.~Spink, and V.~Tewari.
\newblock Equivariant quasisymmetry and noncrossing partitions, 2025,
  2504.15234.

\bibitem{BS98}
N.~Bergeron and F.~Sottile.
\newblock Schubert polynomials, the {B}ruhat order, and the geometry of flag
  manifolds.
\newblock {\em Duke Math. J.}, 95(2):373--423, 1998.

\bibitem{BGG73}
I.~N. Bern\v{s}te\u{\i}n, I.~M. Gelfand, and S.~I. Gelfand.
\newblock Schubert cells, and the cohomology of the spaces {$G/P$}.
\newblock {\em Uspehi Mat. Nauk}, 28(3(171)):3--26, 1973.

\bibitem{Bi97}
P.~Biane.
\newblock Some properties of crossings and partitions.
\newblock {\em Discrete Math.}, 175(1-3):41--53, 1997.

\bibitem{BJV}
P.~Biane and M.~Josuat-Verg\`es.
\newblock Noncrossing partitions, {B}ruhat order and the cluster complex.
\newblock {\em Ann. Inst. Fourier (Grenoble)}, 69(5):2241--2289, 2019.

\bibitem{BB05}
S.~Billey and A.~Postnikov.
\newblock Smoothness of {S}chubert varieties via patterns in root subsystems.
\newblock {\em Adv. in Appl. Math.}, 34(3):447--466, 2005.

\bibitem{Bil99}
S.~C. Billey.
\newblock Kostant polynomials and the cohomology ring for {$G/B$}.
\newblock {\em Duke Math. J.}, 96(1):205--224, 1999.

\bibitem{BB03}
S.~C. Billey and T.~Braden.
\newblock Lower bounds for {K}azhdan-{L}usztig polynomials from patterns.
\newblock {\em Transform. Groups}, 8(4):321--332, 2003.

\bibitem{BjBr05}
A.~Bj\"{o}rner and F.~Brenti.
\newblock {\em Combinatorics of {C}oxeter groups}, volume 231 of {\em Graduate
  Texts in Mathematics}.
\newblock Springer, New York, 2005.

\bibitem{Bor53}
A.~Borel.
\newblock Sur la cohomologie des espaces fibr\'{e}s principaux et des espaces
  homog\`enes de groupes de {L}ie compacts.
\newblock {\em Ann. of Math. (2)}, 57:115--207, 1953.

\bibitem{BGW}
A.~V. Borovik, I.~M. Gelfand, and N.~White.
\newblock {\em Coxeter matroids}, volume 216 of {\em Progress in Mathematics}.
\newblock Birkh\"{a}user Boston, Inc., Boston, MA, 2003.

\bibitem{ChSk74}
T.~Chang and T.~Skjelbred.
\newblock The topological {S}chur lemma and related results.
\newblock {\em Ann. of Math. (2)}, 100:307--321, 1974.

\bibitem{ChapotonNadeau2017}
F.~Chapoton and P.~Nadeau.
\newblock Combinatorics of the categories of noncrossing partitions.
\newblock {\em S\'em. Lothar. Combin.}, 78B:Art. 37, 12, 2017.

\bibitem{Dem74}
M.~Demazure.
\newblock D\'{e}singularisation des vari\'{e}t\'{e}s de {S}chubert
  g\'{e}n\'{e}ralis\'{e}es.
\newblock {\em Ann. Sci. \'{E}cole Norm. Sup. (4)}, 7:53--88, 1974.

\bibitem{MR2308029}
M.~Franz and V.~Puppe.
\newblock Exact cohomology sequences with integral coefficients for torus
  actions.
\newblock {\em Transform. Groups}, 12(1):65--76, 2007.

\bibitem{Fulton_Intersection}
W.~Fulton.
\newblock {\em Intersection theory}, volume~2 of {\em Ergebnisse der Mathematik
  und ihrer Grenzgebiete. 3. Folge. A Series of Modern Surveys in Mathematics
  [Results in Mathematics and Related Areas. 3rd Series. A Series of Modern
  Surveys in Mathematics]}.
\newblock Springer-Verlag, Berlin, second edition, 1998.

\bibitem{GGMS87}
I.~M. Gelfand, R.~M. Goresky, R.~D. MacPherson, and V.~V. Serganova.
\newblock Combinatorial geometries, convex polyhedra, and {S}chubert cells.
\newblock {\em Adv. in Math.}, 63(3):301--316, 1987.

\bibitem{GGP97}
I.M. Gelfand, M.I. Graev, and A.~Postnikov.
\newblock Combinatorics of hypergeometric functions associated with positive
  roots.
\newblock In {\em The {A}rnold-{G}elfand mathematical seminars}, pages
  205--221. Birkh\"{a}user Boston, Boston, MA, 1997.

\bibitem{Ges84}
I.~M. Gessel.
\newblock Multipartite {$P$}-partitions and inner products of skew {S}chur
  functions.
\newblock In {\em Combinatorics and algebra ({B}oulder, {C}olo., 1983)},
  volume~34 of {\em Contemp. Math.}, pages 289--317. Amer. Math. Soc.,
  Providence, RI, 1984.

\bibitem{GKM98}
M.~Goresky, R.~Kottwitz, and R.~MacPherson.
\newblock Equivariant cohomology, {K}oszul duality, and the localization
  theorem.
\newblock {\em Invent. Math.}, 131(1):25--83, 1998.

\bibitem{Gra01}
W.~Graham.
\newblock Positivity in equivariant {S}chubert calculus.
\newblock {\em Duke Math. J.}, 109(3):599--614, 2001.

\bibitem{GZ03}
V.~Guillemin and C.~Zara.
\newblock The existence of generating families for the cohomology ring of a
  graph.
\newblock {\em Adv. Math.}, 174(1):115--153, 2003.

\bibitem{MR2166181}
M.~Harada, A.~Henriques, and T.~S. Holm.
\newblock Computation of generalized equivariant cohomologies of {K}ac-{M}oody
  flag varieties.
\newblock {\em Adv. Math.}, 197(1):198--221, 2005.

\bibitem{HHMP}
M.~Harada, T.~Horiguchi, M.~Masuda, and S.~Park.
\newblock The volume polynomial of regular semisimple {H}essenberg varieties
  and the {G}elfand-{Z}etlin polytope.
\newblock {\em Proc. Steklov Inst. Math.}, 305:318--344, 2019.

\bibitem{HS18}
J.~Heller and P.~Schwer.
\newblock Generalized non-crossing partitions and buildings.
\newblock {\em Electron. J. Combin.}, 25(1):Paper No. 1.24, 29 pp, 2018.

\bibitem{JosuatNadeau2023Koszulity}
M.~Josuat-Verg\`es and P.~Nadeau.
\newblock Koszulity of dual braid monoid algebras via cluster complexes.
\newblock {\em Ann. Math. Blaise Pascal}, 30(2):141--188, 2023.

\bibitem{Kre72}
G.~Kreweras.
\newblock Sur les partitions non crois\'{e}es d'un cycle.
\newblock {\em Discrete Math.}, 1(4):333--350, 1972.

\bibitem{LP20}
T.~Lam and A.~Postnikov.
\newblock Polypositroids.
\newblock {\em Forum Math. Sigma}, 12:Paper No. e42, 67 pp, 2024.

\bibitem{LS82}
A.~Lascoux and M.-P. Sch\"{u}tzenberger.
\newblock Polyn\^{o}mes de {S}chubert.
\newblock {\em C. R. Acad. Sci. Paris S\'{e}r. I Math.}, 294(13):447--450,
  1982.

\bibitem{leemasudapark2024torusorbitclosuresflag}
E.~Lee, M.~Masuda, and S.~Park.
\newblock Torus orbit closures in the flag variety, 2024, 2203.16750.

\bibitem{lian2023hhmp}
C.~Lian.
\newblock The {HHMP} decomposition of the permutohedron and degenerations of
  torus orbits in flag varieties.
\newblock {\em Int. Math. Res. Not. IMRN}, (20):13380--13399, 2024.

\bibitem{MP08}
M.~Masuda and T.~E. Panov.
\newblock Semi-free circle actions, {B}ott towers, and quasitoric manifolds.
\newblock {\em Mat. Sb.}, 199(8):95--122, 2008.

\bibitem{NST_c}
P.~Nadeau, H.~Spink, and V.~Tewari.
\newblock The geometry of quasisymmetric coinvariants, 2024, 2410.12643.

\bibitem{NST_a}
P.~Nadeau, H.~Spink, and V.~Tewari.
\newblock Quasisymmetric divided differences, 2024, 2406.01510.

\bibitem{NTremixed}
P.~Nadeau and V.~Tewari.
\newblock Remixed {E}ulerian numbers.
\newblock {\em Forum Math. Sigma}, 11:Paper No. e65, 26 pp, 2023.

\bibitem{NT_forest}
P.~Nadeau and V.~Tewari.
\newblock Forest polynomials and the class of the permutahedral variety.
\newblock {\em Adv. Math.}, 453:Paper No. 109834, 2024.

\bibitem{oeis}
N.~J.~A. Sloane.
\newblock The {E}ncyclopedia of {I}nteger {S}equences.

\bibitem{StThesis}
R.~P. Stanley.
\newblock {\em Ordered structures and partitions}.
\newblock Memoirs of the American Mathematical Society, No. 119. American
  Mathematical Society, Providence, R.I., 1972.

\bibitem{TW15}
E.~Tsukerman and L.~Williams.
\newblock Bruhat interval polytopes.
\newblock {\em Adv. Math.}, 285:766--810, 2015.

\bibitem{Tym08}
J.~S. Tymoczko.
\newblock Permutation actions on equivariant cohomology of flag varieties.
\newblock In {\em Toric topology}, volume 460 of {\em Contemp. Math.}, pages
  365--384. Amer. Math. Soc., Providence, RI, 2008.

\end{thebibliography}
\end{document}